\documentclass[10pt,reqno]{amsart}
\usepackage{amsmath,amssymb,mathrsfs,graphicx}
\usepackage{ifthen}
\usepackage{a4wide,color}
\usepackage{caption}
\usepackage{sidecap}
\usepackage{rotating}
\usepackage{colortbl}
\usepackage{float}
\usepackage{comment}
\usepackage{longtable}

\usepackage{babel}

\numberwithin{equation}{section}

\numberwithin{figure}{section}

\theoremstyle{plain}

\theoremstyle{plain}

\theoremstyle{remark}

\theoremstyle{plain}

\allowdisplaybreaks
\let \pa \partial

\let \eps \varepsilon
\newcommand{\N}{{\mathbb N}}
\newcommand{\R}{{\mathbb R}}

\newtheorem{theorem}{Theorem}[section]
\newtheorem{lemma}{Lemma}[section]

\newtheorem{proposition}{Proposition}[section]
\newtheorem{remark}{Remark}[section]

\let \eps=\varepsilon

\let \la=\lambda

\let \pa=\partial

\let \om=\omega

\makeatother
\providecommand{\lemmaname}{Lemma}
\providecommand{\propositionname}{Proposition}
\providecommand{\remarkname}{Remark}
\providecommand{\theoremname}{Theorem}

\begin{document}

\title[1D Boltzmann equation with hard potential]{
Time-asymptotic expansion and fluid approximation of the 1D Boltzmann equation with hard potentials
}
\author{Hui-Tzu Chang}
\address{Hui-Tzu Chang, Department of Mathematics, National Cheng Kung
University, Tainan, Taiwan}
\email{L18101022@gs.ncku.edu.tw}

\author{Haitao Wang}
\address{Haitao Wang, School of Mathematical Sciences, Institute of Natural
Sciences, MOE-LSC, CMA-Shanghai, Shanghai Jiao Tong University, Shanghai,
China}
\email{haitallica@sjtu.edu.cn}

\author{Kung-Chien Wu}
\address{Kung-Chien Wu, Department of Mathematics, National Cheng Kung
University, Tainan, Taiwan and National Center for Theoretical Sciences,
National Taiwan University, Taipei, Taiwan}
\email{kungchienwu@gmail.com}

\begin{abstract}

We establish a quantitative large-time comparison between the macroscopic Boltzmann solution and the full nonlinear compressible Navier–Stokes solution in one space dimension. For Maxwellian molecules and hard potentials with angular cutoff, we consider small, spatially compactly supported perturbations of a global Maxwellian with polynomial velocity decay, without assuming spatial differentiability. Nonlinear diffusion waves persist at leading order, making comparison through linearized equations alone insufficient. We construct a coupled system for the leading nonlinear fluid waves and derive cancellation identities from the exact kinetic spectral representation to control interactions between distinct characteristic families. Combined with a particle–fluid decomposition, these yield pointwise estimates separating the leading waves, their interactions, and a faster-decaying remainder. Matching the nonlinear wave dynamics then yields space-time pointwise error estimates and an \(L_x^1\) convergence rate arbitrarily close to \((1+t)^{-1/2}\).

\end{abstract}

\keywords{compressible Navier-Stokes equation, one-dimensional Boltzmann equation, fluid approximation, hard potential, pointwise structure}
\thanks{2020 Mathematics Subject Classification: 35Q20; 82C40.}

\maketitle

\section{Introduction}

\subsection{The model}

We consider the one-dimensional Boltzmann equation (BE)
\begin{equation}
\left\{
\begin{array}{l}
\partial _{t}F+\xi^{1}\pa _{x}F=Q(F,F)\text{,} \\[4mm]
F(0,x,\xi )=F_{0}(x,\xi )\text{,}%
\end{array}%
\right. \quad (t,x,\xi )\in {\mathbb{R}}^{+}\times {\mathbb{R}}\times {%
\mathbb{R}}^{3}\text{,}  \label{bot.1.a}
\end{equation}%
where $F(t,x,\xi )$ is the particle velocity distribution at time
$t\geq0$, position $x\in {\mathbb{R}}$, and microscopic velocity
$\xi =(\xi^{1},\xi^{2},\xi^{3})\in {\mathbb{R}}^{3}$.
The bilinear collision operator is given by
\begin{equation*}
Q(F,G)=\frac{1}{2}\int_{{\mathbb{R}}^{3}\times S^{2}}B(|\xi _{\ast }-\xi
|,\Omega )\left\{ F_{\ast }^{\prime }G^{\prime }+G_{\ast }^{\prime
}F^{\prime }-F_{\ast }G-FG_{\ast }\right\} d\xi _{\ast }d\Omega \text{,}
\end{equation*}%
with collision kernel
\begin{equation*}
B(|\xi -\xi _{\ast }|,\Omega)=|\xi -\xi _{\ast }|^{\gamma }\beta (\vartheta ).
\end{equation*}%
We use the usual notation $F=F\left( t,x,\xi \right) $,
$F_{\ast}=F_{\ast
}\left( t,x,\xi _{\ast }\right) $,
$F^{\prime }=F(t,x,\xi' )$, and
$F_{\ast
}^{\prime }=F\left( t,x,\xi _{\ast }^{\prime }\right) $.
The post-collisional velocities satisfy
\begin{equation*}
\xi ^{\prime }=\xi -[(\xi -\xi _{\ast })\cdot \Omega ]\Omega \text{,}\quad
\xi _{\ast }^{\prime }=\xi_{\ast } +[(\xi -\xi _{\ast })\cdot \Omega ]\Omega \text{,
\ \ }\Omega \in S^{2}\text{.}
\end{equation*}%

Throughout this paper, we consider \textit{Maxwellian molecules}
$\left( \gamma
=0\right) $ and \textit{hard potentials} ($0<\gamma< 1$).
The angular kernel $\beta (\vartheta )$ satisfies Grad's angular
cutoff assumption
\begin{equation*}
0<\beta (\vartheta )\leq C\left\vert \cos \vartheta \right\vert \text{,}
\end{equation*}%
for some constant $C>0$, where the collision angle $\vartheta $
is defined by
\begin{equation*}
\cos \vartheta =\frac{|(\xi -\xi _{\ast })\cdot \Omega |}{|\xi -\xi _{\ast }|%
}\text{.}
\end{equation*}
The case $\gamma=1$ with
$\beta (\vartheta )=\lvert \cos \vartheta\rvert$ is the hard-sphere model.

It is known that Global Maxwellians are equilibrium of the Boltzmann equation.
We study perturbations of a global Maxwellian and write
\begin{equation}
F=\mathcal{M}+\sqrt{\mathcal{M}}f\text{, \ \ \ }F_{0}(x,\xi )=\mathcal{M}%
+\varepsilon \sqrt{\mathcal{M}}f_{0}\text{,}  \label{F}
\end{equation}%
where $\varepsilon >0$ is sufficiently small and $f$ is the
perturbation. The global Maxwellian $\mathcal{M}$ is normalized by
\begin{equation*}\displaystyle
\mathcal{M}= (2\pi)^{-3/2}e^{-\frac{\lvert \xi \rvert^2}{2}}\text{.}
\end{equation*}%
Substituting \eqref{F} into \eqref{bot.1.a}, we find that $f$ satisfies
\begin{equation}
\left\{
\begin{array}{l}
\partial _{t}f+\xi^{1}\pa _{x}f=Lf+\Gamma \left( f,f\right),
\vspace {3mm}
\\
f|_{t=0}=\varepsilon f_{0}\text{, \ \ }\left( x,\xi \right) \in \mathbb{R}\times \mathbb{R}^{3}\text{,}%
\end{array}%
\right.  \label{Linearized}
\end{equation}%
where the linearized collision operator and the nonlinear collision
term are defined by
\begin{equation*}
Lf=2  \mathcal{M}^{-1/2}Q(  \mathcal{M}, \mathcal{M}^{1/2}f),\qquad \Gamma \left(
f,f\right) = \mathcal{M}^{-1/2}Q(  \mathcal{M}^{1/2}f, \mathcal{M}^{1/2}f) \text{.}
\end{equation*}%

We impose even symmetry in each transverse velocity variable
$\xi^2$ and $\xi^3$ on the initial data and work within this symmetry class.
Within this class, the null space of $L$ is three-dimensional,
with the orthonormal basis
\begin{equation*}
	\mathrm{Ker}(L)=\mbox{span}\left\{ \chi _{0},\chi _{1},\chi _{4}\right\} =\mbox{span} \left\{ \mathcal{M}%
	^{1/2},\ \xi^{1}\mathcal{M}^{1/2},\ \frac{1}{\sqrt{6}}(|\xi |^{2}-3)%
	\mathcal{M}^{1/2}\right\} .
\end{equation*}%
We use the macro--micro decomposition associated with this null
space. Let $\mathbb{P}_{0}$ be the orthogonal projection onto
$\mathrm{Ker}(L)$ with respect to the $L_{\xi}^{2}$ inner product,
and set $\mathbb{P}_{1}\equiv\mathrm{Id}-\mathbb{P}_{0}$.

The Euler characteristic speeds and directions are determined by
$\mathbb{P}_{0}\xi^{1}$ acting on $\mathbb{P}_{0}L^{2}_{\xi}$:
\begin{equation}
\left\{
\begin{array}{l}
\mathbb{P}_{0}\xi^{1}\mathbb{P}_{0}E_{j}=\la_{j}E_{j}\text{, } \\[2mm]
\la_{1}=-\sqrt{\frac{5}{3}}\text{,}\quad \la_{2}=0\text{,}%
\,\quad \la_{3}=\sqrt{\frac{5}{3}}\text{,} \\
E_{1}=\sqrt{\frac{3}{10}}\chi _{0}-\sqrt{\frac{1}{2}}\chi_{1}+\sqrt{\frac{1}{5}}\chi _{4}\text{,} \\[2mm]
E_{2}=-\sqrt{\frac{2}{5}}\chi _{0}+\sqrt{\frac{3}{5}}\chi _{4}\text{,} \\[2mm]
E_{3}=\sqrt{\frac{3}{10}}\chi _{0}+\sqrt{\frac{1}{2}}\chi_{1}+\sqrt{\frac{1}{5}}\chi _{4}\,\text{.}
\end{array}%
\right.  \label{Eigen function}
\end{equation}%
We use these coordinates for the fluid systems in
Subsection~\ref{subsec:main-results}.

\subsection{Notations}
In order to proceed, we introduce some notations that will be frequently used throughout the paper. We denote $\left\langle \xi \right\rangle ^{\beta }=(1+|\xi
|^{2})^{\beta /2}$, $\beta \in {\mathbb{R}}$. For the microscopic variable $%
\xi $, we denote
\begin{equation*}
	|g|_{L_{\xi }^{q}}=\Big(\int_{{\mathbb{R}}^{3}}|g|^{q}d\xi \Big)^{1/q}%
	\hbox{
		if }1\leq q<\infty \hbox{,}\quad \quad |g|_{L_{\xi }^{\infty }}=\sup_{\xi
		\in {\mathbb{R}}^{3}}|g(\xi )|\hbox{,}
\end{equation*}%
and the weighted norms can be defined by
\begin{equation*}
	|g|_{L_{\xi ,\beta }^{q}}=\Big(\int_{{\mathbb{R}}^{3}}\left\vert
	\left\langle \xi \right\rangle ^{\beta }g\right\vert ^{q}d\xi \Big)^{1/q}%
	\hbox{
		if }1\leq q<\infty \hbox{,}\quad \quad |g|_{L_{\xi ,\beta }^{\infty
	}}=\sup_{\xi \in {\mathbb{R}}^{3}}\left\vert \left\langle \xi \right\rangle
	^{\beta }g(\xi )\right\vert \hbox{.}
\end{equation*}
and
\begin{equation*}
	|g|_{L_{\xi }^{\infty }(m)}=\sup_{\xi \in {\mathbb{R}}^{3}}\left \{ |g(\xi
	)|m(\xi)\right \} \text{,}
\end{equation*}%
where $m$ is a weight function.

The $L_{\xi }^{2}$ inner product in ${\mathbb{R}}^{3}$ will be denoted by $%
\left\langle \cdot ,\cdot \right\rangle _{\xi }$, i.e.,
\begin{equation*}
	\left\langle f,g\right\rangle _{\xi }=\int_{\mathbb{R}^3} f(\xi )\overline{g(\xi )}d\xi %
	\hbox{.}
\end{equation*}%
For the Boltzmann equation, the natural norm in $\xi $ is $|\cdot
|_{L_{\sigma }^{2}}$, which is defined as
\begin{equation*}
	|g|_{L_{\sigma }^{2}}^{2}=\left|\left \langle \xi \right \rangle ^{\frac{%
			\gamma }{2}}g\right|_{L_{\xi }^{2}}^{2}\text{.}
\end{equation*}%

For the space variable $x$, we have similar notations, namely,
\begin{equation*}
	|g|_{L_{x}^{q}}=\left( \int_{{\mathbb{R}}}|g|^{q}dx\right) ^{1/q}%
	\hbox{
		if }1\leq q<\infty \hbox{,}\quad \quad |g|_{L_{x}^{\infty }}=\sup_{x\in {%
			\mathbb{R}}}|g(x)|\hbox{.}
\end{equation*}

Finally, with $\mathcal{X}$ and $\mathcal{Y}$ being two normed spaces, we
define%
\begin{equation*}
	\left\Vert g\right\Vert _{\mathcal{XY}}=\left\vert \left\vert g\right\vert _{%
		\mathcal{Y}}\right\vert _{\mathcal{X}}\hbox{,}
\end{equation*}%
and for simplicity, we denote
\begin{equation*}
	\left\Vert g\right\Vert _{L^{2}}=\left\Vert g\right\Vert _{L_{\xi
		}^{2}L_{x}^{2}}=\left( \int_{\mathbb{R}^{3}}\left\vert g\right\vert
	_{L_{x}^{2}}^{2}d\xi \right) ^{1/2}\hbox{.}
\end{equation*}%

The domain decomposition plays an important role in our analysis, so we
	introduce a cut-off function $\chi :{\mathbb{R}}\rightarrow {\mathbb{R}}$,
	which is smooth and even, and non-increasing on $[0,\infty)$, $\chi (s)=1$ for $|s|\leq \frac{1}{2}$, $\chi
	(s)=0$ for $|s|\geq 1$ and $0\leq \chi \leq 1$. Moreover, we define $\chi
	_{R}(s)=\chi (s/R)$ for positive $R$.

The Fourier transform is used frequently in this paper. Let $f:\R\to \mathbb{C}$ be a function, we define the Fourier transform of $f$ as
	$$
	\hat{f}(\eta)=\int_{\R}e^{-ix\eta}f(x)dx\,,
	$$
	and the inverse Fourier transform of $f$ is defined by
	$$
	\check{f}(x)=(2\pi)^{-1}\int_{\R}e^{ix\eta}f(\eta)d\eta\,.
	$$

For any two functions $f(x,t)$ and $g(x,t)$, we define the space-time convolution by
\begin{equation*}
f(x,t)\ast _{x,t}g(x,t)=\int_{0}^{t}\int_{{\mathbb{R}}}f(x-y,t-\tau
)g(y,\tau )dyd\tau .
\end{equation*}
For simplicity of notations, hereafter, we abbreviate $\leq C$ to $\lesssim $%
, where $C$ is a constant depending only on fixed numbers. A subscript on $\lesssim$ indicates that the implicit constant may
depend on the specified parameter, in addition to the other fixed
parameters.

\subsection{Review of previous works}
We review the pointwise wave analysis of the Boltzmann equation and its
relation to the compressible Navier--Stokes (NS) equations. These two
questions are closely related in one space dimension, where nonlinear
diffusion waves contribute at leading order.

Liu and Yu \cite{[Yu]} developed a pointwise Green's-function approach to
the one-dimensional hard-sphere Boltzmann equation. Their analysis
separates particlelike singular waves from fluidlike waves and describes
the large-time behavior of small, localized perturbations without assuming
spatial smoothness of the initial data. The approach has also been used
for initial-boundary value problems \cite{[LiuYu-07]}, pointwise shock
stability \cite{[Yu-10]}, and invariant manifolds \cite{[LiuYu-13]}.
Related pointwise analyses include the three-dimensional linearized BE
\cite{[LiuYu1],[LiuYu2]}, the one-dimensional Vlasov--Poisson--Boltzmann
and Vlasov--Maxwell--Boltzmann systems
\cite{[LiYangZhong-20],[LiYangZhong-23]}, and three-dimensional hard-sphere
solutions with polynomial velocity tails \cite{[LinWangWu2]}.

For hard potentials, the linear pointwise analysis of Lee, Liu, and Yu
\cite{[LeeLiuYu-07]} provides a starting point. The spatial decay depends
on the potential power $\gamma$ and the initial velocity weight. The loss
of velocity weight in these estimates prevents their direct use in a
nonlinear iteration with a fixed velocity weight. In \cite{[Haitao]}, an
enhanced Mixture Lemma yields a particle--fluid decomposition: the
particle part carries the loss of velocity weight and decays rapidly,
while the fluid part has the regularity and velocity decay needed for the
nonlinear estimates. This leads to the space-time wave structure of the
three-dimensional nonlinear BE, including Huygens, diffusion, and Riesz
waves. In that setting, nonlinear contributions are of lower order than
the leading linear waves.

The one-dimensional problem requires a different treatment of the
leading nonlinear terms. This is already illustrated by the Burgers
equation
\[
\theta_t+\theta\theta_x=\theta_{xx}.
\]
For localized data, its leading diffusion wave is generally nonlinear.
Although it has the same temporal decay order as a heat wave, its
nonlinear profile cannot in general be replaced by the linear heat
evolution with an error of smaller asymptotic order; see
\cite{[Liu-17],[Liu-21]}.

For systems of viscous conservation laws, the flux and the physical
viscosity couple distinct characteristic families. The analysis in
\cite{[Liu-21],[Zeng]} separates the leading diagonal waves from the
remaining interactions. According to whether a characteristic field is
genuinely nonlinear or linearly degenerate, the leading wave is described
by a convected Burgers equation or a heat equation:
\begin{equation}\label{int-Burger}
 \theta_{it}+\la_i\theta_{ix}+c_{ii}(\theta_i^2)_x
 =\mu_i\theta_{ixx},
\end{equation}
where $\la_i,c_{ii},\mu_i$ are constants with $\mu_i>0$, and $c_{ii}=0$
in the linearly degenerate case. To illustrate the nonlinear coupling,
consider a model with diagonal constant linear viscosity. After
subtracting the leading waves, the remainder equation has the schematic
form
\begin{multline}\label{int-offdiagonal}
 v_{it}+\la_i v_{ix}=\mu_i v_{ixx}
 -\sum_{(j,k)\neq(i,i)}C^i_{jk}(\theta_j\theta_k)_x\\
 +\bigl(O(1)|\boldsymbol\theta||\boldsymbol v|
       +O(1)|\boldsymbol v|^2+O(1)|\boldsymbol\theta|^3\\
 \qquad+O(1)(|\boldsymbol\theta|+|\boldsymbol v|)
              (|\boldsymbol\theta_x|+|\boldsymbol v_x|)\bigr)_x.
\end{multline}
For physical viscosity, off-diagonal linear viscous terms must also be
retained; the comparison with diagonal artificial viscosity is part of
the fluid approximation below.

In the Duhamel representation of \eqref{int-offdiagonal}, the terms
$C^i_{jj}(\theta_j^2)_x$, $j\neq i$, couple a heat kernel with a source
traveling at a different speed. The cancellation mechanism in
\cite[Lemma~3.4]{[Zeng]} uses integration by parts along these distinct
characteristic directions. Its application requires derivative estimates
for the waves and their remainders. The resulting pointwise bounds
separate a leading diffusion wave from algebraic coupling tails of the
form
\begin{equation}\label{eq:est-vis}
 (1+t)^{-1/2}e^{-\frac{(x-\la_i t)^2}{1+t}}
 +\psi_i^{3/2}+\sum_{j\neq i}\widetilde\psi_j^{3/2},
\end{equation}
where the coupling profiles are defined in \eqref{coupling1} and
\eqref{coupling2}; see \cite[Sections~3 and~4]{[Zeng]}.
Koike \cite{[Kai]} further developed time-asymptotic expansions with
pointwise remainder estimates for one-dimensional viscous compressible
flow.

Motivated by this structure, Liu and Yu \cite[Section~10]{[Yu]} introduced
decoupled kinetic Burgers equations and studied the associated fluid and
nonfluid remainders. In the notation used here, their defining integral
equation takes the form
\begin{equation}\label{int-kinetic-Burger}
 \tilde\phi_i(t,x)=\varepsilon\langle E_i,\mathbb G^t f_0\rangle_\xi
 +\int_0^t\Big\langle E_i,\mathbb G^{t-\tau}
       \Gamma(\tilde\phi_iE_i,\tilde\phi_iE_i)(\tau)\Big\rangle_\xi
       d\tau.
\end{equation}
Here $\mathbb G^t$ is the full linearized solution operator defined in
\eqref{Green}, and $E_i$ is given in \eqref{Eigen function}.
Equation \eqref{int-kinetic-Burger} recalls the formulation in
\cite{[Yu]}; the branchwise kinetic Burgers waves used in this paper are
defined in \eqref{kinetic-burger}. The kinetic Burgers formulation and
the analysis of interactions in \cite{[Yu]} are important antecedents of
our wave decomposition. Such estimates describe the difference from
kinetic diffusion waves, but do not by themselves estimate the
difference between the nonlinear BE and NS solutions. A coarser closure
without separating the leading kinetic Burgers waves may yield bounds
such as $\sum_{j=1}^3\psi_j^1$; see
\cite{[LiYangZhong-20],[LiYangZhong-23]}. This upper bound is not
integrable in $x$ and therefore does not give the $L_x^1$-asymptotic
control sought here.

The fluid-like wave structure of the Boltzmann equation suggests a close connection with the compressible Navier--Stokes equations, as formally indicated by the Chapman--Enskog expansion. This motivates the study of the time-asymptotic relationship between their nonlinear solutions. For the three-dimensional problem, Kawashima, Matsumura, and Nishida
\cite{[Kawashima-Matsumura-Nishida]} established time-asymptotic
equivalence between BE and NS.

In corresponding macroscopic variables, the comparison takes the
schematic form
\[
 \|f_{\rm BE}-u_{\rm NS}\|
 \leq\|f_{\rm BE}-f_{\rm LBE}\|
     +\|f_{\rm LBE}-u_{\rm LNS}\|
     +\|u_{\rm LNS}-u_{\rm NS}\|,
\]
where the subscripts in the latter terms indicate the respective linearized equations. In that
three-dimensional setting, the nonlinear corrections decay faster than
the leading linear waves, allowing the comparison to be reduced to the
linearized systems. In one dimension, nonlinear effects in genuinely
nonlinear characteristic families persist at leading order. Thus
comparison through the linearized equations alone does not yield the
desired lower-order error.

Our objective is therefore not only to describe similar wave structures
for BE and NS, but to control the difference between their nonlinear
solutions. We construct a coupled system that retains the leading
interactions of the three characteristic families and estimate the
remaining terms. This decomposition is used to compare the macroscopic
BE solution with both the quadratic NS approximation and the full NS
system with physical viscosity. The quantitative pointwise and $L_x^1$
comparisons are stated in Theorem~\ref{cor:full-NS-comparison}.

\subsection{Main results}
\label{subsec:main-results}

We establish two main results. The first describes the leading fluid
waves, their off-diagonal corrections, and the microscopic component,
with faster-decaying bounds for the exact remainder. The second uses
this structure to establish a quantitative time-asymptotic approximation
of the macroscopic Boltzmann solution by the quadratic and full
compressible Navier--Stokes systems.

To state these results, we introduce the fluid-wave profile
\begin{equation}\label{fluid-def}
\theta^{\alpha}_{j}(x,t)=(1+t)^{-\frac{\alpha}{2}}\left(1+\frac{\vert x-\lambda_{j} t\vert^{2}}{1+t}\right)^{-N},
\end{equation}
where $N$ may be chosen arbitrarily large, and the coupling-wave
profiles
\begin{equation}\label{coupling1}
\psi^{\alpha}_{j}(x,t)=\left(1+t+\vert x-\lambda_{j} t\vert^{2}\right)^{-\frac{\alpha}{2}}
\end{equation}
and
\begin{equation}\label{coupling2}
\tilde{\psi}^{\alpha}_{j}(x,t)=\left((1+t)^{2}+\vert x-\lambda_{j} t\vert^{3}\right)^{-\frac{\alpha}{3}}\,,
\end{equation}
Here $\la_j$ is the characteristic speed defined in
\eqref{Eigen function}. The profiles $\theta_j^\alpha$ describe
localization near the Euler characteristics, while $\psi_j^\alpha$
and $\tilde\psi_j^\alpha$ retain the algebraic tails of wave
interactions.

We first state the result on the wave structure.
\begin{theorem}[Pointwise structure of the Boltzmann solution]
\label{thm:pointwise-structure}
Let $0\leq\gamma<1$, $\beta>3/2+\gamma$, $N>4$,
$N_0>2N(1-\gamma)$, and fix $0<\delta_0<1/4$.
Choose $k$ and $\hat\beta$ as in \eqref{eq-hat-beta}.
Assume that $f_0$ is even in each of $\xi^2$ and $\xi^3$ and
is compactly supported in $x$ for all $\xi$, namely,
\begin{equation*}
f_{0}\left( x,\xi \right) \equiv 0\text{ for }\left\vert x\right\vert \geq 1%
\text{, }\xi \in \mathbb{R}^{3}\text{,}
\end{equation*}%
and
\[
 f_0\in L_x^\infty L^\infty_{\xi,\hat\beta},\qquad
 \|f_0\|_{L_x^\infty L^\infty_{\xi,\hat\beta}}
 =\mathop{\mathrm{ess\,sup}}_{x,\xi}
       \langle\xi\rangle^{\hat\beta}|f_0(x,\xi)|<\infty.
\]
For $\varepsilon>0$ sufficiently small, let $f$ be the solution of
\eqref{Linearized}. Then $f$ admits the macro--micro decomposition
$f=\mathbb{P}_{0}f+\mathbb{P}_{1}f$, with macroscopic component
$$
\mathbb{P}_{0}f=\sum_{i=1}^{3}(\phi_{i}+R_{i})E_{i}\,.
$$
Here $\Phi=\sum_{i=1}^3\phi_iE_i$ and $V^\perp$ are the leading
fluid and nonfluid waves defined in \eqref{leading fluid} and
\eqref{leading-nonfluid}, respectively, and
$R_i=\langle E_i,R\rangle_\xi$, where $R=f-\Phi-V^\perp$
is the exact remainder.

The leading fluid waves admit the decomposition
$\phi_{i}=\tilde{\phi}_{i}+V_{i,A}+V_{i,B}$, where
$\tilde{\phi}_{i}$ is the diagonal kinetic Burgers wave defined in
\eqref{expression tilde-phi-i},
and $V_{i,A}$ and $V_{i,B}$ are the non-smooth and smooth
components of the off-diagonal wave, respectively. They satisfy
\begin{equation*}
		\left\{\begin{array}{l}
			\displaystyle    \left\vert \pa^{s}_{x}\tilde{\phi}_{i}(t,x)\right\vert\lesssim\varepsilon\theta^{s+1}_{i}\,,\quad s\in \N\cup\left\{0\right\}\\ \\
			\displaystyle  \left\vert V_{i,A}(t,x)\right\vert \lesssim \varepsilon(1+t+\vert x\vert)^{-\frac{N_{0}}{1-\gamma}}\,,\\ \\
			\displaystyle \left\vert V_{i,B}(t,x)\right\vert \lesssim \varepsilon \Bigg(\psi_{i}^{\frac{3}{2}}+\sum_{ k\neq i}\tilde{\psi}_{k}^{\frac{3}{2}}\Bigg)\,,\\ \\
			\displaystyle \left\vert \partial_{x}V_{i,B}(t,x)\right\vert\lesssim\varepsilon (1+t)^{-\frac{1}{2}}\Bigg(\psi_{i}^{\frac{3}{2}}+\sum_{ k\neq i}\tilde{\psi}_{k}^{\frac{3}{2}}\Bigg)\,.
		\end{array}
		\right.
	\end{equation*}
The fluid remainder admits the decomposition
$R_{i}=R_{i,A}+R_{i,B}$, where $R_{i,A}$ and $R_{i,B}$ are
its non-smooth and smooth components, respectively. Moreover,
\begin{equation*}
		\left\{\begin{array}{l}
			\displaystyle  \left\vert R_{i,A}(t,x)\right\vert \lesssim \varepsilon^{2}(1+t+\vert x\vert)^{-\frac{N_{0}}{1-\gamma}}\,,\\ \\
			\displaystyle \left\vert R_{i,B}(t,x)\right\vert\lesssim \varepsilon^{2}\left[\theta_{i}^{2-\delta_{0}}+(1+t)^{-\frac{1}{4}+\frac{\delta_{0}}{4}}\left(\psi_{i}^{\frac{3}{2}}+\sum_{k\neq i}\tilde{\psi}_{k}^{\frac{3}{2}}\right)\right]\,,\\ \\
			\displaystyle \left\vert \partial_{x}R_{i,B}(t,x)\right\vert\lesssim \varepsilon^{2}\left[\theta_{i}^{3-\delta_{0}}+(1+t)^{-\frac{3}{4}+\frac{\delta_{0}}{4}}\left(\psi_{i}^{\frac{3}{2}}+\sum_{k\neq i}\tilde{\psi}_{k}^{\frac{3}{2}}\right)\right]\,.
		\end{array}
		\right.
	\end{equation*}
The microscopic component $\mathbb{P}_{1}f$ satisfies
$$
\left\vert \mathbb{P}_{1}f\right\vert_{L^{\infty}_{\xi,\beta}} \lesssim \varepsilon\left[(1+t+|x|)^{-\frac{N_{0}}{1-\gamma}}+\sum_{l=1}^{3}\theta^{2}_{l}\right]+\varepsilon^{2}\left[\sum_{l=1}^{3}(1+t)^{-\frac{1}{2}}\psi_{l}^{\frac{3}{2}}\right].
$$
\end{theorem}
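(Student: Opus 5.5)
The plan is a bootstrap (continuity) argument built on the Green's function decomposition of the linearized operator $\mathbb{G}^t$ together with the particle--fluid decomposition from the enhanced Mixture Lemma of \cite{[Haitao]}. Write $\mathbb{G}^t=\mathbb{G}^t_{P}+\mathbb{G}^t_{F}$. The particle part $\mathbb{G}^t_P$ carries the loss of velocity weight and the singular (non-smooth) data. It decays like $(1+t+|x|)^{-N_0/(1-\gamma)}$ pointwise for compactly supported data in $L^\infty_{\xi,\hat\beta}$, which is why $N_0$ and $\hat\beta$ are chosen as in \eqref{eq-hat-beta}. The fluid part $\mathbb{G}^t_F$ is smooth in $x$, and its macroscopic projection onto $E_i$ is, up to faster terms, a convected heat kernel $\theta_i^{1}$ with derivative gain $(1+t)^{-1/2}$ per $\partial_x$. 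Its microscopic projection gains an extra $(1+t)^{-1/2}$, since it involves $\partial_x$ of the fluid waves. Every "$A$" component (non-smooth, bounded by $(1+t+|x|)^{-N_0/(1-\gamma)}$) in the statement comes from $\mathbb{G}_P$ acting on data or sources. Every "$B$" component comes from $\mathbb{G}_F$, for which derivatives may be taken.

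I will proceed in the following order.

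\begin{enumerate}
\item Estimate the diagonal kinetic Burgers waves $\tilde\phi_i$ defined in \eqref{expression tilde-phi-i}. Through their spectral representation these solve, up to exponentially small corrections, a convected Burgers equation \eqref{int-Burger} with data $\varepsilon\langle E_i,\mathbb{G}^t_F f_0\rangle$. The Hopf--Cole formula gives $|\partial_x^s\tilde\phi_i|\lesssim\varepsilon\theta_i^{s+1}$.
\item Define the off-diagonal wave $V_i=\phi_i-\tilde\phi_i$ as the Duhamel response to the sources $\Gamma(\tilde\phi_jE_j,\tilde\phi_kE_k)$ with $(j,k)\neq(i,i)$. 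Split it into $V_{i,A}$ (particle part) and $V_{i,B}$ (fluid part).
\item For $V_{i,B}$, replace the integration by parts along characteristics of \cite[Lemma~3.4]{[Zeng]} by cancellation identities read off from the exact kinetic spectral representation. A source of the form $\partial_x(\theta_j^1)^2$ convolved with a kernel moving at $\lambda_i\neq\lambda_j$ produces $\tilde\psi_j^{3/2}$. The term $j=k\neq i$, or mixed terms $j\neq k$, produce $\psi_i^{3/2}$. The derivative bound then follows by placing $\partial_x$ on the smooth kernel.
\end{enumerate}

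With these waves in hand, write the ansatz $f=\Phi+V^\perp+R$, where $V^\perp$ is the leading microscopic wave \eqref{leading-nonfluid}, generated by $\mathbb{P}_1$ of the fluid Green's function and the quadratic terms. The remainder $R$ satisfies a Duhamel equation with source
\[
\Gamma(f,f)-\sum_{(j,k)}\Gamma(\tilde\phi_jE_j,\tilde\phi_kE_k),
\]
which is at least cubic, or quadratic in $(V,R,V^\perp)$, or of the form $\tilde\phi\cdot(V+R+V^\perp)$. Introduce the ansatz norm $\mathcal{N}(T)$, namely the supremum over $t\le T$ of $|R_{i,A}|$, $|R_{i,B}|$ and $|\partial_xR_{i,B}|$ divided by their claimed profiles, together with $|\mathbb{P}_1R|_{L^\infty_{\xi,\beta}}$. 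Prove $\mathcal{N}(T)\lesssim\varepsilon^2+\mathcal{N}(T)^2+\varepsilon\mathcal{N}(T)$. The nonlinear term is controlled using $|\langle\xi\rangle^{\beta}\Gamma(g,h)|\lesssim|g|_{L^\infty_{\xi,\beta}}|h|_{L^\infty_{\xi,\beta-\gamma}}$ (with $\beta>3/2+\gamma$). The particle part of $\mathbb{G}$ absorbs the $\langle\xi\rangle^\gamma$ loss, and the fluid part gives smooth $B$-type contributions. For these one needs the standard space-time convolution lemmas:
\begin{itemize}
\item heat kernel along $\lambda_i$ $*_{x,t}$ $\partial_x(\theta_i^{2}\cdot\theta_i^{2-\delta_0})$ gives $\theta_i^{2-\delta_0}$, where $\delta_0$ absorbs a logarithm;
\item interactions with $\psi^{3/2}$ and $\tilde\psi^{3/2}$ give the $(1+t)^{-1/4+\delta_0/4}$ weighted coupling tails.
\end{itemize}
The microscopic bound follows from the same representation: the $\mathbb{P}_1$ projection of the fluid Green's function costs one derivative, giving $\theta_l^2$ and $(1+t)^{-1/2}\psi_l^{3/2}$, plus particle tails.

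I expect the main obstacle to be the off-diagonal cancellation producing $\tilde\psi_k^{3/2}$ rather than the non-integrable $\psi_k^1$. This requires the exact kinetic spectral branches $\lambda_i(\eta)$ near $\eta=0$, and the ability to integrate by parts in time using $(\lambda_i-\lambda_k)\partial_x$ acting on the source, with the error controlled via $\partial_t\tilde\phi_k+\lambda_k\partial_x\tilde\phi_k=O(\partial_x^2)$. My first approach is to work in Fourier variables on the low-frequency fluid part. There I will write the source's transport along $\lambda_k$ as a symbol $e^{-i\lambda_k\eta\tau}$, and divide by $\lambda_i(\eta)-\lambda_k$, which is nonvanishing for small $\eta$.
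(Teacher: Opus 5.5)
Your Fourier-side treatment of the different-speed cancellation for $\tilde\phi_j^2$ (dividing by $\lambda_i(\eta)-\lambda_k$ on the low-frequency branch) is equivalent to the paper's characteristic integration by parts with the kernel $H_{ij}$. The steps around it, however, have real problems.

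\emph{The Burgers premise is false.} The wave $\tilde\phi_i$ in \eqref{expression tilde-phi-i} is not a convected Burgers solution up to exponentially small errors. Its multipliers involve $\sigma_i(\eta)=-i\lambda_i\eta-A_i\eta^2+O(\eta^3)$ and $\langle e_i(\eta),E_i\rangle_\xi\langle\Gamma(E_i,E_i),e_i(-\eta)\rangle_\xi=i\eta B_i^{ii}+O(\eta^2)$, so $\tilde\phi_i$ differs from the Burgers wave at algebraic order. The paper only proves $|\tilde\phi_i-\tilde u_i|\lesssim\varepsilon\theta_i^{2-\delta_0}$, and that proof takes the bound on $\tilde\phi_i$ as input. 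Hopf--Cole is therefore unavailable. The bound $|\partial_x^s\tilde\phi_i|\lesssim\varepsilon\theta_i^{s+1}$ has to come from a Duhamel bootstrap, with derivatives on the kernel over $[0,t/2]$ and on the source over $[t/2,t]$. Likewise, the bound $(\partial_t+\lambda_j\partial_x)\tilde\phi_j=O(\varepsilon\theta_j^3)$ that your cancellation needs must be read off from the exact spectral equation, not from a Burgers equation.

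\emph{The $A/B$ split is wrong.} Splitting by particle part versus fluid part of $\mathbb G$ cannot give the stated $A$-bounds. It puts $\int_0^t\mathbb G_*^{t-\tau}\Gamma(\tilde\phi_jE_j,\tilde\phi_kE_k)(\tau)\,d\tau$ into $V_{i,A}$. That term is controlled only by $\varepsilon^2\theta^2$-type profiles, never by $(1+t+|x|)^{-N_0/(1-\gamma)}$. The split must follow the regularity of the source factors, as in \eqref{sec3-split-A}--\eqref{sec3-split-B}: $\mathbb G_*$ acting on a product of two smooth factors belongs to the smooth part. This is legitimate because $\mathbb G_*$ commutes with $\partial_x$.

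\emph{Velocity weight.} Your remainder bootstrap is missing two mechanisms, and this is the first. Extracting $(1+t+|x|)^{-N_0/(1-\gamma)}$ from $\mathbb G_*$ requires the source at weight $\beta+N_0$, and $\Gamma$ costs a further $\gamma$. An ansatz controlling $\mathbb P_1R$ only in $L^\infty_{\xi,\beta}$ therefore does not reproduce itself for the microscopic pieces generated by $\mathbb G_*$. The paper adds a spatially uniform bootstrap, $\|R_A\|_{L^\infty_xL^\infty_{\xi,\hat\beta}}\lesssim\varepsilon^2(1+t)^{-N_0/(1-\gamma)}$ and $\|\partial_x^sR_B\|_{L^\infty_xL^\infty_{\xi,\hat\beta}}\lesssim\varepsilon^2(1+t)^{-(1-\delta_0+s)/2}$. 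This is closed by the damped-transport and $\mathbb G_\sharp$ microscopic-source estimates, which need no spatial decay, and then combined with the interpolation \eqref{eq-interpolation high velocity}. That interpolation, not the linear particle estimate (which needs only weight $\beta+N_0$), is why $\hat\beta=\beta+2^k(N_0+\gamma)$.

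\emph{Mixed sources.} Since $\Phi$ solves the closed system \eqref{leading fluid}, $V_i$ contains the feedback $2\Gamma(\tilde\phi_jE_j,V_kE_k)+\Gamma(V_jE_j,V_kE_k)$. Correspondingly, the source of $R=f-\Phi-V^\perp$ is $\Gamma(f,f)-\Gamma(\Phi,\Phi)$, not $\Gamma(f,f)-\Gamma(\tilde\phi,\tilde\phi)$. Projected onto a branch $i\neq j$, several products carried at speed $\lambda_j$ are not controlled by direct convolution:
\begin{itemize}
\item $\tilde\phi_jV_{j,B_1}\lesssim\varepsilon^3\theta_j^{5/2}$: at points at distance of order $t$ from every characteristic, direct convolution gives about $\varepsilon^3t^{-5/4}$, against the required $\varepsilon t^{-3/2}$.
\item $\tilde\phi_jV_{2,j}^\perp\lesssim\varepsilon^2\theta_j^3$: direct convolution gives $\varepsilon^2(\psi_i^{3/2}+\tilde\psi_j^{3/2})$, without the factor $(1+t)^{-1/4+\delta_0/4}$ claimed for $R_{i,B}$.
\item $\tilde\phi_jR_{j,B_1}$ likewise needs the cancellation.
\end{itemize}
Each of these requires the cancellation mechanism. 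That in turn requires characteristic-derivative bounds for $V_{j,B_1}$, $V_{2,j}^\perp$ and $R_{j,B_1}$, derived from their own spectral equations (cf. \eqref{cancellation-assumption--V-j-B1} and \eqref{V2-characteristic-bound}) and carried inside the bootstrap. Attributing the $(1+t)^{-1/4+\delta_0/4}$ gain to standard convolution lemmas does not work.
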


The estimates distinguish the diagonal diffusion waves from the
corrections caused by other characteristic families. For each fixed
$C>0$, on $|x-\lambda_i t|\leq C\sqrt{1+t}$ they give
\[
 \begin{aligned}
 |\tilde\phi_i(t,x)|&\lesssim\varepsilon(1+t)^{-1/2},\qquad
 |V_{i,B}(t,x)|\lesssim\varepsilon(1+t)^{-3/4},\\
 |R_{i,B}(t,x)|&\lesssim_{\delta_0}
                  \varepsilon^2(1+t)^{-1+\delta_0/2}.
 \end{aligned}
\]
These upper bounds display the hierarchy along each characteristic;
the full estimates also keep the coupling tails away from that
characteristic. In particular, the remainder is smaller both in
perturbation size and in temporal order than the displayed leading
fluid bounds.

The initial perturbation need not be differentiable in $x$, and the
velocity hypothesis is a finite polynomial weight on $f_0$ in
\eqref{F}. The compact-support assumption removes an initial spatial
tail from the displayed estimates. The order $N$ is fixed before the
choices of $N_0$, $k$, and $\hat\beta$; faster prescribed spatial decay
requires a corresponding velocity weight through \eqref{eq-hat-beta}.
We do not claim optimality of these localization and weight assumptions.

We next compare the macroscopic Boltzmann solution directly with the
two nonlinear fluid systems. The first is the approximation obtained
by the formal Chapman--Enskog expansion calculation in Subsection~4.1; the second retains the
full Euler flux and the state-dependent physical viscosity.

The quadratic Navier--Stokes approximation with frozen physical
viscosity is
\[
 \partial_t\omega_i+\lambda_i\partial_x\omega_i
 =\sum_{j=1}^3A_{ij}\partial_x^2\omega_j
       +\sum_{k,l=1}^3B_i^{kl}\partial_x(\omega_k\omega_l)\,,
\]
where
\[
 \begin{aligned}
 A_{ij}&=\langle\mathbb P_1\xi^1E_i,
                       (-L)^{-1}\mathbb P_1\xi^1E_j\rangle_\xi\,,\\
 B_i^{kl}&=\langle L^{-1}\mathbb P_1\xi^1E_i,
            \Gamma(E_k,E_l)\rangle_\xi\,.
 \end{aligned}
\]
We also consider the full compressible Navier--Stokes system
\[
 \partial_t\omega_i^f+\partial_x\mathcal F_i(\omega^f)
 =\partial_x\left(
       \sum_{j=1}^3\mathcal A_{ij}(\omega^f)
                          \partial_x\omega_j^f\right)\,,
\]
where $\mathcal F$ and $\mathcal A$ denote the full Euler flux and
physical viscosity matrix of the compressible Navier--Stokes
system, expressed in the equilibrium characteristic coordinates
corresponding to $E_i$. We use the physical constitutive relations
and structural setting of Subsection~\ref{sec43:full-comparison},
with transport coefficients matched to the Boltzmann collision
operator. After subtracting the equilibrium flux, the coefficient
matching is
\[
 \begin{aligned}
 \mathcal F(0)&=0,\qquad
 D_z\mathcal F(0)=\operatorname{diag}(\lambda_1,\lambda_2,\lambda_3),\\
 \frac12D_z^2\mathcal F_i(0)[z,z]&=-\sum_{k,l=1}^3B_i^{kl}z_kz_l,\qquad
 \mathcal A(0)=(A_{ij}).
 \end{aligned}
\]
Here $i=1,2,3$, and the two systems have the common initial data
\[
 \omega_i(0,x)=\omega_i^f(0,x)
 =\varepsilon\big\langle\check\chi\ast_xf_0,E_i\big\rangle_\xi.
\]
These are smoothed macroscopic initial data. Since $\chi(0)=1$,
the smoothing preserves their spatial integrals.

\begin{theorem}[Time-asymptotic fluid approximation]
\label{cor:full-NS-comparison}
Assume the hypotheses of Theorem~\ref{thm:pointwise-structure}.
Let $\omega$ and $\omega^f$ solve the quadratic and full NS systems
defined above, with the common initial data.
For sufficiently small $\varepsilon>0$, the following estimate
holds for $i=1,2,3$, $x\in\mathbb R$, and $t\geq0$:
\begin{equation*}
 \begin{aligned}
 &\left|\left\langle\mathbb P_0f,E_i\right\rangle_\xi-\omega_i\right|
  +\left|\left\langle\mathbb P_0f,E_i\right\rangle_\xi-\omega_i^f\right|
 \\
 &\quad\lesssim_{\delta_0}\varepsilon\left[
 (1+t+|x|)^{-\frac{N_0}{1-\gamma}}+\theta_i^{2-\delta_0}
                  +\sum_{j\ne i}\theta_j^2\right]
 \\
 &\qquad+\varepsilon^2(1+t)^{-\frac{1-\delta_0}{4}}
       \left(\psi_i^{3/2}+\sum_{j\ne i}\tilde\psi_j^{3/2}\right)
       +\varepsilon\mathcal P_i(t,x),
 \end{aligned}
\end{equation*}
where
\begin{equation*}
 \begin{aligned}
 \mathcal P_i(t,x)
 ={}&(1+t)^{-\frac14+\frac{\delta_0}{2}}
       \sum_{j=1}^3\left[
       \frac{1}{(1+t+|x-\lambda_j(1+t)|^2)^{3/4}}
       \right.
 \\
 &\qquad\left.
       +\frac{1}{(1+t+|x-\lambda_j(1+t)|)^{1/2}
                       (1+t+|x-\lambda_j(1+t)|^2)^{3/8}}
       \right]
 \\
 &+(1+t)^{-1/2}|x-\lambda_i(1+t)|^{-1+\frac{\delta_0}{2}}
       \mathbf1_{\left\{
       \frac{2\sqrt{1+t}}{\delta_0}\leq|x-\lambda_i(1+t)|
       \leq\frac{\delta_0(1+t)}{2}\right\}}.
 \end{aligned}
\end{equation*}
The last term in $\mathcal P_i$ is understood to vanish outside
its indicated set, including when the set is empty.
In particular,
\begin{equation*}
 \begin{aligned}
 &\left\|\left\langle\mathbb P_0f(t),E_i\right\rangle_\xi
                         -\omega_i(t)\right\|_{L_x^1}
 +\left\|\left\langle\mathbb P_0f(t),E_i\right\rangle_\xi
                         -\omega_i^f(t)\right\|_{L_x^1}
 \\
 &\qquad\lesssim_{\delta_0}
           \varepsilon(1+t)^{-\frac12+\frac{\delta_0}{2}}.
 \end{aligned}
\end{equation*}
\end{theorem}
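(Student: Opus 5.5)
By Theorem~\ref{thm:pointwise-structure}, $\langle\mathbb P_0f,E_i\rangle_\xi=\tilde\phi_i+V_{i,A}+V_{i,B}+R_{i,A}+R_{i,B}$. I will compare this decomposition term by term with a matching decomposition of $\omega_i$, and then of $\omega_i^f$.

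\textbf{Decomposition of the quadratic NS solution.} In the spirit of \cite{[Liu-21],[Zeng]}, write $\omega_i=\bar\theta_i+w_{i,B}+r_i$. Here $\bar\theta_i$ solves the diagonal convected Burgers equation
\[
\partial_t\bar\theta_i+\lambda_i\partial_x\bar\theta_i=A_{ii}\partial_x^2\bar\theta_i+B_i^{ii}\partial_x(\bar\theta_i^2),
\]
with the same mass $\varepsilon\int\langle f_0,E_i\rangle_\xi dx$. This is legitimate because the smoothing by $\check\chi$ preserves spatial integrals, as noted before the statement. The off-diagonal wave $w_{i,B}$ solves the linear system driven by $\sum_{(k,l)\ne(i,i)}B_i^{kl}\partial_x(\bar\theta_k\bar\theta_l)$ together with the off-diagonal viscosity $\sum_{j\ne i}A_{ij}\partial_x^2\bar\theta_j$. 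The same cancellation along distinct characteristics used for $V_{i,B}$ then gives $|w_{i,B}|\lesssim\varepsilon(\psi_i^{3/2}+\sum_{k\ne i}\tilde\psi_k^{3/2})$. A standard pointwise bootstrap with the heat kernel, as in \cite{[Zeng]}, bounds $r_i$ by the same form as $R_{i,B}$.

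\textbf{Term-by-term comparison.}
\begin{enumerate}
\item The nonsmooth pieces $V_{i,A},R_{i,A}$ and the initial layer give the term $(1+t+|x|)^{-N_0/(1-\gamma)}$.
\item The remainders $R_{i,B}$ and $r_i$ give $\theta_i^{2-\delta_0}$ and the $\varepsilon^2$ coupling tail.
\item The off-diagonal difference $V_{i,B}-w_{i,B}$ is handled by writing $V_{i,B}$ through the kinetic Green's function. Its fluid part is the Navier--Stokes heat-type kernel plus an error of one extra $(1+t)^{-1/2}$. The sources $\Gamma(\tilde\phi_kE_k,\tilde\phi_lE_l)$ and $\bar\theta_k\bar\theta_l$ differ by $\tilde\phi_k-\bar\theta_k$, which is controlled in the next item. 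This produces the $\sum_{j\ne i}\theta_j^2$ contribution and the first part of $\mathcal P_i$.
\item The main obstacle is the diagonal difference $\tilde\phi_i-\bar\theta_i$. The kinetic Burgers wave \eqref{kinetic-burger} uses the exact branch $\lambda_i(\eta)=-i\lambda_i\eta-A_{ii}\eta^2+O(\eta^3)$ and the kinetic nonlinearity projected on $E_i$. I will write both as Duhamel integrals in Fourier space and subtract. The linear parts differ by $e^{-A_{ii}\eta^2t}(e^{O(\eta^3)t}-1)$, which costs one factor $(1+t)^{-1/2}$ relative to the heat kernel. This gives $\theta_i^2$-type terms, and after a Hopf--Cole or Green-function iteration also the $|x-\lambda_i(1+t)|^{-1+\delta_0/2}$ tail in the intermediate zone $\sqrt{1+t}/\delta_0\lesssim|x-\lambda_i(1+t)|\lesssim\delta_0(1+t)$. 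The nonlinear parts differ by $\langle E_i,\mathbb G^{t-\tau}\Gamma\rangle$ versus $B_i^{ii}\partial_x$ of the heat kernel; this difference is one derivative higher and is estimated by splitting $\tau\le t/2$ and $\tau\ge t/2$. The time shift $t\mapsto1+t$ comes from the initial layer of $\bar\theta_i$. I expect the Burgers difference to contribute $\varepsilon(1+t)^{-1/4+\delta_0/2}$ times the $3/4$- and $3/8$-type algebraic profiles, which accounts for the remaining terms of $\mathcal P_i$.
\end{enumerate}

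\textbf{Full NS system.} Set $z=\omega^f-\omega$. Then $z$ satisfies the linear quadratic-NS system with zero initial data and sources
\[
\partial_x\big(O(|\omega^f|^3)\big)+\partial_x\big(O(|\omega^f||\partial_x\omega^f|)\big).
\]
These come from the Taylor remainders of $\mathcal F$ beyond second order, via the coefficient matching, and from $\mathcal A(\omega^f)-\mathcal A(0)$. Assuming $\omega^f$ obeys the same bounds as $\omega$ (by the same bootstrap), these sources are of size $\varepsilon^2\theta^3$ and $\varepsilon^2\theta^4$. The Green's-function convolution bounds of \cite{[Zeng]} then place $z$ within the stated bound.

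\textbf{$L^1_x$ rate.} Integrate the pointwise bound in $x$:
\begin{itemize}
\item $\theta^{2-\delta_0}$ gives $(1+t)^{-(1-\delta_0)/2}$, and $\theta_j^2$ gives $(1+t)^{-1/2}$;
\item the $\psi^{3/2}$ and $\tilde\psi^{3/2}$ terms are integrable with total mass $O(1)$, so the extra power makes them $(1+t)^{-1/4+\delta_0/4}$ at worst, and they sit with $\varepsilon^2\le\varepsilon$;
\item the first part of $\mathcal P_i$ integrates to $(1+t)^{-1/4+\delta_0/2}(1+t)^{-1/4}$;
\item the indicator term gives $(1+t)^{-1/2}(\delta_0(1+t))^{\delta_0/2}$.
\end{itemize}
All of these are $\lesssim_{\delta_0}\varepsilon(1+t)^{-1/2+\delta_0/2}$.
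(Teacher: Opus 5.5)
Your plan has a genuine gap where you analyze the Navier--Stokes solutions directly. The physical viscosity matrix $(A_{ij})$ is not diagonal, and it is in fact singular. Since $\mathbb P_1\xi^1\chi_0=0$, the vector $(1,-\tfrac{2}{\sqrt3},1)$, i.e.\ the direction $E_1-\tfrac{2}{\sqrt3}E_2+E_3\propto\chi_0$, lies in its kernel; this can be checked directly from the matrix in Proposition~\ref{LNS}. So $\omega$ and $\omega^f$ solve hyperbolic--parabolic systems whose Green's function is not built from the scalar kernels $\mathbb H_i^t$.

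In your splitting $\omega_i=\bar\theta_i+w_{i,B}+r_i$, the viscous coupling $\sum_{j\ne i}A_{ij}\partial_x^2(w_{j,B}+r_j)$ is linear with an $O(1)$ coefficient. In a Duhamel bootstrap with $\mathbb H_i^t$ it contributes $CM\varepsilon^2$ with $C$ not small, and $\|\partial_x^2\mathbb H_i^s\|_{L^1_x}\sim s^{-1}$ is not even integrable at $s=0$. Hence ``a standard pointwise bootstrap with the heat kernel'' for $r_i$ does not close. For the same reason, ``the same bootstrap'' does not give bounds for the quasilinear degenerate system for $\omega^f$, nor for $z=\omega^f-\omega$.

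The missing idea is the paper's intermediary. It compares $\langle\mathbb P_0f,E_i\rangle_\xi$ only with the artificial system \eqref{artificial}, whose viscosity $\operatorname{diag}(A_{ii})$ is diagonal and positive, so $\mathbb H_i^t$ is exact there. It matches $\tilde\phi_i$ with $\tilde u_i$ (Lemma~\ref{lemm-mathcal-K}), and only the $\mathbb G_i$-branch part $V_{i,B_1}$ with $v_i$ (Lemma~\ref{sec4:refined-comparison}). The pieces $V_{i,B_2}$ (the source of $\varepsilon\sum_{j\ne i}\theta_j^2$), $V_{i,A}$ and $R_i$ are merely bounded. The artificial-to-physical steps $u\to\omega$ and $u^f\to\omega^f$ are imported from \cite[Remark~8.9, (8.48)--(8.52)]{[Zeng]}. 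The only new fluid estimate is $u^f-u=O(\varepsilon^3)$ (Proposition~\ref{sec43:prop-flux-comparison}); both sides there have the same diagonal viscosity, and the cross-family cubic terms still require the characteristic cancellation.

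For the same reason, your bookkeeping of $\mathcal P_i$ is a guess rather than a derivation. $\mathcal P_i$ is exactly the artificial-versus-physical viscosity profile of \cite[(8.52)]{[Zeng]}. The diagonal difference $\tilde\phi_i-\tilde u_i$ (with identical data, not merely equal mass) is $O(\varepsilon\theta_i^{2-\delta_0})$. It produces neither the $3/4$- and $3/8$-type profiles nor the indicator term.

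Finally, your $L^1_x$ step is wrong as written. If $\psi^{3/2}$ and $\tilde\psi^{3/2}$ only had $O(1)$ mass, the $\varepsilon^2$ term would give $(1+t)^{-1/4+\delta_0/4}$, which is not $\lesssim(1+t)^{-1/2+\delta_0/2}$. What is true is $\|\psi_i^{3/2}(t)\|_{L^1_x}\simeq(1+t)^{-1/4}$ and $\|\tilde\psi_j^{3/2}(t)\|_{L^1_x}\simeq(1+t)^{-1/3}$. Combined with the prefactor $(1+t)^{-(1-\delta_0)/4}$, these give the claimed rate.
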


The first estimate justifies the quadratic approximation derived in
Subsection~4.1. The second gives the approximation by the full nonlinear
NS system. The estimate controls the differences of solutions, rather
than only giving separate bounds with similar wave profiles.
The $L_x^1$ errors tend to zero at the stated rate as $t\to\infty$;
this is a large-time comparison at fixed collision scale, not a
small-Knudsen-number limit. The profile $\mathcal P_i$ records the
comparison of artificial and physical viscosity. The kinetic wave
terms remain explicit through $\theta$, $\psi$, and $\tilde\psi$.

\subsection{Key ingredients and ideas of the proof}
The proof combines a coupled nonlinear wave decomposition with a
quantitative comparison of the kinetic and fluid dynamics. We first
construct the leading fluid part by
\begin{equation}\label{eq:leading}
 \Phi=\varepsilon\mathbb P_0\mathbb G^t f_0
 +\int_0^t\mathbb P_0\mathbb G^{t-\tau}
        \Gamma(\Phi,\Phi)(\tau)\,d\tau,
 \qquad \Phi=\sum_{i=1}^3\phi_iE_i.
\end{equation}
This closed three-component system retains the quadratic interactions
of all characteristic families before the exact remainder is
estimated. We separate $\phi_i=\tilde\phi_i+V_i$, where the diagonal
kinetic Burgers wave is defined by
\begin{equation}\label{kinetic-burger}
 \tilde\phi_i(t,x)=\varepsilon\langle E_i,\mathbb G_i^t f_0\rangle_\xi
 +\int_0^t\Big\langle E_i,\mathbb G_i^{t-\tau}
       \Gamma(\tilde\phi_iE_i,\tilde\phi_iE_i)(\tau)\Big\rangle_\xi
       d\tau.
\end{equation}
Here $\mathbb G_i^t$ is the individual fluid spectral branch in
\eqref{Green-fluid}, rather than the full operator $\mathbb G^t$
used in \eqref{int-kinetic-Burger}. This isolates the leading self-interaction of the $i$-th family,
while $V_i$ retains the remaining spectral contributions and
interactions between families. The waves $\tilde\phi_i$ are not solutions
of a local Burgers equation; their quantitative asymptotic comparison
with the NS Burgers waves is established in
Lemma~\ref{lemm-mathcal-K}.

The cancellation for interactions of different speeds is central to
the off-diagonal estimates. The microscopic character of the
collision source, $\mathbb P_0\Gamma=0$, gives an extra low-frequency
factor under projection onto a fluid branch. With the spectral
notation of Section~\ref{sec:pre},
\[
 \langle\Gamma(E_j,E_k),e_i(-\eta)\rangle_\xi=O(|\eta|),
 \qquad \sigma_i(\eta)+i\lambda_i\eta=O(|\eta|^2).
\]
The first property yields a spatial derivative in the convolution,
while the second gives the second-order low-frequency structure of the
characteristic derivative. For the NS diffusion waves, the corresponding
identities follow directly from the local heat and Burgers equations,
and the cancellation is obtained by integration by parts in physical
space. For the Boltzmann equation, we derive the required kernel and
source identities from the Fourier spectral representation, retaining
the exact multipliers rather than replacing the kinetic waves by local
Burgers solutions. We then use these identities in the characteristic
integration by parts for the space-time convolution, keeping the
endpoint terms. Thus the different-speed mechanism is common to both
arguments, but the kinetic case requires this spectral preparation.
The resulting cancellation is applied not only to squares of the
diagonal waves, but also to mixed sources involving off-diagonal waves
and remainders; see Proposition~\ref{prop-V-i} and
Proposition~\ref{prop-remainder}.

To make this argument compatible with initial data without spatial
differentiability, we use the particle--fluid decomposition of
\cite{[Haitao]}. The nonlinear classification in
\eqref{sec3-split-A}--\eqref{sec3-split-B} separates a non-smooth
$A$-component from a smooth $B$-component. The former is controlled
by fast space-time decay without differentiating it, while spatial
derivatives are estimated on the latter. The decomposition
\eqref{eq-V-iB} of $V_{i,B}$ isolates the diagonal spectral contribution.
Its characteristic derivative is estimated from the exact defining
equation. This provides the regular components needed for cancellation,
without differentiating the full kinetic solution.

We also retain the leading microscopic wave $V^\perp$ defined in
\eqref{leading-nonfluid}, including its linear contribution
$\varepsilon\mathbb P_1\mathbb G^t f_0$. The exact remainder
$R=f-\Phi-V^\perp$ then has zero initial data. In its equation
\eqref{eq-R-exact}, each source contains either the remainder or a
leading microscopic factor. Applying the same regularity splitting
and different-speed cancellation yields the faster-decaying
remainder in Theorem~\ref{thm:pointwise-structure}. Keeping
$V^\perp$ separate is important: the full microscopic component is
generally of order $\varepsilon$, whereas the remainder has
quadratic size.

The macro--micro calculation in Subsection~4.1 gives the quadratic
flux and frozen physical viscosity in terms of the collision operator.
We compare the kinetic leading waves with the artificial-viscosity
system \eqref{artificial}. Their transport speeds, diffusion rates,
and leading quadratic coefficients agree. This allows cancellation
of the leading terms in the difference equations. More precisely, Lemmas~\ref{lemm-mathcal-K}
and~\ref{sec4:refined-comparison} compare $\tilde\phi_i$ with
$\tilde u_i$ and $V_{i,B_1}$ with $v_i$, respectively. The
components $V_{i,B_2}$, $V_{i,A}$, and $R_i$ are estimated
separately. The comparison of artificial and physical viscosity
in \cite[Remark~8.9]{[Zeng]} then gives the approximation by the
quadratic NS solution $\omega$.

To reach the full NS system, we introduce the full-flux
artificial-viscosity solution $u^f$ in
\eqref{sec43:full-artificial}. Proposition~\ref{sec43:prop-flux-comparison}
controls the correction $u^f-u$ due to all higher-order Euler terms,
with amplitude $O(\varepsilon^3)$. The two full-flux systems have the
same Euler flux, so the artificial--physical viscosity comparison
applies to $u^f$ and $\omega^f$ as stated in
Proposition~\ref{sec43:prop-viscosity-comparison}. Combining these
comparisons with the kinetic remainder estimates proves
Theorem~\ref{cor:full-NS-comparison}. The nonlinear wave interactions
are therefore retained throughout the comparison, rather than
being discarded by linearization.

\subsection{Organization of the paper}
Section~\ref{sec:pre} collects the collision-operator estimates and
linear particle--fluid decomposition. Section~\ref{wave} constructs
the leading waves and controls the remainder, proving
Theorem~\ref{thm:pointwise-structure}.
Section~\ref{fluid-approximation} gives the formal fluid approximation
and proves Theorem~\ref{cor:full-NS-comparison} by comparison with the
quadratic and full NS systems. The convolution estimates are collected
in the Appendix.

\section{Preliminaries}\label{sec:pre}

We collect the collision estimates and the linear decomposition used in
Sections~\ref{wave} and~\ref{fluid-approximation}.

\subsection{Basic properties of $L$ and $\Gamma$}

The linearized collision operator has the decomposition
$L=-\nu+K$, where $K=-K_1+K_2$ is an integral operator and
\begin{equation}\label{Colli-freq}
 \nu(\xi)=\int_{\mathbb R^3}\int_{\mathbf S^2}
 \mathcal M(\xi_*)B(|\xi-\xi_*|,\Omega)\,d\Omega\,d\xi_*.
\end{equation}
We first recall the collision-frequency and kernel estimates; see
\cite{[Grad]} and \cite[Lemma~2.1]{[Haitao]}.

\begin{lemma}[Collision frequency and kernels]\label{lem:collision-kernels}
The operator $L$ is self-adjoint in $L^2_\xi$. There are positive
constants $\nu_{0},\nu_{1}$ such that
\begin{equation}\label{coercivity}
 \langle g,Lg\rangle_\xi\leq-\nu_{0}|\mathbb P_1g|_{L^2_\sigma}^2,
 \qquad g\in L^2_\sigma,
\end{equation}
where the left-hand side is understood as a quadratic form, and
\begin{equation}\label{nu-gamma}
 \nu_{0}\langle\xi\rangle^\gamma\leq\nu(\xi)
       \leq \nu_{1}\langle\xi\rangle^\gamma.
\end{equation}
For the integral operator $K$,
\begin{equation*}
Kf=-K_{1}f+K_{2}f=\int_{{\mathbb{R}}^{3}}-k_{1}(\xi ,\xi _{\ast })f(\xi
_{\ast })d\xi _{\ast }+\int_{{\mathbb{R}}^{3}}k_{2}(\xi ,\xi _{\ast })f(\xi
_{\ast })d\xi _{\ast }\text{,}
\end{equation*}%
the kernels $k_{1}(\xi ,\xi _{\ast })$ and $k_{2}(\xi ,\xi _{\ast })$
satisfy
\begin{equation*}
k_{1}(\xi ,\xi _{\ast })\lesssim |\xi -\xi _{\ast }|^{\gamma }\exp \left \{ -%
\frac{1}{4}\left( |\xi |^{2}+|\xi _{\ast }|^{2}\right) \right \} \text{,}
\end{equation*}%
and%
\begin{equation*}
k_{2}(\xi ,\xi _{\ast })=a\left( \xi ,\xi _{\ast },\kappa \right) \exp
\left( -\frac{(1-\kappa )}{8}\left[ \frac{\left( \left \vert \xi \right
\vert ^{2}-\left \vert \xi _{\ast }\right \vert ^{2}\right) ^{2}}{\left
\vert \xi -\xi _{\ast }\right \vert ^{2}}+\left \vert \xi -\xi _{\ast
}\right \vert ^{2}\right] \right) \text{,}
\end{equation*}%
for any $0<\kappa <1$, together with
\begin{equation*}
a(\xi ,\xi _{\ast },\kappa )\leq C_{\kappa }|\xi -\xi _{\ast }|^{-1}(1+|\xi
|+|\xi _{\ast }|)^{\gamma -1}\text{.}
\end{equation*}
\end{lemma}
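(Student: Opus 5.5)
The plan is to follow the classical Grad decomposition and verify each claim directly from the collision integral, in the spirit of \cite{[Grad]} and \cite[Lemma~2.1]{[Haitao]}.

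\emph{Self-adjointness and coercivity.} Expanding $Lg=2\mathcal M^{-1/2}Q(\mathcal M,\mathcal M^{1/2}g)$ and using $\mathcal M_*'\mathcal M'=\mathcal M_*\mathcal M$, I would symmetrize the pairing $\langle h,Lg\rangle_\xi$ with the changes of variables $(\xi,\xi_*)\leftrightarrow(\xi_*,\xi)$ and $(\xi,\xi_*)\leftrightarrow(\xi',\xi_*')$. The Jacobian is unity and $B$ is invariant under both changes, so the pairing takes the standard form
\[
-\tfrac14\iint B\,\mathcal M\mathcal M_*\,(G'+G_*'-G-G_*)\,\overline{(H'+H_*'-H-H_*)},
\qquad G=\mathcal M^{-1/2}g,\ H=\mathcal M^{-1/2}h.
\]
This is symmetric in $g,h$ and nonpositive, and it vanishes exactly on the collision invariants; within the even symmetry class these are $\chi_0,\chi_1,\chi_4$. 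To upgrade to the weighted bound $-\nu_0|\mathbb P_1 g|^2_{L^2_\sigma}$, I would use that $K$ is compact relative to $\nu$. Arguing by contradiction, take $|\mathbb P_1g_n|_{L^2_\sigma}=1$ with $\langle g_n,Lg_n\rangle\to0$. Then $\langle\nu g_n,g_n\rangle-\langle Kg_n,g_n\rangle\to0$, and compactness of $\nu^{-1/2}K\nu^{-1/2}$ gives a strong limit lying in $\mathrm{Ker}\,L\cap(\mathrm{Ker}\,L)^\perp=\{0\}$, which contradicts the normalization.

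\emph{Collision frequency.} In the integral defining $\nu$, the angular integral $\int\beta\,d\Omega$ is a positive constant by the cutoff assumption, so $\nu(\xi)=c\int\mathcal M(\xi_*)|\xi-\xi_*|^\gamma d\xi_*$. For $|\xi|\le 2$ this is bounded above and below by positive constants. For $|\xi|\ge2$, the Gaussian weight concentrates on $|\xi_*|\le|\xi|/2$, where $|\xi-\xi_*|\sim|\xi|$, and the remaining region contributes an exponentially small amount. This gives $\nu\sim\langle\xi\rangle^\gamma$ for $0\le\gamma<1$.

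\emph{Kernel bounds.} The loss term produces $K_1$ with kernel $k_1=c\int B\,\mathcal M^{1/2}\mathcal M_*^{1/2}d\Omega$, which immediately gives $|\xi-\xi_*|^\gamma e^{-(|\xi|^2+|\xi_*|^2)/4}$. The gain terms give $K_2$, and this is the main obstacle. I would apply the Carleman representation to rewrite the gain integral as an integral over the plane through $\xi_*$ perpendicular to $\xi-\xi_*$. The Gaussian factor $\mathcal M^{1/2}$ evaluated on that plane then produces the exponent $-\tfrac18\big[(|\xi|^2-|\xi_*|^2)^2/|\xi-\xi_*|^2+|\xi-\xi_*|^2\big]$, and the Jacobian supplies the factor $|\xi-\xi_*|^{-1}$. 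The remaining planar integral of $|\cdot|^\gamma$ times the restricted Gaussian has size $(1+|\xi|+|\xi_*|)^{\gamma-1}$. I would estimate it by splitting the plane into the region near the projection of the origin and its complement. Giving up a fraction $\kappa$ of the Gaussian absorbs the polynomial factors into the constant $C_\kappa$. The cutoff bound $\beta\le C|\cos\vartheta|$ is precisely what controls the angular factor after the Carleman change of variables.
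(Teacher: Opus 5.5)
Your outline is correct and is essentially the classical argument (Grad's splitting $L=-\nu-K_1+K_2$, the Carleman representation of the gain part, and compactness of $K$ for the spectral gap), which the paper adopts without proof by citing \cite{[Grad]} and \cite[Lemma~2.1]{[Haitao]}. One bookkeeping correction in the $k_2$ step: the Jacobian alone does not give $|\xi-\xi_*|^{-1}$; it does so only in combination with the cutoff bound $\beta\le C|\cos\vartheta|$. Together they give $|\xi-\xi_*|^{-1}(|\xi-\xi_*|^2+|w|^2)^{-1/2}$, with $w$ the planar variable, so the planar integrand is $(|\xi-\xi_*|^2+|w|^2)^{(\gamma-1)/2}$ times the restricted Gaussian rather than a $|\cdot|^{\gamma}$ weight, and this extra power is exactly what produces $(1+|\xi|+|\xi_*|)^{\gamma-1}$.
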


The following velocity-weight gain is
\cite[Proposition~2.1]{[Haitao]}.

\begin{lemma}\label{basic-K}
For $\tau\in\mathbb R$ and $1\leq q\leq\infty$,
\begin{equation}\label{eq:K-weight-gain}
 |Kg|_{L^q_{\xi,\tau+2-\gamma}}
       \lesssim |g|_{L^q_{\xi,\tau}}.
\end{equation}
\end{lemma}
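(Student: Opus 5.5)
The plan is to treat $K_1$ and $K_2$ separately, reduce \eqref{eq:K-weight-gain} to two weighted Schur bounds for each kernel, and then interpolate in $q$. Set
\[
 \mathcal K_i(\xi,\xi_*)=\langle\xi\rangle^{\tau+2-\gamma}\,k_i(\xi,\xi_*)\,\langle\xi_*\rangle^{-\tau},\qquad i=1,2,
\]
so that $\langle\xi\rangle^{\tau+2-\gamma}|K_ig|\le\int\mathcal K_i(\xi,\xi_*)\,\langle\xi_*\rangle^{\tau}|g(\xi_*)|\,d\xi_*$. It then suffices to prove
\[
 \sup_\xi\int\mathcal K_i\,d\xi_*\lesssim1,\qquad \sup_{\xi_*}\int\mathcal K_i\,d\xi\lesssim1 .
\]
The first bound gives the case $q=\infty$ and the second gives $q=1$. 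The Schur test, or equivalently Hölder with the splitting $\mathcal K_i=\mathcal K_i^{1/q'}\mathcal K_i^{1/q}$, then gives every $1\le q\le\infty$.

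The kernel $k_1$ is easy. By Lemma~\ref{lem:collision-kernels}, $\mathcal K_1\lesssim|\xi-\xi_*|^\gamma\langle\xi\rangle^{\tau+2-\gamma}e^{-|\xi|^2/4}\langle\xi_*\rangle^{-\tau}e^{-|\xi_*|^2/4}$. The Gaussian in each variable absorbs any polynomial weight and the factor $|\xi-\xi_*|^\gamma\lesssim\langle\xi\rangle^\gamma\langle\xi_*\rangle^\gamma$. Both Schur integrals are therefore bounded uniformly.

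For $k_2$, I will first deal with the weight ratio. Write $\zeta=\xi_*-\xi$. Since $\langle\xi\rangle^{\tau}\langle\xi_*\rangle^{-\tau}\lesssim\langle\zeta\rangle^{|\tau|}\lesssim_{\kappa'}e^{\kappa'|\zeta|^2}$, I can absorb it by giving up a small part of the Gaussian factor $e^{-\frac{1-\kappa}{8}|\zeta|^2}$; this amounts to choosing a slightly smaller exponent constant. It then remains to show
\[
 \int_{\mathbb R^3}\frac{(1+|\xi|+|\xi_*|)^{\gamma-1}}{|\zeta|}\exp\Big(-c\Big[|\zeta|^2+\frac{(|\xi|^2-|\xi_*|^2)^2}{|\zeta|^2}\Big]\Big)d\zeta\lesssim\langle\xi\rangle^{\gamma-2}.
\]
Since $\gamma-1<0$, we have $(1+|\xi|+|\xi_*|)^{\gamma-1}\le\langle\xi\rangle^{\gamma-1}$. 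The remaining task is to gain one more factor $\langle\xi\rangle^{-1}$ from the integral, which is Grad's classical estimate.

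The main step is this extra decay. Note that $(|\xi|^2-|\xi_*|^2)/|\zeta|=-(2\xi\cdot\zeta+|\zeta|^2)/|\zeta|$. I will use spherical coordinates for $\zeta$ with polar axis along $\xi$, writing $\zeta=r\omega$ and $\xi\cdot\omega=|\xi|\mu$. The exponent then becomes $r^2+(2|\xi|\mu+r)^2$. The measure $r^{-1}d\zeta=r\,dr\,d\mu\,d\varphi$ is harmless. For fixed $r$, the $\mu$-integral of $e^{-c(2|\xi|\mu+r)^2}$ is $O(|\xi|^{-1})$, and the $r$-integral of $re^{-cr^2}$ is finite. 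This gives the bound $\langle\xi\rangle^{-1}$ when $|\xi|\ge1$; when $|\xi|\le1$ the bound is trivial. For the second Schur bound, $k_2$ is symmetric in $(\xi,\xi_*)$ and so is its majorant. Hence the same computation with the roles of the variables exchanged gives $\int k_2\,d\xi\lesssim\langle\xi_*\rangle^{\gamma-2}$. The weight ratio is again close to $1$ on the effective support $|\zeta|\lesssim1$, so $\langle\xi\rangle^{\tau+2-\gamma}\langle\xi_*\rangle^{-\tau}\langle\xi_*\rangle^{\gamma-2}\lesssim e^{\kappa'|\zeta|^2}$ is absorbed in the same way. Combining the $K_1$ and $K_2$ bounds with $K=-K_1+K_2$ proves \eqref{eq:K-weight-gain}.
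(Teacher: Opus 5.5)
Your proposal is correct. The paper gives no proof of this lemma; it imports the estimate directly from \cite[Proposition~2.1]{[Haitao]}. What you have written is therefore a self-contained derivation of the cited estimate from the kernel bounds of Lemma~\ref{lem:collision-kernels} alone.

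Your structure is the standard route, and the exponent bookkeeping checks out:
\begin{itemize}
\item Schur's test reduces all $1\le q\le\infty$ to a row bound (the case $q=\infty$) and a column bound (the case $q=1$) for the weighted kernel.
\item For $k_1$, the Gaussians in both variables absorb every polynomial weight.
\item For $k_2$, you move the weights onto $\langle\xi-\xi_*\rangle$ and then gain $\langle\xi\rangle^{-1}$ from Grad's spherical-coordinate computation, $\int_{-1}^{1}e^{-c(2|\xi|\mu+r)^2}\,d\mu\lesssim|\xi|^{-1}$.
\item The column bound follows from the symmetry of the $k_2$ majorant in $(\xi,\xi_*)$.
\end{itemize}
In the row bound, $\langle\xi\rangle^{2-\gamma}\cdot\langle\xi\rangle^{\gamma-1}\cdot\langle\xi\rangle^{-1}=1$, where $\gamma-1\le0$ gives $(1+|\xi|+|\xi_*|)^{\gamma-1}\le\langle\xi\rangle^{\gamma-1}$. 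Your version makes transparent that only those kernel bounds are needed; the paper's citation buys brevity.

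One wording point. The phrase ``the weight ratio is close to $1$ on the effective support $|\zeta|\lesssim1$'' is neither what you need nor what you actually use. The correct justification, valid for all $\zeta$, is Peetre's inequality
\[
\langle\xi\rangle^{s}\langle\xi_*\rangle^{-s}\le 2^{|s|/2}\langle\xi-\xi_*\rangle^{|s|},
\]
applied with $s=\tau$ for the row bound and $s=\tau+2-\gamma$ for the column bound. Follow it with $\langle\zeta\rangle^{|s|}\lesssim_{\kappa'} e^{\kappa'|\zeta|^2}$ for some $\kappa'<\frac{1-\kappa}{8}$. This is exactly what your displayed inequalities already assert, so it is a presentational fix, not a gap.
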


For the nonlinear term, we use the mixed-weight form of the
gain--loss estimate in \cite[Lemma~5, p.~730]{[Guo2010]}.

\begin{lemma}[Bilinear collision estimate]\label{basic-Gamma}
Let $\tau\geq0$ and $h_1,h_2\in L^\infty_{\xi,\tau}$. Then
\begin{equation}\label{eq:Gamma-mixed-weight}
 \begin{aligned}
 |\nu^{-1}\Gamma(h_1,h_2)|_{L^\infty_{\xi,\tau}}
 \lesssim{}&|h_1|_{L^\infty_{\xi,\tau}}|h_2|_{L^\infty_\xi}
       +|h_1|_{L^\infty_\xi}|h_2|_{L^\infty_{\xi,\tau}}.
 \end{aligned}
\end{equation}
The symmetrized collision operator satisfies
\begin{equation}\label{eq:collision-invariants}
 L\mathbb P_0=\mathbb P_0L=0,
 \qquad \mathbb P_0\Gamma(h_1,h_2)=0,
\end{equation}
whenever the collision moments are defined.
\end{lemma}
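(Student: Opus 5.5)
The plan is to prove \eqref{eq:Gamma-mixed-weight} by splitting $\Gamma$ into its gain and loss parts and estimating each pointwise in $\xi$. The identities \eqref{eq:collision-invariants} then follow from the weak formulation of $Q$.

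\textbf{The two parts of $\Gamma$.} Writing $\mu=\mathcal M^{1/2}$ and expanding the definition of $\Gamma$, we get $\Gamma(h_1,h_2)=\Gamma_{\rm gain}(h_1,h_2)-\Gamma_{\rm loss}(h_1,h_2)$. The loss part is
\[
 \Gamma_{\rm loss}(h_1,h_2)(\xi)=\tfrac12\int_{\mathbb R^3\times S^2}B\,\mu(\xi_*)\big[h_1(\xi_*)h_2(\xi)+h_2(\xi_*)h_1(\xi)\big]\,d\xi_*d\Omega .
\]
For the gain part we use energy conservation, $\mathcal M(\xi)\mathcal M(\xi_*)=\mathcal M(\xi')\mathcal M(\xi_*')$. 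It gives $\mu(\xi)^{-1}\mu(\xi')\mu(\xi_*')=\mu(\xi_*)$, and therefore
\[
 \Gamma_{\rm gain}(h_1,h_2)(\xi)=\tfrac12\int_{\mathbb R^3\times S^2}B\,\mu(\xi_*)\big[h_1(\xi')h_2(\xi_*')+h_2(\xi')h_1(\xi_*')\big]\,d\xi_*d\Omega .
\]
Both parts are controlled by one basic bound. By the cutoff assumption on $\beta$,
\[
 \int_{\mathbb R^3\times S^2}|\xi-\xi_*|^\gamma\beta(\vartheta)\,\mu(\xi_*)\,d\xi_*d\Omega\lesssim\langle\xi\rangle^\gamma\lesssim\nu(\xi),
\]
where the last inequality is \eqref{nu-gamma}. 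The estimate then reduces to placing the weight $\langle\xi\rangle^\tau$ on one of the two factors $h_1,h_2$.

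\textbf{Loss term.} In the term $h_1(\xi_*)h_2(\xi)$ the weight goes on $h_2(\xi)$ and we bound $|h_1(\xi_*)|\le|h_1|_{L^\infty_\xi}$. This gives $\langle\xi\rangle^\tau|h_1(\xi_*)h_2(\xi)|\le|h_1|_{L^\infty_\xi}|h_2|_{L^\infty_{\xi,\tau}}$. The basic bound then yields a contribution $\lesssim\nu(\xi)|h_1|_{L^\infty_\xi}|h_2|_{L^\infty_{\xi,\tau}}$. The symmetric term $h_2(\xi_*)h_1(\xi)$ is handled the same way and gives $\nu(\xi)|h_1|_{L^\infty_{\xi,\tau}}|h_2|_{L^\infty_\xi}$.

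\textbf{Gain term.} Energy conservation gives $|\xi|^2\le|\xi'|^2+|\xi_*'|^2$. Since $\tau\ge0$, this implies
\[
 \langle\xi\rangle^\tau\lesssim_\tau\langle\xi'\rangle^\tau+\langle\xi_*'\rangle^\tau .
\]
Hence
\[
 \langle\xi\rangle^\tau|h_1(\xi')h_2(\xi_*')|\lesssim|h_1|_{L^\infty_{\xi,\tau}}|h_2|_{L^\infty_\xi}+|h_1|_{L^\infty_\xi}|h_2|_{L^\infty_{\xi,\tau}} ,
\]
and the same holds for $h_2(\xi')h_1(\xi_*')$. Integrating against $B\mu(\xi_*)$ and applying the basic bound once more, then dividing by $\nu(\xi)$, proves \eqref{eq:Gamma-mixed-weight}.

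\textbf{Expected obstacle.} The only delicate point is that no decay of $h_1,h_2$ is available in the integration variable. This is harmless because the factor $\mu(\xi_*)$ always remains after the energy identity. The weight transfer needs only $\tau\ge0$. So the main care is simply in checking the energy identity for $\mu$ in the gain term.

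\textbf{Collision invariants.} For \eqref{eq:collision-invariants} we use the weak form: for $\psi\in\{1,\xi^1,|\xi|^2\}$ and admissible $F,G$, the pre/post-collisional change of variables gives $\int Q(F,G)\psi\,d\xi=0$. Since $\chi_0,\chi_1,\chi_4$ are $\mu$ times such $\psi$, we get $\langle\Gamma(h_1,h_2),\chi_j\rangle_\xi=\int Q(\mu h_1,\mu h_2)\psi_j\,d\xi=0$, that is, $\mathbb P_0\Gamma=0$. The same computation with $Q(\mathcal M,\mu g)$ gives $\mathbb P_0L=0$. Finally, $Q(\mathcal M,\mathcal M\psi)=0$ for collision invariants $\psi$, because $\mathcal M\psi$ is a linearized equilibrium; hence $L\mathbb P_0=0$. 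Self-adjointness is consistent with these two identities.
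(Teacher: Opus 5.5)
Your proof is correct and is essentially the standard gain--loss argument behind the paper's citation of \cite[Lemma~5]{[Guo2010]}: the energy identity replaces $\mu(\xi)^{-1}\mu(\xi')\mu(\xi_*')$ by $\mu(\xi_*)$, and the cutoff bound $\int B\,\mu(\xi_*)\,d\xi_*d\Omega\lesssim\nu(\xi)$ controls both parts. Your additive inequality $\langle\xi\rangle^\tau\lesssim\langle\xi'\rangle^\tau+\langle\xi_*'\rangle^\tau$, used in place of the multiplicative $w(\xi)\lesssim w(\xi')w(\xi_*')$, is exactly what gives the mixed-weight form stated here, and the weak-formulation argument for $L\mathbb P_0=\mathbb P_0L=0$ and $\mathbb P_0\Gamma=0$ is likewise correct.
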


\subsection{Review of the linearized Boltzmann equation}
Consider
\begin{equation}\label{eq:linear0}
 \partial_th+\xi^1\partial_xh=Lh,\qquad h|_{t=0}=h_0.
\end{equation}
Its solution operator is
\begin{equation}\label{Green}
 \mathbb G^th_0=\frac1{2\pi}\int_{\mathbb R}
 e^{ix\eta}e^{(L-i\xi^1\eta)t}\widehat h_0(\eta)\,d\eta.
\end{equation}
We combine the one-dimensional spectral analysis of
\cite{[LeeLiuYu-07]} with the particle--fluid construction in
\cite[Section~3]{[Haitao]}. In the latter construction the transport
variable is restricted to $x\in\mathbb R$; the collision operator
and velocity integrations remain three-dimensional.

\subsubsection{Long wave--short wave decomposition}

The low-frequency spectral expansion is classical; see
\cite{[EP], [LeeLiuYu-07], [YangYu]}.
For the smooth dependence of the eigenvalues and the
$L_\xi^2$-valued eigenvectors on the real frequency parameter,
we use the formulation in \cite[Theorem~2.4, p.~812]{[Zhong]}.

We use $L^{-1}$ on the orthogonal complement of all five collision
invariants in the full velocity space. Its restriction to the even
class is the inverse on $\mathbb P_1L^2_\xi$, as in \cite{[LiuYu-04]}.

\begin{proposition}[Low-frequency spectrum]\label{prop:low-frequency-spectrum}
For real $|\eta|\leq\delta$, with $\delta>0$ sufficiently small,
the fluid spectrum of $L-i\xi^1\eta$ in the even class consists
of three branches. Their eigenvectors may be normalized by
\begin{equation}\label{eq:spectral-normalization}
 e_j(0)=E_j,\qquad
 e_j(-\eta)=\overline{e_j(\eta)},\qquad
 \langle e_j(-\eta),e_l(\eta)\rangle_\xi=\delta_{jl}.
\end{equation}
\begin{equation}\label{eq:finite-spectral-expansion}
 \begin{aligned}
 \sigma_j(\eta)&=-i\lambda_j\eta-A_j\eta^2
                         +\eta^3R_{\sigma,j}(\eta),
                         \\[2mm]
 e_j(\eta)&=E_j+\eta e'_j(0)+\eta^2R_{e,j}(\eta),
 \end{aligned}
\end{equation}
where $A_j>0$, $\operatorname{Re}\sigma_j(\eta)\leq-c\eta^2$, and
\begin{equation*}
\left\{\begin{array}{l}
\displaystyle A_j=\langle\mathbb P_1\xi^1E_j,
(-L)^{-1}\mathbb P_1\xi^1E_j\rangle_\xi\,,
\\ \\
\displaystyle e'_{j}(0)=\sum_{i=1}^{3}\eps_{j}^{i}E_{i}+e_{j}^{\perp}\text{,} \\
\\
\displaystyle \eps_{j}^{i}=-i\frac{\left<E_{i},\mathbb{P}_{0}\xi^{1}L^{-1}\mathbb{P}_{1}\xi^{1}E_{j}\right>_{\xi}}{\la_{i}-\la_{j}}\,,\quad j\neq i\text{,}\quad \hbox{and}\quad \eps_{j}^{j}=0\,,\\ \\
\displaystyle e_{j}^{\perp}=iL^{-1}\mathbb{P}_{1}\xi^{1}E_{j}\,.
\end{array}
\right.
\end{equation*}%
In particular, for each fixed finite order $M$ and $b\geq0$,
\[
 R_{\sigma,j}\in C^M([-\delta,\delta];\mathbb C),
 \qquad
 R_{e,j}\in
 C^M([-\delta,\delta];L_\xi^2\cap L^\infty_{\xi,b})\,,
\]
and
\[
 e'_j(0)\in L_\xi^2\cap L^\infty_{\xi,b}\,,\qquad
|L^{-1}\mathbb P_1\xi^1E_j|_{L^\infty_{\xi,b}}<\infty.
 \]

\end{proposition}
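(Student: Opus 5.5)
The plan is analytic perturbation theory for the family $L-i\xi^1\eta$ near $\eta=0$, restricted to the even class.

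\emph{Step 1: reduce to a finite-dimensional problem.} At $\eta=0$ the operator $L$ has the isolated eigenvalue $0$, whose eigenspace is $\mathrm{Ker}(L)=\mathrm{span}\{\chi_0,\chi_1,\chi_4\}$. By coercivity \eqref{coercivity}, the rest of the spectrum lies in $\{\operatorname{Re}z\leq-\nu_0\}$.

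The perturbation $-i\xi^1\eta$ is unbounded, so standard Kato theory does not apply directly. I would instead use a Lyapunov--Schmidt reduction. For $\eta$ small and $|z|\le\nu_0/2$, the operator $\mathbb P_1(L-i\xi^1\eta-z)\mathbb P_1$ is invertible on $\mathbb P_1L^2_\sigma$. This uses $\nu\gtrsim\langle\xi\rangle^\gamma$ together with the fact that $\xi^1\eta$ is dominated by $\nu$ only after a Neumann series that treats $L-z$ as the leading part. For hard potentials $\xi^1\eta$ is not relatively bounded by $\nu$, so I would rely on the precise resolvent bounds of \cite{[EP],[YangYu],[Zhong]}. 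Eliminating the microscopic part then gives a $3\times3$ matrix problem
\[
 \det\bigl(z+i\eta\mathbb P_0\xi^1\mathbb P_0+\eta^2\mathbb P_0\xi^1\mathbb P_1(L-i\xi^1\eta\mathbb P_1-z)^{-1}\mathbb P_1\xi^1\mathbb P_0\bigr)=0,
\]
which is analytic in $(\eta,z)$. Rescaling $z=\eta\zeta$ reduces this to a perturbation of the distinct eigenvalues $-i\lambda_j$ of $-i\mathbb P_0\xi^1\mathbb P_0$ from \eqref{Eigen function}. Because these eigenvalues are simple, the implicit function theorem gives three smooth branches $\sigma_j(\eta)$ and eigenvectors $e_j(\eta)$.

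\emph{Step 2: expand the branches.} Expanding to second order gives $\sigma_j=-i\lambda_j\eta-A_j\eta^2+O(\eta^3)$. Here $A_j=\langle\mathbb P_1\xi^1E_j,(-L)^{-1}\mathbb P_1\xi^1E_j\rangle_\xi$, which is positive by coercivity since $\mathbb P_1\xi^1E_j\ne0$.

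The first-order eigenvector correction is obtained as follows:
\begin{itemize}
 \item The microscopic part solves $L e_j^\perp=i\mathbb P_1\xi^1E_j$, so $e_j^\perp=iL^{-1}\mathbb P_1\xi^1E_j$.
 \item The macroscopic coefficients $\varepsilon_j^i$ come from projecting the second-order equation onto $E_i$ with $i\ne j$. Dividing by $\lambda_i-\lambda_j$ gives exactly the stated formula.
 \item The normalization $\varepsilon_j^j=0$ is a choice of phase and scaling for $e_j(\eta)$.
\end{itemize}

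\emph{Step 3: normalization, bounds, and regularity.}
\begin{itemize}
 \item The bound $\operatorname{Re}\sigma_j\le-c\eta^2$ follows from the expansion after shrinking $\delta$.
 \item Because $\overline{L-i\xi^1\eta}=L+i\xi^1\eta$, conjugation maps the $\eta$-branch to the $(-\eta)$-branch. This lets us normalize $e_j(-\eta)=\overline{e_j(\eta)}$.
 \item Since $L$ is self-adjoint, the adjoint of $L-i\xi^1\eta$ is $L+i\xi^1\eta$. Biorthogonality of eigenvectors for distinct eigenvalues then gives $\langle e_j(-\eta),e_l(\eta)\rangle_\xi=0$ for $j\ne l$. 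We then rescale so that the diagonal pairing equals $1$.
 \item $C^M$ regularity of the remainders $R_{\sigma,j}$ and $R_{e,j}$ follows from Taylor's theorem with integral remainder applied to smooth functions.
\end{itemize}

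\emph{Main obstacle: the weighted $L^\infty_{\xi,b}$ control of $e_j(\eta)$ and its remainder.} I would write the eigen-equation as
\[
 e_j=(\nu+i\xi^1\eta+\sigma_j)^{-1}Ke_j.
\]
For small $\eta$, $|\nu+i\xi^1\eta+\sigma_j|\gtrsim\nu$. Iterating Lemma~\ref{basic-K}, each application of $K$ gains $2-\gamma$ in velocity weight, which bootstraps $e_j$ from $L^2_\xi$ into any $L^\infty_{\xi,b}$. The same bootstrap, applied to the $\eta$-derivatives of this equation, controls those derivatives.

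There is one subtlety: differentiating in $\eta$ produces factors of $\xi^1$, each costing one power of weight. Since $K$ gains $2-\gamma>1$ per iteration, finitely many iterations absorb these losses for each fixed $M$ and $b$.

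Finally, $|L^{-1}\mathbb P_1\xi^1E_j|_{L^\infty_{\xi,b}}<\infty$ follows from the same argument. Writing $g=L^{-1}\mathbb P_1\xi^1E_j$, we have $g=\nu^{-1}(Kg-\mathbb P_1\xi^1E_j)$, and $\mathbb P_1\xi^1E_j$ is Gaussian-decaying.
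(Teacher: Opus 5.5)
Your route (eliminating $\mathbb P_1e$ to get a $3\times3$ problem on $\mathrm{Ker}(L)$, rescaling $z=\eta\zeta$ about the simple roots $-i\lambda_j$, biorthogonality via the adjoint $L+i\xi^1\eta$, and a $K$-bootstrap for velocity weights) is the standard argument behind the sources the paper relies on. The paper gives no proof of its own: it cites \cite{[EP],[LeeLiuYu-07],[YangYu]} for the expansion and \cite[Theorem~2.4]{[Zhong]} for smooth dependence on the \emph{real} frequency.

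\textbf{The analyticity claim fails.} You assert that the reduced problem is analytic in $(\eta,z)$. For $0\le\gamma<1$, $\nu\sim\langle\xi\rangle^\gamma$ does not dominate $\xi^1$. Take $\eta=ib$ with $b\ne0$: the multiplication part $-\nu(\xi)+b\xi^1$ of $L-i\xi^1\eta$ changes sign and is unbounded above. Since $K$ is compact, $z=0$ then lies in the essential spectrum, and $R(\eta,z)=(\mathbb P_1(L-i\xi^1\eta-z)\mathbb P_1)^{-1}$ does not exist there. Analyticity is available only when $\nu$ dominates $|\xi^1|$ (hard spheres). Here one only gets smoothness in real $\eta$, which is why the statement asserts $C^M$ regularity for each finite $M$.

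\textbf{How to repair it.} You need the $C^k$ implicit function theorem, preceded by a proof that $\eta\mapsto R(\eta,z)\mathbb P_1\xi^1E_l$ is $C^\infty$ for real $\eta$.
\begin{itemize}
\item Invertibility needs no Neumann series: for real $\eta$, $|z|\le\nu_0/2$ and $g\in\mathbb P_1L^2_\xi$, one has $\operatorname{Re}\langle(L-i\xi^1\eta-z)g,g\rangle_\xi\le-\frac{\nu_0}{2}|g|_{L^2_\xi}^2$.
\item The derivatives are $\partial_\eta^kR=k!\,R\,(i\mathbb P_1\xi^1\mathbb P_1R)^k$. Each factor $\xi^1$ costs a power of $\langle\xi\rangle$, so you need polynomial-weight bounds on $R(\eta,z)$, uniform in $(\eta,z)$.
\item These bounds follow from $g=(\nu+i\xi^1\eta+z)^{-1}\bigl(Kg+i\eta\mathbb P_0(\xi^1g)-h\bigr)$ for $g=R(\eta,z)h$, using $|\nu+i\xi^1\eta+z|\ge\nu/2$ and Lemma~\ref{basic-K} with $q=2$.
\end{itemize}
So your final bootstrap must be run on the resolvent \emph{before} the implicit function theorem, not only on the eigenvectors afterwards. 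Once that is done, $e_j=\mathbb P_0e_j+i\eta R(\eta,\sigma_j)\mathbb P_1\xi^1\mathbb P_0e_j$ gives the weighted $C^M$ bounds directly. The passage to $L^\infty_{\xi,b}$ also needs $|Kg|_{L^\infty_{\xi,b}}\lesssim|g|_{L^2_{\xi,b}}$, which comes from the kernel bounds of Lemma~\ref{lem:collision-kernels}; Lemma~\ref{basic-K} only gains weight within a fixed $L^q$.

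\textbf{A sign slip.} With $\mathbb P_1e=i\eta R(\eta,z)\mathbb P_1\xi^1\mathbb P_0e$, the reduced equation is
\[
\bigl(z+i\eta\mathbb P_0\xi^1\mathbb P_0-\eta^2\mathbb P_0\xi^1\mathbb P_1R(\eta,z)\mathbb P_1\xi^1\mathbb P_0\bigr)\mathbb P_0e=0,
\]
with a minus sign on the $\eta^2$ term. With your plus sign and $R(0,0)=L^{-1}$, you would get $\sigma_j=-i\lambda_j\eta+A_j\eta^2$ and the opposite sign of $\varepsilon_j^i$. With the correct sign, your second-order computation yields exactly $-A_j\eta^2$ and the stated $\varepsilon_j^i$.

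Also, $\varepsilon_j^j=0$ is forced rather than chosen. Combined with $e_j(-\eta)=\overline{e_j(\eta)}$, the diagonal normalization reads $\int e_j(\eta)^2\,d\xi=1$, and its $O(\eta)$ coefficient is $2\varepsilon_j^j$.
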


Using the cutoff assumption, write
\begin{align*}
 e^{(L-i\xi^1\eta)t}
 ={}&\chi_\delta(\eta)\sum_{j=1}^3e^{\sigma_j(\eta)t}
               |e_j(\eta)\rangle\langle e_j(-\eta)|+\chi_\delta(\eta)e^{(L-i\xi^1\eta)t}\Pi_\eta^\perp
 +(1-\chi_\delta(\eta))e^{(L-i\xi^1\eta)t}\\
 =:{}&\widehat{\mathbb G}_{L;0}
       +\widehat{\mathbb G}_{L;\perp}+\widehat{\mathbb G}_S\,,
\end{align*}
where
\[
 \Pi_\eta h=\sum_{j=1}^3e_j(\eta)
                     \langle h,e_j(-\eta)\rangle_\xi,
 \qquad \Pi_\eta^\perp=I-\Pi_\eta\,.
\]
These are complementary spectral projections; they need not be
orthogonal when $\eta\ne0$.
The corresponding physical-space operators are defined by Fourier
inversion, and
\begin{equation}\label{Green-fluid}
 \widehat{\mathbb G}_{L;0}(\eta,t)
 =\sum_{j=1}^3\widehat{\mathbb G}_j(\eta,t)
 :=\chi_\delta(\eta)\sum_{j=1}^3e^{\sigma_j(\eta)t}
                    |e_j(\eta)\rangle\langle e_j(-\eta)|.
\end{equation}
The complementary low-frequency spectrum and the spectrum on
$|\eta|\geq\delta/2$ are separated from the imaginary axis in
a left half-plane; see \cite[Section~2.2]{[Haitao]}.

The Fourier multiplier argument in \cite{[LeeLiuYu-07]} gives the
following branchwise bounds. The weighted version and the extra
decay for microscopic data are reviewed in
\cite[Proposition~2.2]{[Haitao]}; here the spatial Fourier
variable is one-dimensional.

\begin{lemma}[Fluid estimates]\label{Prop-linear-fluid}
Let $h_0\in L_x^\infty L^2_\xi$ vanish for $|x|\geq1$.
For every fixed $\alpha\in\mathbb N\cup\{0\}$,
$\hat Q\geq0$, and $N>0$, the fluid profiles in \eqref{fluid-def} satisfies
\begin{align*}
&\left|\left<E_{j}, \pa_{x}^{\alpha}\mathbb{G}_{j}^{t}h_{0}\right>_{\xi}\right|\lesssim \theta_{j}^{\alpha+1} \Vert h_{0}\Vert _{L_{x}^{\infty }L_{\xi }^{2}}%
\text{,}
\\[2mm]
&\left|\left<E_{i}, \pa_{x}^{\alpha}\mathbb{G}_{j}^{t}h_{0}\right>_{\xi}\right|\lesssim \theta_{j}^{\alpha+2} \Vert h_{0}\Vert _{L_{x}^{\infty }L_{\xi }^{2}}%
\text{,}\notag
\end{align*}%
\begin{align*}
&\left|\left< E_{j}, \pa_{x}^{\alpha}\mathbb{G}_{j}^{t}\mathbb{P}
_{1}h_{0}\right>_{\xi}\right|\lesssim \theta_{j}^{\alpha+2} \Vert h_{0}\Vert _{L_{x}^{\infty
}L_{\xi }^{2}}\,,
\\[2mm]
&\left|\left< E_{i}, \pa_{x}^{\alpha}\mathbb{G}_{j}^{t}\mathbb{P}
_{1}h_{0}\right>_{\xi}\right|\lesssim \theta_{j}^{\alpha+3} \Vert h_{0}\Vert _{L_{x}^{\infty
}L_{\xi }^{2}}\,,
\end{align*}%
for $j\neq i$ and
\begin{equation*}
\left\vert \pa_{x}^{\alpha}\mathbb{P}
_{1}\mathbb{G}_{j}^{t}h_{0}\right\vert _{L_{\xi}^{\infty }\left(\langle\xi\rangle^{\hat{Q}}\right)}\lesssim \theta_{j}^{\alpha+2}\Vert h_{0}\Vert _{L_{x}^{\infty
}L_{\xi }^{2}}\,,
\end{equation*}
\begin{equation*}
\left\vert \pa_{x}^{\alpha}\mathbb{P}
_{1}\mathbb{G}_{j}^{t}\mathbb{P}
_{1}h_{0}\right\vert _{L_{\xi}^{\infty }\left(\langle\xi\rangle^{\hat{Q}}\right)}\lesssim \theta_{j}^{\alpha+3} \Vert h_{0}\Vert _{L_{x}^{\infty
}L_{\xi }^{2}}\,.
\end{equation*}
 Constants may depend on the fixed orders and weights.
\end{lemma}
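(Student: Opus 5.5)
The plan is to work entirely on the Fourier side for each branch, using the representation
\[
 \widehat{\mathbb G}_j(\eta,t)\hat h_0=\chi_\delta(\eta)e^{\sigma_j(\eta)t}e_j(\eta)\langle \hat h_0(\eta),e_j(-\eta)\rangle_\xi,
\]
together with Proposition~\ref{prop:low-frequency-spectrum}. Each quantity in the lemma is then an oscillatory integral
\[
 \frac1{2\pi}\int_{\mathbb R}e^{ix\eta}\chi_\delta(\eta)e^{\sigma_j(\eta)t}(i\eta)^\alpha m(\eta)\,\widehat{\langle h_0,\cdot\rangle}(\eta)\,d\eta ,
\]
where the multiplier $m(\eta)$ is built from $\langle E_i,e_j(\eta)\rangle_\xi$ and $\langle \hat h_0(\eta),e_j(-\eta)\rangle_\xi$. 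Since $h_0$ is supported in $|x|\le1$, we have $\hat h_0(\eta)=\int_{-1}^1e^{-iy\eta}h_0(y)\,dy$. I will therefore write the output as $\int_{-1}^1 \mathcal K(x-y,t)\,h_0(y)\,dy$ and estimate the scalar or vector kernel $\mathcal K$ pointwise. The support restriction $|y|\le1$ lets one replace $x-y$ by $x$ in $\theta_j$ up to constants.

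The gain of powers comes from vanishing of the multiplier at $\eta=0$.
\begin{itemize}
\item For $i\ne j$, $\langle E_i,e_j(\eta)\rangle_\xi=\eta\langle E_i,e_j'(0)\rangle_\xi+O(\eta^2)=O(|\eta|)$, since $\langle E_i,E_j\rangle_\xi=0$.
\item For microscopic data, $\langle \mathbb P_1 h,e_j(-\eta)\rangle_\xi=\langle h,\mathbb P_1 e_j(-\eta)\rangle_\xi=O(|\eta|)$, because $\mathbb P_1E_j=0$.
\item Likewise $\mathbb P_1e_j(\eta)=O(|\eta|)$ in $L^\infty_{\xi,b}$, by the regularity of $R_{e,j}$ and the bound on $e_j'(0)$. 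This gives the $\mathbb P_1\mathbb G_j^t$ estimates with any polynomial weight $\hat Q$, taking $b=\hat Q$.
\end{itemize}
Thus each estimate reduces to the following claim. If $m\in C^M([-\delta,\delta])$ with $m(\eta)=\eta^k\tilde m(\eta)$, then
\[
 \Big|\int e^{i(x-\lambda_jt)\eta}\chi_\delta(\eta)e^{(\sigma_j(\eta)+i\lambda_j\eta)t}(i\eta)^{\alpha}m(\eta)\,d\eta\Big|\lesssim \theta_j^{\alpha+k+1}(x,t).
\]
Here $m$ is allowed to be vector-valued in $L^2_\xi$; pairing with $h_0(y)$ contributes $\|h_0\|_{L^\infty_xL^2_\xi}$.

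To prove the claim I use the standard two-regime argument of \cite{[LeeLiuYu-07]}.

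In the region $|x-\lambda_jt|\le\sqrt{1+t}$, bound the integrand directly by $|\eta|^{\alpha+k}e^{-c\eta^2t}$. Using $\operatorname{Re}\sigma_j\le-c\eta^2$ this gives $(1+t)^{-(\alpha+k+1)/2}$.

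In the region $|x-\lambda_jt|\ge\sqrt{1+t}$, integrate by parts $2N$ times in $\eta$ against $e^{i(x-\lambda_jt)\eta}$. Each $\partial_\eta$ lands on one of three factors:
\begin{itemize}
\item on $e^{(\sigma_j+i\lambda_j\eta)t}$, where $\partial_\eta(\sigma_j+i\lambda_j\eta)t=O(|\eta|t)$ and $e^{-c\eta^2 t}$ converts $|\eta|t$ into $\sqrt t$;
\item on $\eta^{\alpha+k}$, which costs $|\eta|^{-1}\sim\sqrt t$;
\item on $\chi_\delta$ or $\tilde m$, which is harmless, the cutoff terms being supported away from zero and giving exponential decay in $t$.
\end{itemize}
Each integration by parts therefore gains $\sqrt{1+t}/|x-\lambda_jt|$, and $2N$ of them yield the factor $(1+|x-\lambda_jt|^2/(1+t))^{-N}$. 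The exponent $N$ is arbitrary provided $M\ge 2N+\alpha$, and the Proposition allows any finite $M$.

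The main obstacle is that $\sigma_j$ and $e_j$ are only $C^M$ on $[-\delta,\delta]$, not analytic. Also, repeated differentiation of $\eta^{\alpha+k}$ eventually produces a nonsmooth low power near $\eta=0$ for large $N$. I would handle this with a dyadic decomposition in $|\eta|$ at scale $(1+t)^{-1/2}$: integrate by parts on each piece, where derivatives of the symbol of order $\ell$ are bounded by $|\eta|^{\alpha+k-\ell}(1+\eta^2t)^{\ell}$, and then sum. Constants depend on $\alpha$, $N$, $\hat Q$ through $M$ and $b$.
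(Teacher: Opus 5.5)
Your proposal is correct and is essentially the Fourier-multiplier argument of \cite{[LeeLiuYu-07]}, with the weighted and microscopic refinements of \cite[Proposition~2.2]{[Haitao]}, which the paper cites for this lemma: the vanishing of $\langle E_i,e_j(\eta)\rangle_\xi$ $(i\ne j)$ and of $\mathbb P_1e_j(\pm\eta)$ at $\eta=0$ gives the extra powers of $(1+t)^{-1/2}$, and $2N$ integrations by parts against $e^{i(x-\lambda_jt)\eta}$ give the spatial factor. The obstacle in your last paragraph does not exist, because $\eta^{\alpha+k}$ is a polynomial whose derivatives terminate and $|\partial_\eta^\ell e^{t(\sigma_j+i\lambda_j\eta)}|\lesssim(1+t)^{\ell/2}e^{-c\eta^2t}$, so the direct argument already closes (equivalently, rescale $\eta=\zeta/\sqrt t$ as in the proof of Lemma~\ref{lemm-mathcal-K}); drop the dyadic fix, since the symbol bound $|\eta|^{\alpha+k-\ell}(1+\eta^2t)^\ell$ is too lossy on the core $|\eta|\lesssim(1+t)^{-1/2}$, where the true bound is $(1+t)^{(\ell-\alpha-k)/2}$, to yield arbitrary $N$.
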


\subsubsection{Singular--regular decomposition}\label{sec:linear}

Following \cite[Section~3.1]{[Haitao]}, extract the singularity by
the Picard terms
\begin{equation}\label{eq:picard-iteration}
 \begin{cases}
 (\partial_t+\xi^1\partial_x+\nu)h^{(0)}=0,
       &h^{(0)}|_{t=0}=h_0,\\[2mm]
 (\partial_t+\xi^1\partial_x+\nu)h^{(m)}=Kh^{(m-1)},
       &h^{(m)}|_{t=0}=0,\quad m\geq1.
 \end{cases}
\end{equation}
For a fixed sufficiently large finite $\hat N$, define directly
\begin{equation}\label{eq:particle-fluid-decomposition}
 \begin{aligned}
 \mathbb G_*^th_0&=\sum_{m=0}^{\hat N}h^{(m)},\qquad
 \mathbb G_\sharp^th_0=\mathbb G^th_0-\mathbb G_*^th_0,\\
 \mathcal G^th_0&=\mathbb G_\sharp^th_0-\mathbb G_{L;0}^th_0
 =\mathbb G_{L;\perp}^th_0+\mathbb G_S^th_0-\mathbb G_*^th_0.
 \end{aligned}
\end{equation}
Thus the singular part is a finite sum, while
\begin{equation}\label{eq:regular-part-source}
 (\partial_t+\xi^1\partial_x-L)(\mathbb G_\sharp^th_0)
       =Kh^{(\hat N)},\qquad
 \mathbb G_\sharp^0h_0=0.
\end{equation}
By the Enhanced Mixture Lemma \cite[Lemma~3.2]{[Haitao]},
increasing $\hat N$ gives any fixed amount of regularity and
polynomial decay for $Kh^{(\hat N)}$.

\subsubsection{Particle--fluid estimates}

The singular estimate below is the one-dimensional form of
\cite[Proposition~3.1, (3.3)]{[Haitao]}.
For the regular estimate, we use
\cite[Proposition~2.3 and Lemma~3.2]{[Haitao]} and the
remainder construction in \cite[Sections~3.2--3.3]{[Haitao]}
with one spatial variable.

\begin{proposition}[Particle--fluid estimates]\label{PF-estimate}
Assume $h_0\in L^\infty_{\xi,Q}(L_x^\infty\cap L_x^1)$,
$Q>3/2$, and $h_0(x,\xi)=0$ for $|x|\geq1$.
Fix $\hat Q>0$, $\sigma\geq\frac{1}{2}$, and $0<\theta\leq Q$.
There is a finite $\hat N$ such that the decomposition
\eqref{eq:particle-fluid-decomposition} satisfies
\begin{equation}\label{est-G-t-ast}
 |\mathbb G_*^th_0|_{L^\infty_{\xi,Q-\theta}}
 \lesssim e^{-ct}\langle x\rangle^{-\frac{\theta}{1-\gamma}}
                   \|h_0\|_{L^\infty_{\xi,Q}L_x^\infty}
\end{equation}
and
\begin{equation}\label{est-mathcal-G-t}
 |\partial_x^a\mathcal G^th_0|_{L^\infty_{\xi,\hat Q}}
 \lesssim e^{-ct}\langle x\rangle^{-\frac{\sigma}{1-\gamma}}
                   \|h_0\|_{L^\infty_{\xi,Q}L_x^\infty},
 \qquad a=0,1,
\end{equation}
for $t\geq0$. No spatial derivative of $h_0$ is required in
\eqref{est-mathcal-G-t}. In particular, its right-hand side may
be replaced by
$(1+t+|x|)^{-\frac{\sigma}{1-\gamma}}
\|h_0\|_{L^\infty_{\xi,Q}L_x^\infty}$.

For \eqref{est-G-t-ast} alone, $Q\geq0$ and
$0\leq\theta\leq Q$ suffice. This estimate also holds for each
fixed $h^{(m)}$, with a constant depending on $m$.
One index $\hat N$ may be chosen for any finite collection of
the stated weights and derivative orders. The fluid part of
$\mathbb G_\sharp^t$ is estimated by
Lemma~\ref{Prop-linear-fluid}.
\end{proposition}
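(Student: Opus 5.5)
The statement is the particle--fluid estimate, Proposition~\ref{PF-estimate}. The plan is to treat the singular part $\mathbb G_*^t$ and the regular part $\mathcal G^t$ separately.

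\emph{Singular part.} I would argue by induction on the Picard terms in \eqref{eq:picard-iteration}. For $h^{(0)}$, the solution along characteristics is
\[
h^{(0)}(t,x,\xi)=e^{-\nu(\xi)t}h_0(x-\xi^1t,\xi).
\]
Since $h_0$ vanishes for $|x|\geq1$, a nonzero value requires $|x|\lesssim 1+|\xi|t$. There are two regimes.
\begin{itemize}
\item[(i)] If $\langle\xi\rangle\gtrsim \langle x\rangle^{1/(1-\gamma)}$, I trade $\theta$ powers of the velocity weight for spatial decay: $\langle\xi\rangle^{-\theta}\lesssim\langle x\rangle^{-\theta/(1-\gamma)}$.
\item[(ii)] Otherwise $t\gtrsim |x|/|\xi|\gtrsim \langle x\rangle^{1-\frac{1}{1-\gamma}}\cdots$. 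Here I would use $e^{-\nu_0\langle\xi\rangle^\gamma t}$, balancing $\langle\xi\rangle^\gamma t$ against $|x|/\langle\xi\rangle^{1-\gamma}$.
\end{itemize}
Keeping half of the exponential in reserve gives the factor $e^{-ct}$. The weight bookkeeping produces the exponent $\theta/(1-\gamma)$. For $m\geq1$, I would use Duhamel together with Lemma~\ref{basic-K}. Since $K$ gains $2-\gamma$ weight, the same dichotomy propagates, with a constant depending on $m$. Summing over finitely many $m$ gives \eqref{est-G-t-ast}.

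\emph{Regular part.} Write $\mathcal G^t=\mathbb G_{L;\perp}^t+\mathbb G_S^t-\mathbb G_*^t$. By \eqref{eq:regular-part-source}, this equals the solution of the linearized equation driven by $Kh^{(\hat N)}$, with the fluid branches removed. By the Enhanced Mixture Lemma, choosing $\hat N$ large makes $Kh^{(\hat N)}$ carry $x$-derivatives and velocity weight $\hat Q$. It also has spatial decay $\langle x\rangle^{-\sigma/(1-\gamma)}$ and an $e^{-ct}$ factor. Since $\widehat{\mathbb G}_{L;\perp}+\widehat{\mathbb G}_S$ has spectrum in a left half-plane, Fourier analysis gives an $L^2$ bound with $e^{-ct}$ in time. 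Smoothness in $\eta$, equivalent to spatial weights, would be handled by differentiating the resolvent in $\eta$ as in \cite{[LeeLiuYu-07]}.

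I would then pass from $L^2_\xi$ to $L^\infty_{\xi,\hat Q}$ by a further $K$-bootstrap along characteristics. The case $a=1$ is handled by applying the same argument to $\partial_x Kh^{(\hat N)}$, with no derivative of $h_0$ used. Exponential decay in time dominates algebraic decay, which justifies the replacement by $(1+t+|x|)^{-\sigma/(1-\gamma)}$ on the right-hand side. Choosing one $\hat N$ for finitely many weights and derivative orders is immediate.

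\emph{Expected obstacle.} The hardest step is obtaining \emph{spatial} decay $\langle x\rangle^{-\sigma/(1-\gamma)}$ for the regular part uniformly in $t$. The loss of weight in the hard-potential transport must be paid by the Mixture Lemma. The exponential time decay must also survive the cutoff $\chi_\delta$ in the Fourier splitting. For this I would follow \cite[Sections~3.2--3.3]{[Haitao]} with one spatial variable.
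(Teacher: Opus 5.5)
Your outline uses the same framework the paper relies on: damped transport and Picard iteration for $\mathbb G_*^t$, and the Enhanced Mixture Lemma plus the non-fluid spectral gap for $\mathcal G^t$, with the hard step deferred to \cite{[Haitao]}. The genuine difference is how you get the product decay in \eqref{est-mathcal-G-t}.

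The paper obtains two separate bounds and multiplies them. One is exponential time decay at every fixed output weight, via the velocity interpolation of \cite[Section~3.2]{[Haitao]}. The other is a spatial bound with a stronger exponent, from \cite[Section~3.3]{[Haitao]}. They combine as, e.g., $\min\{Ae^{-ct},B\langle x\rangle^{-2\sigma/(1-\gamma)}\}\le\sqrt{AB}\,e^{-ct/2}\langle x\rangle^{-\sigma/(1-\gamma)}$. So no single estimate has to carry both decays, and the extra spatial weight costs nothing because $\hat N$ is free.

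You instead carry both decays at once, through $\eta$-derivatives and a spatially weighted $K$-bootstrap. That is workable but heavier:
\begin{itemize}
\item Each $\partial_\eta$ costs a factor $\xi^1$.
\item Differentiating $e^{(L-i\xi^1\eta)t}$ by Duhamel produces fluid factors $e^{\sigma_j(\eta)r}$, which decay only like $e^{-c\eta^2 r}$. Exponential decay of $\partial_\eta[(1-\chi_\delta\Pi_\eta)e^{(L-i\xi^1\eta)t}]$ appears only after these cancel against the term coming from $\partial_\eta(\chi_\delta\Pi_\eta)$. Alternatively, use a resolvent contour lying to the left of the $\sigma_j(\eta)$.
\item The bootstrap along characteristics re-triggers the velocity--space trade.
\end{itemize}

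Three smaller points.
\begin{itemize}
\item \emph{Case (ii) of the singular part.} As written it seems to rely on the exponential alone. Near the threshold, $\langle\xi\rangle^\gamma t\gtrsim|x|\langle\xi\rangle^{\gamma-1}$ is only of order one, so that cannot work. What works, with no case split, is $\langle\xi\rangle^{-\theta}e^{-c|x|\langle\xi\rangle^{\gamma-1}}=|x|^{-\theta/(1-\gamma)}s^{\theta/(1-\gamma)}e^{-cs}$ with $s=|x|\langle\xi\rangle^{\gamma-1}$.
\item \emph{The induction for $m\ge1$.} The relation $|x|\lesssim1+|\xi|t$ is lost because $K$ is nonlocal in $\xi$. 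Split instead by whether $|\xi^1|(t-s)\le|x|/2$. In that region use the inductive spatial decay of $Kh^{(m-1)}$ (at the appropriate $\theta'$). In the complementary region trade the weight gained from $K$.
\item \emph{The extra term in $\mathcal G^th_0$.} It is not just the non-fluid part of $\mathbb G_\sharp^th_0$, because $\mathbb G_{L;0}^t$ projects the full $\mathbb G^t=\mathbb G_*^t+\mathbb G_\sharp^t$. Hence $\widehat{\mathcal G^th_0}=(1-\chi_\delta\Pi_\eta)\widehat{\mathbb G_\sharp^th_0}-\chi_\delta\Pi_\eta\widehat{\mathbb G_*^th_0}$. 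The second term is exponentially decaying, low-frequency, and has fast velocity decay, so it is harmless, but it must be estimated.
\end{itemize}
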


For the regular estimate, include one spatial derivative in the
Enhanced Mixture Lemma. The velocity interpolation in
\cite[Section~3.2]{[Haitao]} gives exponential decay at every fixed
output weight. Combining this estimate with the stronger spatial
weight available from \cite[Section~3.3]{[Haitao]} yields the
product decay in \eqref{est-mathcal-G-t}. The choice of $\hat N$
is made after fixing the largest weight and derivative order.

\subsubsection{Localized and inhomogeneous estimates}

We state the source-term consequences of the preceding results.
For a velocity space $\mathcal X$, define
\begin{equation}\label{eq:local-source-envelope}
 F_{\mathcal X}^{\#}[S](y)
  =\mathop{\operatorname{ess\,sup}}_{|z-y|\leq2}
                                      |S(z,\cdot)|_{\mathcal X}.
\end{equation}
Let $F\geq0$ be a dominated function satisfying, uniformly in $t$ and $y$,
\[
 |S(t,y,\cdot)|_{\mathcal X}\leq C_S F(y,t),\qquad
 \sup_{|z-y|\leq2}F(z,t)\leq C_F F(y,t).
\]
Then
\begin{equation}
\label{est-F-chi}
 F_{\mathcal X}^{\#}[S(t)](y)\leq C_S C_F F(y,t).
\end{equation}
Each profile in \eqref{fluid-def}--\eqref{coupling2}, as well as
$(1+t+|y|)^{-q}$ for each fixed $q>0$, has this local
comparability. The same holds for their finite products and finite
sums with nonnegative coefficients depending only on $t$.
Thus localization preserves these scalar upper bounds.

The characteristic vectors in \eqref{Eigen function} also give
\[
 |\langle h,E_i\rangle_\xi|
 +|\mathbb P_1h|_{L^\infty_{\xi,b}}
 \lesssim_b |h|_{L^\infty_{\xi,b}},\qquad b\geq0,
 \quad i=1,2,3,
\]
since they are polynomial multiples of $\mathcal M^{1/2}$.
Consequently, the weighted operator bounds below also hold for
their macroscopic coefficients and after applying $\mathbb P_1$
to their outputs.

\begin{proposition}[Source estimates]\label{prop:linear-source-estimates}
The following bounds hold whenever their right-hand sides are finite.

\noindent\textup{(i)} With the parameters of
Proposition~\ref{PF-estimate},
\begin{align}
 |\mathbb G_*^tS(x)|_{L^\infty_{\xi,Q-\theta}}
 &\lesssim\int_{\mathbb R}e^{-ct}
    \langle x-y\rangle^{-\frac{\theta}{1-\gamma}}
       F_{L^\infty_{\xi,Q}}^{\#}[S](y)\,dy,
 \label{eq:localized-particle-source}\\
 |\partial_x^a\mathcal G^tS(x)|_{L^\infty_{\xi,\hat Q}}
 &\lesssim\int_{\mathbb R}e^{-ct}
    \langle x-y\rangle^{-\frac{\sigma}{1-\gamma}}
       F_{L^\infty_{\xi,Q}}^{\#}[S](y)\,dy,
 \qquad a=0,1.
 \label{eq:localized-regular-source}
\end{align}
The fluid bounds in Lemma~\ref{Prop-linear-fluid} have the
same localized form with their respective $\theta_j$ profiles
and $F_{L^2_\xi}^{\#}[S]$. Scalar localization preserves
$\mathbb P_0S=0$.

\noindent\textup{(ii)} For $b\geq0$.
If $\|\nu^{-1}S(s)\|_{L_x^\infty L^\infty_{\xi,b}}$ is locally bounded, then for the fixed finite singular sum,
\begin{equation}\label{eq:particle-Duhamel-source}
 \left\|\int_0^t\mathbb G_*^{t-s}S(s)\,ds
       \right\|_{L_x^\infty L^\infty_{\xi,b}}
 \lesssim\sup_\xi\int_0^t
 e^{-c_0\nu(\xi)(t-s)}\nu(\xi)\left\Vert\nu^{-1}S(s)\right\Vert_{L_x^\infty L^\infty_{\xi,b}}\,ds.
\end{equation}
If the source has the corresponding spatial derivatives, the same
estimate applies to them through
$\partial_x^a\mathbb G_*^tS=\mathbb G_*^t\partial_x^aS$.

\noindent\textup{(iii)} If $b>\frac{3}{2}+\gamma$ and
$\mathbb P_0S=0$, then
\begin{equation}\label{eq:regular-microscopic-source}
 \|\partial_x^a\mathbb G_\sharp^tS\|_{L_x^\infty L^\infty_{\xi,b}}
 \lesssim(1+t)^{-(1+a)/2}
          \|\nu^{-1}S\|_{L_x^\infty L^\infty_{\xi,b}},
 \qquad a=0,1.
\end{equation}
No compact support or spatial derivative of $S$ is required in
\eqref{eq:regular-microscopic-source}.
\end{proposition}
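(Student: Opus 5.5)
We derive each of the three parts of Proposition~\ref{prop:linear-source-estimates} from the homogeneous estimates already available, by superposition in space for (i) and in time for (ii) and (iii).

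\emph{Part (i).} We decompose the source spatially with a partition of unity adapted to unit intervals. Let $\{\zeta_n\}_{n\in\mathbb Z}$ be smooth, with $\operatorname{supp}\zeta_n\subset[n-1,n+1]$ and $\sum_n\zeta_n\equiv1$, and write $S=\sum_n\zeta_nS$. Each piece $\zeta_nS(\cdot+n)$ vanishes for $|x|\geq1$. Its norm $\|\zeta_nS\|_{L^\infty_{\xi,Q}L^\infty_x}$ is bounded by $F^{\#}_{L^\infty_{\xi,Q}}[S](y)$ for every $y$ with $|y-n|\leq1$, because the envelope takes the supremum over $|z-y|\leq2$.

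The operators $\mathbb G_*^t$ and $\mathcal G^t$ commute with spatial translation, so \eqref{est-G-t-ast} and \eqref{est-mathcal-G-t} apply to each piece about its center $n$. For $|y-n|\leq1$ we have $\langle x-n\rangle\sim\langle x-y\rangle$. This lets us replace the discrete sum over $n$ by $\int_{\mathbb R}\cdot\,dy$, which yields \eqref{eq:localized-particle-source} and \eqref{eq:localized-regular-source}.

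The same superposition gives the localized fluid bounds, via Lemma~\ref{Prop-linear-fluid} and the $L^2_\xi$ envelope. Since $\zeta_n$ is scalar, $\mathbb P_0(\zeta_nS)=\zeta_n\mathbb P_0S$. Hence microscopic sources remain microscopic, and the improved $\mathbb P_1$ fluid rates still apply. No velocity interpolation is needed, because the weight $Q$ is unchanged by the partition.

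\emph{Part (ii).} The singular sum is explicit. Each $h^{(m)}$ is built from the damped transport semigroup $e^{-\nu(\xi)t}$ along the characteristics $x-\xi^1t$, followed by applications of $K$. Writing $S=\nu\cdot\nu^{-1}S$, the term $h^{(0)}$ applied to $S(s)$ is bounded pointwise in $\xi$ by
\[
e^{-\nu(\xi)(t-s)}\nu(\xi)\,\|\nu^{-1}S(s)\|_{L^\infty_xL^\infty_{\xi,b}}\langle\xi\rangle^{-b}.
\]
For $m\geq1$ we iterate: $K$ preserves (indeed gains) the weight $b$ by Lemma~\ref{basic-K}. We then use
\[
\int_0^{t'}e^{-\nu(\xi)(t'-r)}\,dr\leq\nu(\xi)^{-1}\leq C
\]
together with $\nu\gtrsim1$ to absorb the intermediate time integrals. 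The damping factor $e^{-c_0\nu(\xi)(t-s)}$ survives with some $c_0<1$, since we only need to split off a fraction of the exponent. The commutation $\partial_x^a\mathbb G_*^t=\mathbb G_*^t\partial_x^a$ is immediate from translation invariance.

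\emph{Part (iii).} This is the main step. We split $\mathbb G_\sharp^t=\mathbb G^t_{L;0}+\mathcal G^t$ and treat the two pieces separately.

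The regular part $\mathcal G^tS$ decays exponentially. However, the statement allows $S$ without compact support, and \eqref{est-mathcal-G-t} with its spatial weight is stated only for localized data. We therefore apply part (i) with the choice $\sigma/(1-\gamma)>1$. Then
\[
\int_{\mathbb R}\langle x-y\rangle^{-\sigma/(1-\gamma)}\,dy<\infty,
\]
so the $L^\infty_x$ norm of $S$ suffices, with $Q=b$. The ratio between $\|\cdot\|_{L^\infty_{\xi,b}}$ and $\|\nu^{-1}\cdot\|_{L^\infty_{\xi,b}}$ costs a factor $\langle\xi\rangle^\gamma$. We absorb it by taking $Q=b+\gamma$ in the input. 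If that factor cannot be absorbed this way, we instead apply the estimate to $\mathbb G_\sharp^t$ written through \eqref{eq:regular-part-source} with $K$ acting first, which gains $2-\gamma$ of weight.

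For the fluid part $\mathbb G_{L;0}^tS$, we use $\mathbb P_0S=0$ and the microscopic bounds of Lemma~\ref{Prop-linear-fluid}. In $L^\infty_x$ for non-localized data, the integrable profiles $\theta_j^{2+a}$ must be replaced by their $L^1_x$ norms. Summing the localized bounds gives
\[
\int\theta_j^{2+a}\,dy\sim(1+t)^{-(1+a)/2},
\]
which is exactly the claimed rate. The needed velocity control is $L^2_\xi$; it follows from $L^\infty_{\xi,b}$ with $b>3/2+\gamma$, after accounting for the factor $\nu$.

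We expect the main obstacle to be bookkeeping the weight $\langle\xi\rangle^\gamma$ between $S$ and $\nu^{-1}S$ within the particle–fluid split. That issue is where the threshold $b>3/2+\gamma$ enters.
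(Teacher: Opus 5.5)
Your argument for (i), for (ii), and for the fluid half of (iii) follows the paper's sketch. For (i) you use unit-interval localization plus translation invariance. For (ii) you use the finite Picard sum with the weight gain of $K$. For $\mathbb G_{L;0}^t$ you integrate the microscopic profiles in space, $\int\theta_j^{2+a}\,dy\sim(1+t)^{-(1+a)/2}$, and use $|S|_{L^2_\xi}\lesssim|\nu^{-1}S|_{L^\infty_{\xi,b}}$ for $b>\frac32+\gamma$.

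One small correction in (ii). For $m\geq1$ the factor $e^{-c_0\nu(\xi)(t-s)}$ does not survive pointwise in $\xi$: near $r=t$ the integrand of $h^{(1)}$ carries only the uniform damping $e^{-\nu_0(t-s)}$. What closes the estimate is the $\sup_\xi$ on the right-hand side. It dominates $\int_0^te^{-\nu_0(t-s)/2}\|\nu^{-1}S(s)\|\,ds$, by evaluating at the minimum of $\nu$, once $c_0\leq\nu_0/(2\nu_1)$.

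The genuine gap is the regular part $\mathcal G^tS$ in (iii), where your weight bookkeeping runs in the wrong direction. The hypothesis controls $\|\nu^{-1}S\|_{L^\infty_xL^\infty_{\xi,b}}$. By \eqref{nu-gamma} this bounds $\|S\|_{L^\infty_xL^\infty_{\xi,b-\gamma}}$. It does not bound $\|S\|_{L^\infty_{\xi,b}}$, your first choice $Q=b$. Still less does it bound $\|S\|_{L^\infty_{\xi,b+\gamma}}\sim\|\nu^{-1}S\|_{L^\infty_{\xi,b+2\gamma}}$, your ``absorbing'' choice $Q=b+\gamma$. So as written, \eqref{eq:regular-microscopic-source} does not follow for $\mathcal G^tS$, and the fallback through \eqref{eq:regular-part-source} is not an argument.

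The missing observation is that in \eqref{est-mathcal-G-t} the output weight $\hat Q$ is arbitrary and independent of the input weight $Q$. The weight gain of $K$ that you invoke in the fallback is already built into that freedom. So lower the input weight instead of raising it. Apply \eqref{eq:localized-regular-source} with $Q=b-\gamma$, $\hat Q=b$, and your choice $\sigma/(1-\gamma)>1$. Then $F^{\#}_{L^\infty_{\xi,b-\gamma}}[S]\lesssim\|\nu^{-1}S\|_{L^\infty_xL^\infty_{\xi,b}}$, and the integrable spatial weight gives $e^{-ct}\lesssim(1+t)^{-(1+a)/2}$.

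The requirement $Q>\frac32$ of Proposition~\ref{PF-estimate} then reads exactly $b>\frac32+\gamma$. This, together with the $L^2_\xi$ conversion in the fluid part, is where the threshold enters. With your choice $Q=b+\gamma$ the regular part would only need $b>\frac32-\gamma$, which is a symptom that the bookkeeping is reversed.
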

\begin{proof} We give a sketch proof of this proposition.

Part~\textup{(i)} is the unit-interval localization of
Lemma~\ref{Prop-linear-fluid} and Proposition~\ref{PF-estimate};
compare the source convolutions in \cite[Section~4.1]{[Haitao]}.
Equation~\eqref{est-F-chi} allows the scalar source majorant to be
inserted directly in these convolutions.
For a spatial indicator, retain the envelope or enlarge its region
by the localization radius.

Part~\textup{(ii)} follows from the finite Picard representation
\eqref{eq:picard-iteration} and Lemma~\ref{basic-K}. The zeroth
term retains $\nu(\xi)e^{-\nu(\xi)(t-s)}$; the remaining
finitely many terms use the boundedness of $K\nu$ on
$L^\infty_{\xi,b}$. This is the damped-transport estimate used
for collision sources; compare \cite[(232)--(237)]{[Guo2010]}.
For part~\textup{(iii)}, integrate the microscopic fluid bounds
in space and use
\begin{equation}\label{eq:source-L2-conversion}
 |S|_{L^2_\xi}\lesssim|\nu^{-1}S|_{L^\infty_{\xi,b}},
 \qquad b>\frac32+\gamma,
\end{equation}
which follows from \eqref{nu-gamma} and the velocity integral in
three dimensions.
The regular contribution uses \eqref{eq:localized-regular-source} with
$Q=b-\gamma>3/2$, $\hat Q=b$, and an integrable spatial
weight. These observations give the source forms without changing
the linear decomposition. At the highest velocity weight,
parts~\textup{(ii)}--\textup{(iii)} apply with $b=\hat\beta$ and
the source norm
$\|\nu^{-1}S\|_{L_x^\infty L^\infty_{\xi,\hat\beta}}$.
\end{proof}

\section{Leading waves and remainder of the 1D Boltzmann equation}\label{wave}
In this section, we construct and analyze the leading fluid waves $\Phi$ and the leading non-fluid waves $V^{\perp}$ associated with the perturbation $f$ in \eqref{Linearized}.
The exact remainder $R=f-\Phi-V^\perp$ is further decomposed into its fluid and non-fluid parts. According to  Proposition \ref{PF-estimate}, the Green function $\mathbb{G}^{t}$ for the linearized Boltzmann equation can be split into the singular part $\mathbb{G}_{\ast}^{t}$ and the regular part $\mathbb{G}_{\sharp}^{t}$ , we introduce a classification procedure that decomposes each integral expression into non-smooth and smooth components. Now, we introduce this classification procedure as follows :
\\
Consider an integral expression of the form :
\[
I=\mathbb{G}^{t}I_{0}+\int_{0}^{t}\mathbb{G}^{t-\tau}\Gamma(F,H)(\tau)d\tau\,.
\]
Suppose that $F$ and $H$ admit the decompositions :
$$F=F_A+F_B\,, \qquad H=H_A+H_B\,,$$
where $F_{A}$, $H_{A}$ be their non-smooth part and $F_{B}$, $H_{B}$ be their smooth part.
Then the nonlinear term $\Gamma(F,H)$ can be decomposed as
\[
 \Gamma(F,H)=\Gamma(F_A,H)+\Gamma(F_B,H_A)+\Gamma(F_B,H_B)\,.
\]
Therefore, the term $I$ can be classified to  the non-smooth part $I_{A}$ and the smooth part $I_{B}$ as follows :
\[
I=I_{A}+I_{B}\,,
\]
where the non-smooth part of $I_{A}$ can be defined by
\begin{equation}\label{sec3-split-A}
 I_{A}=\mathbb{G}^{t}_{\ast}I_{0}+\int_0^t\mathbb G_*^{t-\tau}
 [\Gamma(F_A,H)+\Gamma(F_B,H_A)](\tau)\,d\tau,
\end{equation}
while the smooth component $I_{B}$ can be defined by
\begin{equation}\label{sec3-split-B}
 I_{B}=\mathbb{G}_{\sharp}^{t}I_{0}+\int_0^t\mathbb G_\sharp^{t-\tau}\Gamma(F,H)(\tau)\,d\tau
 +\int_0^t\mathbb G_*^{t-\tau}\Gamma(F_B,H_B)(\tau)\,d\tau.
\end{equation}
This classification procedure will be applied to the leading waves and the exact remainder in Proposition \ref{prop-V-i}-\ref{prop-remainder}.

    \subsection{Leading waves}
    We define the leading fluid waves by
    \begin{equation}
    \label{leading fluid}
     \Phi=\varepsilon \mathbb{P}_{0}\mathbb{G}^{t}f_{0}+\int_{0}^{t}\mathbb{P}_{0}\mathbb{G}^{t-\tau}\Gamma\left(\Phi,\Phi\right)(\tau)d\tau .
    \end{equation}
    The waves $\Phi$ can be represented as
    \begin{equation*}
    \Phi=\sum_{i=1}^{3}\phi_{i}E_{i},
    \end{equation*}
    where
    \begin{equation*}
    \phi_{i}(t,x)=\varepsilon \left\langle E_{i},\mathbb{G}^{t}f_{0}\right\rangle_{\xi}+\int_{0}^{t}\left\langle E_{i},\mathbb{G}^{t-\tau}\sum_{j,k=1}^{3}\Gamma\left(\phi_{j}E_{j},\phi_{k}E_{k}\right)(\tau)\right\rangle_{\xi}d\tau
    \end{equation*}
and define the leading non-fluid waves by
 \begin{equation}
    \label{leading-nonfluid}
     V^{\perp}=\varepsilon \mathbb{P}_{1}\mathbb{G}^{t}f_{0}+\int_{0}^{t}\mathbb{P}_{1}\mathbb{G}^{t-\tau}\Gamma\left(\Phi,\Phi\right)(\tau)d\tau .
    \end{equation}
  We decompose $\phi_{i}$ as
  $$\phi_{i}(t,x)=\tilde{\phi}_{i}(t,x)+V_{i}(t,x)\,,$$
   where the main part of the fluid waves $\tilde{\phi}_{i}(t,x)$ is the so called ``kinetic Burgers waves'':
    \begin{equation}
    \tilde{\phi}_{i}(t,x)=\varepsilon \langle E_{i},\mathbb{G}_{i}^{t}f_{0}\rangle_{\xi}+\int_{0}^{t}\Big\langle E_{i},\mathbb{G}_{i}^{t-\tau}\Gamma\left(\tilde{\phi}_{i}E_{i},\tilde{\phi}_{i}E_{i}\right)(\tau)\Big\rangle_{\xi}d\tau\,.
    \label{expression tilde-phi-i}
    \end{equation}
    The off-diagonal part $V_{i}$ then satisfies
 \begin{align}
    \quad V_{i}(t,x)&=\varepsilon \langle E_{i},\mathbb{G}_{*}^{t}f_{0}\rangle_{\xi}+\varepsilon \langle E_{i},\left(\mathbb{G}^{t}-\mathbb{G}_{*}^{t}-\mathbb{G}_{i}^{t}\right)f_{0}\rangle_{\xi}\notag
    \\
    &\quad+\sum_{\substack{1\leq j,k\leq 3 \\ (j,k)\neq (i,i)}}\int_{0}^{t}\Big\langle E_{i},\mathbb{G}^{t-\tau}\Gamma\left(\tilde{\phi}_{j}E_{j},\tilde{\phi}_{k}E_{k}\right)(\tau)\Big\rangle_{\xi}d\tau
     \notag
   \\
    &\quad +\int_{0}^{t}\Big\langle E_{i},\left(\mathbb{G}^{t-\tau}-\mathbb{G}_{i}^{t-\tau}\right)\Gamma\left(\tilde{\phi}_{i}E_{i},\tilde{\phi}_{i}E_{i}\right)(\tau)\Big\rangle_{\xi}d\tau\notag
    \\
     &\quad +\sum_{1\leq j,k\leq 3}2\int_{0}^{t}\Big\langle E_{i},\mathbb{G}^{t-\tau}\Gamma\left(\tilde{\phi}_{j}E_{j},V_{k}E_{k}\right)(\tau)\Big\rangle_{\xi}d\tau\notag
     \\
     &\quad+\sum_{1\leq j,k \leq 3}\int_{0}^{t}\Big\langle E_{i},\mathbb{G}^{t-\tau}\Gamma\left(V_{j}E_{j},V_{k}E_{k}\right)(\tau)\Big\rangle_{\xi}d\tau
     \label{expression V-i}
    \end{align}

    \begin{proposition}
    \label{lemm tilde-phi-ast,sharp}
    For each fixed finite integer $s\in \mathbb{N}\cup\left\{0\right\}$ and $\varepsilon>0$
    sufficiently small after the fixed choices above, $\tilde\phi_i$
    of \eqref{expression tilde-phi-i} satisfies
     \begin{equation}
     \label{est-tilde-phi-i}
         \left\vert \pa^{s}_{x}\tilde{\phi}_{i}(t,x)\right\vert\lesssim_{s}
         \varepsilon\theta^{s+1}_{i}\,.
     \end{equation}
    \end{proposition}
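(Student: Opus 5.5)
We will prove \eqref{est-tilde-phi-i} by a bootstrap (continuity) argument on the weighted norm
\[
 \mathcal N_s(T)=\sup_{0\leq t\leq T}\sup_{x\in\mathbb R}\max_{0\leq r\leq s}
 \frac{|\partial_x^r\tilde\phi_i(t,x)|}{\theta_i^{r+1}(t,x)}.
\]
The first step is to show that $\mathcal N_s(T)\leq C_s\varepsilon+C_s\mathcal N_s(T)^2$ with $C_s$ independent of $T$. Smallness of $\varepsilon$ and the continuity of $\mathcal N_s$ in $T$ then give $\mathcal N_s(T)\leq 2C_s\varepsilon$ for all $T$. Local existence and smoothness of $\tilde\phi_i$ are not an issue, because $\mathbb G_i^t$ is frequency-localized to $|\eta|\leq\delta$. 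Every $\partial_x^r$ therefore falls on a band-limited kernel, and $\tilde\phi_i$ is smooth in $x$ for $t>0$ even though $f_0$ is not.

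The linear term is handled directly by Lemma~\ref{Prop-linear-fluid} with $h_0=f_0$. Since $f_0$ vanishes for $|x|\geq1$ and $\|f_0\|_{L^\infty_xL^2_\xi}\lesssim\|f_0\|_{L^\infty_xL^\infty_{\xi,\hat\beta}}$, this gives $|\langle E_i,\partial_x^r\mathbb G_i^tf_0\rangle_\xi|\lesssim\theta_i^{r+1}$.

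For the nonlinear term we use $\mathbb P_0\Gamma=0$. The source $\Gamma(E_i,E_i)\tilde\phi_i^2$ is purely microscopic, with a fixed velocity profile that is Schwartz-like. By the localized microscopic fluid bound in Lemma~\ref{Prop-linear-fluid}, together with Proposition~\ref{prop:linear-source-estimates}(i), the kernel $\langle E_i,\partial_x^r\mathbb G_i^{t-\tau}\mathbb P_1(\cdot)\rangle$ gains one extra derivative of decay. Concretely, we obtain
\[
 |\partial_x^r\mathcal I(t,x)|\lesssim\int_0^t\!\!\int_{\mathbb R}
 \theta_i^{r+2}(x-y,t-\tau)\,|\tilde\phi_i(\tau,y)|^2\,dy\,d\tau
 \lesssim\mathcal N_0(T)^2\int_0^t\!\!\int_{\mathbb R}\theta_i^{r+2}(x-y,t-\tau)\,\theta_i^{2}(y,\tau)\,dy\,d\tau .
\]
The singularity at $\tau=t$ is harmless: $\theta^{r+2}(\cdot,t-\tau)$ has $L^1_x$ norm $(1+t-\tau)^{-(r+1)/2}$ with no blowup at $\tau=t$, thanks to the $1+t$ in the profile. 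When $r\geq1$ we split the time integral at $t/2$. On $[t/2,t]$ we move $r$ derivatives onto the source, so that $\partial_x^r(\tilde\phi_i^2)$ is bounded by $\mathcal N_r^2\sum\theta_i^{a+1}\theta_i^{r-a+1}\lesssim\mathcal N_r^2\theta_i^{r+2}$. We keep $r=0$ or $r=1$ derivatives on the kernel, which has $L^1_x$ norm at most $(1+t-\tau)^{-1}$. On $[0,t/2]$ all derivatives stay on the kernel.

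The main obstacle is the same-speed convolution estimate
\[
 \int_0^t\!\!\int_{\mathbb R}\theta_i^{a}(x-y,t-\tau)\,\theta_i^{b}(y,\tau)\,dy\,d\tau\lesssim\theta_i^{r+1}(x,t),
\]
with the pairs $(a,b)$ arising above, for example $(r+2,2)$ on $[0,t/2]$ and $(2,r+2)$ or $(1,r+2)$ on $[t/2,t]$. Since both factors travel at the same speed $\lambda_i$, we change variables to $x-\lambda_it$, which reduces the problem to heat-type profiles. On $[0,t/2]$ the kernel is comparable to $\theta_i^{r+2}(x,t)$ after the spatial integral, which gives $(1+t)^{-(r+2)/2}$ times the profile, times $\int_0^{t/2}\|\theta^2(\tau)\|_{L^1}\,d\tau\lesssim\log(2+t)$. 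This is acceptable, since $(1+t)^{-1/2}\log$ is absorbed.

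On $[t/2,t]$ the source is $(1+t)^{-(r+2)/2}$ times the profile, and $\int_{t/2}^t\|\theta^{1}(\cdot,t-\tau)\|_{L^1}\,d\tau\lesssim t^{1/2}$. This yields exactly $\theta_i^{r+1}$, and it is the borderline case. The point to check carefully is that the polynomial tails $(1+|x-\lambda_it|^2/(1+t))^{-N}$ are reproduced, possibly with a slightly smaller $N$, which we absorb since $N$ is arbitrary and was fixed with room. For this we split $y$ according to whether $|y-\lambda_i\tau|$ is at least half of $|x-\lambda_it|$, in the standard way; the estimates of this type are those collected in the Appendix, and we will invoke them there. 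Closing the bootstrap with $\varepsilon$ small then gives \eqref{est-tilde-phi-i} for each fixed $s$.
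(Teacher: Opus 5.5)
Your proposal is essentially the paper's proof. It uses the Duhamel formula for $\tilde\phi_i$, the extra decay $\theta_i^{\alpha+2}$ of $\langle E_i,\partial_x^\alpha\mathbb G_i^{t}\mathbb P_1\,\cdot\,\rangle_\xi$ coming from $\mathbb P_0\Gamma=0$, a split at $t/2$ with derivatives on the kernel over $[0,t/2]$ and on $\tilde\phi_i^2$ over $[t/2,t]$, the same-speed bounds $\int_0^{t/2}\theta_i^{2+r}(t-\tau)*_x\theta_i^{2}(\tau)\,d\tau+\int_{t/2}^{t}\theta_i^{2}(t-\tau)*_x\theta_i^{2+r}(\tau)\,d\tau\lesssim\theta_i^{1+r}$, and closure by smallness of $\varepsilon$. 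Fix the bookkeeping, though: since $\|\theta_i^2(\tau)\|_{L^1_x}\sim(1+\tau)^{-1/2}$, the $[0,t/2]$ piece is borderline too (a factor $(1+t)^{1/2}$, not a logarithm); the kernel on $[t/2,t]$ is $\theta_i^2$ rather than $\theta_i^1$, since the pair $(1,r+2)$ would only give $(1+t)^{-r/2}$; and these same-speed convolutions reproduce the tail exponent $N$ with no loss, which you need because a loss in $N$ cannot be absorbed in a bootstrap at fixed $N$.
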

    \begin{proof} The convolution estimates used in this proof are collected in Table \ref{table-1} in the Appendix.
    For any fixed $s\in\mathbb{N}\cup\left\{0\right\}$, we take the q-th $x$-derivative of $\tilde{\phi}_{i}$ for $0\leq q\leq s$ to obtain
    \begin{align*}
    \partial_{x}^{q}\tilde{\phi}_{i}&=\varepsilon\langle E_{i},\partial_{x}^{q}\mathbb{G}^{t}_{i}f_{0}\rangle_{\xi}+\int_{0}^{\frac{t}{2}}\Bigg\langle E_{i},\partial_{x}^{q}\mathbb{G}^{t-\tau}_{i}\Gamma\left(\tilde{\phi}_{i}E_{i},\tilde{\phi}_{i}E_{i}\right)(\tau)\Bigg\rangle_{\xi}d\tau
    \\
    &\quad +\int_{\frac{t}{2}}^{t}\Bigg\langle E_{i},\mathbb{G}^{t-\tau}_{i}\left[\partial_{x}^{q}\Gamma\left(\tilde{\phi}_{i}E_{i},\tilde{\phi}_{i}E_{i}\right)(\tau)\right]\Bigg\rangle_{\xi}d\tau
    \\
    &\equiv B_{0}+B_{1}+B_{2}\,.
    \end{align*}
    Clearly,
    \begin{equation*}
        \vert B_{0}\vert \lesssim \varepsilon \theta_{i}^{1+q}\,.
    \end{equation*}
 Based on the above estimates, we assume the ansatz :
    \begin{equation}
    \label{asatz-tilde-phi}
    \left\vert \partial_{x}^{q}\tilde{\phi}_{i}\right\vert
    \leq  \varepsilon M\theta_{i}^{1+q}\,, \qquad 0\leq q\leq s\,.
    \end{equation}
   For $B_{1}$, we have
    \begin{align*}
    \vert B_{1}\vert \lesssim
    \varepsilon^{2}M^{2}\left[\int_{0}^{\frac{t}{2}}\int_{-\infty}^{\infty}
    \theta_{i}^{2+q}(t-\tau,x-y)\theta_{i}^{2}(\tau,y)dyd\tau\right]
    \lesssim \varepsilon^{2}M^{2}\theta_{i}^{1+q}\,.
    \end{align*}
   As for $B_{2}$, one yields
    \begin{align*}
    \vert B_{2}\vert\lesssim  \varepsilon^{2} M^{2}\int_{\frac{t}{2}}^{t}\int_{-\infty}^{\infty}\theta_{i}^{2}(t-\tau,x-y)\theta_{i}^{2+q}(\tau,y)dyd\tau
    \lesssim \varepsilon^{2}M^{2}\theta_{i}^{1+q}\,.
    \end{align*}
Combining the component estimates, the known forcing has amplitude
$C_{0}\varepsilon$, while the terms containing the $\tilde\phi_{i}$ have amplitude at most
$C_{1}\varepsilon^2M^{2}$, with the corresponding profiles in \eqref{asatz-tilde-phi}.
Here $C_{0}\geq1$ and $C_{1}$ depend only on the fixed parameters $s$ and the
initial-data norm, not on $M$, $\varepsilon$, or the terminal time.
The output coefficient is therefore bounded by
\[
C_{0}\varepsilon+C_{1}\varepsilon^{2}M^{2}=\varepsilon \left(C_{0}+C_{1}\varepsilon M^{2}\right)
\]
Choose $M\geq 4 C_{0}$ sufficiently large and $0<\varepsilon\leq1$ sufficiently small that
\[
C_{1}\varepsilon M^{2}\leq\frac14 M.
\]
The output is then bounded by $\frac{1}{2}M\varepsilon$, completing the proof.
    \end{proof}
    \begin{proposition}
    \label{prop-V-i}
    For $\varepsilon>0$ sufficiently small, $V_{i}$ in \eqref{expression V-i} can be decomposed into a non-smooth part $V_{i,A}$ and a smooth part $V_{i,B}$ satisfying
    \begin{equation}
    \left\vert V_{i,A}(t,x)\right\vert \lesssim \varepsilon(1+t+\vert x\vert)^{-\frac{N_{0}}{1-\gamma}}\quad
 \text{for any } \ N_{0}>2N(1-\gamma),
    \label{est-V-i-A}
    \end{equation}
     \begin{equation}
    \left\vert V_{i,B}(t,x)\right\vert \lesssim \varepsilon \Bigg(\psi_{i}^{\frac{3}{2}}+\sum_{ k\neq i}\tilde{\psi}_{k}^{\frac{3}{2}}\Bigg),
    \label{est-V-i-B}
    \end{equation}
    \begin{equation}
    \left\vert \partial_{x}V_{i,B}(t,x)\right\vert\lesssim\varepsilon (1+t)^{-\frac{1}{2}}\Bigg(\psi_{i}^{\frac{3}{2}}+\sum_{ k\neq i}\tilde{\psi}_{k}^{\frac{3}{2}}\Bigg)\,.
    \label{est-der-V-i-B}
    \end{equation}
    \end{proposition}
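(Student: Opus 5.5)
We will close a nonlinear bootstrap on the integral equation \eqref{expression V-i}. To do so, every term is first sorted by the classification \eqref{sec3-split-A}--\eqref{sec3-split-B}.

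\textbf{Splitting.} We regard $\tilde\phi_j E_j$ as purely smooth, since by Proposition~\ref{lemm tilde-phi-ast,sharp} it has derivatives of all orders bounded by $\theta_j^{s+1}$. We regard $V_k E_k=V_{k,A}E_k+V_{k,B}E_k$ as split into its two parts.
\begin{itemize}
\item $V_{i,A}$ collects $\varepsilon\langle E_i,\mathbb G_*^tf_0\rangle_\xi$ together with all $\mathbb G_*$-Duhamel integrals whose source contains at least one $A$-factor.
\item $V_{i,B}$ collects everything else: the linear term $\varepsilon\langle E_i,(\mathbb G^t-\mathbb G^t_*-\mathbb G_i^t)f_0\rangle_\xi$, all $\mathbb G_\sharp$-integrals, and the $\mathbb G_*$-integrals of $B\times B$ sources.
\end{itemize}
We then adopt the ansatz that \eqref{est-V-i-A}--\eqref{est-der-V-i-B} hold with constant $M\varepsilon$, and show that the right-hand side returns the constant $C_0\varepsilon+C_1\varepsilon^2M^2$. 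The closing step is the same as in Proposition~\ref{lemm tilde-phi-ast,sharp}.

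\textbf{$A$-part.} The linear particle term is bounded by \eqref{est-G-t-ast} with $\theta=N_0$. This requires $\hat\beta\ge N_0+\dots$, as fixed by \eqref{eq-hat-beta}. The bound is $e^{-ct}\langle x\rangle^{-N_0/(1-\gamma)}\lesssim(1+t+|x|)^{-N_0/(1-\gamma)}$.

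For the nonlinear $\mathbb G_*$-terms we use \eqref{eq:particle-Duhamel-source} in weighted $L^\infty$ together with Lemma~\ref{basic-Gamma}. Each source is a product of an $A$-factor, bounded by $M\varepsilon(1+\tau+|y|)^{-N_0/(1-\gamma)}$, and a bounded factor. The damped transport $e^{-c_0\nu(t-\tau)}$ preserves the space–time decay, at the cost of a localized convolution. Here the weighted version \eqref{eq:localized-particle-source} supplies the spatial decay. The result is $\varepsilon^2M^2(1+t+|x|)^{-N_0/(1-\gamma)}$.

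\textbf{$B$-part, easy pieces.} We handle the linear pieces first.
\begin{itemize}
\item The off-diagonal fluid branches $\langle E_i,\mathbb G_j^tf_0\rangle$ with $j\ne i$ give $\theta_j^2$ by Lemma~\ref{Prop-linear-fluid}, and $\theta_j^2\lesssim\tilde\psi_j^{3/2}$.
\item The regular parts $\mathcal G^t f_0$ give fast decay by \eqref{est-mathcal-G-t} with $\sigma$ large.
\end{itemize}
For the nonlinear pieces we use $\mathbb P_0\Gamma=0$. With this, Lemma~\ref{Prop-linear-fluid} gives the extra-decaying kernels $\theta_j^{2}$ for $\langle E_i,\mathbb G_j\mathbb P_1\cdot\rangle$, $j=i$, and $\theta_j^{3}$ for $j\neq i$. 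The remaining contributions are of two kinds.
\begin{itemize}
\item The diagonal self-interaction through the non-$i$ branches and $\mathcal G$, namely $(\mathbb G-\mathbb G_i)\Gamma(\tilde\phi_iE_i,\tilde\phi_iE_i)$. Its source is $\theta_i^2$. Convolved with $\theta_j^3$, $j\ne i$, or with the fast-decaying $\mathcal G$, it gives $\tilde\psi_j^{3/2}$ or $\psi_i^{3/2}$ by the appendix convolution table.
\item The interactions involving $V$. These are $\tilde\phi_j V_k$ and $V_jV_k$, with sources bounded by $\theta_j\cdot(\psi^{3/2}+\tilde\psi^{3/2})$ or by quadratic coupling tails. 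These are standard convolutions in the style of \cite{[Zeng]}, producing $\varepsilon^2M^2(\psi_i^{3/2}+\sum\tilde\psi_k^{3/2})$.
\end{itemize}
The $\mathbb G_*$-integrals of $B\times B$ sources are handled by \eqref{eq:particle-Duhamel-source}, which reproduces the source profile. The derivative estimate \eqref{est-der-V-i-B} follows in the same way. We put $\partial_x$ on the kernel for $\tau<t/2$, gaining $(1+t)^{-1/2}$. For $\tau>t/2$ we put $\partial_x$ on the smooth source factors: $\partial_x\tilde\phi$ costs $\theta^{s+2}$, and $\partial_xV_B$ is controlled by the ansatz. The $\mathcal G$ and $\mathbb G_*$ pieces use $a=1$ in \eqref{eq:localized-regular-source} and $\partial_x\mathbb G_*=\mathbb G_*\partial_x$.

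\textbf{Main obstacle.} The hard term is the different-speed interaction: the $i$-th fluid branch $\mathbb G_i$ acting on $\Gamma(\tilde\phi_jE_j,\tilde\phi_jE_j)$ with $j\ne i$, and the corresponding cross terms. A direct convolution of $\theta_i^2$ with the source $\theta_j^2$ yields only $\psi_i^{1}$-type tails, which are not integrable. Here we follow the mechanism of \cite[Lemma~3.4]{[Zeng]} in spectral form, in three steps.
\begin{enumerate}
\item Write $\langle\Gamma(E_j,E_j),e_i(-\eta)\rangle=O(\eta)$, which turns the source into $\partial_x(\tilde\phi_j^2)$ times a smooth multiplier.
\item Note that $\tilde\phi_j$ moves at speed $\lambda_j$, so $(\partial_\tau+\lambda_j\partial_y)\tilde\phi_j^2$ is of size $\theta_j^4$. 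This follows from \eqref{expression tilde-phi-i} and $\sigma_j+i\lambda_j\eta=O(\eta^2)$.
\item Express $\partial_y=(\lambda_j-\lambda_i)^{-1}\bigl[(\partial_\tau+\lambda_j\partial_y)-(\partial_\tau+\lambda_i\partial_y)\bigr]$ and integrate by parts in $\tau$. The $\lambda_i$-characteristic derivative falls on the kernel $\mathbb G_i^{t-\tau}$, where it costs $O(\eta^2)$, that is, an extra $(t-\tau)^{-1}$. The $\lambda_j$-derivative falls on the source.
\end{enumerate}
The endpoint terms at $\tau=0,t$ and the split at $t/2$ must be kept. This yields $\tilde\psi_j^{3/2}$ and $\psi_i^{3/2}$. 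For the mixed sources $\tilde\phi_jV_{k,B}$ we need the characteristic derivative of $V_{k,B}$. We would obtain it from its own defining equation, isolating its $\mathbb G_k$-component, which is presumably the decomposition \eqref{eq-V-iB}. We expect this step, with the bookkeeping of endpoint terms, to be the most delicate.
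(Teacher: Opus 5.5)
Your architecture matches the paper's: the $A$/$B$ classification, a bootstrap closed by $C_0\varepsilon+C_1\varepsilon^2M^2$, and the spectral form of Zeng's cancellation. The gap is that your induction hypothesis is too weak for the bootstrap to close.

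You assume only the final bounds \eqref{est-V-i-A}--\eqref{est-der-V-i-B}. Along the $j$-th characteristic this gives no more than $|V_{j,B}|\lesssim M\varepsilon\psi_j^{3/2}\sim M\varepsilon(1+t)^{-3/4}$.

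Take the term $\int_0^t\langle E_i,\mathbb G_i^{t-\tau}2\Gamma(\tilde\phi_jE_j,V_{j,B}E_j)\rangle_\xi\,d\tau$ with $j\neq i$ (the paper's $J_5^j$). Its source is then bounded only by $\theta_j^{5/2}$. In the region $|x-\lambda_it|\sim|x-\lambda_jt|\sim t$, the convolution $\int_0^t\theta_i^2(t-\tau)*_x\theta_j^{5/2}(\tau)\,d\tau$ is of order $(1+t)^{-5/4}$. There, however, $\psi_i^{3/2}+\tilde\psi_j^{3/2}\sim(1+t)^{-3/2}$. So, contrary to your ``easy pieces'' paragraph, this is not a standard convolution.

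Your last paragraph proposes cancellation for this term. But $(\partial_t+\lambda_j\partial_x)$ gains only on the $\mathbb G_j$-generated part of $V_{j,B}$. The rest contains pieces that do not travel at speed $\lambda_j$, such as $\varepsilon\langle E_j,\mathbb G_l^tf_0\rangle_\xi$ and $\mathbb G_l$-Duhamel terms with $l\neq j$, and the $\mathcal G$-terms. That rest must be bounded directly. A direct bound needs it to be $O(\varepsilon\theta_j^2)$ near $x=\lambda_jt$, which your ansatz does not supply.

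The paper closes the argument with a two-tier hypothesis. It splits $V_{i,B}=V_{i,B_1}+V_{i,B_2}$ as in \eqref{eq-V-iB}, with $V_{i,B_1}$ collecting every $\mathbb G_i$-contribution.
\begin{itemize}
\item $V_{i,B_1}$ satisfies the $\psi_i^{3/2}+\sum_{k\neq i}\tilde\psi_k^{3/2}$ bound \eqref{asatz-V-i-B1}.
\item $V_{i,B_2}$ satisfies the strictly stronger bound \eqref{asatz-V-i-B2}, namely $M\varepsilon[\sum_m\theta_m^{2+s}+(1+t)^{-(1+s)/2}\psi_m^{3/2}]$.
\end{itemize}
Then $\tilde\phi_jV_{j,B_2}\lesssim\theta_j^3$ is convolved directly. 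Cancellation is used only for $\tilde\phi_jV_{j,B_1}$, through the exact relation \eqref{cancellation-assumption--V-j-B1}.

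Establishing the stronger $B_2$ bound requires the cancellation mechanism twice more.
\begin{itemize}
\item For $\mathbb G_l$, $l\neq i$, acting on $\Gamma(\tilde\phi_kE_k,\tilde\phi_kE_k)$ with $k\neq l$ (the paper's $L_5$). Your direct convolution of $(\mathbb G-\mathbb G_i)\Gamma(\tilde\phi_iE_i,\tilde\phi_iE_i)$ gives only $(1+t)^{-3/2}$ between the characteristics, not $(1+t)^{-2}$.
\item For $\mathbb G_l$ acting on $\tilde\phi_kV_{k,B_1}$ (the paper's $L_{10}$), via the kernel \eqref{def-tilde-H-lk} carrying $(i\eta)^{-2}$.
\end{itemize}

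A smaller point concerns where the integration by parts in $\tau$ starts. It cannot start at $\tau=0$. The endpoint term $H_{ij}(t)*_x\tilde\phi_j^2(0)$, with $H_{ij}$ as in \eqref{def-H-ij}, is only $O(\varepsilon^2\theta_i^1)$, which is too large near $x=\lambda_it$. It cannot start at $\tau=t/2$ either. A direct bound on $[0,t/2]$ is only $O(\varepsilon^2(1+t)^{-1})$ between the characteristics. The paper integrates by parts on $[\sqrt t,t]$ and bounds the layer $[0,\sqrt t]$ directly by $\varepsilon^2\theta_i^{3/2}$.
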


\begin{proof}
    The convolution estimates used in this proof are collected in Tables \ref{table-1}--\ref{table-2} in the Appendix.
    Write $V_{i}=V_{i,A}+V_{i,B}$. For convenience, throughout this paper denoting $$\Phi_{A}=\sum_{m=1}^{3}V_{m,A}E_{m}\,,\qquad \Phi_{B}=\sum_{m=1}^{3}(\tilde\phi_{m}+V_{m,B})E_{m}\,.$$ Applying
\eqref{sec3-split-A}--\eqref{sec3-split-B} to each term in
\eqref{expression V-i}, we define
\begin{align}
\label{eq-V-iA}
 V_{i,A}&=\varepsilon\langle E_{i},\mathbb G_*^{t}f_{0}\rangle_{\xi}+\int_{0}^{t}\left\langle E_{i},\mathbb{G}_{\ast}^{t-\tau}g_{A}(\tau)\right\rangle_{\xi}d\tau
 =I_{0}+I_{1}\,,
\end{align}
where
\begin{equation}
\label{eq-gA}
g_{A}(t)=2\Gamma\left(\Phi_{A},\Phi_{B}\right)+\Gamma\left(\Phi_{A},\Phi_{A}\right)\,,
\end{equation}
and we further split the smooth fluid component as
\begin{equation}
\label{eq-V-iB}
V_{i,B}=V_{i,B_{1}}+V_{i,B_{2}}\,,
\end{equation}
\begin{align}
V_{i,B_{1}}&=
\int_{0}^{t}\left\langle E_{i},\mathbb{G}_{i}^{t-\tau}\Bigg(\sum_{\substack{1\leq j,k\leq 3\\ j\neq k}}\Gamma\left(\tilde\phi_{j}E_{j},\tilde\phi_{k}E_{k}\right)\Bigg)(\tau)\right\rangle_{\xi}d\tau\notag
\\
&\quad+\sum_{\substack{1\leq j\leq 3\\j\neq i}}\int_{0}^{t}\left\langle E_{i},\mathbb{G}_{i}^{t-\tau}\Gamma\left(\tilde\phi_{j}E_{j},\tilde\phi_{j}E_{j}\right)(\tau)\right\rangle_{\xi}d\tau\notag
\notag
\\
&\quad+\int_{0}^{t}\left\langle E_{i},\mathbb{G}_{i}^{t-\tau}2\Gamma\left(\tilde\phi_{i}E_{i},V_{i,B}E_{i}\right)\Bigg)(\tau)\right\rangle_{\xi}d\tau\notag
\\
&\quad+\int_{0}^{t}\left\langle E_{i},\mathbb{G}_{i}^{t-\tau}\Bigg(\sum_{\substack{1\leq j,k\leq 3\\ j\neq k}}2\Gamma\left(\tilde\phi_{j}E_{j},V_{k,B}E_{k}\right)\Bigg)(\tau)\right\rangle_{\xi}d\tau\notag
\\
&\quad+\sum_{\substack{1\leq j\leq 3 \\j\neq i}}\int_{0}^{t}\left\langle E_{i},\mathbb{G}_{i}^{t-\tau}2\Gamma\left(\tilde\phi_{j}E_{j},V_{j,B}E_{j}\right)(\tau)\right\rangle_{\xi}d\tau\notag
\\
&\quad+\sum_{1\leq j,k\leq 3}\int_{0}^{t}\left\langle E_{i},\mathbb{G}_{i}^{t-\tau}\Gamma\left(V_{j,B}E_{j},V_{k,B}E_{k}\right)(\tau)\right\rangle_{\xi}d\tau
+\int_{0}^{t}\left\langle E_{i},\mathbb{G}_{i}^{t-\tau}g_{A}(\tau)\right\rangle_{\xi}d\tau
\notag\\
&=J_{1}+\cdots+J_{7}\,,
\label{eq-V-i-B1}
\end{align}
and
\begin{align}
V_{i,B_{2}}&=\varepsilon \left\langle E_{i},\left(\mathbb{G}_{\sharp}^{t}-\mathbb{G}_{i}^{t}\right)f_{0}\right\rangle_{\xi}+\sum_{1\leq j,k\leq 3}\int_{0}^{t}\left\langle E_{i},\left(\mathbb{G}_{\ast}^{t-\tau}+\mathcal{G}^{t-\tau}\right)\Gamma\left(\tilde\phi_{j}E_{j},\tilde\phi_{k}E_{k}\right)(\tau)\right\rangle_{\xi}d\tau\notag
\\
&\quad+\sum_{\substack{1\leq l\leq 3 \\l\neq i}}\int_{0}^{t}\left\langle E_{i},\mathbb{G}_{l}^{t-\tau}\Bigg(\sum_{\substack{1\leq j,k\leq 3\\ j\neq k}}\Gamma\left(\tilde\phi_{j}E_{j},\tilde\phi_{k}E_{k}\right)\Bigg)(\tau)\right\rangle_{\xi}d\tau\notag
\\
&\quad+\sum_{\substack{1\leq l\leq 3 \\l\neq i}}\int_{0}^{t}\left\langle E_{i},\mathbb{G}_{l}^{t-\tau}\Gamma\left(\tilde\phi_{l}E_{l},\tilde\phi_{l}E_{l}\right)(\tau)\right\rangle_{\xi}d\tau\notag
\\
&\quad+\sum_{\substack{1\leq l\leq 3 \\l\neq i}}\int_{0}^{t}\left\langle E_{i},\mathbb{G}_{l}^{t-\tau}\Bigg(\sum_{\substack{1\leq k\leq 3\\k\neq l}}\Gamma\left(\tilde\phi_{k}E_{k},\tilde\phi_{k}E_{k}\right)(\tau)\Bigg)\right\rangle_{\xi}d\tau\notag
\\
&\quad+\int_{0}^{t}\left\langle E_{i},\Bigg[\sum_{\substack{1\leq l\leq 3 \\l\neq i}}(\mathbb{G}_{l}^{t-\tau})+\mathcal{G}^{t-\tau}\Bigg]g_{A}(\tau)\right\rangle_{\xi}d\tau+\int_{0}^{t}\left\langle E_{i},\left(\mathbb{G}_{\ast}^{t-\tau}+\mathcal{G}^{t-\tau}\right)g_{B}(\tau)\right\rangle_{\xi}d\tau
\notag\\
&\quad+\sum_{\substack{1\leq l\leq 3 \\l\neq i}}\int_{0}^{t}\left\langle E_{i},\mathbb{G}_{l}^{t-\tau}2\Gamma\left(\tilde\phi_{l}E_{l},V_{l,B}E_{l}\right)(\tau)\right\rangle_{\xi}d\tau\notag
\\
&\quad+\sum_{\substack{1\leq l\leq 3 \\l\neq i}}\int_{0}^{t}\left\langle E_{i},\mathbb{G}_{l}^{t-\tau}\Bigg(\sum_{\substack{1\leq j,k\leq 3\\ j\neq k}}2\Gamma\left(\tilde\phi_{j}E_{j},V_{k,B}E_{k}\right)\Bigg)(\tau)\right\rangle_{\xi}d\tau\notag
\\
&\quad+\sum_{\substack{1\leq l\leq 3 \\l\neq i}}\int_{0}^{t}\left\langle E_{i},\mathbb{G}_{l}^{t-\tau}\Bigg(\sum_{\substack{1\leq k\leq 3\\k\neq l}}2\Gamma\left(\tilde\phi_{k}E_{k},V_{k,B}E_{k}\right)(\tau)\Bigg)\right\rangle_{\xi}d\tau\notag
\\
&\quad+\sum_{\substack{1\leq l\leq 3 \\l\neq i}}\int_{0}^{t}\left\langle E_{i},\mathbb{G}_{l}^{t-\tau}\Bigg(\sum_{1\leq j,k\leq 3}\Gamma\left(V_{j,B}E_{j},V_{k,B}E_{k}\right)(\tau)\Bigg)\right\rangle_{\xi}d\tau
\notag\\
&=L_{1}+\cdots +L_{11}\,,
\label{eq-V-i-B2}
\end{align}
where
\begin{equation}
\label{eq-gB}
g_{B}(t)=\sum_{1\leq j,k\leq 3}\left[2\Gamma\left(\tilde\phi_{j}E_{j},V_{k,B}E_{k}\right)+\Gamma\left(V_{j,B}E_{j},V_{k,B}E_{k}\right)\right]\,.
\end{equation}
The first part $V_{i,B_{1}}$ contains all contributions involving the diagonal spectral kernel $\mathbb G_i$.

    Throughout this proof, $s$ and $n$ denote differentiation orders with, $s\in \left\{0,1\right\}$ and $n\in \mathbb{N}\cup \left\{0\right\}$. We first estimate the terms involving only the kinetic Burgers waves, beginning with the non-smooth component $V_{i,A}$. In view of \eqref{eq-V-iA}--\eqref{eq-V-i-B2}, it follows that
 $$|I_{0}|\lesssim \varepsilon(1+t+\vert x\vert)^{-\frac{N_{0}}{1-\gamma}}\quad
 \text{for any } \ N_{0}>2N(1-\gamma)\,.$$
 We next turn to the first component $V_{i,B_{1}}$ of the smooth part $V_{i,B}$.
 For $J_{1}$, using these inequality for $j\ne k$, we obtain
\begin{align}
    \theta^1_j\theta^1_k
    &\lesssim (1+t)^{-1}\left(\psi_j^{2N}+\psi_k^{2N}\right),
       \qquad j\ne k\,,
    \label{est-theta-1-j-times-theta-1-k}
    \end{align}
and hence we have
    \begin{equation*}
    \vert \partial_{x}^{s}J_{1}\vert \lesssim  \varepsilon^{2}\left[\theta_{i}^{2+s}\right]\ast_{x,t}\Bigg[\sum_{l=1}^{3}(1+t)^{-1}\psi_l^{2N}\Bigg]\lesssim \varepsilon^{2}(1+t)^{-\frac{s}{2}}\Bigg(\psi_{i}^{\frac{3}{2}}+\sum_{j\neq i}\tilde\psi_{j}^{\frac{3}{2}}\Bigg).
    \end{equation*}
For $J_{2}$, in view of the expansion of $\widehat{\mathbb{G}}_{i}$ in \eqref{Green-fluid}, we have
\[
 \widehat{\mathbb G_i^t h}(\eta)
 =\chi_{\delta}(\eta)e^{\sigma_i(\eta)t}e_i(\eta)
   \langle\hat h(\eta),e_i(-\eta)\rangle_\xi.
\]
Thus, we define the kernel
\begin{equation}
\label{def-H-ij}
 H_{ij}(t,x)=\frac1{2\pi}\int_{\mathbb R}e^{ix\eta+\sigma_i(\eta)t}
 \frac{\chi_{\delta}(\eta)\langle e_i(\eta),E_i\rangle_\xi
       \langle\Gamma(E_j,E_j),e_i(-\eta)\rangle_\xi}{i\eta}\,d\eta\,.
\end{equation}
By Proposition \ref{prop:low-frequency-spectrum} and the Fourier multiplier argument used in Lemma \ref{Prop-linear-fluid}, the kernel satisfies $$|\partial_x^nH_{ij}|\lesssim\theta_i^{1+n}\,.$$
We obtain the following exact identity for each component of $J_{2}$:
\begin{equation}
\label{eq-J-2-ij}
J_{2}^{ij}= \int_{0}^{t}\langle E_i,\mathbb G_i^{t-\tau}
     \Gamma(\tilde\phi_jE_j,\tilde\phi_jE_j)(\tau)\rangle_\xi
 d\tau=\int_{0}^{t}\partial_xH_{ij}(t-\tau)*_x(\tilde\phi_j)^2(\tau)d\tau.
\end{equation}
This follows from the fact that
$\Gamma(E_j,E_j)$ is microscopic.  Moreover, there exists a kernel $H_{1}$ such that
$$
 (\partial_t+\lambda_i\partial_x)H_{ij}=\partial_x H_1,
 \qquad |\partial_x^nH_1|\lesssim\theta_i^{2+n}\,,
 $$
since
$$\widehat{(\partial_{t}+\lambda_{i}\partial_{x})H_{ij}}=(\sigma_{i}(\eta)+i\lambda_{i}\eta)\widehat{H_{ij}}\,, \qquad\sigma_i(\eta)+i\lambda_i\eta=-A_{i}\eta^{2}+\eta^{3}R_{\sigma,i}(\eta)\,.$$

Fix $j\ne i$. For $t\geq4$, we split the component $J_{2}^{ij}$ as $$J_2^{ij}=\int_{0}^{\sqrt{t}}\cdots+\int_{\sqrt{t}}^{t}\cdots\equiv J^{ij}_{2\left\{[0,\sqrt{t}]\right\}}+J^{ij}_{2\left\{[\sqrt{t},t]\right\}}\,.$$ Its early part satisfies
\[
 J^{ij}_{2\left\{[0,\sqrt{t}]\right\}}\lesssim\varepsilon^2
 \int_0^{\sqrt t}\theta_i^2(t-\tau)*_x\theta_j^2(\tau)\,d\tau
 \lesssim\varepsilon^2\theta_i^{3/2}.
\]
For the remaining part, differentiating the spatial convolution with respect to $\tau$ gives
\begin{align*}
 &(\lambda_j-\lambda_i)\partial_xH_{ij}(t-\tau)*_x(\tilde\phi_j)^2(\tau)
 \\
 &=-\partial_\tau\big[H_{ij}(t-\tau)*_x(\tilde\phi_j)^2(\tau)\big]-[(\partial_t+\lambda_i\partial_x)H_{ij}](t-\tau)
       *_x(\tilde\phi_j)^2(\tau)\\
 &\quad+H_{ij}(t-\tau)*_x
       [(\partial_\tau+\lambda_j\partial_x)(\tilde\phi_j)^2](\tau).
\end{align*}
The scalar defining equation \eqref{expression tilde-phi-i} also gives,
using its actual spectral multipliers,
\begin{align}
 \widehat{(\partial_t+\lambda_j\partial_x)\tilde\phi_j}
 ={}&(\sigma_j(\eta)+i\lambda_j\eta)\widehat{\tilde\phi_j}\notag\\
 &+\chi_{\delta}(\eta)\langle e_j(\eta),E_j\rangle_\xi
   \langle\Gamma(E_j,E_j),e_j(-\eta)\rangle_\xi
   \widehat{(\tilde\phi_j)^2}.
   \label{eq-tilde-j-par t+lamda j par x}
\end{align}
The two multipliers are of orders two and one in $\eta$, respectively. Applying the previously established spatial derivative estimates to these localized multipliers, we obtain
\begin{equation*}
|\partial_x^n(\partial_t+\lambda_j\partial_x)\tilde\phi_j|
 \lesssim(\varepsilon+\varepsilon^2)\theta_j^{3+n}.
\end{equation*}
Consequently,
\begin{align}
\label{tilde-phi-j-cancell-1}
 |(\tilde\phi_j)^2|&\lesssim\varepsilon^2\theta_j^2,&
 |\partial_x(\tilde\phi_j)^2|&\lesssim\varepsilon^2\theta_j^3,&
 |(\partial_t+\lambda_j\partial_x)(\tilde\phi_j)^2|
 &\lesssim\varepsilon^2\theta_j^4.
\end{align}

Integrating this identity
from $\sqrt t$ to $t$, and moving one spatial derivative from its second
term to the source, we obtain
\begin{align*}
 \left\vert(\lambda_{j}-\lambda_{i})J^{ij}_{2\{[\sqrt t,t]\}}\right\vert
 &=\left\vert-\big[H_{ij}(t-\tau)*_x(\tilde{\phi}_{j})^{2}(\tau)\big]_{\sqrt t}^{t}
   -\int_{\sqrt t}^t(\partial_{t}+\lambda_{i}\partial_{x})H_{ij}(t-\tau)*_x(\tilde\phi_{j})^{2}(\tau)\,d\tau\right.\\
 &\left.\quad+\int_{\sqrt t}^t H_{ij}(t-\tau)*_x
              (\partial_\tau+\lambda_j\partial_x)(\tilde\phi_j)^2(\tau)\,d\tau\right\vert
              \\
 &\lesssim\varepsilon^2\Big[\theta_j^2
   +\theta_i^1(t-\sqrt t)*_x\theta_j^2(\sqrt t)+\int_{\sqrt t}^t\theta_i^2(t-\tau)*_x\theta_j^3(\tau)\,d\tau\\
 &+\int_{\sqrt t}^t\theta_i^1(t-\tau)*_x\theta_j^4(\tau)\,d\tau\Big]\\
 &\lesssim\varepsilon^2
       \big(\psi_i^{3/2}+\tilde\psi_j^{3/2}\big).
\end{align*}
The above argument improves the estimate of $J_{2}^{ij}$, and we refer to this procedure as the `` Cancellation Mechanism ". We summarize this procedure in Remark \ref{remark-cancellation mechanism}.
 For $\partial_{x}J_{2}$, applying this Cancellation Mechanism gives
\begin{equation}
\label{est-derivative-tilde-J}
\left\vert\partial_{x}J_{2}\right\vert\lesssim \varepsilon^{2}(1+t)^{-\frac{1}{2}}\Bigg(\psi_{i}^{\frac{3}{2}}+\sum_{ k\neq i}\tilde{\psi}_{k}^{\frac{3}{2}}\Bigg).
\end{equation}
Here the $x$-derivative is placed on the Green function over $[0,\frac{t}{2}]$ and transferred to the source over $[\frac{t}{2},t]$. We next turn to the second component $V_{i,B_{2}}$. By Lemma \ref{Prop-linear-fluid}, Proposition \ref{PF-estimate}, and \eqref{est-theta-1-j-times-theta-1-k}, we obtain
\[
\vert \partial_{x}^{s}L_{1}\vert\lesssim \varepsilon \Bigg[\sum_{j\neq i}(\theta_{j}^{2+s})+(1+t+\vert x\vert)^{-\frac{N_{0}}{1-\gamma}}\Bigg]\,, \qquad \text{for any }N_{0}>2N(1-\gamma)\,,
\]
For $L_{2}$, Lemma~\ref{basic-Gamma} and the Gaussian decay of
$E_l,E_m$ give, for every fixed $b\geq0$,
\begin{equation}\label{eq:fixed-microscopic-vectors}
 |\Gamma(E_l,E_m)|_{L^\infty_{\xi,b}}<\infty.
\end{equation}
Taking $Q=\beta+N_0$, $\theta=N_0$, $\hat Q=\beta$, and
$\sigma=\max\{N_0,1/2\}$ in Proposition~\ref{prop:linear-source-estimates}\textup{(i)},
we transfer $\partial_x^s$ to the source by translation invariance.
Using \eqref{est-F-chi}, we obtain
\begin{align*}
\vert \partial_{x}^{s}L_{2}\vert
&\lesssim \varepsilon^2\int_0^t\int_{\mathbb R}
e^{-c(t-\tau)}\langle x-y\rangle^{-\frac{N_0}{1-\gamma}}
\sum_{m=1}^3\theta_m^{2+s}(y,\tau)\,dy\,d\tau
\\
&\lesssim \varepsilon^2\sum_{m=1}^3\theta_m^{2+s}(x,t)\,.
\end{align*}
Here $N_0/(1-\gamma)>2N>1$ preserves the spatial tails, and the
exponential factor absorbs the fixed time weights and characteristic
displacements. For $L_{3}$, we obtain
\[
\vert \partial_{x}^{s}L_{3}\vert\lesssim\varepsilon^{2}(1+t)^{-\frac{1+s}{2}}\Bigg[\sum_{m=1}^{3}\psi_{m}^{\frac{3}{2}}\Bigg]\,.
\]
For each component of $L_{4}$, we transfer the extra factor $\eta$, arising from $\langle e_{l}(\eta),E_{i}\rangle_{\xi}$ and $\partial_{x}^{s}\mathbb{G}_{l}^{t-\tau}$, to the source on the time interval $[\frac{t}{2},t]$. Thus, for $l\neq i$,
\begin{align*}
\vert \partial_{x}^{s}L_{4}^{l}\vert&=\left\vert \partial_{x}^{s}\int_{0}^{t}\left\langle E_{i},\mathbb{G}_{l}^{t-\tau}\Gamma\left(\tilde\phi_{l}E_{l},\tilde\phi_{l}E_{l}\right)(\tau)\right\rangle_{\xi}d\tau\right\vert
\\
&\lesssim \varepsilon^2\left[\int_{0}^{\frac{t}{2}}\theta_{l}^{3+s}(t-\tau)\ast_{x}\theta_{l}^{2}(\tau)d\tau+\int_{\frac{t}{2}}^{t}\theta_{l}^{2}(t-\tau)\ast_{x}\theta_{l}^{3+s}(\tau)d\tau\right]
\\
&\lesssim \varepsilon^{2}\theta_{l}^{2+s}\,.
\end{align*}

The structure of $L_{5}$ is the same as that of $J_{2}$, except for an additional derivative on the Green's function, which arises from projecting $\mathbb{G}^{t}_{i}$ onto a different direction $E_{j}$. Therefore, applying the Cancellation Mechanism, with the $x$-derivative placed on the Green's function over $[0,\frac{t}{2}]$ and on the source over $[\frac{t}{2},t]$, we obtain
\begin{equation*}
|\partial_{x}^{s}L_{5}|\lesssim \varepsilon^{2}(1+t)^{-\frac{1+s}{2}}\Bigg(\sum_{m=1}^{3}\psi_{m}^{\frac{3}{2}}\Bigg)\,.
\end{equation*}

Motivated by these estimates, we assume that the off-diagonal component $V_{i}$ satisfies the following ansatz:
\begin{equation}
\label{asatz-V-iA}
\vert V_{i,A}\vert \leq M \varepsilon(1+t+\vert x\vert)^{-\frac{N_{0}}{1-\gamma}}\,, \qquad\text{for any } N_{0}>2N(1-\gamma)\,,
\end{equation}
\begin{equation}
\label{asatz-V-i-B1}
\vert \partial_{x}^{s}V_{i,B_{1}}\vert \leq M\varepsilon(1+t)^{-\frac{s}{2}}\Bigg(\psi_{i}^{\frac{3}{2}}+\sum_{k\neq i}\tilde\psi_{k}^{\frac{3}{2}}\Bigg)\,,
\end{equation}
\begin{equation}
\label{asatz-V-i-B2}
\vert\partial_{x}^{s} V_{i,B_{2}}\vert \leq M\varepsilon\left[\sum_{m=1}^{3}\theta_{m}^{2+s}+(1+t)^{-\frac{1+s}{2}}\psi_{m}^{\frac{3}{2}}\right]\,,
\end{equation}
where the constant $M$ will be chosen later. Moreover, we have the inequality
\begin{equation}
\label{}
(1+t)^{-\frac{1}{4}}\psi_{m}^{\frac{3}{2}}\lesssim \tilde\psi_{m}^{\frac{3}{2}}\,, \qquad \text{for any } m\in\left\{1,2,3\right\}\,,
\end{equation}
Hence, the ansatz \eqref{asatz-V-i-B1}-\eqref{asatz-V-i-B2} implies \eqref{est-V-i-B}--\eqref{est-der-V-i-B}.
Under these assumptions, we return to the estimates of $V_{i,A}$ through $V_{i,B_{2}}$.
In $V_{i,A}$, applying Proposition~\ref{prop:linear-source-estimates}\textup{(i)}
and \eqref{est-F-chi} to $g_A$ in \eqref{eq-gA}, the same exponentially
damped convolution preserves $(1+t+|x|)^{-\frac{N_0}{1-\gamma}}$.
Thus, using the ansatz \eqref{asatz-V-iA}--\eqref{asatz-V-i-B2}, we obtain
$$
|I_{1}|\lesssim M^{2}\varepsilon^{2}(1+t+\vert x\vert)^{-\frac{N_{0}}{1-\gamma}}\quad
 \text{for any } \ N_{0}>2N(1-\gamma)\,,
$$
and hence $V_{i,A}$ satisfies the ansatz \eqref{asatz-V-iA}.

For $V_{i,B_{1}}$, Lemma \ref{Prop-linear-fluid} gives
\[
\vert J_{3}\vert \lesssim \varepsilon^2M^2\theta_{i}^{2}\ast_{x,t}\theta_{i}^{\frac{5}{2}}\lesssim\varepsilon^2M^2\theta_{i}^{\frac{3}{2}}\,,
\]
\[
\vert \partial_{x}J_{3}\vert \lesssim \varepsilon^2M^2\left[\int_{0}^{\frac{t}{2}}\theta_{i}^{3}(t-\tau)\ast_{x}\theta_{i}^{\frac{5}{2}}(\tau)d\tau+\int_{\frac{t}{2}}^{t}\theta_{i}^{2}(t-\tau)\ast_{x}\theta_{i}^{\frac{7}{2}}(\tau)d\tau\right]\lesssim\varepsilon^2M^2\theta_{i}^{\frac{5}{2}}\,,
\]
where the final inequality follows by transferring one derivative to the source on $[\frac{t}{2},t]$.
For $J_{4}$, note that for $j\neq k$
\begin{equation}
\label{eq-theta-psi-j-neq-k}
\theta^{1}_{j}\psi_{k}^{\frac{3}{2}}\lesssim (1+t)^{-\frac{1}{2}}\psi_{j}^{3}\,,
\end{equation}
we obtain
\begin{equation*}
\vert J_{4}\vert \lesssim \varepsilon^{2}M^2\left[\theta_{i}^{2}\ast_{x,t}\Bigg(\sum_{l=1}^{3}(1+t)^{-\frac{1}{2}}\psi_{l}^{3}+\theta_{l}^{3}\Bigg)\right]\lesssim\varepsilon^{2}M^2\left(\psi_{i}^{\frac{3}{2}}+\sum_{k\neq i}\tilde\psi^{\frac{3}{2}}_{k}\right)\,,
\end{equation*}
and
\begin{align*}
\vert \partial_{x}J_{4}\vert
&\lesssim\left|\sum_{\substack{1\leq j,k\leq 3\\ j\neq k}}\pa_{x}\left(\int_{0}^{t/2}+\int_{t/2}^{t}\right)\Big\langle E_{i},\mathbb{G}^{t-\tau}_{i}\Gamma\left(\tilde\phi_{j}E_{j},V_{k,B}E_{k}\right)(\tau)\Big\rangle_{\xi}d\tau\right|
\notag\\
&\lesssim\varepsilon^{2}M^2(1+t)^{-\frac{1}{2}}\left[\theta_{i}^{2}*_{x,t}\sum_{l=1}^{3}\left((1+t)^{-\frac{1}{2}}\psi_{l}^{3}+\theta_{l}^{3}\right)\right]
\notag\\
&\lesssim \varepsilon^{2}M^2(1+t)^{-\frac{1}{2}}\left(\psi_{i}^{\frac{3}{2}}+\sum_{k\neq i}\tilde\psi^{\frac{3}{2}}_{k}\right)\,.
\end{align*}
We denote each component of $J_{5}$ by
\[
J_{5}^{j}=\int_{0}^{t}\left\langle E_{i},\mathbb{G}_{i}^{t-\tau}2\Gamma\left(\tilde\phi_{j}E_{j},V_{j,B}E_{j}\right)(\tau)\right\rangle_{\xi}d\tau\,.
\]
For the contribution of $V_{j,B_{2}}$, we immediately obtain
\begin{align*}
\vert \partial_{x}^{s}J_{5}^{j}\vert_{V_{j,B}=V_{j,B_{2}}}\vert
 &\lesssim\varepsilon^2M^2\left[\int_{0}^{\frac{t}{2}}\theta_{i}^{2+s}(t-\tau)\ast_{x}\theta_{j}^{3}(\tau)d\tau+\int_{\frac{t}{2}}^{t}\theta_{i}^{2}(t-\tau)\ast_{x}\theta_{j}^{3+s}(\tau)d\tau\right]
 \\[2mm]
 &\lesssim\varepsilon^2M^2(1+t)^{-\frac{s}{2}}\left(\psi_{i}^{\frac{3}{2}}+\tilde\psi_{j}^{\frac{3}{2}}\right)\,.
\end{align*}
For the contribution of $V_{j,B_{1}}$, we use the Cancellation Mechanism. For $t\geq 4$, split $J_{5}^{j}\vert_{V_{j,B}=V_{j,B_{1}}}$ into $$J_{5}^{j}\vert_{V_{j,B}=V_{j,B_{1}}}=\int_{0}^{\sqrt{t}}\cdots+\int_{\sqrt{t}}^{t}\cdots\equiv J^{j}_{5\left\{[0,\sqrt{t}]\right\}}\bigg\vert_{V_{j,B}=V_{j,B_{1}}}+J^{j}_{5\left\{[\sqrt{t},t]\right\}}\bigg\vert_{V_{j,B}=V_{j,B_{1}}}\,.$$ Its first part satisfies
\[
\Bigg\vert\partial_{x}^{s}J^{j}_{5\left\{[0,\sqrt{t}]\right\}}\bigg\vert_{V_{j,B}=V_{j,B_{1}}}\Bigg\vert\lesssim \varepsilon^{2}M^2\int_{0}^{\sqrt{t}}\theta_{i}^{2+s}(t-\tau)\ast_{x}\theta_{j}^{\frac{5}{2}}(\tau)d\tau\lesssim\varepsilon^{2}M^2\theta_{i}^{\frac{7}{4}+s}\,.
\]
The definition of $V_{j,B_{1}}$ also gives
\begin{equation}
\label{cancellation-assumption--V-j-B1}
 \widehat{(\partial_t+\lambda_j\partial_x)} V_{j,B_{1}}
 =(\sigma_j(\eta)+i\lambda_j\eta)\widehat{V_{j,B_{1}}}+\chi_{\delta}(\eta)\langle e_j(\eta),E_j\rangle_\xi \,\mathcal{S}(\eta,t)\,,
\end{equation}
where
\begin{align*}
\mathcal{S}(\eta,t)&=\sum_{\substack{1\leq l,k\leq 3\\(l,k)\neq (j,j)}}\left(\left\langle \Gamma(E_{l},E_{k}),e_{j}(-\eta)\right\rangle_{\xi}\widehat{(\tilde\phi_{l}\tilde\phi_{k}})\right)
+\sum_{1\leq l,k\leq 3}\left(\left\langle \Gamma(E_{l},E_{k}),e_{j}(-\eta)\right\rangle_{\xi}\widehat{V_{l,B}V_{k,B}}\right)
\\
&\quad+2\sum_{1\leq l,k\leq 3}\left(\left\langle \Gamma(E_{l},E_{k}),e_{j}(-\eta)\right\rangle_{\xi}\widehat{\tilde\phi_{l}V_{k,B}}\right)+\left\langle \,\widehat{g}_{A},e_{j}(-\eta)\right\rangle_{\xi}\,.
\end{align*}

Together with the expression of $(\partial_t+\lambda_{j}\partial_{x})\tilde\phi_{j}$ \eqref{eq-tilde-j-par t+lamda j par x}, one yields
\begin{align*}
&\left\vert(\lambda_{j}-\lambda_{i})\partial_{x}^{s}J^{j}_{5\{[\sqrt t,t]\}}\Big\vert_{V_{j,B}=V_{j,B_{1}}}\right\vert
\\
 &=\left\vert-\big[\partial_{x}^{s}H_{ij}(t-\tau)*_x(2\tilde{\phi}_{j}V_{j,B_{1}})(\tau)\big]_{\sqrt t}^{t}
   -\int_{\sqrt t}^t(\partial_{t}+\lambda_{i}\partial_{x})\partial_{x}^{s}H_{ij}(t-\tau)*_x(2\tilde\phi_{j}V_{j,B_{1}})(\tau)\,d\tau\right.\\
 &\quad\left.+\int_{\sqrt t}^t\partial_{x}^{s}H_{ij}(t-\tau)*_x(\partial_\tau+\lambda_j\partial_x)(2\tilde\phi_jV_{j,B_{1}})(\tau)\,d\tau\right\vert
              \\
 &\lesssim\varepsilon^2M^2\Big[\theta_j^{\frac{5}{2}+s}
   +\theta_i^{1+s}(t-\sqrt t)*_x\theta_j^{\frac{5}{2}}(\sqrt t)+\int_{\sqrt t}^t\theta_i^{2+s}(t-\tau)*_x\theta_j^{\frac{7}{2}}(\tau)\,d\tau\\
 &+\int_{\sqrt t}^t\theta_i^{1+s}(t-\tau)*_x\theta_j^4(\tau)\,d\tau\Big]\\
 &\lesssim\varepsilon^2M^2
       (1+t)^{-\frac{s}{2}}\big(\psi_i^{3/2}+\tilde\psi_j^{3/2}\big)\,.
\end{align*}
We note that one factor $\eta$ is transferred from $\widehat{(\partial_{t}+\lambda_{i}\partial_{x})H_{ij}}$ to the source $\widehat{\tilde\phi_{j}V_{j,B_{1}}}$ in the second term. In the third term, we transfer one factor $\eta$ from $(\sigma_{j}(\eta)+i\lambda_{j}\eta)\widehat{V_{j,B_{1}}}$ to Green function $\widehat{H_{ij}}$ and the factor $\widehat{\phi_{j}}$ of source, since $(\sigma_{j}(\eta)+i\lambda_{j}\eta)=-A_{j}\eta^{2}+\eta^{3}R_{\sigma,j}(\eta)$.
For $J_{6}$, treating $\partial^{s}_{x}$ in the same way as for $J_{5}^{j}\vert_{V_{j,B}=V_{j,B_{2}}}$, we immediately obtain
\[
\vert\partial_{x}^{s}J_{6}\vert \lesssim \varepsilon^2M^2(1+t)^{-\frac{s}{2}}\Bigg(\psi_{i}^{\frac{3}{2}}+\sum_{k\neq i}\tilde\psi_{k}^{\frac{3}{2}}\Bigg)\,.
\]
For $J_{7}$, by Lemma \ref{Prop-linear-fluid}, we obtain
\[
\vert \partial_{x}^{s}J_{7}\vert \lesssim M^{2}\varepsilon^{2}(\theta_{i}^{2+s})\ast_{x,t}(1+t+\vert x\vert)^{-\frac{N_{0}}{1-\gamma}}\lesssim M^{2}\varepsilon^{2}\theta_{i}^{2+s}\,.
\]
Therefore, $V_{i,B_{1}}$ satisfies the ansatz \eqref{asatz-V-i-B1}.

We next consider the remaining components of $V_{i,B_{2}}$.
By Lemma \ref{Prop-linear-fluid} and Proposition \ref{PF-estimate}, we obtain
\[
\vert \partial_{x}^{s}L_{6}\vert\lesssim M^{2}\varepsilon^{2}\Bigg(\sum_{l\neq i}\theta_{l}^{3+s}+(1+t+\vert x\vert)^{-\frac{N_{0}}{1-\gamma}}\Bigg)\,,\qquad \text{for any }N_{0}>2N(1-\gamma)\,,
\]
For $L_{7}$, applying the same argument as for $L_{2}$ to $g_B$
in \eqref{eq-gB}, with the ansatz \eqref{asatz-V-i-B1}--\eqref{asatz-V-i-B2}, gives
\[
\vert \partial_{x}^{s}L_{7}\vert \lesssim\varepsilon^2M^2(1+t)^{-\frac{1+s}{2}}\sum_{m=1}^{3}(\psi_{m}^{\frac{3}{2}})\,,
\]
\begin{align*}
\vert L_{8}\vert &\lesssim M^2\varepsilon^2\sum_{l\neq i}\left[\int_{0}^{\frac{t}{2}}\theta_{l}^{3}(t-\tau)\ast_{x}\theta_{l}^{\frac{5}{2}}(\tau)d\tau+\int_{\frac{t}{2}}^{t}\theta_{l}^{2}(t-\tau)\ast_{x}\theta_{l}^{\frac{7}{2}}(\tau)d\tau\right]
\lesssim M^2\varepsilon^2\sum_{l\neq i}(\theta_{l}^{\frac{5}{2}})\,,
\end{align*}
\begin{align*}
\vert \partial_{x}L_{8}\vert &\lesssim M^2\varepsilon^2\sum_{l\neq i}\left[\int_{0}^{\frac{t}{2}}\theta_{l}^{4}(t-\tau)\ast_{x}\theta_{l}^{\frac{5}{2}}(\tau)d\tau+\int_{\frac{t}{2}}^{t}\theta_{l}^{3}(t-\tau)\ast_{x}\theta_{l}^{\frac{7}{2}}(\tau)d\tau\right]
\\
&\lesssim M^2\varepsilon^2\ln(1+t)\sum_{l\neq i}(\theta_{l}^{\frac{7}{2}})\,,
\end{align*}
and
\begin{align*}
\vert \partial_{x}^{s}L_{9}\vert &\lesssim M^2\varepsilon^2\sum_{l\neq i}\left[\int_{0}^{\frac{t}{2}}\theta_{l}^{3+s}(t-\tau)\ast_{x}\Bigg(\sum_{m=1}^{3}\psi_{m}^{3}\Bigg)(\tau)d\tau\right.
\\
&\quad\left.+\int_{\frac{t}{2}}^{t}\theta_{l}^{2+s}(t-\tau)\ast_{x}\Bigg(\sum_{m=1}^{3}(1+t)^{-\frac{1}{2}}\psi_{m}^{3}\Bigg)(\tau)d\tau\right]
\\
&\lesssim M^2\varepsilon^2(1+t)^{-\frac{1+s}{2}}\Bigg(\sum_{m=1}^{3}\psi_{m}^{\frac{3}{2}}\Bigg)\,.
\end{align*}
The estimates of $L_{8}$ and $L_{9}$ are obtained by transferring the extra factor $\eta$, arising from $\langle E_{i},e_{l}(-\eta)\rangle_{\xi}$, to the source on $[\frac{t}{2},t]$ in their Fourier representations of $\widehat{L}_{8}$ and $\widehat{L}_{9}$, respectively.

We then consider the term $L_{10}$. We denote each component of $L_{10}$ by
\[
L_{10}^{lk}=\int_{0}^{t}\left\langle E_{i},\mathbb{G}_{l}^{t-\tau}2\Gamma\left(\tilde\phi_{k}E_{k},V_{k,B}E_{k
}\right)(\tau)\right\rangle_{\xi}d\tau\,.
\]
For the contribution of $V_{k,B_{2}}$, we immediately obtain
\begin{align*}
\vert \partial_{x}^{s}L_{10}^{lk}\vert_{V_{k,B}=V_{k,B_{2}}}\vert&
 \lesssim\varepsilon^2M^2\left[\int_{0}^{\frac{t}{2}}\theta_{l}^{3+s}(t-\tau)\ast_{x}\theta_{k}^{3}(\tau)d\tau+\int_{\frac{t}{2}}^{t}\theta_{l}^{3}(t-\tau)\ast_{x}\theta_{k}^{3+s}(\tau)d\tau\right]
 \\[2mm]
 &\lesssim\varepsilon^2M^2(1+t)^{-\frac{1+s}{2}}\left(\psi_{l}^{\frac{3}{2}}+\psi_{k}^{\frac{3}{2}}\right)\,.
\end{align*}
For the contribution of $V_{k,B_{1}}$, we use the Cancellation Mechanism. For $t\geq 4$, split $L_{10}^{lk}\vert_{V_{k,B}=V_{k,B_{1}}}$ into $$L_{10}^{lk}\vert_{V_{k,B}=V_{k,B_{1}}}=\int_{0}^{\sqrt{t}}\cdots+\int_{\sqrt{t}}^{t}\cdots\equiv L^{lk}_{10\left\{[0,\sqrt{t}]\right\}}\big\vert_{V_{k,B}=V_{k,B_{1}}}+L^{lk}_{10\left\{[\sqrt{t},t]\right\}}\big\vert_{V_{k,B}=V_{k,B_{1}}}\,.$$ Its first part satisfies
\[
\left\vert\partial_{x}^{s}L^{lk}_{10\left\{[0,\sqrt{t}]\right\}}\big\vert_{V_{k,B}=V_{k,B_{1}}}\right\vert\lesssim \varepsilon^{2}M^2\int_{0}^{\sqrt{t}}\theta_{l}^{3+s}(t-\tau)\ast_{x}\theta_{k}^{\frac{5}{2}}(\tau)d\tau\lesssim\varepsilon^{2}M^2\theta_{l}^{\frac{11}{4}+s}\,.
\]
For the second part, for $l\neq i$ and $k\neq l$, we define the kernel $\widetilde{H}_{lk}$ by
\begin{equation}
\label{def-tilde-H-lk}
 \widetilde{H}_{lk}(t,x)=\frac1{2\pi}\int_{\mathbb R}e^{ix\eta+\sigma_l(\eta)t}
 \frac{\chi_{\delta}(\eta)\langle e_l(\eta),E_i\rangle_\xi
       \langle2\Gamma(E_k,E_k),e_l(-\eta)\rangle_\xi}{(i\eta)^{2}}\,d\eta\,,
\end{equation}
which satisfies \eqref{asumption-cancellation-mechanism-1}.
Hence, $L^{lk}_{10\left\{[\sqrt{t},t]\right\}}\big\vert_{V_{k,B}=V_{k,B_{1}}}$ can be written as
\[
L^{lk}_{10\{[\sqrt t,t]\}}\big\vert_{V_{k,B}=V_{k,B_{1}}}=\int_{\sqrt{t}}^{t}\partial^{2}_{x}\widetilde{H}_{lk}(t-\tau)\ast_{x}(\tilde\phi_{k}V_{k,B_{1}})(\tau)d\tau\,.
\]
Combining this with the expression for $(\partial_t+\lambda_{j}\partial_{x})\tilde\phi_{j}$ \eqref{eq-tilde-j-par t+lamda j par x} and $(\partial_{t}+\lambda_{k}\partial_{x})V_{k,B_{1}}$ \eqref{cancellation-assumption--V-j-B1} yields
\begin{align*}
&\left\vert(\lambda_{k}-\lambda_{l})\partial_{x}^{s}L^{lk}_{10\{[\sqrt t,t]\}}\big\vert_{V_{k,B}=V_{k,B_{1}}}\right\vert
\\
 &=\left\vert-\big[\partial_{x}^{s+1}\widetilde{H}_{lk}(t-\tau)*_x(\tilde{\phi}_{k}V_{k,B_{1}})(\tau)\big]_{\sqrt t}^{t}
   -\int_{\sqrt t}^t(\partial_{t}+\lambda_{l}\partial_{x})\partial_{x}^{s+1}\widetilde{H}_{lk}(t-\tau)*_x(\tilde\phi_{k}V_{k,B_{1}})(\tau)\,d\tau\right.\\
 &\quad\left.+\int_{\sqrt t}^t\partial_{x}^{s+1}\widetilde{H}_{lk}(t-\tau)*_x(\partial_\tau+\lambda_k\partial_x)(\tilde\phi_kV_{k,B_{1}})(\tau)\,d\tau\right\vert
              \\
 &\lesssim\varepsilon^2M^2\Big[\theta_k^{\frac{5}{2}+s}
   +\theta_l^{2+s}(t-\sqrt t)*_x\theta_k^{\frac{5}{2}}(\sqrt t)+\int_{\sqrt t}^t\theta_l^{3+s}(t-\tau)*_x\theta_k^{\frac{7}{2}}(\tau)\,d\tau\\
 &+\int_{\sqrt t}^t\theta_l^{2+s}(t-\tau)*_x\theta_k^4(\tau)\,d\tau\Big]\\
 &\lesssim\varepsilon^2 M^2
       (1+t)^{-\frac{s+1}{2}}\big(\psi_l^{3/2}+\psi_k^{3/2}\big)\,.
\end{align*}
For $L_{11}$, treating $\partial^{s}_{x}$ in the same way as for $L_{9}$, we immediately obtain
\[
\vert\partial_{x}^{s}L_{11}\vert \lesssim \varepsilon^2M^2(1+t)^{-\frac{s+1}{2}}\Bigg(\sum_{m=1}^{3}\psi_{m}^{\frac{3}{2}}\Bigg)\,.
\]
Thus, $V_{i,B_{2}}$ satisfies the ansatz \eqref{asatz-V-i-B2}.
Combining the component estimates, the known forcing has amplitude
$C_{0}\varepsilon$, while the terms containing $V_{i,B}$ have amplitude at most
$C_1\varepsilon^2M^2$, with the corresponding profiles in \eqref{asatz-V-iA}-\eqref{asatz-V-i-B2}.
Here $C_0\geq1$ and $C_1$ depend only on the
initial-data norm, not on $M$, $\varepsilon$, or the terminal time.
The output coefficient is therefore bounded by
\[
C_0\varepsilon+C_1\varepsilon^{2} M^2=\varepsilon\left(C_0+C_1\varepsilon M^2\right).
\]
Choose $M\geq 4 C_{0}$ sufficiently large and $0<\varepsilon\leq1$ sufficiently small that
\[
C_0+C_1\varepsilon M^2\leq\frac12 M.
\]
The output is then bounded by $\frac{1}{2}M\varepsilon$,
completing the proof.
\begin{remark} \label{remark-cancellation mechanism} The estimate of $J_{2}^{ij}$ reveals a cancellation mechanism that plays an important role throughout the paper. We refer to this argument as the ``Cancellation Mechanism''. It reveals some cancellation arising from the wave interaction when the kernel and the source propagate with distinct characteristic speeds \(\lambda_i\) and \(\lambda_j\). This argument is analogous to the cancellation mechanism for viscous conservation laws in Lemma 3.4 of \cite{[Zeng]}.


We now specify how the Cancellation Mechanism is applied to mixed sources. Consider a quantity $L$ satisfying, for any $n\in \mathbb{N}\cup\left\{0\right\}$,
\[
\partial_{x}^{n}L=\int_{0}^{t}\partial_{x}^{n+1}H(t-\tau)\ast_{x}(AB)(\tau) d\tau\,.
\]
We first split the time integral at $\sqrt{t}$, namely, $$\partial_{x}^{n}L=\int_{0}^{\sqrt{t}}\cdots+\int_{\sqrt{t}}^{t}\cdots=\partial_{x}^{n}L_{\left\{[0,\sqrt{t}]\right\}}+\partial_{x}^{n}L_{\left\{[\sqrt{t},t]\right\}}\,.$$ Here the kernel $H$ satisfies, for any $n\in\mathbb{N}\cup\left\{0\right\}$,
\begin{equation}
\label{asumption-cancellation-mechanism-1}
 |\partial_x^nH|\lesssim\theta_i^{1+n},\qquad
 (\partial_t+\lambda_i\partial_x)H=\partial_x H_1,
 \qquad |\partial_x^nH_1|\lesssim\theta_i^{2+n}.
\end{equation}
Here $H,H_1$ are used only to express the exact multipliers in this
calculation; no new wave is introduced. For the source $AB$, both $A$ and $B$ propagate in the $\lambda_j$ direction, with $j\ne i$, and satisfy
\begin{equation}
\widehat{(\partial_{t}+\lambda_{j}\partial_{x})X}=O(\eta^{2})\widehat{X}+\mathcal{S}_{X}(\eta,t)\,, \ \ \ X\in\left\{A,B\right\}\,,
\label{asumption-cancellation-mechanism-2}
\end{equation}
for some source term $\mathcal{S}_{X}(\eta,t)$.
The integration by parts already used for $(\lambda_{j}-\lambda_{i})\partial_{x}^{n}L_{\left\{[\sqrt{t},t]\right\}}$ gives
\begin{align}
 &(\lambda_j-\lambda_i)
       \int_{\sqrt t}^t\partial_x^{n+1}H(t-\tau)*_x(AB)(\tau)\,d\tau \notag\\
 &=-\big[\partial_x^nH(t-\tau)*_x(AB)(\tau)\big]_{\sqrt t}^{t}
   -\int_{\sqrt t}^t\partial_x^nH_1(t-\tau)*_x\partial_x(AB)(\tau)\,d\tau\notag\\
 &\quad+\int_{\sqrt t}^t\partial_x^nH(t-\tau)*_x
              (\partial_\tau+\lambda_j\partial_x)(AB)(\tau)\,d\tau\,,
              \label{eq-cancellation-mechanism}
\end{align}
which yields a better estimate than a direct convolution bound.
The interval $[0,\sqrt t]$ is estimated directly.
\end{remark}
\end{proof}
We now turn to the estimate of $V^{\perp}$.
  \begin{proposition}
  \label{prop-V-perp}
    Let $\beta >\frac{3}{2}$, $N_{0}>2N(1-\gamma)$. Assume that $f_{0}$ is compactly supported in the variable $x$ for all $\xi$, namely,
    \begin{equation*}
    f_{0}(x,\xi)\equiv 0 \ \text{  for  }\ \vert x\vert\geq 1, \ \xi\in\mathbb{R}^{3},
    \end{equation*}
    and $f_0\in L_x^\infty L^{\infty}_{\xi,\beta+N_0}$. Then there exists a sufficiently small $\varepsilon>0$ such that $V^{\perp}$ in \eqref{leading-nonfluid} satisfies
    \begin{equation}
    \left\vert V^{\perp}\right\vert_{L^{\infty}_{\xi,\beta}} \lesssim \varepsilon\left( (1+t+\vert x\vert)^{-\frac{N_{0}}{1-\gamma}}+
    \sum_{l=1}^{3}\theta^{2}_{l}\right)+\varepsilon^{2}\left(\sum_{l=1}^{3}(1+t)^{-\frac{1}{2}}\psi_{l}^{\frac{3}{2}}\right)
    \label{est-V-perp}
    \end{equation}
    for any $N_{0}>2N(1-\gamma)$.
    \end{proposition}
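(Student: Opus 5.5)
We estimate the two pieces of \eqref{leading-nonfluid} separately. The right-hand side of \eqref{est-V-perp} has three parts. The $\varepsilon$-terms come from the linear part, and the $\varepsilon^2(1+t)^{-1/2}\psi^{3/2}$ term comes from the Duhamel part. Throughout, $\Phi$ is already controlled by Propositions~\ref{lemm tilde-phi-ast,sharp} and~\ref{prop-V-i}, so no bootstrap in $V^\perp$ is needed: $V^\perp$ does not appear on the right of \eqref{leading-nonfluid}.

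\textbf{Linear part.} We split
\[
\mathbb P_1\mathbb G^tf_0=\mathbb P_1\mathbb G_*^tf_0+\mathbb P_1\mathcal G^tf_0+\sum_{j}\mathbb P_1\mathbb G_j^tf_0 .
\]
For the first term we apply \eqref{est-G-t-ast} with $Q=\beta+N_0$ and $\theta=N_0$, obtaining the factor $e^{-ct}\langle x\rangle^{-N_0/(1-\gamma)}$. This is bounded by $(1+t+|x|)^{-N_0/(1-\gamma)}$. The $\mathcal G^t$ term is handled the same way by \eqref{est-mathcal-G-t} with $\hat Q=\beta$ and $\sigma=N_0$. For the fluid branches, Lemma~\ref{Prop-linear-fluid} (the $\mathbb P_1\mathbb G_j^t$ bound with $\hat Q=\beta$) gives $\theta_j^2$, using $\|f_0\|_{L^\infty_xL^2_\xi}\lesssim\|f_0\|_{L^\infty_xL^\infty_{\xi,\beta}}$ since $\beta>3/2$.

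\textbf{Duhamel part.} Write $S=\Gamma(\Phi,\Phi)$. It satisfies $\mathbb P_0S=0$, and by Lemma~\ref{basic-Gamma} and the Gaussian decay of $E_m$,
\[
|\nu^{-1}S|_{L^\infty_{\xi,b}}\lesssim|\Phi|^2 \quad\text{for every } b.
\]
Decompose $\Phi=\Phi_A+\Phi_B$ as in the proof of Proposition~\ref{prop-V-i}, and split the kernel as $\mathbb G_*+\mathcal G+\sum_j\mathbb G_j$.

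\emph{Particle and regular parts.} Apply Proposition~\ref{prop:linear-source-estimates}(i) with the exponentially damped kernel. Together with \eqref{est-F-chi}, this reproduces the source profile, which is $\lesssim\varepsilon^2[(1+t+|x|)^{-N_0/(1-\gamma)}+\sum_m\theta_m^2+\dots]$. This term is absorbed into the $\varepsilon$-terms, since $\varepsilon^2\theta^2\lesssim\varepsilon\theta^2$.

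\emph{Fluid branches.} Use the microscopic bound $|\mathbb P_1\mathbb G_j^t\mathbb P_1h|\lesssim\theta_j^3$ from Lemma~\ref{Prop-linear-fluid}. The source bound uses:
\begin{itemize}
\item $|\tilde\phi_j|^2\lesssim\varepsilon^2\theta_j^2$;
\item cross products via \eqref{est-theta-1-j-times-theta-1-k};
\item products with $V_{k,B}$ via \eqref{eq-theta-psi-j-neq-k} and the ansatz bounds.
\end{itemize}
Altogether,
\[
|S|\lesssim\varepsilon^2\sum_l\bigl(\theta_l^2+(1+t)^{-1}\psi_l^{2N}+(1+t)^{-1/2}\psi_l^3\bigr)+\varepsilon^2(1+t+|x|)^{-N_0/(1-\gamma)}.
\]
Convolving $\theta_j^3$ with these terms via the Appendix tables gives:
\begin{itemize}
\item the same-family piece $\theta_j^3*\theta_j^2\lesssim\theta_j^2$ (after transferring the extra derivative to the source on $[t/2,t]$ as for $L_4$);
\item the different-family pieces $\theta_j^3*\theta_l^2$ and $\theta_j^3*(1+t)^{-1}\psi^{2N}$, which are $\lesssim(1+t)^{-1/2}(\psi_j^{3/2}+\psi_l^{3/2})$.
\end{itemize}
The extra $\eta$ in $\theta_j^3$, compared with the $\theta_j^2$ kernel in $J_2$, is precisely what provides the $(1+t)^{-1/2}$ factor.

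\textbf{Main obstacle.} The main obstacle is the different-speed convolution $\theta_j^3*_{x,t}\theta_l^2$, $l\neq j$. We must check that it closes with $(1+t)^{-1/2}\psi^{3/2}$ and not with the weaker $\tilde\psi^{3/2}$. If the direct table estimate gives only $\tilde\psi$, we will instead use the Cancellation Mechanism of Remark~\ref{remark-cancellation mechanism}, with the kernel $H$ obtained by dividing the $\mathbb P_1$-branch multiplier by $i\eta$. This is legitimate because $\langle\Gamma(E_l,E_l),e_j(-\eta)\rangle=O(\eta)$, and $(\partial_t+\lambda_l\partial_x)\tilde\phi_l^2$ is bounded by \eqref{tilde-phi-j-cancell-1}. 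Finally, we need the weighted velocity bound $L^\infty_{\xi,\beta}$ for the fluid-branch output. It holds because $\mathbb P_1 e_j(\eta)$ lies in every $L^\infty_{\xi,b}$ by Proposition~\ref{prop:low-frequency-spectrum}, so the velocity weight costs nothing there.
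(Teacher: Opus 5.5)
Your linear part, the exponentially damped $\mathbb G_*+\mathcal G$ contributions, and the same-family fluid-branch terms (moving one $\eta$ onto the source on $[t/2,t]$) all match the paper's treatment of $V_{1,A}^\perp$, $V_{1,B}^\perp$ and $V_{2,m}^\perp$. The gap is in the different-family terms of $\int_0^t\mathbb P_1\mathbb G_j^{t-\tau}\Gamma(\Phi,\Phi)\,d\tau$, $l\neq j$.

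First, the direct bound you assert there is false, so the Cancellation Mechanism is mandatory, not a fallback. Take $x=\lambda_jt+(\lambda_l-\lambda_j)t/4$. This point is at distance $\gtrsim t$ from all three characteristics, so every term on the right of \eqref{est-V-perp} is $O(t^{-2})$ there. Now restrict $\theta_j^3*_{x,t}\theta_l^2$ to $|\tau-t/4|\le\sqrt t$ and $|y-\lambda_l\tau|\le\sqrt{1+\tau}$. On this region both profile arguments are $O(1)$, so you get a lower bound of order $t^{-3/2}\cdot t^{-1}\cdot t^{1/2}\cdot t^{1/2}=t^{-3/2}$, which is much larger than $t^{-2}$.

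Second, and more seriously, once cancellation is required a pointwise envelope of the source no longer suffices. The mechanism needs control of $(\partial_\tau+\lambda_l\partial_x)$ applied to the actual source, and you supply this only for $\tilde\phi_l^2$, through \eqref{tilde-phi-j-cancell-1}.

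The source also contains $2\Gamma(\tilde\phi_lE_l,V_{l,B}E_l)$, which you absorbed into the $\theta_l^2$ envelope. By \eqref{asatz-V-i-B1}, its $V_{l,B_1}$ part has size $\theta_l\psi_l^{3/2}\approx\theta_l^{5/2}$. The same lower-bound computation at that point gives $t^{-7/4}\gg t^{-2}$, so this piece cannot be estimated directly either. Moreover, \eqref{tilde-phi-j-cancell-1} says nothing about its characteristic derivative.

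The missing step is to split $V_{l,B}=V_{l,B_1}+V_{l,B_2}$ and keep $V_{l,A}$ separate.
\begin{itemize}
\item The products with $V_{l,B_2}$ (of size $\theta_l^3+\dots$) and with $V_{l,A}$ (decaying like $(1+t+|x|)^{-N_0/(1-\gamma)}$) can be convolved directly.
\item For $\tilde\phi_lV_{l,B_1}$ you must run the cancellation using the Fourier identity \eqref{cancellation-assumption--V-j-B1} for $(\partial_t+\lambda_l\partial_x)V_{l,B_1}$, together with the corresponding identity for $\tilde\phi_l$. Use a kernel of the type \eqref{def-tilde-H-lk}, exactly as in the estimate of $L_{10}$ in the proof of Proposition~\ref{prop-V-i}.
\end{itemize}
This source-by-source bookkeeping is what the paper's decomposition $V^\perp=V_1^\perp+\sum_mV_{2,m}^\perp$, together with its appeal to ``the arguments used for these two types of waves'', provides. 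With it added, your outline closes.
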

\begin{proof} The convolution estimates used in this proof are listed in Tables \ref{table-1}--\ref{table-2} in the Appendix. We decompose $V^{\perp}$ as follows:
\begin{equation}
\label{decom-V-perp}
V^{\perp}=V_{1}^{\perp}+\sum_{m=1}^{3}V_{2,m}^{\perp}\,,
\end{equation}
where
\begin{equation}
\label{eq-V-1-perp}
V_{1}^{\perp}=V_{1,A}^{\perp}+V_{1,B}^{\perp}\,,
\end{equation}
\begin{align}
V_{1,A}^{\perp}&=\varepsilon \mathbb{P}_{1}\mathbb{G}^{t}_{\ast}f_{0}
\notag\\
&\quad+\int_{0}^{t}\mathbb{P}_{1}\mathbb{G}^{t-\tau}_{\ast}\left[\sum_{1\leq j,k\leq 3}\Big(\Gamma\left(2\tilde{\phi}_{j}E_{j},V_{k,A}E_{k}\right)+\Gamma\left(\left(V_{j,A}+2V_{j,B}\right)E_{j},V_{k,A}E_{k}\right)\Big)\right](\tau)d\tau\,,
\label{eq-V-1-A-perp}
\end{align}
\begin{align}
V_{1,B}^{\perp}&=\varepsilon\mathbb{P}_{1}\mathcal{G}^{t}f_{0}
\notag\\
&\quad +\int_{0}^{t}\mathbb{P}_{1}\left(\mathbb{G}^{t-\tau}_{\ast}+\mathcal{G}^{t-\tau}\right)\Bigg[\sum_{1\leq j,k\leq 3}\Gamma\left(\left(2\tilde{\phi}_{j}+V_{j,B}\right)E_{j},V_{k,B}E_{k}\right)\Bigg](\tau)d\tau
\notag\\
&\quad+\int_{0}^{t}\mathbb{P}_{1}\mathbb{G}_{\sharp}^{t-\tau}\Bigg[\sum_{1\leq j,k\leq 3}\Big(\Gamma\left(2\tilde{\phi}_{j}E_{j},V_{k,A}E_{k}\right)+\Gamma\left(\left(V_{j,A}+2V_{j,B}\right)E_{j},V_{k,A}E_{k}\right)\Big)\Bigg](\tau)d\tau
\notag\\
&\quad+\sum_{m=1}^{3}\int_{0}^{t}\mathbb{P}_{1}\mathbb{G}^{t-\tau}_{m}\left[\sum_{\substack{1\leq j,k\leq 3\\ j\neq k}}\Gamma\left(\tilde{\phi}_{j}E_{j},\tilde{\phi}_{k}E_{k}\right)+\sum_{\substack{1\leq j\leq 3\\ j\neq m}}\Gamma\left(\tilde{\phi}_{j}E_{j},\tilde{\phi}_{j}E_{j}\right)\right](\tau)d\tau
\notag\\
&\quad+\int_{0}^{t}\mathbb{P}_{1}\left(\mathbb{G}^{t-\tau}_{\ast}+\mathcal{G}^{t-\tau}\right)\Bigg[\sum_{\substack{1\leq j,k\leq 3\\ j\neq k}}\Gamma\left(\tilde{\phi}_{j}E_{j},\tilde{\phi}_{k}E_{k}\right)\Bigg](\tau)d\tau
\notag\\
&\quad+\int_{0}^{t}\mathbb{P}_{1}\mathbb{G}^{t-\tau}_{L;0}\Bigg[\sum_{1\leq j,k\leq 3}\Big(\Gamma\left(\left(2\tilde{\phi}_{j}+V_{j,B}\right)E_{j},V_{k,B}E_{k}\right)\Big)\Bigg](\tau)d\tau\,,
\label{eq-V-1-B-perp}
\end{align}
and
\begin{align}
V_{2,m}^{\perp}&=\varepsilon \mathbb{P}_{1}\mathbb{G}^{t}_{m}f_{0}+\int_{0}^{t}\mathbb{P}_{1}\mathbb{G}^{t-\tau}_{m}\Gamma\left(\tilde{\phi}_{m}E_{m},\tilde{\phi}_{m}E_{m}\right)(\tau)d\tau
\notag\\
&\quad+\int_{0}^{t}\mathbb{P}_{1}\left(\mathbb{G}^{t-\tau}_{\ast}
+\mathcal{G}^{t-\tau}\right)\Gamma\left(\tilde{\phi}_{m}E_{m},\tilde{\phi}_{m}E_{m}\right)(\tau)d\tau\,.
\label{eq-V-2-m-perp}
\end{align}

Here, \(V_1^\perp\) corresponds to the off-diagonal component, whereas \(V_{2,m}^\perp\) corresponds to the \(m\)-th kinetic Burgers wave \(\tilde\phi_m\). Applying the arguments used for these two types of waves, we obtain the following estimates.

     For $V^{\perp}_{1,A}$ and any $N_{0}>2N(1-\gamma)$, we obtain
    \begin{equation}
    \label{est-V-perp-1A}
     \left\vert V^{\perp}_{1,A}\right\vert_{L^{\infty}_{\xi,\beta}} \lesssim \varepsilon(1+t+\vert x\vert )^{-\frac{N_{0}}{1-\gamma}} \left\Vert f_{0}\right\Vert_{L^{\infty}_{\xi,\beta+N_{0}}L^{\infty}_{x}}+\varepsilon^{2} (1+t+\vert x\vert )^{-\frac{N_{0}}{1-\gamma}}\,.
    \end{equation}

The estimates for $V_{1,B}^{\perp}$ are
    \begin{equation}
    \left\vert V^{\perp}_{1,B}\right\vert_{L^{\infty}_{\xi,\beta}} \lesssim \varepsilon \left(1+t+\vert x\vert\right)^{-\frac{N_{0}}{1-\gamma}}+\varepsilon^{2}\left[\sum_{m=1}^{3}(1+t)^{-\frac{1}{2}}\psi_{m}^{\frac{3}{2}}\right],
    \label{est-V-perp-1B}
    \end{equation}
    \begin{equation}
    \label{est-der-V-perp-1B}
    \left\vert \partial_{x}V^{\perp}_{1,B}\right\vert_{L^{\infty}_{\xi,\beta}} \lesssim \varepsilon \left(1+t+\vert x\vert\right)^{-\frac{N_{0}}{1-\gamma}}+\varepsilon^{2}\left[\sum_{m=1}^{3}\ln(1+t)\theta_{m}^{\frac{7}{2}}+(1+t)^{-1}\psi_{m}^{\frac{3}{2}}\right].
    \end{equation}

    Finally, for $V_{2,m}^{\perp}$, we obtain
    \begin{equation}
    \label{est-der-V-perp-2m}
    \left\vert \partial^{s}_{x}V_{2,m}^{\perp}\right\vert_{L^{\infty}_{\xi,\beta}}\lesssim \varepsilon \theta^{2+s}_{m}, \ \ \ \text{for any} \ s\in \mathbb{N}\cup\left\{0\right\}.
    \end{equation}
 \end{proof}
 To apply the Cancellation Mechanism to the mixed terms $\Gamma\big(\widetilde{\phi}_m E_m,V_{2,m}^{\perp}\big)$ in the proof of Proposition \ref{prop-remainder}, we also need the following characteristic derivative estimate.
 \begin{remark}
We also record the corresponding characteristic derivative estimate.
For each fixed $s\in\mathbb{N}\cup\left\{0\right\}$, we have
\begin{equation}\label{V2-characteristic-bound}
 \left|\partial_x^s
       (\partial_t+\lambda_m\partial_x)V_{2,m}^\perp
                                      \right|_{L^\infty_{\xi,\beta}}
 \lesssim\varepsilon\theta_m^{4+s}.
\end{equation}
First, we denote the sum of the first two terms in $V_{2,m}^{\perp}$ by $A_{m}$ and the last term by $B_{m}$. The definition of $A_{m}$ gives
\begin{align*}
\widehat{(\partial_{t}+\lambda_{m}\partial_{x})A_{m}}&=\left(\sigma_{m}(\eta)+i\lambda_{m}\eta\right)\widehat{A_{m}}
\\
&\quad+\chi_{\delta}(\eta)\mathbb{P}_{1}e_{m}(\eta)\left\langle \Gamma(E_{m},E_{m}),e_{m}(-\eta)\right\rangle_{\xi}\widehat{(\tilde\phi_{m})^{2}}\,.
\end{align*}
Hence \(A_m\) satisfies the same estimate as \(\tilde\phi_m\), namely.
\[
\left\vert \partial_{x}^{s}\left(\partial_{t}+\lambda_{m}\partial_{x}\right)A_{m}\right\vert \lesssim (\varepsilon+\varepsilon^{2})\theta_{m}^{4+s}\,, \ \ \
\text{for every }s\in \mathbb{N}\cup\left\{0\right\}\,.
\]
We next consider the last term $B_{m}$. After changing the integration variable $u=t-\tau$, we rewrite
\\
\(\partial_{x}^{s}(\partial_{t}+\lambda_{m}\partial_{x})B_m\) as
\begin{align*}
\partial_{x}^{s}(\partial_{t}+\lambda_{m}\partial_{x})B_{m}&=
\partial_{x}^{s}(\partial_{t}+\lambda_{m}\partial_{x})\int_{0}^{t}\mathbb{P}_{1}\left(\mathbb{G}_{\ast}^{u}+\mathcal{G}^{u}\right)\Gamma(\tilde\phi_{m}E_{m},\tilde\phi_{m}E_{m})(t-u)du
\\
 &=\partial_{x}^{s}\left[\mathbb P_1(\mathbb G_*^t+\mathcal G^t)
                       \Gamma(\tilde\phi_mE_m,\tilde\phi_mE_m)(0)\right]\\
 &\quad+\int_0^t\mathbb P_1(\mathbb G_*^{t-\tau}+\mathcal G^{t-\tau})
       \left[\partial_{x}^{s}(\partial_\tau+\lambda_m\partial_x)\right]
                       \Gamma(\tilde\phi_mE_m,\tilde\phi_mE_m)(\tau)\,d\tau\,,
\end{align*}
for every $s\in\mathbb{N}\cup\left\{0\right\}$.
The first term is estimated directly and decays rapidly. In the second term,
the characteristic derivative is bounded by
$C\varepsilon^2\theta_m^{4+s}$. This establishes
\eqref{V2-characteristic-bound}.
\end{remark}

\subsection{Remainder structure}
Set
\begin{equation}\label{eq-R-definition}
 R=f-\Phi-V^\perp.
\end{equation}
According to \eqref{leading fluid} and \eqref{leading-nonfluid},
\[
 \Phi+V^\perp=\varepsilon\mathbb G^tf_0
       +\int_0^t\mathbb G^{t-\tau}\Gamma(\Phi,\Phi)(\tau)\,d\tau.
\]
Subtracting this identity from the equation for $f$, and using the
symmetry of $\Gamma$, we obtain
\begin{align}\label{eq-R-exact}
 R(t)={}&\int_0^t\mathbb G^{t-\tau}\Big[
       \Gamma(R,R)+2\Gamma(R,\Phi+V^\perp)\notag\\
 &\hspace{34mm}+2\Gamma(\Phi,V^\perp)
                   +\Gamma(V^\perp,V^\perp)\Big](\tau)\,d\tau.
\end{align}

We decompose $R$ into fluid and non-fluid parts:
\begin{equation}\label{eq-R-p}
 R=R^F+R^\perp,\qquad
 R^F=\mathbb P_0R=\sum_{i=1}^3R_iE_i,\qquad
 R_i=\langle E_i,R\rangle_\xi,\qquad R^\perp=\mathbb P_1R.
\end{equation}
The exact equations for these components are therefore
\begin{align}\label{eq-R-i}
 R_i(t,x)={}&\int_0^t\Big\langle E_i,\mathbb G^{t-\tau}\Big[
       \Gamma(R,R)+2\Gamma(R,\Phi+V^\perp)\notag\\
 &\hspace{34mm}+2\Gamma(\Phi,V^\perp)
                   +\Gamma(V^\perp,V^\perp)\Big](\tau)\Big\rangle_\xi\,d\tau\,,
\end{align}
and
\begin{align}\label{eq-R-perp}
 R^\perp(t)={}&\int_0^t\mathbb P_1\mathbb G^{t-\tau}\Big[
       \Gamma(R,R)+2\Gamma(R,\Phi+V^\perp)\notag\\
 &\hspace{34mm}+2\Gamma(\Phi,V^\perp)
                   +\Gamma(V^\perp,V^\perp)\Big](\tau)\,d\tau.
\end{align}

\begin{proposition}[Remainder Structure]
\label{prop-remainder}
Let $\beta>\frac{3}{2}+\gamma$, $N_0>2N(1-\gamma)$, and
$0<\delta_0<\frac{1}{4}$. Fix an integer $k$ sufficiently large so that
\[
 2^{-k}\leq \min\left\{\frac{1-\gamma}{2N_{0}+1-\gamma}\,,\frac{1-\delta_{0}}{6-2\delta_{0}}\right\}\,,
\]
and set
\begin{equation}\label{eq-hat-beta}
 \hat\beta=\beta+2^k(N_0+\gamma).
\end{equation}
Assume that $f_0(x,\xi)=0$ for $|x|\geq1$ and that
$f_0\in L_x^\infty L^\infty_{\xi,\hat\beta}$.
For sufficiently small $\varepsilon>0$, the exact remainder components
admit the decompositions $R_i=R_{i,A}+R_{i,B}$ and
$R^\perp=R_A^\perp+R_B^\perp$, and satisfy
\begin{equation}\label{est-R-iA}
 |R_{i,A}|\lesssim\varepsilon^2(1+t+|x|)^{-N_0/(1-\gamma)},
\end{equation}
\begin{equation}\label{est-R-iB}
 |\partial_{x}^{s}R_{i,B}|\lesssim\varepsilon^2\left[
 \theta_i^{2-\delta_0+s}+(1+t)^{-\frac{1-\delta_{0}+2s}{4}}
 \left(\psi_i^{3/2}+\sum_{m\ne i}\tilde\psi_m^{3/2}\right)\right], \quad \text{for any } s\in\left\{0,1\right\}\,,
\end{equation}
\begin{equation}\label{est-R-perp-A}
 |R_A^\perp|_{L^\infty_{\xi,\beta}}
 \lesssim\varepsilon^2(1+t+|x|)^{-N_0/(1-\gamma)},
\end{equation}
and
\begin{equation}\label{est-R-perp-B}
 |R_B^\perp|_{L^\infty_{\xi,\beta}}
 \lesssim\varepsilon^2\sum_{m=1}^3\left[
 \theta_m^{3-\delta_0}+(1+t)^{-\frac{3-\delta_{0}}{4}}\psi_m^{3/2}\right].
\end{equation}
\end{proposition}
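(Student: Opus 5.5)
The proof will be a continuity (bootstrap) argument on the exact remainder equation \eqref{eq-R-exact}, run in parallel with the splitting \eqref{sec3-split-A}--\eqref{sec3-split-B}. The leading waves are already controlled: $\tilde\phi_m$ by Proposition~\ref{lemm tilde-phi-ast,sharp}, $V_{m,A},V_{m,B}$ by Proposition~\ref{prop-V-i}, and $V^\perp$ by Proposition~\ref{prop-V-perp} together with \eqref{V2-characteristic-bound}. With these in hand, I would write $R=R_A+R_B$ according to the classification.

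For the non-smooth part, $R_A$ collects $\int_0^t\mathbb G_*^{t-\tau}[\cdots]$ applied to every source that contains at least one $A$-factor. These factors are $R_A$, $V_{m,A}$, $V^\perp_{1,A}$, and the particle part of $V^\perp$ coming from $\varepsilon\mathbb P_1\mathbb G_*^tf_0$. The smooth part $R_B$ contains the $\mathbb G_\sharp$ Duhamel terms together with the $\mathbb G_*$-terms whose sources are products of $B$-factors. I would then take the ansatz that \eqref{est-R-iA}--\eqref{est-R-perp-B} hold with constant $M\varepsilon^2$.

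The key point is that \eqref{eq-R-exact} has no linear forcing. Every source term is either quadratic in $R$, or of the form $\Gamma(R,\Phi+V^\perp)$ of size $M\varepsilon^3$, or contains a factor of $V^\perp$ paired with $\Phi$ or $V^\perp$, of size $\varepsilon^2$. Thus the output coefficient is $C_0\varepsilon^2+C_1M\varepsilon^3+C_2M^2\varepsilon^4$, and the argument closes once $\varepsilon$ is small, exactly as in Proposition~\ref{prop-V-i}.

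The estimates for $R_A$ and $R^\perp_A$ then follow from Proposition~\ref{prop:linear-source-estimates}(i)--(ii). The exponentially damped convolution $e^{-c(t-\tau)}\langle x-y\rangle^{-N_0/(1-\gamma)}$ preserves $(1+t+|x|)^{-N_0/(1-\gamma)}$.

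The loss of velocity weight is where $\hat\beta$ and $k$ enter. Each application of \eqref{est-G-t-ast} costs a weight $\theta$ in exchange for spatial decay $\theta/(1-\gamma)$, and the nonlinear estimate \eqref{eq:Gamma-mixed-weight} costs a further factor $\nu\sim\langle\xi\rangle^\gamma$. I would therefore interpolate in velocity through a dyadic ladder of $k$ steps. At each level, the $L^\infty_{\xi,b}$ control at a larger weight (which is only bounded by \eqref{eq:particle-Duhamel-source}) is interpolated against the decaying low-weight bound, halving the exponent each time. This is why the losses sum to $2^k(N_0+\gamma)$ and why $2^{-k}$ must be below $(1-\gamma)/(2N_0+1-\gamma)$, so that the interpolated spatial rate still exceeds $N_0/(1-\gamma)$. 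The second constraint, $2^{-k}\le(1-\delta_0)/(6-2\delta_0)$, is what yields the $\delta_0$-loss in the smooth exponents $\theta^{2-\delta_0}$ and $(1+t)^{-(1-\delta_0)/4}$.

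For $R^\perp_B$ I would use \eqref{eq:regular-microscopic-source} and the microscopic fluid bounds of Lemma~\ref{Prop-linear-fluid}, which gain one extra power of $\theta$. This turns the quadratic sources, of size $\theta_m^{2}\cdot\theta_m^{2}$ or smaller, into $\theta_m^{3-\delta_0}$, with the coupling tails converted via Table~\ref{table-1}.

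The main obstacle is $R_{i,B}$, and specifically the fluid-branch contributions $\langle E_i,\mathbb G_l^{t-\tau}\cdot\rangle$ with sources of size only $\varepsilon^2\theta^3$. These arise from $2\Gamma(\Phi,V^\perp)$, dominated by $\Gamma(\tilde\phi_mE_m,V^\perp_{2,m})$, and from the $\varepsilon^3$ mixed terms $\Gamma(R_B,\tilde\phi_m)$. When $l=i$ these are handled by direct convolution, using $\mathbb P_0\Gamma=0$ to gain one derivative on the kernel; splitting at $t/2$ gives $\theta_i^{2}*\theta_i^{3}\lesssim\theta_i^{2-\delta_0}$ up to the logarithm, which is absorbed by the $\delta_0$. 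When $m\ne l$ I would apply the Cancellation Mechanism of Remark~\ref{remark-cancellation mechanism}, with kernel $H_{lm}$ built as in \eqref{def-H-ij} and source $AB=\tilde\phi_m\cdot\langle\Gamma(E_m,V^\perp_{2,m}),\cdot\rangle$. The required characteristic-derivative hypothesis \eqref{asumption-cancellation-mechanism-2} is supplied by \eqref{eq-tilde-j-par t+lamda j par x} and \eqref{V2-characteristic-bound}. For the mixed terms involving $R_B$ itself, no characteristic structure is available, so only direct bounds apply. I expect these to force exactly the weaker $(1+t)^{-(1-\delta_0)/4}\psi^{3/2}$ tail, so the delicate step is checking the appendix convolution tables for $\theta_l^{2}*[\theta_m^{1}\cdot(1+t)^{-(1-\delta_0)/4}\tilde\psi^{3/2}]$ with $m\ne l$. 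For the derivative bound ($s=1$), I would place $\partial_x$ on the kernel over $[0,t/2]$ and on the $B$-sources over $[t/2,t]$, using the $s=1$ ansatz.
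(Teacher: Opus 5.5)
Your overall architecture matches the paper: the bootstrap on \eqref{eq-R-exact} with the $A/B$ classification, the bookkeeping $C_0\varepsilon^2+C_1M\varepsilon^3+C_2M^2\varepsilon^4$, velocity interpolation up to $\hat\beta$, and the Cancellation Mechanism for $\Gamma(\tilde\phi_jE_j,V^\perp_{2,j})$ seen through a different branch. The gap is your claim that the mixed terms involving $R_B$ have ``no characteristic structure'' and can be bounded directly. That fails, and with it the closure of the ansatz.

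Take $j\neq i$ and the term $\int_0^t\langle E_i,\mathbb G_i^{t-\tau}2\Gamma(R_{j,B}E_j,\tilde\phi_jE_j)(\tau)\rangle_\xi\,d\tau$. Your ansatz puts $\theta_j^{2-\delta_0}$ into $R_{j,B}$, so the source contains $\varepsilon^3M\theta_j^{3-\delta_0}$, while the kernel has size $\theta_i^2$. Choose $x$ between $\lambda_it$ and $\lambda_jt$ at distance $\gtrsim t$ from all three characteristics, and define $\tau^*$ by $\lambda_j\tau^*+\lambda_i(t-\tau^*)=x$, so $\tau^*\sim t-\tau^*\sim t$. The region $|\tau-\tau^*|\leq\sqrt t$, $|y-\lambda_j\tau|\leq\sqrt{1+\tau}$ alone gives $\int_0^t\theta_i^2(t-\tau)*_x\theta_j^{3-\delta_0}(\tau)\,d\tau\gtrsim t^{-3/2+\delta_0/2}$. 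The right side of \eqref{est-R-iB} at this point is only of order $t^{-7/4+\delta_0/4}$. So direct bounds give at best $\psi_i^{3/2}+\tilde\psi_j^{3/2}$, with a growing loss, and never the extra factor $(1+t)^{-(1-\delta_0)/4}$ you anticipate.

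The same defect occurs in $R_B^\perp$, where you propose only direct bounds. For $\mathbb P_1\mathbb G_i^{t-\tau}\Gamma(\tilde\phi_jE_j,V^\perp_{2,j})$ and $\mathbb P_1\mathbb G_i^{t-\tau}\Gamma(R_{j,B}E_j,\tilde\phi_jE_j)$ with $j\neq i$, the direct convolution is at least of order $t^{-2}$ between the characteristics. This is the $\theta_i^3*\psi_j^3$ entry of Table~\ref{table-1}, and it exceeds the $t^{-9/4+\delta_0/4}$ allowed by \eqref{est-R-perp-B}.

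The missing idea is that the piece of $R_{j,B}$ carrying $\theta_j^{2-\delta_0}$ does have characteristic structure, and you must isolate it. Split $R_{i,B}=R_{i,B_1}+R_{i,B_2}$, where $R_{i,B_1}$ collects all Duhamel contributions through the diagonal branch $\mathbb G_i$ and $R_{i,B_2}$ the rest. Give them separate ansatz bounds: $R_{i,B_1}$ carries $\theta_i^{2-\delta_0}$ plus the tail, while $R_{i,B_2}$ carries only $(1+t)^{-(1-\delta_0)/4}\sum_m\tilde\psi_m^{3/2}$.

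By construction, $\widehat{(\partial_t+\lambda_j\partial_x)R_{j,B_1}}=(\sigma_j(\eta)+i\lambda_j\eta)\widehat{R_{j,B_1}}+\chi_\delta(\eta)\langle e_j(\eta),E_j\rangle_\xi\langle\widehat{S_j},e_j(-\eta)\rangle_\xi$, where $S_j$ is the microscopic source fed into $\mathbb G_j$. This is exactly hypothesis \eqref{asumption-cancellation-mechanism-2}, just as \eqref{cancellation-assumption--V-j-B1} is for $V_{j,B_1}$.

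Then split the source into two parts:
\begin{itemize}
\item $\mathcal A_j$, the terms containing $R_{j,B_2}$ or $V_{j,B_2}$. These already carry the extra time decay and are bounded directly.
\item $\mathcal B_j=2\Gamma(R_{j,B_1}E_j,(\tilde\phi_j+V_{j,B_1})E_j)$. Apply the integration by parts on $[\sqrt t,t]$ to this part, using the kernel $H_{ij}$ of \eqref{def-H-ij} for the fluid component and $H_{ij}^\perp$ of \eqref{def-H-ij-perp} for the microscopic one.
\end{itemize}
A kernel of the same type, built from $\mathbb P_1e_i(\eta)$, also handles $\mathbb P_1\mathbb G_i\Gamma(\tilde\phi_jE_j,V^\perp_{2,j})$.

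A minor correction: the condition $2^{-k}\leq\frac{1-\delta_0}{6-2\delta_0}$ does not create the $\delta_0$-loss, which comes from logarithms in the convolutions. Its role is to let the interpolated high-weight source in the $\mathbb G_*+\mathcal G$ part of $R_B^\perp$ keep the profile $\theta^{3-\delta_0}$.
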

\begin{proof}
The convolution estimates used in this proof are listed in Tables \ref{table-1}--\ref{table-2} in the Appendix. We use the Cancellation Mechanism described in Remark~\ref{remark-cancellation mechanism}. For convenience, set
\begin{align*}
 V_A^\perp&=V_{1,A}^\perp,&
 V_B^\perp&=V_{1,B}^\perp+\sum_mV_{2,m}^\perp.
\end{align*}
Write $R=R_A+R_B$. Applying
\eqref{sec3-split-A}--\eqref{sec3-split-B} to each term in
\eqref{eq-R-exact}, we define
\begin{align}\label{eq-R-A}
 R_A(t)=\int_0^t\mathbb G_*^{t-\tau}\Big[
 &\Gamma(R_A,R_A)+2\Gamma(R_A,R_B)
      +2\Gamma(R_A,\Phi+V^\perp)\notag\\
 &+2\Gamma(R_B,\Phi_A+V_A^\perp)
      +2\Gamma(\Phi_A,V^\perp)+2\Gamma(\Phi_B,V_A^\perp)\notag\\
 &+\Gamma(V_A^\perp,V_A^\perp)
      +2\Gamma(V_A^\perp,V_B^\perp)\Big](\tau)\,d\tau
\end{align}
and
\begin{align}\label{eq-R-B}
 R_B(t)={}&\int_0^t\mathbb G_\sharp^{t-\tau}\Big[
       \Gamma(R,R)+2\Gamma(R,\Phi+V^\perp)\notag\\
 &\hspace{33mm}+2\Gamma(\Phi,V^\perp)
                   +\Gamma(V^\perp,V^\perp)\Big](\tau)\,d\tau\notag\\
 &+\int_0^t\mathbb G_*^{t-\tau}\Big[
       \Gamma(R_B,R_B)+2\Gamma(R_B,\Phi_B+V_B^\perp)\notag\\
 &\hspace{33mm}+2\Gamma(\Phi_B,V_B^\perp)
                   +\Gamma(V_B^\perp,V_B^\perp)\Big](\tau)\,d\tau.
\end{align}
Adding \eqref{eq-R-A} and \eqref{eq-R-B} recovers
\eqref{eq-R-exact}. We then set
\begin{equation}\label{decom-R-i}
 R_{i,A}=\langle E_i,R_A\rangle_\xi,\qquad
 R_{i,B}=\langle E_i,R_B\rangle_\xi,
\end{equation}
\begin{equation}\label{decom-R-perp}
 R_A^\perp=\mathbb P_1R_A,\qquad R_B^\perp=\mathbb P_1R_B.
\end{equation}
Accordingly, $R_A=\sum_iR_{i,A}E_i+R_A^\perp$ and
$R_B=\sum_iR_{i,B}E_i+R_B^\perp$. Let $h_{A}$ denote the full bracket in \eqref{eq-R-A},
and let $h_B$ denote the bracket in the second integral of \eqref{eq-R-B}.
For $X\in\left\{A,B\right\}$, write $h_{X}=h_{X}^{(0)}+h_{X}^{(1)}$, where $h_{X}\vert_{R=0}=h_{X}^{(0)}$.
Thus, $h_{X}^{(0)}$ depends only on the leading waves, whereas $h_{X}^{(1)}$
also depends on the remainder. We split the non-smooth components as
\[
 R_{i,A}=\int_0^t\left\langle E_{i},\mathbb G_*^{t-\tau}h_{A}^{(1)}(\tau)\right\rangle_{\xi}\,d\tau
 +\int_0^t\left\langle E_{i},\mathbb G_*^{t-\tau}h_{A}^{(0)}(\tau)\right\rangle_{\xi}\,d\tau=I_{1}+I_{2}\]
and
\[
 R_{A}^{\perp}=\int_0^t\mathbb{P}_{1}\mathbb G_*^{t-\tau}h_{A}^{(1)}(\tau)d\tau+\int_0^t\mathbb{P}_{1}\mathbb G_*^{t-\tau}h_{A}^{(0)}(\tau)\,d\tau=I_{1}^{\perp}+I_{2}^{\perp}\,.
\]

We further split the smooth fluid component as
\begin{equation}\label{sec32-split-B12}
 R_{i,B}=R_{i,B_{1}}+R_{i,B_{2}}.
\end{equation}

The two parts are defined by
\begin{align}\label{sec32-R-B1-definition}
 R_{i,B_{1}}
&=\int_0^t\langle E_i,\mathbb G_i^{t-\tau}h_{A}^{(1)}(\tau)\rangle_\xi\,d\tau+\int_0^t\langle E_i,\mathbb G_i^{t-\tau}h_{A}^{(0)}(\tau)\rangle_\xi\,d\tau
\notag\\[2mm]
&\quad+\int_0^t\langle E_i,\mathbb G_i^{t-\tau}\left[\Gamma(R_{B},R_{B})+2\Gamma(R_{B},V_{B}^{\perp})+2\Gamma(R_{B}^{\perp},\Phi_{B})\right](\tau)\rangle_\xi\,d\tau
\notag\\[2mm]
&\quad+\int_0^t\left\langle E_i,\mathbb G_i^{t-\tau}\Bigg[\sum_{j\neq k}2\Gamma\big(R_{j,B}E_{j},(\tilde\phi_{k}+V_{k,B})E_{k}\big)\Bigg](\tau)\right\rangle_\xi\,d\tau
\notag\\[2mm]
&\quad+\int_0^t\left\langle E_i,\mathbb G_i^{t-\tau}2\Gamma\big(R_{i,B}E_{i},(\tilde\phi_{i}+V_{i,B})E_{i}\big)(\tau)\right\rangle_\xi\,d\tau
\notag\\[2mm]
&\quad+\int_0^t\left\langle E_i,\mathbb G_i^{t-\tau}\Bigg[\sum_{j\neq i}2\Gamma\big(R_{j,B}E_{j},(\tilde\phi_{j}+V_{j,B})E_{j}\big)\Bigg](\tau)\right\rangle_\xi\,d\tau
\notag\\[2mm]
&\quad+\int_0^t\left\langle E_i,\mathbb G_i^{t-\tau}\Bigg[2\Gamma(\Phi_{B},V_{1,B}^{\perp})+2\Gamma\Bigg(\sum_{m=1}^{3}(V_{m,B}E_{m}),\sum_{l=1}^{3}(V_{2,l}^{\perp})\Bigg)+\Gamma\Bigg(V_{B}^{\perp},V_{B}^{\perp}\Bigg)\Bigg](\tau)\right\rangle_\xi\,d\tau
\notag\\[2mm]
&\quad+\int_0^t\left\langle E_i,\mathbb G_i^{t-\tau}\Bigg[\sum_{j\neq k}2\Gamma\Big(\tilde\phi_{j}E_{j},V_{2,k}^{\perp}\Big)\Bigg](\tau)\right\rangle_\xi\,d\tau
\notag\\[2mm]
&\quad+\int_0^t\left\langle E_i,\mathbb G_i^{t-\tau}2\Gamma\Big(\tilde\phi_{i}E_{i},V_{2,i}^{\perp}\Big)(\tau)\right\rangle_\xi\,d\tau
+\int_0^t\left\langle E_i,\mathbb G_i^{t-\tau}\Bigg[\sum_{j\neq i}2\Gamma\Big(\tilde\phi_{j}E_{j},V_{2,j}^{\perp}\Big)\Bigg](\tau)\right\rangle_\xi\,d\tau
\notag\\[2mm]
=&\,K_{1}+\cdots+K_{4}+J_{1}+J_{2}+O_{1}+O_{2}+L_{1}+L_{2}\,,
\end{align}
and
\begin{align}\label{sec32-R-B2-definition}
 R_{i,B_{2}}=&\int_0^t\left\langle E_i,
      \left( \mathbb{G}^{t-\tau}_{\sharp}-\mathbb{G}_{i}^{t-\tau}\right)h_{A}^{(1)}(\tau)\right\rangle_{\xi}d\tau+\int_0^t\left\langle E_i,
      \left( \mathbb{G}^{t-\tau}_{\sharp}-\mathbb{G}_{i}^{t-\tau}\right)h_{A}^{(0)}(\tau)\right\rangle_{\xi}d\tau
      \notag\\[2mm]
      &+\int_{0}^{t}\left\langle E_{i},\left(\mathbb{G}_{\ast}^{t-\tau}+\mathcal{G}^{t-\tau}\right)h_{B}^{(1)}(\tau)\right\rangle_{\xi}d\tau+\int_{0}^{t}\left\langle E_{i},\left(\mathbb{G}_{\ast}^{t-\tau}+\mathcal{G}^{t-\tau}\right)h_{B}^{(0)}(\tau)\right\rangle_{\xi}d\tau
      \notag\\[2mm]
      &+\int_0^t\left\langle E_i,
      \Bigg(\sum_{l\neq i}\mathbb{G}_{l}^{t-\tau}\Bigg)h_{B}^{(1)}(\tau)\right\rangle_{\xi}d\tau
      +\int_0^t\left\langle E_i,
      \Bigg(\sum_{l\neq i}\mathbb{G}_{l}^{t-\tau}\Bigg)h_{B}^{(0)}(\tau)\right\rangle_{\xi}d\tau
      \notag\\[2mm]
      =&\,N_{1}+N_{2}+M_{1}+M_{2}+Q_{1}+Q_{2}\,.
\end{align}
The component \(R_{i,B_{1}}\) collects all terms involving the diagonal spectral kernel $\mathbb G_i$. Finally, we decompose $R_{B}^{\perp}$ as
\begin{align}\label{sec32-R-B-perp-definition}
 R_{B}^{\perp}=&\int_0^t\mathbb{P}_{1}
       \mathbb{G}^{t-\tau}_{\sharp}h_{A}^{(1)}(\tau)d\tau+\int_0^t\mathbb{P}_{1}
       \mathbb{G}^{t-\tau}_{\sharp}h_{A}^{(0)}(\tau)d\tau
      \notag\\[2mm]
&+\int_{0}^{t}\mathbb{P}_{1}\left(\mathbb{G}_{\ast}^{t-\tau}+\mathcal{G}^{t-\tau}\right)h_{B}^{(1)}(\tau)d\tau+\int_{0}^{t}\mathbb{P}_{1}\left(\mathbb{G}_{\ast}^{t-\tau}+\mathcal{G}^{t-\tau}\right)h_{B}^{(0)}(\tau)d\tau
      \notag\\[2mm]
&+\int_0^t\mathbb{P}_{1}\mathbb G_{L;0}^{t-\tau}\left[\Gamma(R_{B},R_{B})+2\Gamma(R_{B},V_{B}^{\perp})+2\Gamma(R_{B}^{\perp},\Phi_{B})\right](\tau)\,d\tau
\notag\\[2mm]
&+\int_0^t\mathbb{P}_{1} G_{L;0}^{t-\tau}\Bigg[\sum_{j\neq k}2\Gamma\big(R_{j,B}E_{j},(\tilde\phi_{k}+V_{k,B})E_{k}\big)\Bigg](\tau)\,d\tau
\notag\\[2mm]
&+\int_0^t\mathbb{P}_{1}\Bigg[\sum_{m=1}^{3}\mathbb G_m^{t-\tau}2\Gamma\big(R_{m,B}E_{m},(\tilde\phi_{m}+V_{m,B})E_{m}\big)\Bigg](\tau)\,d\tau
\notag\\[2mm]
&+\left[\sum_{i=1}^{3}\int_0^t\mathbb{P}_{1}\Bigg[\sum_{j\neq i}\mathbb G_i^{t-\tau}2\Gamma\big(R_{j,B}E_{j},(\tilde\phi_{j}+V_{j,B})E_{j}\big)\Bigg](\tau)\,d\tau\right]
\notag\\[2mm]
&+\int_0^t\mathbb{P}_{1}\mathbb G_{L;0}^{t-\tau}\Bigg[2\Gamma(\Phi_{B},V_{1,B}^{\perp})+2\Gamma\Bigg(\sum_{m=1}^{3}(V_{m,B}E_{m}),\sum_{l=1}^{3}(V_{2,l}^{\perp})\Bigg)+\Gamma\bigg(V_{B}^{\perp},V_{B}^{\perp}\bigg)\Bigg](\tau)\,d\tau
\notag\\[2mm]
&+\int_0^t\mathbb{P}_{1}\mathbb G_{L;0}^{t-\tau}\Bigg[\sum_{j\neq k}2\Gamma\Big(\tilde\phi_{j}E_{j},V_{2,k}^{\perp}\Big)\Bigg](\tau)\,d\tau+\int_0^t\mathbb{P}_{1}\Bigg[\sum_{m=1}^{3}\mathbb G_m^{t-\tau}2\Gamma\Big(\tilde\phi_{m}E_{m},V_{2,m}^{\perp}\Big)\Bigg](\tau)\,d\tau
\notag\\[2mm]
&\quad+\int_0^t\mathbb{P}_{1}\Bigg[\sum_{i=1}^{3}\mathbb G_i^{t-\tau}\Bigg(\sum_{j\neq i}2\Gamma\Big(\tilde\phi_{j}E_{j},V_{2,j}^{\perp}\Big)\Bigg)\Bigg](\tau)\,d\tau
\notag\\[2mm]
=&K_{1}^{\perp}+K_{2}^{\perp}+M_{1}^{\perp}+M_{2}^{\perp}+K_{3}^{\perp}+K_{4}^{\perp}+J_{1}^{\perp}+J_{2}^{\perp}+O_1^{\perp}+O_2^{\perp}+L_1^{\perp}+L_{2}^{\perp}\,.
\end{align}
 Throughout this proof, \(s\) and \(n\) denote differentiation orders, with $s\in \left\{0,1\right\}$ and $n\in \mathbb{N}\cup \left\{0\right\}$. We first estimate the terms involving only the leading waves. We begin with the non-smooth components $R_{i,A}$ and $R_{A}^{\perp}$. In view of Lemma~\ref{Prop-linear-fluid}, Proposition~\ref{PF-estimate}, and \eqref{est-theta-1-j-times-theta-1-k}, one yields
\[
|I_{2}|+|I_2^{\perp}|_{L^\infty_{\xi,\beta}}\lesssim\varepsilon^2 (1+t+|x|)^{-N_0/(1-\gamma)}\,.
\]
We next turn to $R_{i,B}$. For the component $R_{i,B_{1}}$, we have
\[
\vert \partial_{x}^{s}K_{2}\vert\lesssim \varepsilon^{2}\theta_{i}^{2+s}\,,
\]
\[
\vert O_{1}\vert\lesssim \varepsilon^{2}(\theta_{i}^{2})\ast_{x,t}\Bigg(\sum_{m=1}^{3}\theta_{m}^{\frac{7}{2}}+(1+t)^{-\frac{1}{2}}\psi_{m}^{3}\Bigg)\lesssim \varepsilon^{2}(1+t)^{-\frac{1}{4}}\Bigg(\psi_{i}^{\frac{3}{2}}+\sum_{k\neq i}\tilde\psi_{k}^{\frac{3}{2}}\Bigg)\,,
\]
\[
\vert\partial_{x}O_{1}\vert\lesssim \varepsilon^{2}(\theta_{i}^{3})\ast_{x,t}\Bigg(\sum_{m=1}^{3}\theta_{m}^{\frac{7}{2}}+(1+t)^{-\frac{1}{2}}\psi_{m}^{3}\Bigg)\lesssim \varepsilon^{2}(1+t)^{-\frac{3}{4}}\ln(1+t)\Bigg(\psi_{i}^{\frac{3}{2}}+\sum_{k\neq i}\tilde\psi_{k}^{\frac{3}{2}}\Bigg)\,,
\]
\[
 \vert \partial_{x}^{s}O_{2}\vert\lesssim \varepsilon^{2}(\theta_{i}^{2+s})\ast_{x,t}\Bigg(\sum_{m=1}^{3}(1+t)^{-1}\psi_{m}^{2N}\Bigg)\lesssim \varepsilon^{2}(1+t)^{-(\frac{1}{4}+\frac{s}{2})}\Bigg(\psi_{i}^{\frac{3}{2}}+\sum_{k\neq i}\tilde\psi_{k}^{\frac{3}{2}}\Bigg)\,,
\]
and
\[
 \vert \partial_{x}^{s}L_{1}\vert\lesssim \varepsilon^{2}(1+t)^{-\frac{s}{2}}(\theta_{i}^{2})\ast_{x,t}(\theta_{i}^{3})\lesssim \varepsilon^{2}(1+t)^{-\frac{s}{2}}\theta_{i}^{2-\delta_{0}}\,.
\]
To estimate $L_{2}$, we use the Cancellation Mechanism. Define kernel $H$ by
\[
H(t,x)=\frac{1}{2\pi}\int_{\mathbb{R}}e^{ix\eta+\sigma_{i}(\eta)t}(i)^{-1}\chi_{\delta}(\eta)\left\langle e_{i}(\eta),E_{i}\right\rangle_{\xi}d\eta\,,
\]
and let \(H_1\) be the corresponding kernel in \eqref{asumption-cancellation-mechanism-1}.
Fix \(i\ne j\). For \(t\ge4\), each component of $L_{2}$ can be written as
\[
L_{2}^{ij}\equiv\left(\int_{0}^{\sqrt{t}}+\int_{\sqrt{t}}^{t}\right)\partial_{x}H(t-\tau)\ast_{x}\mathcal{Q}(\tilde\phi_{j}E_{j},V_{2,j}^{\perp})(\tau)d\tau=L^{ij}_{2\left\{[0,\sqrt{t}]\right\}}+L^{ij}_{2\left\{[\sqrt{t},t]\right\}}\,,
\]
where $\mathcal{Q}(\cdot,\cdot)$ is defined by
\[
\mathcal{Q}(g,h)=\mathcal{F}^{-1}\left(\rho(\eta)\frac{2}{\eta}\left\langle \widehat{\Gamma(g,h)},e_{i}(-\eta)\right\rangle_{\xi}\right)\,,
\]
where $\rho$ is a cutoff function satisfying $\rho=1$ on $ \text{supp} \  \chi_{\delta}$.
The first term is estimated directly:
\[
\vert \partial_{x}^{s}L^{ij}_{2\left\{[0,\sqrt{t}]\right\}}\vert \lesssim\varepsilon^{2}\int_{0}^{\sqrt{t}}\theta_{i}^{2+s}(t-\tau)\ast \theta_{j}^{3}(\tau)d\tau\lesssim\varepsilon^{2}\theta_{i}^{2-\frac{\delta_{0}}{2}+s}\,.
\]
Applying the Cancellation Mechanism together with \eqref{tilde-phi-j-cancell-1} and \eqref{V2-characteristic-bound}, we obtain
\begin{align*}
 &\left\vert(\lambda_j-\lambda_i)
     \int_{\sqrt t}^t\partial_x^{s+1}H(t-\tau)*_x\mathcal{Q}(\tilde\phi_{j}E_{j},V_{2,j}^{\perp})(\tau)\right\vert\,d\tau\\
 &\leq\left\vert-\Bigg[\partial_x^sH(t-\tau)*_x\mathcal{Q}(\tilde\phi_{j}E_{j},V_{2,j}^{\perp})\Bigg]_{\sqrt t}^{t}
   -\int_{\sqrt t}^t\partial_x^sH_1(t-\tau)*_x\partial_x\mathcal{Q}(\tilde\phi_{j}E_{j},V_{2,j}^{\perp})(\tau)\right\vert\,d\tau\\
 &\quad+\left\vert\int_{\sqrt t}^t\partial_x^sH(t-\tau)*_x
(\partial_\tau+\lambda_j\partial_x)\mathcal{Q}(\tilde\phi_{j}E_{j},V_{2,j}^{\perp})(\tau)\,d\tau\right\vert
\\
&\lesssim\varepsilon^2\Big[\theta_j^{3+s}
   +\theta_i^{1+s}(t-\sqrt t)*_x\theta_j^3(\sqrt t)+\int_{\sqrt t}^{\frac{t}{2}}\theta_i^{2+s}(t-\tau)*_x\theta_j^4(\tau)\,d\tau+\int_{\frac{t}{2}}^{t}\theta_i^{2}(t-\tau)*_x\theta_j^{4+s}(\tau)\,d\tau\\
 &+\int_{\sqrt t}^{\frac{t}{2}}\theta_i^{1+s}(t-\tau)*_x\theta_j^5(\tau)\,d\tau+\int_{\frac{t}{2}}^t\theta_i^1(t-\tau)*_x\theta_j^{5+s}(\tau)\,d\tau\Big]\\
 &\lesssim\varepsilon^2(1+t)^{-\frac{1}{4}-\frac{s}{2}}\big(\psi_i^{3/2}+\tilde\psi_j^{3/2}\big).
\end{align*}
Combining the estimates on the two time intervals gives
\[
\vert \partial_{x}^{s}L_{2}\vert\lesssim \varepsilon^{2}\left[(1+t)^{-\frac{1}{4}-\frac{s}{2}}\Bigg(\psi_i^{3/2}+\sum_{k\neq i}\tilde\psi_k^{3/2}\Bigg)+\theta_{i}^{2-\frac{\delta_{0}}{2}+s}\right]\,.
\]
We next estimate $R_{i,B_{2}}$. We obtain
\[
\vert \partial_{x}^{s}N_{2}\vert\lesssim \varepsilon^{2}\left(\sum_{k\neq i}\theta_{k}^{3+s}\right)\,, \ \ \ \vert \partial_{x}^{s}M_{2}\vert\lesssim\varepsilon^{2}\Bigg(\sum_{m=1}^{3}\theta_{m}^{3+s}+(1+t)^{-\frac{1}{2}}\psi_{m}^{3}\Bigg)\,,
\]
\[
\vert Q_{2}\vert\lesssim \varepsilon^{2}\Bigg(\sum_{l\neq i}\theta^{3}_{l}\Bigg)\ast_{x,t}\Bigg(\sum_{m=1}^{3}\theta_{m}^{3}+(1+t)^{-\frac{1}{2}}\psi_{m}^{3}\Bigg)\lesssim \varepsilon^{2}(1+t)^{-\frac{1}{2}}\ln(1+t)\Bigg(\sum_{m=1}^{3}\tilde\psi_{m}^{\frac{3}{2}}\Bigg)\,,
\]
\begin{align*}
\vert \partial_{x}Q_{2}\vert&\lesssim\varepsilon^{2}\left[\int_{0}^{\frac{t}{2}}\Bigg(\sum_{l\neq i}\theta^{4}_{l}\Bigg)(t-\tau)\ast_{x}\Bigg(\sum_{m=1}^{3}\theta_{m}^{3}+(1+t)^{-\frac{1}{2}}\psi_{m}^{3}\Bigg)(\tau)d\tau\right.
\\
&\left.\quad+\int_{\frac{t}{2}}^{t}\Bigg(\sum_{l\neq i}\theta^{3}_{l}\Bigg)(t-\tau)\ast_{x}\Bigg(\sum_{m=1}^{3}\theta_{m}^{4}+(1+t)^{-\frac{1}{2}}\psi_{m}^{3}\Bigg)(\tau)d\tau\right]
\\
&\lesssim \varepsilon^{2}(1+t)^{-1}\ln(1+t)\Bigg(\sum_{m=1}^{3}\tilde\psi_{m}^{\frac{3}{2}}\Bigg)\,.
\end{align*}

For the non-fluid component $R_{B}^{\perp}$, the same argument gives
\[
\vert \partial_{x}^{s}K_{2}^{\perp}\vert_{L^{\infty}_{\xi,\beta}}\lesssim\varepsilon^{2}\left(\sum_{m=1}^{3}\theta_{m}^{3+s}\right)\,, \ \ \ \vert \partial_{x}^{s}M_{2}^{\perp}\vert_{L^{\infty}_{\xi,\beta}}\lesssim\varepsilon^{2}\Bigg(\sum_{m=1}^{3}\theta_{m}^{3+s}+(1+t)^{-\frac{1}{2}}\psi_{m}^{3}\Bigg)\,,
\]
\begin{align*}
\vert O_{1}^{\perp}\vert_{L^{\infty}_{\xi,\beta}}
&\lesssim \varepsilon^{2}\int_{0}^{t}\Bigg(\sum_{i=1}^{3}\theta_{i}^{3}\Bigg)(t-\tau)\ast_{x}\Bigg(\sum_{m=1}^{3}\theta_{m}^{\frac{7}{2}}+(1+t)^{-\frac{1}{2}}\psi_{m}^{3}\Bigg)(\tau)d\tau
\\
&\lesssim \varepsilon^{2}\ln(1+t)(1+t)^{-\frac{3}{4}}\Bigg(\sum_{m=1}^{3}\psi_{m}^{\frac{3}{2}}\Bigg)\,,
\end{align*}
\begin{align*}
\vert \partial_{x}O_{1}^{\perp}\vert_{L^{\infty}_{\xi,\beta}}
&\lesssim \varepsilon^{2}\int_{0}^{t}\Bigg(\sum_{i=1}^{3}\theta_{i}^{4}\Bigg)(t-\tau)\ast_{x}\Bigg(\sum_{m=1}^{3}\theta_{m}^{\frac{7}{2}}+(1+t)^{-\frac{1}{2}}\psi_{m}^{3}\Bigg)(\tau)d\tau
\\
&\lesssim \varepsilon^{2}(1+t)^{-\frac{3}{4}}\Bigg(\sum_{m=1}^{3} \tilde\psi_{m}^{\frac{3}{2}}\Bigg)\,,
\end{align*}
\[
 \vert \partial_{x}^{s}O_{2}^{\perp}\vert_{L^{\infty}_{\xi,\beta}}\lesssim \varepsilon^{2}\Bigg(\sum_{l=1}^{3}\theta_{l}^{3+s}\Bigg)\ast_{x,t}\Bigg(\sum_{m=1}^{3}(1+t)^{-1}\psi_{m}^{2N}\Bigg)\lesssim \varepsilon^{2}(1+t)^{-(\frac{3}{4}+\frac{s}{2})}\Bigg(\sum_{m=1}^{3}\psi_{m}^{\frac{3}{2}}\Bigg)
\]
and
\[
 \vert \partial_{x}^{s}L_{1}^{\perp}\vert_{L^{\infty}_{\xi,\beta}}\lesssim \varepsilon^{2}\Bigg[\sum_{m=1}^{3}(1+t)^{-(\frac{s+1}{2})}(\theta_{m}^{2})\ast_{x,t}(\theta_{m}^{3})\Bigg]\lesssim \varepsilon^{2}(1+t)^{-(\frac{s+1}{2})}\Bigg(\sum_{m=1}^{3}\theta_{m}^{2-\delta_{0}}\Bigg)\,.
\]
Applying the Cancellation Mechanism to $L^{\perp}_{2}$ gives
\[
\vert \partial_{x}^{s}L_{2}^{\perp}\vert_{L^{\infty}_{\xi,\beta}}\lesssim \varepsilon^{2}\Bigg[\sum_{m=1}^{3}(1+t)^{-\frac{3}{4}-\frac{s}{2}}\psi_m^{3/2}+\theta_{m}^{3-\frac{\delta_{0}}{2}+s}\Bigg]\,.
\]
The argument is the same as for $L_{2}$, so we omit the details.
Motivated by these estimates, we assume that the remainder $R$ satisfies the following ansatz:

\begin{align}
 |R_{i,A}|+|R_A^\perp|_{L^\infty_{\xi,\beta}}
 &\leq M\varepsilon^2(1+t+|x|)^{-\frac{N_0}{1-\gamma}},
 \label{ansatz-R-A}\\
  |\partial_x^sR_{i,B1}|
 &\leq M\varepsilon^2\Big[\theta_i^{2+s-\delta_0}+(1+t)^{-\frac{1-\delta_{0}}{4}-\frac{s}{2}}
       \Big(\psi_i^{3/2}+\sum_{m\ne i}(\tilde\psi_m^{3/2})\Big)\Big]\,,
\label{ansatz-R-i-B1}\\
 |\partial_x^sR_{i,B2}|
 &\leq M\varepsilon^2(1+t)^{-\frac{1-\delta_{0}}{4}-\frac{s}{2}}\Bigg(\sum_{m=1}^{3}\tilde\psi_m^{3/2}\Bigg)\,,
\label{ansatz-R-i-B2}\\
 |R_B^\perp|_{L^\infty_{\xi,\beta}}
 &\leq M\varepsilon^2\sum_{m}^{3}\Big[
       \theta_m^{3-\delta_0}
       +(1+t)^{-\frac{3-\delta_{0}}{4}}\psi_m^{3/2}\Big]\,,
\label{ansatz-R-B-perp}\\
|\partial_xR_B^\perp|_{L^\infty_{\xi,\beta}}
 &\leq M\varepsilon^2\sum_{m=1}^{3}\Big[
       \theta_m^{4-2\delta_0}
       +(1+t)^{-\frac{3-\delta_{0}}{4}}\tilde\psi_m^{3/2}\Big]\,,
\label{ansatz-der-R-B-perp}
\end{align}
where the constant $M$ will be chosen later.
We now estimate the terms involving the remainder $R$.
Only $I_{1}^{\perp}$ in $R_{A}^{\perp}$ and $M_{1}^{\perp}$ in $R_{B}^{\perp}$ require separate treatment of the high-velocity weights.
We first estimate the remaining terms and postpone the treatment of $I_{1}^{\perp}$ and $M_{1}^{\perp}$.
For $R_{i,A}$, Proposition~\ref{PF-estimate} gives
\[
|I_{1}|\lesssim(\varepsilon^4M^2+\varepsilon^{3}M)(1+t+|x|)^{-\frac{N_0}{1-\gamma}}\,.
\]
For $R_{i,B_{1}}$, Lemma~\ref{Prop-linear-fluid}, Proposition~\ref{PF-estimate}, and \eqref{est-theta-1-j-times-theta-1-k} yield
\[
\vert \partial_{x}^{s}K_{1}\vert\lesssim (\varepsilon^4M^2+\varepsilon^{3}M)\theta_{i}^{2+s}\,,
\]
\[
\vert K_{3}\vert\lesssim\varepsilon^3M(\varepsilon M+1)(\theta_{i}^{2})\ast_{x,t}\Bigg(\sum_{k=1}^{3}(1+t)^{-\frac{1-\delta_{0}}{4}}\psi_{k}^{3}\Bigg)
\lesssim\varepsilon^3M(\varepsilon M+1)(1+t)^{-\frac{1-\delta_{0}}{4}}\Bigg(\psi_{i}^{\frac{3}{2}}+\sum_{k\neq i}\tilde\psi_{k}^{\frac{3}{2}}\Bigg)\,,
\]
\begin{align*}
\vert \partial_{x}K_{3}\vert&\lesssim\varepsilon^3M(\varepsilon M+1)\left[\int_{0}^{\frac{t}{2}}(\theta_{i}^{3})(t-\tau)\ast_{x}\Bigg(\sum_{k=1}^{3}(1+t)^{-\frac{1-\delta_{0}}{4}}\psi_{k}^{3}\Bigg)(\tau)d\tau\right.
\\
&\quad\left.+\int_{\frac{t}{2}}^{t}(\theta_{i}^{2})(t-\tau)\ast_{x}\Bigg(\sum_{k=1}^{3}(1+t)^{-\frac{3-\delta_{0}}{4}}\psi_{k}^{3}\Bigg)(\tau)d\tau\right]
\\
&\lesssim\varepsilon^3M(\varepsilon M+1)(1+t)^{-\frac{3-\delta_{0}}{4}}\Bigg(\psi_{i}^{\frac{3}{2}}+\sum_{k\neq i}\tilde\psi_{k}^{\frac{3}{2}}\Bigg)\,,
\end{align*}
\begin{align*}
 \vert \partial_{x}^{s}K_{4}\vert&\lesssim (\varepsilon^4M^2+\varepsilon^{3}M)(1+t)^{-\frac{s}{2}}(\theta_{i}^{2})\ast_{x,t}\Bigg(\sum_{m=1}^{3}(1+t)^{-\frac{1-\delta_{0}}{4}}\psi_{m}^{3}\Bigg)
 \\
 &\lesssim (\varepsilon^4M^2+\varepsilon^{3}M)(1+t)^{-(\frac{1-\delta_{0}}{4}+\frac{s}{2})}\Bigg(\psi_{i}^{\frac{3}{2}}+\sum_{k\neq i}\tilde\psi_{k}^{\frac{3}{2}}\Bigg)\,,
\end{align*}
and
\begin{align*}
 \vert \partial_{x}^{s}J_{1}\vert&\lesssim (\varepsilon^4M^2+\varepsilon^{3}M)(1+t)^{-\frac{s}{2}}(\theta_{i}^{2})\ast_{x,t}\Bigg[\theta_{i}^{3-\delta_{0}}+\sum_{m=1}^{3}\left[(1+t)^{-\frac{1-\delta_{0}}{4}}\psi_{m}^{3}\right]\Bigg]
 \\
 &\lesssim (\varepsilon^4M^2+\varepsilon^{3}M)(1+t)^{-\frac{s}{2}}\Bigg[\theta_{i}^{2-\delta_{0}}+(1+t)^{-\frac{1-\delta_{0}}{4}}\Bigg(\psi_{i}^{\frac{3}{2}}+\sum_{k\neq i}\tilde\psi_{k}^{\frac{3}{2}}\Bigg)\Bigg]\,.
\end{align*}
The terms \(J_2\) and \(J_2^\perp\) will be treated together below. For $R_{i,B_{2}}$, Lemma~\ref{Prop-linear-fluid}, Proposition~\ref{PF-estimate}, and \eqref{est-theta-1-j-times-theta-1-k} give
\[
\vert \partial_{x}^{s}N_{1}\vert\lesssim(\varepsilon^4M^2+\varepsilon^{3}M)\left(\sum_{k\neq i}\theta_{k}^{3+s}\right)\,,
\]
\[
  \vert \partial_{x}^{s}M_{1}\vert\lesssim(\varepsilon^4M^2+\varepsilon^{3}M)\Bigg(\sum_{m=1}^{3}\theta_{m}^{3-\delta_{0}+s(1-\delta_{0})}+(1+t)^{-\frac{3-\delta_{0}}{4}}\psi_{m}^{\frac{3+s}{2}}\Bigg)\,,
\]
\begin{align*}
\vert \partial_{x}^{s}Q_{1}\vert&\lesssim(\varepsilon^4M^2+\varepsilon^{3}M)(1+t)^{-\frac{s}{2}}\Bigg(\sum_{l\neq i}\theta^{3}_{l}\Bigg)\ast_{x,t}\Bigg(\sum_{m=1}^{3}\theta_{m}^{3-\delta_{0}}+(1+t)^{-\frac{1-\delta_{0}}{4}}\psi_{m}^{3}\Bigg)
\\
&\lesssim(\varepsilon^4M^2+\varepsilon^{3}M)(1+t)^{-\frac{1-\delta_{0}+2s}{4}}\Bigg(\sum_{m=1}^{3}\tilde\psi_{m}^{\frac{3}{2}}\Bigg)\,.
\end{align*}
For $R_{B}^{\perp}$, we have
\[
\vert \partial_{x}^{s}K_{1}^{\perp}\vert_{L^{\infty}_{\xi,\beta}}\lesssim(\varepsilon^4M^2+\varepsilon^{3}M)\left(\sum_{m=1}^{3}\theta_{m}^{3+s}\right)\,.
\]
For $K_{3}^{\perp}$, $K_{4}^{\perp}$, and $J_{1}^{\perp}$, we transfer one Fourier factor $\eta$ to the source on the time interval $[\frac{t}{2},t]$ to obtain
\begin{align*}
\vert \partial_{x}^{s} K_{3}^{\perp}\vert_{L^{\infty}_{\xi,\beta}}\,, \  \vert \partial_{x}^{s} K_{4}^{\perp}\vert_{L^{\infty}_{\xi,\beta}}&\lesssim\varepsilon^3M(\varepsilon M+1)\left[\int_{0}^{\frac{t}{2}}\Bigg(\sum_{m=1}^{3}\theta_{m}^{3+s}\Bigg)(t-\tau)\ast_{x}\Bigg(\sum_{k=1}^{3}(1+t)^{-\frac{1-\delta_{0}}{4}}\psi_{k}^{3}\Bigg)(\tau)d\tau\right.
\\
&\quad\left.+\int_{\frac{t}{2}}^{t}\Bigg(\sum_{m=1}^{3}\theta_{m}^{2+s}\Bigg)(t-\tau)\ast_{x}\Bigg(\sum_{k=1}^{3}(1+t)^{-\frac{3-\delta_{0}}{4}}\psi_{k}^{3}\Bigg)(\tau)d\tau\right]
\\
&\lesssim\varepsilon^3M(\varepsilon M+1)(1+t)^{-\frac{3-\delta_{0}}{4}-\frac{s}{2}}\Bigg(\sum_{m=1}^{3}\psi_{m}^{\frac{3}{2}}\Bigg)\,,
\end{align*}
\begin{align*}
\vert \partial_{x}^{s} J_{1}^{\perp}\vert_{L^{\infty}_{\xi,\beta}}&\lesssim\varepsilon^3M(\varepsilon M+1)\left[\int_{0}^{\frac{t}{2}}\Bigg[\sum_{m=1}^{3}\theta_{m}^{3+s}(t-\tau)\ast_{x}\Bigg(\theta_{m}^{3-\delta_{0}}+\sum_{k=1}^{3}\bigg((1+t)^{-\frac{1-\delta_{0}}{4}}\psi_{k}^{3}\bigg)\Bigg)(\tau)\Bigg]d\tau\right.
\\
&\quad\left.+\int_{\frac{t}{2}}^{t}\Bigg[\sum_{m=1}^{3}\theta_{m}^{2+s}(t-\tau)\ast_{x}\Bigg(\theta_{m}^{4-\delta_{0}}+\sum_{k=1}^{3}\bigg((1+t)^{-\frac{3-\delta_{0}}{4}}\psi_{k}^{3}\bigg)\Bigg)(\tau)\Bigg]d\tau\right]
\\
&\lesssim\varepsilon^3M(\varepsilon M+1)\left[\sum_{m=1}^{3}\theta_{m}^{3+s(1-\delta_{0})-\delta_{0}}+(1+t)^{-\frac{3-\delta_{0}}{4}-\frac{s}{2}}\psi_{m}^{\frac{3}{2}}\right]\,.
\end{align*}
We next apply the Cancellation Mechanism to $J_{2}$ and $J_{2}^{\perp}$.
For each component $\tilde J_{(i,j)}$ of $J_{2}$ and $\tilde J_{(i,j)}^{\perp}$ of $J_{2}^{\perp}$, we define
\[
\tilde J_{(i,j)}=\int_0^t\left\langle E_i,\mathbb G_i^{t-\tau}2\Gamma\big(R_{j,B}E_{j},(\tilde\phi_{j}+V_{j,B})E_{j}\big)(\tau)\right\rangle_\xi\,d\tau\,,
\]
and
\[
\tilde J_{(i,j)}^{\perp}=\int_0^t\mathbb{P}_{1}\mathbb G_i^{t-\tau}2\Gamma\big(R_{j,B}E_{j},(\tilde\phi_{j}+V_{j,B})E_{j}\big)(\tau)\,d\tau\,,
\]
for $1\leq j\leq 3$ and $j\neq i$. We divide their source $2\Gamma\big(R_{j,B}E_{j},(\tilde\phi_{j}+V_{j,B})E_{j}\big)$ into
\begin{equation}
\label{eq-mathcal-Aj}
\mathcal{A}_{j}=2\Gamma\big(R_{j,B_{2}}E_{j},(\tilde\phi_{j}+V_{j,B})E_{j}\big)+2\Gamma\big(R_{j,B_{1}}E_{j},V_{j,B_{2}}E_{j}\big)\,,
\end{equation}
and
\begin{equation}
\label{eq-mathcal-Bj}
\mathcal{B}_{j}=2\Gamma\big(R_{j,B_{1}}E_{j},(\tilde\phi_{j}+V_{j,B_{1}})E_{j}\big)\,.
\end{equation}
For convenience, set
\[
\tilde{J}_{(i,j)}\bigg\vert_{\mathcal{A}_{j}}=\int_0^t\left\langle E_i,\mathbb G_i^{t-\tau}\mathcal{A}_{j}(\tau)\right\rangle_\xi\,d\tau\,,\qquad
\tilde{J}_{(i,j)}\bigg\vert_{\mathcal{B}_{j}}=\int_0^t\left\langle E_i,\mathbb G_i^{t-\tau}\mathcal{B}_{j}(\tau)\right\rangle_\xi\,d\tau\,,
\]
and
\[
\tilde{J}_{(i,j)}^{\perp}\bigg\vert_{\mathcal{A}_{j}}=\int_0^t\mathbb{P}_{1}\mathbb G_i^{t-\tau}\mathcal{A}_{j}(\tau)\,d\tau\,,\qquad
\tilde{J}_{(i,j)}^{\perp}\bigg\vert_{\mathcal{B}_{j}}=\int_0^t\mathbb{P}_{1}\mathbb G_i^{t-\tau}\mathcal{B}_{j}(\tau)\,d\tau\,.
\]
For $\tilde{J}_{(i,j)}\big\vert_{\mathcal{A}_{j}}$,
we can estimate directly:
\begin{align*}
\left\vert\partial_{x}^{s}\tilde{J}_{(i,j)}\right\vert\Bigg\vert_{\mathcal{A}_{j}}&\lesssim\varepsilon^3M(\varepsilon M+1)(1+t)^{-\frac{s}{2}}(\theta_{i}^{2})\ast_{x,t}\left[(1+t)^{-\frac{1-\delta_{0}}{4}}\Bigg(\sum_{m=1}^{3}\psi_{m}^{3}\Bigg)\right]
\\
&\lesssim\varepsilon^3M(\varepsilon M+1)(1+t)^{-\frac{s}{2}}(1+t)^{-\frac{1-\delta_{0}}{4}}\Bigg(\psi_{i}^{\frac{3}{2}}+\sum_{k\neq i}\tilde\psi_{k}^{\frac{3}{2}}\Bigg)\,.
\end{align*}
For $\tilde{J}_{(i,j)}\big\vert_{\mathcal{B}_{j}}$, we apply the Cancellation Mechanism.
Using the definition in \eqref{def-H-ij}, we write $\tilde{J}_{(i,j)}\vert_{\mathcal{B}}$ as
\begin{align*}
\tilde{J}_{(i,j)}\big\vert_{\mathcal{B}_{j}}&=\left(\int_{0}^{\sqrt{t}}+\int_{\sqrt{t}}^{t}\right)\partial_{x}H_{ij}(t-\tau)\ast_{x}2\left[R_{j,B_{1}}\tilde\phi_{j}+R_{j,B_{1}}V_{j,B_{1}}\right](\tau)d\tau
\\[2mm]
&\equiv\tilde{J}_{(i,j)\left\{[0,\sqrt{t}]\right\}}\Big\vert_{\mathcal{B}_{j}}+\tilde{J}_{(i,j)\left\{[\sqrt{t},t]\right\}}\Big\vert_{\mathcal{B}_{j}}\,.
\end{align*}
The ansatz \eqref{ansatz-R-i-B1} for $R_{j,B_{1}}$ and the pointwise estimates of $\tilde\phi_{j}$ and $V_{j,B_{1}}$ give
\begin{align*}
\left\vert\partial_{x}^{s}\tilde{J}_{(i,j)\left\{[0,\sqrt{t}]\right\}}\Big\vert_{\mathcal{B}_{j}}\right\vert&\lesssim\varepsilon^3M(\varepsilon M+1)\int_{0}^{\sqrt{t}}\theta_{i}^{2+s}(t-\tau)\ast_{x}\left(\theta_{j}^{3-\delta_{0}}+(1+t)^{-\frac{1-\delta_{0}}{4}}\Bigg(\sum_{m=1}^{3}\psi_{m}^{3}\Bigg)\right)(\tau)d\tau
\\
&\lesssim\varepsilon^3M(\varepsilon M+1)(1+t)^{-\frac{s}{2}}\left[\theta_{i}^{2-\delta_{0}}+(1+t)^{-\frac{1-\delta_{0}}{4}}\Bigg(\psi_{i}^{\frac{3}{2}}+\sum_{k\neq i}\tilde\psi_{k}^{\frac{3}{2}}\Bigg)\right]
\end{align*}
The term $R_{j,B_{1}}$ satisfies the Cancellation Mechanism assumption \eqref{asumption-cancellation-mechanism-2} :
\begin{equation}
\widehat{\left(\partial_{t}+\lambda_{j}\partial_{x}\right)R_{j,B_{1}}}=(-A_{j}\eta^{2}+\eta^{3}R_{\sigma,j}(\eta))\widehat{R_{j,B_{1}}}+\mathcal{R}(\eta,t)\,,
\end{equation}
where
\begin{equation*}
\mathcal{R}(\eta,t)=\chi_{\delta}(\eta)\left\langle e_{j}(\eta),E_{j}\right\rangle_{\xi}\left\langle \widehat{(h_{A}+h_{B})}, e_{j}(-\eta)\right\rangle_{\xi}\,.
\end{equation*}
Together with the expression of $(\partial_{t}+\lambda_{j}\partial_{x})V_{j,B_{1}}$ \eqref{cancellation-assumption--V-j-B1}\,, one yields
\begin{align*}
 \left(\lambda_{j}-\lambda_{i}\right)\partial_{x}^{s}\tilde{J}_{(i,j)\left\{[\sqrt{t},t]\right\}}\bigg\vert_{\mathcal{B}_{j}}&= -\big[\partial_x^sH_{ij}(t-\tau)*_x2(R_{j,B_{1}})(\tilde\phi_{j}+V_{j,B_{1}})(\tau)\big]_{\sqrt t}^{t} \\
 &\quad-\int_{\sqrt t}^t\partial_x^sH_1(t-\tau)*_x\partial_x\left[2(R_{j,B_{1}})(\tilde\phi_{j}+V_{j,B_{1}})\right](\tau)\,d\tau\\
 &\quad+\int_{\sqrt t}^t\partial_x^sH_{ij}(t-\tau)*_x
              \left[(\partial_\tau+\lambda_j\partial_x)(2R_{j,B_{1}})\right](\tilde\phi_{j}+V_{j,B_{1}})(\tau)\,d\tau
              \\
&\quad+\int_{\sqrt t}^t\partial_x^sH_{ij}(t-\tau)*_x
(2R_{j,B_{1}})\left[(\partial_\tau+\lambda_j\partial_x)(\tilde\phi_{j}+V_{j,B_{1}})\right](\tau)\,d\tau\,.
\end{align*}
It follows that
\begin{align*}
\left\vert\partial_{x}^{s}\tilde{J}_{(i,j)\left\{[\sqrt{t},t]\right\}}\bigg\vert_{\mathcal{B}_{j}}\right\vert&\lesssim\varepsilon^3M(\varepsilon M+1)\left\{
\left[\theta_{j}^{3-\delta_{0}+s}+(1+t)^{-\frac{1-\delta_{0}+2s}{4}}\Bigg(\sum_{m=1}^{3}\psi_{m}^{3}\Bigg)\right]\right.
\\
&\quad+\theta_{i}^{1+s}(t-\sqrt{t})\ast_{x}\left[\theta_{j}^{3-\delta_{0}}+(1+t)^{-\frac{1-\delta_{0}}{4}}\Bigg(\sum_{m=1}^{3}\psi_{m}^{3}\Bigg)\right](\sqrt{t})
\\
&\quad+\int_{\sqrt{t}}^{t}\theta_{i}^{2+s}(t-\tau)\ast_{x}\left[\theta_{j}^{4-\delta_{0}}+(1+t)^{-\frac{3-\delta_{0}}{4}}\Bigg(\sum_{m=1}^{3}\psi_{m}^{3}\Bigg)\right](\tau)d\tau
\\
&\quad+\int_{\sqrt{t}}^{t}\theta_{i}^{1+s}(t-\tau)\ast_{x}\left[\theta_{j}^{5-\delta_{0}}+(1+t)^{-\frac{5-\delta_{0}}{4}}\Bigg(\sum_{m=1}^{3}\psi_{m}^{3}\Bigg)\right](\tau)d\tau
\\
&\quad\left.+\int_{\sqrt{t}}^{t}\theta_{i}^{2+s}(t-\tau)\ast_{x}\left[\theta_{j}^{4-\delta_{0}}+(1+t)^{-\frac{3-\delta_{0}}{4}}\Bigg(\sum_{m=1}^{3}\psi_{m}^{3}\Bigg)\right](\tau)d\tau\right\}
\\
&\lesssim \varepsilon^3M(\varepsilon M+1)(1+t)^{-\frac{1-\delta_{0}+2s}{4}}\Bigg(\psi_{i}^{\frac{3}{2}}+\sum_{k\neq i}\tilde\psi_{k}^{\frac{3}{2}}\Bigg)\,.
\end{align*}
For the first derivative of $\tilde{J}_{(i,j)}\big\vert_{\mathcal{B}_{j}}$, we further split $[\sqrt{t},t]$ into $[\sqrt{t},\frac{t}{2}]\cup [\frac{t}{2},t]$ and then calculate convolution directly.
This completes the estimate of $J_{2}$ and verifies the ansatz for $R_{i,B_{1}}$ in \eqref{ansatz-R-i-B1}.

We next consider the non-fluid component $\tilde{J}_{(i,j)}^{\perp}$.
For the contributions from $\mathcal{A}_{j}$ defined by \eqref{eq-mathcal-Aj}, direct estimation gives
\begin{align*}
\left\vert\partial_{x}^{s}\tilde{J}_{(i,j)}^{\perp}\right\vert\Bigg\vert_{\mathcal{A}_{j}}&\lesssim\varepsilon^3M(\varepsilon M+1)(1+t)^{-\frac{s}{2}}(\theta_{i}^{3})\ast_{x,t}\left[(1+t)^{-\frac{1-\delta_{0}}{4}}\Bigg(\sum_{m=1}^{3}\psi_{m}^{3}\Bigg)\right]
\\
&\lesssim\varepsilon^3M(\varepsilon M+1)(1+t)^{-\frac{s}{2}}(1+t)^{-\frac{3-\delta_{0}}{4}}\Bigg(\sum_{m=1}^{3}\psi_{m}^{\frac{3}{2}}\Bigg)\,.
\end{align*}
For the contributions from $\mathcal{B}_{j}$ defined by \eqref{eq-mathcal-Bj},
we use the Cancellation Mechanism. We write $\tilde{J}_{(i,j)}^{\perp}\big\vert_{\mathcal{B}_{j}}$ as
\begin{align*}
\tilde{J}_{(i,j)}^{\perp}\big\vert_{\mathcal{B}_{j}}&=\left(\int_{0}^{\sqrt{t}}+\int_{\sqrt{t}}^{t}\right)\partial_{x}^{2}H_{ij}^{\perp}(t-\tau)\ast_{x}\left[2(R_{j,B_{1}})(\tilde\phi_{j}+V_{j,B_{1}})\right](\tau)d\tau
\\[2mm]
&\equiv\tilde{J}^{\perp}_{(i,j)\left\{[0,\sqrt{t}]\right\}}\Big\vert_{\mathcal{B}}+\tilde{J}^{\perp}_{(i,j)\left\{[\sqrt{t},t]\right\}}\Big\vert_{\mathcal{B}}\,,
\end{align*}
where the kernel $H_{ij}^{\perp}$ is defined by
\begin{equation}
\label{def-H-ij-perp}
H_{ij}^{\perp}(t,x)=\frac{1}{2\pi}\int_{\mathbb R}e^{ix\eta+\sigma_{i}t}\chi_{\delta}(\eta)\frac{\left(\mathbb{P}_{1}e_{i}(\eta)\right)\left\langle \Gamma(E_{j},E_{j}),e_{i}(-\eta)\right\rangle_{\xi}}{(i\eta)^{2}}d\eta\,,
\end{equation}
The corresponding kernel $H_{1}^{\perp}$ plays the role of $H_{1}$ in \eqref{asumption-cancellation-mechanism-1} and satisfies \eqref{asumption-cancellation-mechanism-1}.
Repeating the argument for $\tilde{J}_{(i,j)}\vert_{\mathcal{B}}$, and using the ansatz \eqref{ansatz-R-i-B1} for $R_{j,B_{1}}$
together with the pointwise estimates \eqref{est-tilde-phi-i} and \eqref{est-V-i-B} for $\tilde\phi_{j}$ and $V_{j,B_{1}}$, we obtain
\[
\vert \tilde{J}_{(i,j)}^{\perp}\vert\bigg\vert_{\mathcal{B}_{j}}
\lesssim\varepsilon^3M(\varepsilon M+1)\left[\theta_{i}^{3-\delta_{0}}+(1+t)^{-\frac{3-\delta_{0}}{4}}\Bigg(\psi_{i}^{\frac{3}{2}}+\sum_{k\neq i}\tilde\psi_{k}^{\frac{3}{2}}\Bigg)\right]\,.
\]
For the first derivative, we transfer one derivative to the source on $[\frac{t}{2},t]$ to obtain
\begin{align*}
\left\vert \partial_{x}\tilde{J}_{(i,j)}^{\perp}\right\vert\bigg\vert_{\mathcal{B}_{j}}
&\lesssim\varepsilon^3M(\varepsilon M+1)\left\{\int_{0}^\frac{t}{2}\theta^{4}_{i}(t-\tau)\ast_{x}\left[\theta_{j}^{3-\delta_{0}}+(1+t)^{-\frac{1-\delta_{0}}{4}}\Bigg(\sum_{m=1}^{3}\psi_{m}^{3}\Bigg)\right](\tau)d\tau\right.
\\
&\quad\left.+\int_\frac{t}{2}^{t}\theta^{3}_{i}(t-\tau)\ast_{x}\left[\theta_{j}^{4-\delta_{0}}+(1+t)^{-\frac{3-\delta_{0}}{4}}\Bigg(\sum_{m=1}^{3}\psi_{m}^{3}\Bigg)\right](\tau)d\tau\right\}
\\
&\lesssim\varepsilon^3M(\varepsilon M+1)(1+t)^{-\frac{3-\delta_{0}}{4}}\Bigg(\sum_{m=1}^{3}\tilde\psi_{m}^{\frac{3}{2}}\Bigg)\,.
\end{align*}
Thus, $J_{2}$ and $J_{2}^{\perp}$ satisfy the ansatz for $R_{i,B_{1}}$ in \eqref{ansatz-R-i-B1}
and for $R_{B}^{\perp}$ in \eqref{ansatz-R-B-perp}, respectively.

It remains to estimate $I_{1}^{\perp}$ in $R_{A}^{\perp}$ and $M_{1}^{\perp}$ in $R_{B}^{\perp}$. To estimate them, their sources require estimates with higher velocity weights. Too deal with those sources estimates with higher velocity weights, we use weighted interpolation:
\\
For $0\leq\sigma\leq N_0$, weighted interpolation gives
\[
\begin{aligned}
|g(x,t)|_{L_{\xi,\beta+\sigma+\gamma}^\infty}
&\leq
 |g(x,t)|_{L_{\xi,\beta}^\infty}^{1-2^{-k}}
 |g(x,t)|_{L_{\xi,\beta+2^k(\sigma+\gamma)}^\infty}^{2^{-k}}
\leq
 |g(x,t)|_{L_{\xi,\beta}^\infty}^{1-2^{-k}}
 \|g(t)\|_{L_x^\infty
 L_{\xi,\beta+2^k(\sigma+\gamma)}^\infty}^{2^{-k}}\,.
\end{aligned}
\]
Combining this interpolation inequality with Lemma 2.3, we obtain
\begin{align}
|\Gamma(g,h)|_{L_{\xi,\beta+\sigma}^\infty}
&\lesssim
 |g|_{L_{\xi,\beta+\sigma+\gamma}^\infty}
 |h|_{L_{\xi,\beta+\sigma+\gamma}^\infty}
\notag\\
&\lesssim \left(\|g(t)\|_{L_x^\infty
 L_{\xi,\hat{\beta}}^\infty}\|h(t)\|_{L_x^\infty
 L_{\xi,\hat{\beta}}^\infty}\right)^{2^{-k}}
 \left(
 |g|_{L_{\xi,\beta}^\infty}
 |h|_{L_{\xi,\beta}^\infty}
 \right)^{1-2^{-k}}\,,  \ \  \text{for  }\ \hat{\beta}\geq\beta+2^k(\sigma+\gamma)\,.
 \label{eq-interpolation high velocity}
\end{align}
Not all source terms in $I_1^\perp$ and $M_1^\perp$
require velocity interpolation. For the leading-wave factors,
we use the corresponding estimates for each component
at the required velocity weights.
For the macroscopic factors, the Gaussian decay of the  \(E_l\) and \(E_m\) give
\[
|\Gamma(g(t,x)E_l,h(t,x)E_m)|_{L_{\xi,b}^\infty}\lesssim_{b}\vert g(t,x)\vert \vert h(t,x)\vert \ \ \ \text{for any  } 1\leq l, m \leq 3\,, \ b>0\,,
\]
and for the initial singular contribution, \eqref{est-G-t-ast} gives
\[
\begin{aligned}
\bigl|
\varepsilon\mathbb P_1\mathbb G_{\ast}^{t}f_0(x)
\bigr|_{L_{\xi,\beta+\sigma+\gamma}^{\infty}}
&\lesssim
\varepsilon e^{-ct}
\langle x\rangle^{-\frac{N_0}{1-\gamma}}
\|f_0\|_
 {L_x^\infty L_{\xi,\beta+\sigma+\gamma+N_0}^{\infty}}
\lesssim
\varepsilon
(1+t+|x|)^{-\frac{N_0}{1-\gamma}}
\|f_0\|_{L_x^\infty L_{\xi,\hat\beta}^{\infty}}\,.
\end{aligned}
\]
Because interpolation need the higher velocity weights estimates, we need to establish the estimates with higher velocity weights for $R_{A}$ and $R_{B}$. Write
\[
\begin{aligned}
R_A(t)&=\int_0^t\mathbb {G}_*^{t-\tau}h_A^{(1)}(\tau)\,d\tau+\int_0^t\mathbb G_*^{t-\tau}h_A^{(0)}(\tau)\,d\tau
\\
&=R_{A,1}+R_{A,2}\,,\\
R_B(t)&=\left[\int_0^t\mathbb G_\sharp^{t-\tau}
       (h_A^{(1)}+h_B^{(1)})(\tau)\,d\tau
   +\int_0^t\mathbb G_*^{t-\tau}h_B^{(1)}(\tau)\,d\tau\right]
   \\
   &\quad+\left[\int_0^t\mathbb G_\sharp^{t-\tau}
       (h_A^{(0)}+h_B^{(0)})(\tau)\,d\tau
   +\int_0^t\mathbb G_*^{t-\tau}h_B^{(0)}(\tau)\,d\tau\right]
   \\
   &=R_{B,1}+R_{B,2}
\end{aligned}
\]
By Proposition~\ref{PF-estimate}, the estimates for $\mathbb{G}_{\ast}^{t}$ also apply to any fixed Picard term $h^{(j)}$.
Lemma~1.2 in \cite{[Haitao]}, Lemma~\ref{basic-Gamma}, Lemma~\ref{Prop-linear-fluid}, Proposition~\ref{PF-estimate}, and the leading-wave estimates then give
\begin{align*}
 \vert R_{A,2}\vert_{L^{\infty}_{x}L^{\infty}_{\xi,\hat{\beta}}}&\lesssim\sup_{\xi}\int_{0}^{t}e^{-c_{0}\nu(\xi)(t-\tau)}\nu(\xi)\left\vert \nu^{-1}h_{A}^{(0)}\right\vert_{L^{\infty}_{x}L^{\infty}_{\xi,\hat{\beta}}}(\tau)d\tau
 \\
 &\lesssim\varepsilon^{2} \sup_{\xi}\int_{0}^{t}e^{-c_{0}\nu(\xi)(t-\tau)}\nu(\xi)(1+\tau)^{-\frac{N_{0}}{1-\gamma}-\frac{1}{2}}d\tau
 \\
 &\lesssim \varepsilon^{2} (1+t)^{-\frac{N_{0}}{1-\gamma}-\frac{1}{2}}\,,
\end{align*}
and
\begin{align*}
  &\vert \partial_{x}^{s}R_{B,2}\vert_{L^{\infty}_{x}L^{\infty}_{\xi,\hat{\beta}}}
  \\
  &\lesssim
  \int_{0}^{t}(1+t-\tau)^{-\frac{1+s}{2}}\left\vert\nu^{-1}(h_{A}^{(0)}+h_{B}^{(0)})\right\vert_{L^{\infty}_{x}L^{\infty}_{\xi,\hat{\beta}}}(\tau)d\tau
  +\sup_{\xi}\int_{0}^{t}e^{-c_{0}\nu(\xi)(t-\tau)}\nu(\xi)\left\vert \nu^{-1}\partial_{x}^{s}h_{B}^{(0)}\right\vert_{L^{\infty}_{x}L^{\infty}_{\xi,\hat{\beta}}}(\tau)d\tau
  \\
  &\lesssim \varepsilon^{2}\left[\int_{0}^{t}(1+t-\tau)^{-\frac{1+s}{2}}(1+\tau)^{-\frac{3}{2}}d\tau+\sup_{\xi}\int_{0}^{t}e^{-c_{0}\nu(\xi)(t-\tau)}\nu(\xi)(1+\tau)^{-\frac{3+s}{2}}d\tau\right]
  \\
  &\lesssim\varepsilon^{2}(1+t)^{-\frac{1+s}{2}}\,.
\end{align*}
We impose the following large-time ansatz on $R_{A}$ and $R_{B}$:

\begin{equation}
\label{asatz-large-time-R}
\|R_A(t)\|_{L_x^\infty L_{\xi,\hat\beta}^\infty}\leq \varepsilon^{2}M(1+t)^{-\frac{N_{0}}{1-\gamma}}\,, \ \ \ \|\partial_{x}^{s}R_B(t)\|_{L_x^\infty L_{\xi,\hat\beta}^\infty}\leq \varepsilon^{2}M(1+t)^{-\frac{1-\delta_{0}+s}{2}}\,,
\end{equation}
 where $M$ is the same constant as in the ansatz \eqref{ansatz-R-A}--\eqref{ansatz-der-R-B-perp} and will be chosen later.
We then have
\begin{align*}
 \vert R_{A,1}\vert_{L^{\infty}_{x}L^{\infty}_{\xi,\hat{\beta}}}&\lesssim\varepsilon^{3}M(\varepsilon M+1) \sup_{\xi}\int_{0}^{t}e^{-c_{0}\nu(\xi)(t-\tau)}\nu(\xi)(1+\tau)^{-\frac{N_{0}}{1-\gamma}-\frac{1-\delta_{0}}{2}}d\tau
 \\
 &\lesssim \varepsilon^{3}M(\varepsilon M+1) (1+t)^{-\frac{N_{0}}{1-\gamma}}\,,
\end{align*}
and
\begin{align*}
  &\vert \partial_{x}^{s}R_{B,1}\vert_{L^{\infty}_{x}L^{\infty}_{\xi,\hat{\beta}}}
  \\
   &\lesssim
  \int_{0}^{t}(1+t-\tau)^{-\frac{1+s}{2}}\left\vert\nu^{-1}(h_{A}^{(1)})\right\vert_{L^{\infty}_{x}L^{\infty}_{\xi,\hat{\beta}}}(\tau)d\tau+\int_{0}^{\frac{t}{2}}(1+t-\tau)^{-\frac{1+s}{2}}\left\vert\nu^{-1}(h_{B}^{(1)})\right\vert_{L^{\infty}_{x}L^{\infty}_{\xi,\hat{\beta}}}(\tau)d\tau
  \\
  &\quad+\int_{\frac{t}{2}}^{t}(1+t-\tau)^{-\frac{1}{2}}\left\vert\nu^{-1}\partial_{x}^{s}(h_{B}^{(1)})\right\vert_{L^{\infty}_{x}L^{\infty}_{\xi,\hat{\beta}}}(\tau)d\tau
  +\sup_{\xi}\int_{0}^{t}e^{-c_{0}\nu(\xi)(t-\tau)}\nu(\xi)\left\vert \nu^{-1}\partial_{x}^{s}h_{B}^{(1)}\right\vert_{L^{\infty}_{x}L^{\infty}_{\xi,\hat{\beta}}}(\tau)d\tau
  \\
  &\lesssim \varepsilon^{3}M(\varepsilon M+1) \left[\int_{0}^{t}(1+t-\tau)^{-\frac{1+s}{2}}(1+\tau)^{-\frac{N_{0}}{1-\gamma}-\frac{1-\delta_{0}}{2}}d\tau+\int_{0}^{\frac{t}{2}}(1+t-\tau)^{-\frac{1+s}{2}}(1+\tau)^{-1+\frac{\delta_{0}}{2}}d\tau\right.
  \\
  &\left.\quad+\int_{\frac{t}{2}}^{t}(1+t-\tau)^{-\frac{1}{2}}(1+\tau)^{-1+\frac{\delta_{0}}{2}-\frac{s}{2}}d\tau
  +\sup_{\xi}\int_{0}^{t}e^{-c_{0}\nu(\xi)(t-\tau)}\nu(\xi)(1+\tau)^{-1+\frac{\delta_{0}}{2}-\frac{s}{2}}d\tau\right]
  \\
  &\lesssim\varepsilon^{3}M(\varepsilon M+1) (1+t)^{-\frac{1+s-\delta_{0}}{2}}\,.
\end{align*}
These estimates show that $R_{A}$ and $R_{B}$ satisfy the ansatz \eqref{asatz-large-time-R}.
These time-decay rates will be used in the interpolation argument below.

Combining this inequality
\begin{equation}
(1+t+\vert x-\lambda_{i}t\vert^{2})^{-a}\leq (2+\vert\lambda_{i}\vert)^{a}(1+t+\vert x\vert)^{-a}\,, \ \ \ \text{for }a>0\,,
\end{equation}
with the ansatz for $R_{A}$ and $R_{B}$ in \eqref{ansatz-R-A}--\eqref{ansatz-der-R-B-perp},
their large-time bounds \eqref{asatz-large-time-R}, and the pointwise estimates for the leading waves, applying the velocity interpolation \eqref{eq-interpolation high velocity} gives
\begin{align*}
\left\vert h_{A}^{(1)}\right\vert_{L^{\infty}_{\xi,\beta+N_{0}}}&\lesssim\left(\Vert R_{A}\Vert_{L^{\infty}_{x}L^{\infty}_{\hat{\beta}}}\right)^{2^{-(k-1)}}\left(\vert R_{A}\vert_{L^{\infty}_{\xi,\beta}}\right)^{2-2^{-(k-1)}}
\\
&\quad+\left(\Vert R_{A}\Vert_{L^{\infty}_{x}L^{\infty}_{\hat{\beta}}}\Vert R_{B}\Vert_{L^{\infty}_{x}L^{\infty}_{\hat{\beta}}}\right)^{2^{-k}}\left(\vert R_{A}\vert_{L^{\infty}_{\xi,\beta}}\vert R_{B}\vert_{L^{\infty}_{\xi,\beta}}\right)^{1-2^{-k}}
\\
&\quad+\left(\Vert R_{A}\Vert_{L^{\infty}_{x}L^{\infty}_{\hat{\beta}}}\right)^{2^{-k}}\left(\vert
R_{A}\vert_{L^{\infty}_{\xi,\beta}}\right)^{1-2^{-k}}\left\vert \Phi+V^{\perp}\right\vert_{L^{\infty}_{\xi,\beta+N_{0}+\gamma}}
\\
&\quad+\left(\Vert R_{B}\Vert_{L^{\infty}_{x}L^{\infty}_{\hat{\beta}}}\right)^{2^{-k}}\left(\vert
R_{B}\vert_{L^{\infty}_{\xi,\beta}}\right)^{1-2^{-k}}\left\vert \Phi_{A}+V_{A}^{\perp}\right\vert_{L^{\infty}_{\xi,\beta+N_{0}+\gamma}}
\\
&\lesssim\left[(1+t+\vert x\vert)^{-\frac{N_{0}}{1-\gamma}-\frac{1}{2}}\right]^{1-2^{-k}}
\\[2mm]
&\lesssim (1+t+\vert x\vert )^{-\frac{N_{0}}{1-\gamma}}\,,
\end{align*}
provided that $1-\frac{\delta_{0}}{2}\geq \frac{1}{2}$ and $2^{-k}\leq \frac{1-\gamma}{1-\gamma+2N_{0}}$\,.
Consequently, $I_{1}^{\perp}$ satisfies
\begin{align*}
|I_1^\perp(x,t)|_{L_{\xi,\beta}^\infty}&\lesssim\int_{0}^{t}\int_{\mathbb{R}}e^{-c(t-\tau)}\langle x-y\rangle^{-\frac{N_{0}}{1-\gamma}}F_{L^{\infty}_{\xi,\beta+N_{0}}}^{\#}[h_{A}^{(1)}(\tau)](y)dyd\tau
\\
&\lesssim \varepsilon^{3}M(1+\varepsilon M)
\int_0^t\int_{\mathbb{R}}e^{-c(t-\tau)}
 \langle x-y\rangle^{-\frac{N_0}{1-\gamma}}
 (1+\tau+|y|)^{-\frac{N_0}{1-\gamma}}
\,dy\,d\tau
\\
&\lesssim
 \varepsilon^{3}M(1+\varepsilon M)
(1+t+|x|)^{-\frac{N_0}{1-\gamma}}\,,
\end{align*}
and hence, it satisfies the ansatz \eqref{ansatz-R-A} for $R_{A}^{\perp}$. For $M_1^\perp$, we estimate the source term directly:
\begin{align*}
\left\vert \partial_{x}^{s}h_{B}^{(1)}\right\vert_{L^{\infty}_{\xi,\beta+N_{0}}}&\lesssim (\varepsilon^{4}M^{2})^{2^{-k}}\left(\vert \partial_{x}^{s}R_{B}\vert_{L^{\infty}_{\xi,\beta}}\vert R_{B}\vert_{L^{\infty}_{\xi,\beta}}\right)^{1-2^{-k}}
+\vert\partial_{x}^{s}
\mathbb{P}_{0}R_{B}\vert_{L^{\infty}_{\xi,\hat{\beta}}}\left\vert \Phi_{B}+V^{\perp}_{B}\right\vert_{L^{\infty}_{\xi,\hat{\beta}}}
\\
&\quad+\vert
\mathbb{P}_{0}R_{B}\vert_{L^{\infty}_{\xi,\hat{\beta}}}\left\vert \partial_{x}^{s}(\Phi_{B}+V^{\perp}_{B})\right\vert_{L^{\infty}_{\xi,\hat{\beta}}}
\\
&\quad+(\varepsilon^{2}M)^{2^{-k}}\left(\vert\partial_{x}^{s}\mathbb{P}_{1}
R_{B}\vert_{L^{\infty}_{\xi,\beta}}\right)^{1-2^{-k}}\left\vert \Phi_{B}+V^{\perp}_{B}\right\vert_{L^{\infty}_{\xi,\hat{\beta}}}
\\
&\quad+(\varepsilon^{2}M)^{2^{-k}}\left(\vert\mathbb{P}_{1}
R_{B}\vert_{L^{\infty}_{\xi,\beta}}\right)^{1-2^{-k}}\left\vert\partial_{x}^{s}(\Phi_{B}+V^{\perp}_{B})\right\vert_{L^{\infty}_{\xi,\hat{\beta}}}
\\
&\lesssim M\varepsilon^{3}(1+\varepsilon M)\left\{\left[(1+t)^{-\frac{s}{2}}\Bigg(\sum_{m=1}^{3}\theta_{m}^{4-2\delta_{0}}+(1+t)^{\frac{-1+\delta_{0}}{2}}\psi_{m}^{3}\Bigg)\right]^{1-2^{-k}}\right.
\\
&\quad+(1+t)^{-\frac{s}{2}}\Bigg(\sum_{m=1}^{3}\theta_{m}^{3-\delta_{0}}+(1+t)^{\frac{-1+\delta_{0}}{4}}\psi_{m}^{3}\Bigg)
\\
&\quad\left.+\Bigg(\sum_{m=1}^{3}\theta_{m}^{3-\delta_{0}}+(1+t)^{\frac{-3+\delta_{0}}{4}}\psi_{m}^{\frac{3}{2}}\Bigg)^{1-2^{-k}}\Bigg(\sum_{m=1}^{3}\theta_{m}^{1}+\psi_{m}^{\frac{3}{2}}\Bigg)\right\}
\\
&\lesssim  M\varepsilon^{3}(1+\varepsilon M)\Bigg(\sum_{m=1}^{3}\theta_{m}^{3-\delta_{0}+s(1-\delta_{0})}+(1+t)^{\frac{-3+\delta_{0}}{4}}\psi_{m}^{\frac{3+s}{2}}\Bigg)\,.
\end{align*}
It follows that
\[
\begin{aligned}
|\partial_x^sM_1^\perp(x,t)|_{L_{\xi,\beta}^\infty}
&=
\left|
\int_0^t\mathbb P_1
(\mathbb G_*^{t-\tau}+\mathcal G^{t-\tau})
\partial_x^sh_B^{(1)}(\tau)\,d\tau
\right|_{L_{\xi,\beta}^\infty}
\\
&\lesssim
\varepsilon^{3}M(1+\varepsilon M)\int_0^t\int_{\mathbb R}e^{-c(t-\tau)}
\langle x-y\rangle^{-\frac{N_0}{1-\gamma}}
\\
&\quad\cdot\Bigg(\sum_{m=1}^{3}\theta_{m}^{3-\delta_{0}+s(1-\delta_{0})}+(1+t)^{\frac{-3+\delta_{0}}{4}}\psi_{m}^{\frac{3+s}{2}}\Bigg)(\tau,y)\,dyd\tau
\\
&\lesssim  \varepsilon^{3}M(1+\varepsilon M)\Bigg(\sum_{m=1}^{3}\theta_{m}^{3-\delta_{0}+s(1-\delta_{0})}+(1+t)^{\frac{-3+\delta_{0}}{4}}\psi_{m}^{\frac{3+s}{2}}\Bigg)\,,
\end{aligned}
\]
provided that $0<\delta_{0}<\frac{1}{4}$ and $2^{-k}\leq\frac{1-\delta_{0}}{6-2\delta_{0}}$.
Thus $M_{1}^{\perp}$ satisfies the ansatz \eqref{ansatz-R-B-perp}--\eqref{ansatz-der-R-B-perp}.

Combining the component estimates, the known forcing has amplitude
$C_{0}\varepsilon^2$, while the terms containing the remainder have amplitude at most
$C_1\varepsilon^3M(1+\varepsilon M)$, with the corresponding profiles in \eqref{ansatz-R-A}-\eqref{ansatz-der-R-B-perp}.
Here $C_0\geq1$ and $C_1$ depend only on the
initial-data norm, not on $M$, $\varepsilon$, or the terminal time.
The output coefficient is therefore bounded by
\[
C_0\varepsilon^{2}+C_1\varepsilon^{3} M(1+\varepsilon M)=\varepsilon^{2}\left(C_0+C_1\varepsilon M(1+\varepsilon M)\right).
\]
Choose $M\geq 4 C_{0}$ sufficiently large and $0<\varepsilon\leq1$ sufficiently small that
\[
C_0+C_1\varepsilon M(1+\varepsilon M)\leq\frac12 M.
\]
The output is then bounded by $\frac{1}{2}M\varepsilon^{2}$, completing the proof.

\end{proof}

\section{Fluid approximation to the Navier-Stokes equation}\label{fluid-approximation}

\subsection{Macro-Micro decomposition and formal derivation}

In this subsection, we use the macro–micro decomposition to derive the fluid approximation formally. The corresponding linear approximation was established in \cite{[LiuYu2]}; here we extend the argument to the nonlinear setting. For simplicity of notation, we write
\[
 f^0=\mathbb P_0f,\qquad f^1=\mathbb P_1f,\qquad f=f^0+f^1.
\]
Since $\mathbb P_0L=L\mathbb P_0=0$ and $\mathbb P_0\Gamma=0$, the exact projected equations are
\[
 \left\{\begin{aligned}
 (f^0)_t+\partial_x\mathbb P_0\xi^1(f^0+f^1)&=0,\\
 (f^1)_t+\partial_x\mathbb P_1\xi^1(f^0+f^1)
 &=Lf^1+\Gamma(f^0+f^1,f^0+f^1).
 \end{aligned}\right.
\]
In the spirit of the Chapman-Enskog expansion, we assume that the macro part $f^{0}$ dominates the micro part $f^{1}$ and that the derivatives $\nabla_{x,t}f$ is dominated by the quantity $f$ itself. Accordingly, we simplify the micro part of the equation as
\begin{align*}
\pa_{x}\mathbb{P}_{1}(\xi^{1} f^{0})=Lf^{1}+\Gamma(f^{0},f^{0})
\end{align*}
or
\begin{align}\label{ce}
f^{1}=L^{-1}\big[\pa_{x}\mathbb{P}_{1}(\xi^{1} f^{0})-\Gamma(f^{0},f^{0})\big]\,.
\end{align}
For the nonlinear term in \eqref{ce},
\eqref{eq:fixed-microscopic-vectors}, coercivity, the kernel bounds,
and Lemma~\ref{basic-K} give, for every fixed $b\geq0$,
\[
 |L^{-1}\Gamma(E_l,E_m)|_{L^\infty_{\xi,b}}<\infty.
\]
The formula (\ref{ce}) is the so-called Chapman-Enskog expansion. Substituting this relation into the macro equation gives
\begin{align}\label{NS-K}
(f^{0})_{t}+\pa_{x}\mathbb{P}_{0}(\xi^{1} f^{0})=\pa_{x}\mathbb{P}_{0}\xi^{1}(-L)^{-1}\big[\pa_{x}\mathbb{P}_{1}(\xi^{1}f^{0})-\Gamma(f^{0},f^{0})\big]\,.
\end{align}
This is the kinetic form of the quadratic Navier--Stokes equation with frozen physical viscosity.

To obtain the gas-dynamic form, we express
\begin{align*}
f^{0}(t,x,\xi)=\sum_{i=1}^{3}\omega_{i}(t,x)E_{i}(\xi)\,, \quad \omega_{i}(t,x)=\big<f, E_{i}\big>_{\xi}\,,
\end{align*}
in terms of the Euler characteristic directions $E_{j}$, $j=1,2,3$ in (\ref{Eigen function}).

The gas-dynamic form is obtained by projecting (\ref{NS-K}) onto the Euler characteristic directions $E_{j}$, $j=1,2,3$. For the initial condition, note that the fluid part of the Boltzmann equation is generated by the long-wave component of the solution. We therefore define
$$
\omega_{i0}=\varepsilon\left\langle \check{\chi}*_{x}f_{0}, E_{i}\right\rangle_{\xi}\,.
$$
We have the following proposition.
\begin{proposition}\label{LNS}
Equation \eqref{NS-K}, in the characteristic coordinates of
\eqref{Eigen function}, is
\begin{equation}\label{NS-burger}
 \left\{\begin{aligned}
 \partial_t\omega_i+\lambda_i\partial_x\omega_i
 &=\sum_{j=1}^3A_{ij}\partial_x^2\omega_j
       +\sum_{k,l=1}^3B_i^{kl}\partial_x(\omega_k\omega_l),\\
 A_{ij}&=\langle\mathbb P_1\xi^1E_i,
                       (-L)^{-1}\mathbb P_1\xi^1E_j\rangle_\xi,\\
 B_i^{kl}&=\langle L^{-1}\mathbb P_1\xi^1E_i,
                                     \Gamma(E_k,E_l)\rangle_\xi,\\
 \omega_i(0)&=\varepsilon\langle\check\chi*_xf_0,E_i\rangle_\xi.
 \end{aligned}\right.
\end{equation}
With the normalizations
\[
 \mu=\langle\xi^1\xi^2\sqrt{\mathcal M},
                 (-L)^{-1}(\xi^1\xi^2\sqrt{\mathcal M})\rangle_\xi,
 \qquad
 \kappa=\frac16\langle\xi^1(|\xi|^2-5)\sqrt{\mathcal M},
                 (-L)^{-1}\xi^1(|\xi|^2-5)\sqrt{\mathcal M}\rangle_\xi,
\]
the viscosity matrix is
\[
 (A_{ij})=
 \begin{pmatrix}
 \frac23\mu+\frac15\kappa&\frac{\sqrt3}{5}\kappa&-\frac23\mu+\frac15\kappa\\
 \frac{\sqrt3}{5}\kappa&\frac35\kappa&\frac{\sqrt3}{5}\kappa\\
 -\frac23\mu+\frac15\kappa&\frac{\sqrt3}{5}\kappa&\frac23\mu+\frac15\kappa
 \end{pmatrix}.
\]
\end{proposition}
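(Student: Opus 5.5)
The plan is to project the kinetic equation \eqref{NS-K} onto each direction $E_i$, simplify each resulting moment with the self-adjointness of $\mathbb P_0$, $\mathbb P_1$ and $L$, and then evaluate the viscosity matrix from the explicit form of $E_j$ using the rotational invariance of $L$.

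\textbf{Step 1: transport term.} Write $f^0=\sum_j\omega_jE_j$ and take the $L^2_\xi$ inner product of \eqref{NS-K} with $E_i$. The time derivative gives $\partial_t\omega_i$. By \eqref{Eigen function} we have $\mathbb P_0\xi^1\mathbb P_0E_j=\lambda_jE_j$, and the $E_j$ are orthonormal. Hence
\[
\langle E_i,\partial_x\mathbb P_0(\xi^1f^0)\rangle_\xi=\lambda_i\partial_x\omega_i .
\]

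\textbf{Step 2: right-hand side and the coefficients $A_{ij}$, $B_i^{kl}$.} Set $g=(-L)^{-1}[\partial_x\mathbb P_1(\xi^1f^0)-\Gamma(f^0,f^0)]$. Then $g$ lies in the range of $\mathbb P_1$, because $(-L)^{-1}$ is taken on the orthogonal complement of the collision invariants, as in Section~\ref{sec:pre}. Using that $\mathbb P_0$ and multiplication by $\xi^1$ are self-adjoint, we get
\[
\langle E_i,\mathbb P_0\xi^1 g\rangle_\xi=\langle \xi^1E_i,g\rangle_\xi=\langle \mathbb P_1\xi^1E_i,g\rangle_\xi .
\]
For the linear part, the self-adjointness of $(-L)^{-1}$ on $\mathbb P_1L^2_\xi$ gives $\sum_jA_{ij}\partial_x^2\omega_j$ with $A_{ij}$ as stated; in particular $A_{ij}=A_{ji}$. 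For the quadratic part, bilinearity gives $\Gamma(f^0,f^0)=\sum_{k,l}\omega_k\omega_l\Gamma(E_k,E_l)$. Then
\[
-\langle\mathbb P_1\xi^1E_i,(-L)^{-1}\Gamma(E_k,E_l)\rangle_\xi=\langle L^{-1}\mathbb P_1\xi^1E_i,\Gamma(E_k,E_l)\rangle_\xi=B_i^{kl},
\]
so this part equals $\sum_{k,l}B_i^{kl}\partial_x(\omega_k\omega_l)$. The finiteness of these pairings follows from Proposition~\ref{prop:low-frequency-spectrum} and the bound $|L^{-1}\Gamma(E_l,E_m)|_{L^\infty_{\xi,b}}<\infty$ noted above. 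The initial datum is the definition of $\omega_{i0}$.

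\textbf{Step 3: explicit viscosity matrix.} First compute the microscopic parts of $\xi^1$ applied to the basis vectors:
\[
\mathbb P_1\xi^1\chi_0=0,\qquad \mathbb P_1\xi^1\chi_1=\big((\xi^1)^2-\tfrac13|\xi|^2\big)\mathcal M^{1/2},\qquad \mathbb P_1\xi^1\chi_4=\tfrac1{\sqrt6}\,\xi^1(|\xi|^2-5)\mathcal M^{1/2}.
\]
The two nonzero functions have opposite parity in $\xi^1$. $L$ commutes with the reflection $\xi^1\mapsto-\xi^1$, so their cross pairing under $(-L)^{-1}$ vanishes. The $\chi_4$ pairing equals $\frac16\cdot 6\kappa\cdot\frac16\cdot 6=\kappa$, directly from the definition of $\kappa$.

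The main point is the tensor identity
\[
\langle((\xi^1)^2-\tfrac13|\xi|^2)\mathcal M^{1/2},(-L)^{-1}((\xi^1)^2-\tfrac13|\xi|^2)\mathcal M^{1/2}\rangle_\xi=\tfrac43\mu .
\]
To prove it, use that $L$ commutes with rotations, so $(-L)^{-1}$ maps $\phi_{ab}=(\xi^a\xi^b-\frac13\delta_{ab}|\xi|^2)\mathcal M^{1/2}$ to $b(|\xi|)\phi_{ab}$ with a common radial factor $b$. The pairing is therefore a fixed constant times the isotropic integral of $(\xi^a\xi^b-\frac13\delta_{ab}|\xi|^2)^2$. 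Comparing with the case $a=1$, $b=2$, which defines $\mu$, reduces the claim to the angular-average ratio $(4/45)/(1/15)=4/3$.

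Finally, expand $\mathbb P_1\xi^1E_j$ using the coefficients of $\chi_1,\chi_4$ in $E_j$: these are $(-\tfrac1{\sqrt2},\tfrac1{\sqrt5})$, $(0,\sqrt{3/5})$, $(\tfrac1{\sqrt2},\tfrac1{\sqrt5})$. For example, $A_{11}=\frac12\cdot\frac43\mu+\frac15\kappa$ and $A_{13}=-\frac23\mu+\frac15\kappa$. Likewise, $A_{12}=\frac{\sqrt3}{5}\kappa$ and $A_{22}=\frac35\kappa$, which gives the stated matrix.

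I expect Step 3 to be the main obstacle, specifically the rotational-invariance argument for the factor $\tfrac43$. It also requires checking that the symmetry class (evenness in $\xi^2,\xi^3$) does not interfere. It does not, since all functions involved are even in $\xi^2$ and $\xi^3$, and $L^{-1}$ on the full space restricts to this class.
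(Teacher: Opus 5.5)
Your proposal is correct and follows the paper's route: the paper obtains \eqref{NS-burger} by projecting \eqref{NS-K} onto the directions $E_i$, and your Steps~1--2 use the self-adjointness of $\mathbb P_0$, of multiplication by $\xi^1$, and of $L^{-1}$ on $\mathbb P_1L^2_\xi$, which is exactly what that projection needs. The paper states $(A_{ij})$ without derivation, and your Step~3 supplies it correctly: in the basis $\chi_0,\chi_1,\chi_4$, the pairing matrix of the $\mathbb P_1\xi^1\chi_k$ is $\mathrm{diag}(0,\tfrac{4}{3}\mu,\kappa)$, and taking coefficients of the $E_i$ gives the stated entries. One small correction: $\xi^1\xi^2\sqrt{\mathcal M}$ is odd in $\xi^2$, so your remark that all functions involved are even is inaccurate, and the rotation argument linking the $\chi_1$ block to $\mu$ must be run with the full-space $L^{-1}$ that the paper fixes rather than its even-class restriction.
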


\subsection{Rigorous estimates}

We first compare the leading kinetic Burgers waves with the artificial-viscosity system
\begin{equation}\label{artificial}
 \left\{\begin{aligned}
 \partial_tu_i+\lambda_i\partial_xu_i
 &=A_i\partial_x^2u_i+\sum_{k,l=1}^3B_i^{kl}\partial_x(u_ku_l),\\
 A_i&=A_{ii}>0,\qquad
 u_i(0)=\varepsilon\langle\check\chi*_xf_0,E_i\rangle_\xi.
 \end{aligned}\right.
\end{equation}
Here $A_i$ is the same diffusion coefficient as that appearing in the expansion of
$\sigma_i$. We define $\tilde u_i$ and $v_i$ by
\begin{equation}\label{burger-artificial}
 \partial_t\tilde u_i+\lambda_i\partial_x\tilde u_i
 =A_i\partial_x^2\tilde u_i+B_i^{ii}\partial_x(\tilde u_i^2),
 \qquad \tilde u_i(0)=u_i(0),
\end{equation}
and set $v_i=u_i-\tilde u_i$. Subtracting the two equations gives
\begin{align}\label{off-artificial}
 \partial_tv_i+\lambda_i\partial_xv_i
 ={}&A_i\partial_x^2v_i
     +\sum_{j\ne i}B_i^{jj}\partial_x(\tilde u_j^2)
     +\sum_{k\ne l}B_i^{kl}\partial_x(\tilde u_k\tilde u_l)\notag\\
 &+2\sum_{k,l}B_i^{kl}\partial_x(v_k\tilde u_l)
     +\sum_{k,l}B_i^{kl}\partial_x(v_kv_l),
 \qquad v_i(0)=0.
\end{align}
For $t>0$, we use the convected heat kernel
\[
 \mathbb H_i^t(x)=\frac1{\sqrt{4\pi A_it}}
             \exp\left(-\frac{|x-\lambda_it|^2}{4A_it}\right), \qquad \widehat{\mathbb{H}}_{i}^{t}(\eta)=e^{\alpha_{i}(\eta)t}\,,
 \qquad \alpha_i(\eta)=-i\lambda_i\eta-A_i\eta^2.
\]
At $t=0$, $\mathbb H_i^0$ is the identity operator. Thus the exact mild equations are
\begin{align}\label{def-tilde-u-i}
 \tilde u_i(t)
 ={}&\mathbb H_i^t*_xu_i(0)
       +B_i^{ii}\int_0^t\partial_x\mathbb H_i^{t-\tau}*_x\tilde u_i^2(\tau)\,d\tau,
\end{align}
and
\begin{align}\label{def-v-i}
 v_i(t)={}&\sum_{\substack{1\leq l,m\leq 3\\(l,m)\ne(i,i)}}B_i^{lm}
      \int_0^t\partial_x\mathbb H_i^{t-\tau}*_x(\tilde u_l\tilde u_m)(\tau)\,d\tau\notag\\
 &+2\sum_{1\leq l,m\leq 3}B_i^{lm}
      \int_0^t\partial_x\mathbb H_i^{t-\tau}*_x(v_l\tilde u_m)(\tau)\,d\tau\notag\\
 &+\sum_{1\leq l,m\leq 3}B_i^{lm}
      \int_0^t\partial_x\mathbb H_i^{t-\tau}*_x(v_lv_m)(\tau)\,d\tau.
\end{align}

The constant viscosity matrix in \eqref{artificial} is positive definite and commutes with the diagonal transport matrix. Hence, the results of \cite[Theorem~2.9 and Section~4]{[Zeng]} apply to this system, with the identity as the equilibrium symmetrizer. The required assumptions on the initial data follow from the estimates below.
\\
For every fixed
$m,n\geq0$,
\begin{equation}\label{sec42:data-decay}
 \|u(0)\|_{H^m}+\sup_x\langle x\rangle^n
       \big(|u(x,0)|+|\partial_x^m u(x,0)|\big)
       \lesssim_{m,n}\varepsilon.
\end{equation}
Moreover,
\begin{equation}\label{sec42:data-primitives}
 \left\|\int_{-\infty}^x u(y,0)\,dy\right\|_{L^1(-\infty,0)}
 +\left\|\int_x^\infty u(y,0)\,dy\right\|_{L^1(0,\infty)}
 \leq\int_{\mathbb R}|y|\,|u(y,0)|\,dy\lesssim\varepsilon.
\end{equation}
The pointwise primitive bounds follow from \eqref{sec42:data-decay}. We shall also use the source estimates in \cite[Lemma~3.4, (4.25)--(4.35), and (8.43)--(8.47)]{[Zeng]}. The initial condition in \eqref{burger-artificial} give $v_i(0)=0$; the derivative estimate below follows from the differentiated source argument, rather than solely from the $L_x^\infty$ conclusion of \cite[Theorem~2.9]{[Zeng]}.

\begin{proposition}\label{sec4:artificial-waves}
For each fixed finite integer $s\geq0$ and sufficiently small
$\varepsilon$, the Burgers waves satisfy
\begin{equation}
\label{est-tilde-ui}
 |\partial_x^s\tilde u_i(t,x)|\lesssim_s\varepsilon\theta_i^{1+s}.
\end{equation}
For the off-diagonal waves with $v_i(0)=0$, one has
\begin{equation}\label{sec5-artificial-bounds}
 |\partial_x^s v_i(t,x)|\lesssim\varepsilon^2(1+t)^{-s/2}
       \left(\psi_i^{3/2}+\sum_{j\ne i}\tilde\psi_j^{3/2}\right),
       \qquad s=0,1.
\end{equation}
\end{proposition}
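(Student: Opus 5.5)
The plan mirrors the kinetic arguments of Propositions~\ref{lemm tilde-phi-ast,sharp} and~\ref{prop-V-i}, with the convected heat kernel $\mathbb H_i^t$ replacing the spectral branch $\mathbb G_i^t$. This substitution is simpler, since $\widehat{\mathbb H}_i^t=e^{\alpha_i(\eta)t}$ satisfies $(\partial_t+\lambda_i\partial_x)\mathbb H_i=A_i\partial_x^2\mathbb H_i$ exactly.

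\textbf{Step 1: the Burgers waves.} I first record that $|\partial_x^n\mathbb H_i^t*_xu_i(0)|\lesssim_n\varepsilon\theta_i^{1+n}$. This follows from \eqref{sec42:data-decay}: the data decay faster than any polynomial, and the Gaussian kernel dominates $\theta_i^{1+n}$ for every $N$. Next I make the ansatz $|\partial_x^q\tilde u_i|\le M\varepsilon\theta_i^{1+q}$ for $0\le q\le s$ and estimate the Duhamel term in \eqref{def-tilde-u-i}. On $[0,t/2]$ all derivatives go on $\partial_x\mathbb H_i$, and on $[t/2,t]$ they go on $\tilde u_i^2$. This gives the convolutions $\theta_i^{2+q}*_{x,t}\theta_i^2$ from Table~\ref{table-1}, so the term is bounded by $\varepsilon^2M^2\theta_i^{1+q}$. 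The same $C_0\varepsilon+C_1\varepsilon^2M^2\le\frac12M\varepsilon$ closure as before then yields \eqref{est-tilde-ui}. Alternatively, this is a scalar Burgers equation and one could cite \cite{[Zeng]} directly.

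\textbf{Step 2: the off-diagonal waves.} I make the ansatz $|\partial_x^sv_i|\le M\varepsilon^2(1+t)^{-s/2}\big(\psi_i^{3/2}+\sum_{j\ne i}\tilde\psi_j^{3/2}\big)$ for $s=0,1$ and split \eqref{def-v-i} into three groups of terms.

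The mixed products $\tilde u_k\tilde u_l$ with $k\ne l$ are handled by \eqref{est-theta-1-j-times-theta-1-k}, which bounds them by $(1+t)^{-1}\psi^{2N}$; the bound then follows from the convolution tables exactly as for $J_1$.

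The cubic-order terms $v_l\tilde u_m$ and $v_lv_m$ are handled as $J_4$ and $J_6$ were, using \eqref{eq-theta-psi-j-neq-k}. For $s=1$ the derivative goes on the kernel on $[0,t/2]$ and on the source on $[t/2,t]$; this needs the $s=1$ ansatz for $v$ and Step~1 for $\tilde u$.

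The hard terms are the different-speed squares $B_i^{jj}\int_0^t\partial_x\mathbb H_i^{t-\tau}*_x\tilde u_j^2\,d\tau$ with $j\ne i$, together with $v_j\tilde u_j$ for $j\ne i$ (which needs a characteristic derivative of $v_j$). For these I apply the Cancellation Mechanism of Remark~\ref{remark-cancellation mechanism} with $H=\mathbb H_i$ and $H_1=A_i\partial_x\mathbb H_i$, which satisfy \eqref{asumption-cancellation-mechanism-1}. The integral is split at $\sqrt t$. The early part $[0,\sqrt t]$ is estimated directly, giving $\varepsilon^2\theta_i^{3/2}$. On $[\sqrt t,t]$ I integrate by parts in $\tau$ as in \eqref{eq-cancellation-mechanism}, keeping the endpoint terms. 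The characteristic derivative of the source is obtained directly from the PDE:
\[
(\partial_t+\lambda_j\partial_x)\tilde u_j=A_j\partial_x^2\tilde u_j+B_j^{jj}\partial_x(\tilde u_j^2)=O(\varepsilon\theta_j^3),
\]
and likewise $(\partial_t+\lambda_j\partial_x)v_j$ follows from \eqref{off-artificial}. For $s=1$ an extra derivative goes on the kernel on $[0,t/2]$ and on the source on $[t/2,t]$, as in \eqref{est-derivative-tilde-J}.

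\textbf{Expected obstacle.} The main difficulty is the term $v_j\tilde u_j$. Its characteristic derivative brings in $\partial_x^2v_j$, which the ansatz does not control, as well as the non-cancelling sources in \eqref{off-artificial}. I would first handle $A_j\partial_x^2 v_j$ by moving one derivative onto the kernel $\mathbb H_i$, which costs only $\theta_i^{1}\to\theta_i^2$ on $[\sqrt t,t]$. Failing that, I would split $v_j$ into a $\mathbb H_j$-diagonal part and a remainder, mimicking $V_{j,B_1}$ and $V_{j,B_2}$. If neither works, I would fall back on citing the differentiated source bounds \cite[(4.25)--(4.35), (8.43)--(8.47)]{[Zeng]}. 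The argument closes with $C_0\varepsilon^2+C_1\varepsilon^3M^2\le\frac12M\varepsilon^2$ for $\varepsilon$ small.
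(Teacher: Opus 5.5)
Your overall route is the one the paper intends. The paper omits this proof as a simpler transcription of the kinetic arguments of Section~\ref{wave}, backed by the differentiated source estimates of \cite{[Zeng]}. Your $s=0$ argument also goes through, including moving one derivative of $A_j\partial_x^2v_j$ onto $\mathbb H_i$.

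The gap is in the $s=1$ bound, at exactly the point where the heat-kernel version is not simpler. The band-limited kernels $H_{ij}$ satisfy \eqref{asumption-cancellation-mechanism-1} uniformly down to $t=0$. By contrast, $\|\partial_x^n\mathbb H_i^{r}\|_{L^1_x}\sim r^{-n/2}$, so $H=\mathbb H_i$ and $H_1=A_i\partial_x\mathbb H_i$ obey those bounds only for $r=t-\tau\gtrsim1$. In Step~1 this only means that all derivatives must go on the source when $t\lesssim1$.

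Take $\partial_x\int_0^t\partial_x\mathbb H_i^{t-\tau}*_x(v_j\tilde u_j)\,d\tau$ with $j\ne i$, where you put the derivative on the source over $[t/2,t]$, as for $J_2$ in \eqref{est-derivative-tilde-J}.
\begin{itemize}
\item Cancellation is still needed on that interval. Your bounds allow $|\partial_x(v_j\tilde u_j)|$ to be of size $\varepsilon^3M(1+\tau)^{-7/4}$ near the $j$-characteristic. A direct envelope bound then gives about $\varepsilon^3M(1+t)^{-7/4}$ at points at distance $\sim t$ from all three characteristics (coming from $\tau\approx 3t/4$). There the target $(1+t)^{-1/2}(\psi_i^{3/2}+\sum_{k\ne i}\tilde\psi_k^{3/2})$ is only $\sim(1+t)^{-2}$.
\item With $\partial_x$ on the source, the cancellation identity cannot close. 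Its characteristic-derivative term contains $\partial_x(A_j\tilde u_j\partial_x^2v_j)$, and its $H_1$ term contains $\partial_x^2(v_j\tilde u_j)$.
\item Each of these can be absorbed in only two ways. One is to control $\partial_x^2v_j$, which is not in your ansatz; adding it just moves the same problem up one order. The other is to put two derivatives on $\mathbb H_i^{t-\tau}$, whose $L^1_x$ norm $(t-\tau)^{-1}$ is not integrable at $\tau=t$.
\item Your other fallback, splitting $v_j$ into a diagonal part and a remainder, gives nothing here, because $v_j$ is driven entirely by $\mathbb H_j$.
\end{itemize}

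The fix is to imitate the paper's treatment of $J_5^j$ in Proposition~\ref{prop-V-i}, not that of $J_2$.
\begin{itemize}
\item Keep $\partial_x$ on the kernel over $[\sqrt t,t-1]$, where $|\partial_x^n\mathbb H_i^{t-\tau}|\lesssim\theta_i^{1+n}(t-\tau)$ holds.
\item Run the cancellation on that interval with $H=\partial_x\mathbb H_i$. Move one derivative of $A_j\partial_x^2v_j$ onto the kernel, and write the endpoint at $\tau=t-1$ as $\mathbb H_i^{1}*_x\partial_x(v_j\tilde u_j)$.
\item Estimate the unit interval $[t-1,t]$ directly. This uses that $\partial_x\mathbb H_i^{r}$ is $L^1_x$-integrable in $r$, and that pointwise $|\partial_x(v_j\tilde u_j)|\lesssim\varepsilon^3M(1+t)^{-1/2}\tilde\psi_j^{3/2}$.
\end{itemize}
For the sources $\tilde u_j^2$ your $[t/2,t]$ transfer is fine, since every derivative of $\tilde u_j$ is controlled.
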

The proof is similar to, and in fact simpler than, that for the leading fluid wave of the 1D Boltzmann equation. We therefore omit the details.

Set
\begin{equation}\label{def-mathcal-K}
 \mathcal K_i=\tilde\phi_i-\tilde u_i
\end{equation}
and
\begin{equation}\label{def-mathcal-S}
 \mathcal S_i=V_{i,B_{1}}-v_i.
\end{equation}

\begin{lemma}
\label{lemm-mathcal-K}
Under the hypotheses of Section~\ref{wave}, for any fixed $0<\delta_0<1/4$ and finite integer $s\geq0$, sufficiently small $\varepsilon$ gives
\begin{equation}\label{est-mathcal-K-i}
 |\partial_x^s\mathcal K_i(t,x)|
                  \lesssim\varepsilon\theta_i^{2+s-\delta_0}.
\end{equation}
\end{lemma}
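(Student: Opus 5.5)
The plan is to compare the two mild formulations \eqref{expression tilde-phi-i} and \eqref{def-tilde-u-i} directly in Fourier variables, and then close a bootstrap for $\mathcal K_i$ as in the proof of Proposition~\ref{lemm tilde-phi-ast,sharp}. Since $\mathbb G_i$ acts on a scalar multiple of $E_i$ through $\chi_\delta(\eta)e^{\sigma_i(\eta)t}e_i(\eta)\langle\cdot,e_i(-\eta)\rangle_\xi$, the symbol of $\tilde\phi_i$ is
\[
 \widehat{\tilde\phi_i}(\eta,t)=\varepsilon\chi_\delta e^{\sigma_it}a_i(\eta)\langle\widehat f_0,e_i(-\eta)\rangle_\xi
 +\int_0^t\chi_\delta e^{\sigma_i(t-\tau)}a_i(\eta)\langle\Gamma(E_i,E_i),e_i(-\eta)\rangle_\xi\widehat{\tilde\phi_i^2}(\tau)\,d\tau,
\]
where $a_i(\eta)=\langle e_i(\eta),E_i\rangle_\xi$. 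The symbol of $\tilde u_i$ is $\varepsilon e^{\alpha_it}\chi(\eta)\langle\widehat f_0,E_i\rangle_\xi+B_i^{ii}\int_0^t i\eta\, e^{\alpha_i(t-\tau)}\widehat{\tilde u_i^2}\,d\tau$. From Proposition~\ref{prop:low-frequency-spectrum} I will extract three expansions. First, $a_i(\eta)=1+O(\eta^2)$, because $\varepsilon_i^i=0$ and $e_i^\perp\perp E_i$. Second, $\langle\Gamma(E_i,E_i),e_i(-\eta)\rangle_\xi=i\eta B_i^{ii}+O(\eta^2)$, by $\mathbb P_0\Gamma=0$ and $e_i^\perp=iL^{-1}\mathbb P_1\xi^1E_i$; the sign is to be checked against the definition of $B_i^{ii}$. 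Third, $e^{\sigma_it}-e^{\alpha_it}=e^{\alpha_it}(e^{\eta^3R_{\sigma,i}t}-1)$, which is bounded by $|\eta|^3t\,e^{-c\eta^2t}$ on $|\eta|\le\delta$.

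\textbf{Step 1 (linear difference).} I will split $\varepsilon\langle E_i,\mathbb G_i^tf_0\rangle_\xi-\mathbb H_i^t*_xu_i(0)$ into four pieces.
\begin{itemize}
\item[(a)] $(\chi_\delta-\chi)e^{\alpha_it}\langle\widehat f_0,E_i\rangle_\xi$ is supported in $|\eta|\ge\delta/2$. Its symbol is smooth in $\eta$ because $f_0$ has compact $x$-support, and $e^{\alpha_it}$ gives a factor $e^{-ct}$ there, so this piece is $O(\varepsilon e^{-ct}\langle x\rangle^{-2N})\lesssim\varepsilon\theta_i^{3+s}$.
\item[(b)] The piece with $e^{\sigma_it}-e^{\alpha_it}$ carries $\eta^3t$.
\item[(c)] The piece with $a_i-1$ carries $\eta^2$.
\item[(d)] $\langle\widehat f_0,e_i(-\eta)-E_i\rangle_\xi$ carries $\eta$.
\end{itemize}
The Fourier multiplier argument of Lemma~\ref{Prop-linear-fluid}, now with one extra power of $\eta$ or better, bounds the remainder of $\partial_x^s$ of these pieces by $\varepsilon\theta_i^{2+s}$.

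\textbf{Step 2 (nonlinear difference).} Writing $\tilde\phi_i^2-\tilde u_i^2=\mathcal K_i(\tilde\phi_i+\tilde u_i)$, the Duhamel difference becomes
\[
 B_i^{ii}\int_0^t\partial_x\mathbb H_i^{t-\tau}*_x\big[\mathcal K_i(\tilde\phi_i+\tilde u_i)\big]d\tau+\int_0^t\mathcal D_i(t-\tau)*_x\tilde\phi_i^2(\tau)\,d\tau .
\]
Here $\mathcal D_i$ is the kernel with symbol $\chi_\delta e^{\sigma_it}a_i\langle\Gamma(E_i,E_i),e_i(-\eta)\rangle_\xi-i\eta B_i^{ii}e^{\alpha_it}$, up to a high-frequency cutoff error handled as in (a). By the expansions above this symbol is $O(\eta^2)e^{-c\eta^2t}+O(|\eta|^4t)e^{-c\eta^2t}$, so $|\partial_x^n\mathcal D_i|\lesssim\theta_i^{3+n}$. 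Using Proposition~\ref{lemm tilde-phi-ast,sharp}, the $\mathcal D_i$-term is bounded by $\varepsilon^2\theta_i^{3}*_{x,t}\theta_i^2$. For derivatives I will place them on the kernel over $[0,t/2]$ and on the source over $[t/2,t]$, as before. The resulting convolutions produce at worst a factor $\log(2+t)$ relative to $\theta_i^{2+s}$, and this logarithm is absorbed into $\theta_i^{2+s-\delta_0}$; this is the only source of the $\delta_0$ loss.

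\textbf{Step 3 (bootstrap).} I will impose the ansatz $|\partial_x^q\mathcal K_i|\le M\varepsilon\theta_i^{2+q-\delta_0}$ for $0\le q\le s$. Together with \eqref{est-tilde-phi-i} and \eqref{est-tilde-ui}, the coupling term is bounded by $\varepsilon^2M\,\theta_i^{2+s}*_{x,t}\theta_i^{3-\delta_0}$, again splitting the time interval at $t/2$ for derivatives. This is $\lesssim\varepsilon^2M\theta_i^{2+s-\delta_0}$ by the Table~\ref{table-1} convolution bounds. Hence the output is at most $\varepsilon(C_0+C_1\varepsilon M)\theta_i^{2+s-\delta_0}$, and choosing $M\ge4C_0$ and then $\varepsilon$ small closes the bootstrap.

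The main obstacle is Step 2: verifying the exact cancellation of the $O(\eta)$ term in the kinetic source symbol against $i\eta B_i^{ii}$. This is where the constants in Proposition~\ref{LNS} must match the spectral expansion. If that matching fails, the difference symbol is only $O(\eta)$ and the bound degrades to order $\theta_i^1$. The second point to check is that the logarithms from the $\theta^3*\theta^2$ convolutions are genuinely only logarithmic, uniformly in the spatial tail $(1+|x-\lambda_it|^2/(1+t))^{-N}$.
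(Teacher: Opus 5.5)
Your proposal is correct and follows essentially the paper's own proof. Both use the same splitting of the linear difference into the cutoff piece, the $e^{\sigma_it}-e^{\alpha_it}$ piece and the eigenvector-correction pieces; the same treatment of the source-symbol difference; and the same bootstrap with ansatz $M\varepsilon\theta_i^{2+q-\delta_0}$ closed by $\theta_i^{2}*_{x,t}\theta_i^{3-\delta_0}\lesssim\theta_i^{2-\delta_0}$. The $O(\eta)$ cancellation you flag does hold, since $\mathbb P_0\Gamma=0$ and $e_i^\perp=iL^{-1}\mathbb P_1\xi^1E_i$ give $\langle\Gamma(E_i,E_i),e_i(-\eta)\rangle_\xi=i\eta B_i^{ii}+O(\eta^2)$. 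Two minor points: the paper avoids your logarithm by moving one factor of $\eta$ onto $\tilde\phi_i^2$ over $[t/2,t]$ (absorbing it works too), and the $\delta_0$ loss is really forced by the coupling term $\mathcal K_i(\tilde\phi_i+\tilde u_i)$, because a $\theta_i^2$ ansatz there only yields $\theta_i^2\ln(1+t)$.
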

\begin{proof}
Subtracting \eqref{def-tilde-u-i} from
\eqref{expression tilde-phi-i}, we obtain
\begin{align}\label{eq-mathcal-K}
 \mathcal K_i={}&\varepsilon\langle\mathbb G_i^tf_0,E_i\rangle_\xi
                         -\mathbb H_i^t*_xu_i(0)\notag\\
 &+B_i^{ii}\int_0^t\partial_x\mathbb H_i^{t-\tau}*_x
              [\mathcal K_i(\tilde\phi_i+\tilde u_i)](\tau)d\tau\notag\\
 &+\frac1{2\pi}\int_0^t\int_{\mathbb R}e^{ix\eta}\chi_\delta(\eta)
       \Big[e^{\sigma_i(\eta)(t-\tau)}
            \langle e_i(\eta),E_i\rangle_\xi
            \langle\Gamma(E_i,E_i),e_i(-\eta)\rangle_\xi\notag\\
 &\hspace{42mm}-i\eta B_i^{ii}e^{\alpha_i(\eta)(t-\tau)}\Big]
              \widehat{\tilde\phi_i^2}(\tau,\eta)d\eta d\tau\notag\\
 &-\frac{B_i^{ii}}{2\pi}\int_0^t\int_{\mathbb R}
       e^{ix\eta+\alpha_i(\eta)(t-\tau)}i\eta(1-\chi_\delta(\eta))
              \widehat{\tilde\phi_i^2}(\tau,\eta)d\eta d\tau\notag\\
 ={}&J_1+J_2+J_3+J_4.
\end{align}
Fix a finite integer \(s\in\mathbb N\cup\{0\}\). Throughout the proof, \(q\) denotes the order of differentiation, with \(0\le q\le s\). For $J_{1}$, the expansions of $\widehat{\mathbb{G}}_{i}^{t}$ and the heat kernel $\widehat{\mathbb{H}_{i}^{t}}$ give
\begin{align*}
J_{1}&=\frac{\varepsilon}{2\pi}\left[\int_{\mathbb{R}}e^{ix\eta}\chi_{\delta}(\eta)e^{\sigma_{i}(\eta)t}\left(\langle E_{i},e_{i}(-\eta)\rangle_{\xi}\langle \hat{f}_{0}(\eta),e_{i}(-\eta)\rangle_{\xi}-\langle \hat{f}_{0}(\eta),E_{i}\rangle_{\xi}\right)d\eta\right.
\\
&\quad\left.+\int_{\mathbb{R}}e^{ix\eta}\chi_{\delta}(\eta)\left(e^{\sigma_{i}(\eta)t}-e^{\alpha_{i}(\eta)t}\right)\langle \hat{f}_{0}(\eta),E_{i}\rangle_{\xi}d\eta
+\int_{\mathbb{R}}e^{ix\eta}\left(\chi_{\delta}(\eta)-\chi(\eta)\right)e^{\alpha_{i}(\eta)t}\langle \hat{f}_{0}(\eta),E_{i}\rangle_{\xi}d\eta\right]
\\
&=J_{11}+J_{12}+J_{13}
\end{align*}
For $J_{11}$,
\begin{align*}
&\langle E_{i},e_{i}(-\eta)\rangle_{\xi}\langle \hat{f}_{0}(\eta),e_{i}(-\eta)\rangle_{\xi}-\langle \hat{f}_{0}(\eta),E_{i}\rangle_{\xi}
\\
&=\left(\langle E_{i},e_{i}(-\eta)\rangle_{\xi}-1\right)\langle \hat{f}_{0}(\eta),e_{i}(-\eta)\rangle_{\xi}+\langle\hat{f}_{0}(\eta),e_{i}(-\eta)-E{i}\rangle_{\xi}
\\
&=\langle E_{i},e_{i}(-\eta)-E_{i}\rangle_{\xi}\langle \hat{f}_{0}(\eta),e_{i}(-\eta)\rangle_{\xi}+\langle\hat{f}_{0}(\eta),e_{i}(-\eta)-E{i}\rangle_{\xi}
\\
&=(i\eta)\left[\frac{1}{i\eta}\left(\langle E_{i},e_{i}(-\eta)-E_{i}\rangle_{\xi}\langle \hat{f}_{0}(\eta),e_{i}(-\eta)\rangle_{\xi}+\langle\hat{f}_{0}(\eta),e_{i}(-\eta)-E_{i}\rangle_{\xi}\right)\right]\,,
\end{align*}
where the factor in brackets is a smooth function. Hence,
\[
\vert \partial_{x}^{q}J_{11}\vert \lesssim\varepsilon\theta_{i}^{2+q}\,.
\]
For $J_{12}$, we write $\chi_{\delta}(\eta)\left( e^{\sigma_i t}-e^{\alpha_i t}\right)$ as
\begin{equation}
\label{eq-artificial-boltzmann}
\chi_{\delta}(\eta)t(\sigma_i-\alpha_i)
       \int_0^1e^{t[(1-r)\alpha_i+r\sigma_i]}dr =(i\eta)\left(\chi_{\delta}(\eta)\frac{t \left(\sigma_{i}-\alpha_{i}\right)e^{\alpha_{i}(\eta)t}\int_{0}^{1}e^{rt(\sigma_{i}-\alpha_{i})}dr}{i\eta}\right)\equiv (i\eta)\widehat{\mathcal{D}}_{i}(\eta,t)\,,
\end{equation}
Using the expansion $\sigma_i-\alpha_i=\eta^{3}R_{\sigma,i}(\eta)$ and the identity
\begin{equation}
\label{derivative-change-time-space}
\left(1+\frac{\vert x-\lambda_{i}t\vert^{2}}{t}\right)^{-1}\left(1-\partial_{\zeta}^{2}\right)\left(e^{i\left(\frac{x-\lambda_{i}t}{\sqrt{t}}\zeta\right)}\right)=e^{i\left(\frac{x-\lambda_{i}t}{\sqrt{t}}\zeta\right)}\,,
\end{equation}
for $t\geq 1$ and $n\in\mathbb{N}\cup\left\{0\right\}$, we change variables by $\eta=\frac{\zeta} {\sqrt{t}}$ and integrate by parts to obtain
\begin{align*}
\partial_{x}^{n}\mathcal{D}_{i}(t,x)&=\frac{1}{2\pi}(t)^{-\frac{1+n}{2}}\left(1+\frac{\vert x-\lambda_{i}t\vert^{2}}{t}\right)^{-L}\int_{\mathbb{R}}\left[e^{i\left(\frac{x-\lambda_{i}t}{\sqrt{t}}\zeta\right)}\right]
 \\
 &\quad\cdot\left(1-\partial_{\zeta}^{2}\right)^{L} \left[(i\zeta)^{n}e^{-A_{i}\zeta^{2}}\left(\frac{t^{3/2}(\sigma_{i}-\alpha_{i})(\zeta/\sqrt{t})}{i\zeta}\right)
 \left(\int_{0}^{1}e^{rt(\sigma_{i}-\alpha_{i})(\zeta/\sqrt{t})}dr\right)\chi_{\delta}\left(\frac{\zeta}{\sqrt{t}}\right)\right]d\zeta\,,
\end{align*}
for any fixed $L\geq N$. All remaining multiplicative factors can be controlled by the Gaussian factor $e^{-c\zeta^{2}}$ for some $0<c<A_{i}$. Hence,
\begin{equation}
\label{est-mathcal-Di}
\left\vert \partial_{x}^{n}\mathcal{D}_{i}\right\vert\lesssim\theta_{i}^{1+n}(t,x)\,, \qquad n\in \mathbb{N}\cup\left\{0\right\}\,,
\end{equation}
Moreover, it satisfies \eqref{asumption-cancellation-mechanism-1} with characteristic speed $\lambda_{i}$. Therefore,
\begin{align*}
 \left\vert\partial_{x}^{q}J_{12}\right\vert&=\frac{\varepsilon}{2\pi}\left\vert\int_{\mathbb{R}}e^{ix\eta}(i\eta)^{q+1}\widehat{\mathcal{D}}_{i}(\eta,t)\langle \hat{f}_{0}(\eta),E_{i}\rangle_{\xi}d\eta\right\vert\lesssim\varepsilon\theta_{i}^{q+2}\,.
\end{align*}

For $J_{13}$, there exists a fixed $\eta_{\ast}>0$ such that
$\text{supp}\left(\chi_{\delta}(\eta)-\chi(\eta)\right)\subset\left\{\vert \eta\vert\geq \eta^{\ast}\right\}$. It follows that
\[
\vert \partial_{x}^{q}J_{13}\vert\lesssim \varepsilon e^{-ct}\left(1+\vert x-\lambda_{i}t\vert\right)^{-L}\lesssim \varepsilon\theta_{i}^{2+q}\,, \qquad \text{for any fixed }L\geq 2N.
\]
Next, we decompose $J_{3}$ as follows:
\begin{align*}
J_{3}&=\frac1{2\pi}\left[\int_0^t\int_{\mathbb R}e^{ix\eta}\chi_\delta(\eta)
       e^{\sigma_i(\eta)(t-\tau)}\left(\langle e_i(\eta),E_i\rangle_\xi
            \langle e_i(\eta),\Gamma(E_i,E_i)\rangle_\xi-i\eta B_i^{ii}\right) \widehat{\tilde\phi_i^2}(\tau,\eta)d\eta d\tau\right.\notag\\
 &\quad+\int_0^t\int_{\mathbb R}e^{ix\eta}\chi_\delta(\eta)
       \left(e^{\sigma_i(\eta)(t-\tau)}-e^{\alpha_{i}(\eta)(t-\tau)}\right)\left(i\eta B_i^{ii}\right) \widehat{\tilde\phi_i^2}(\tau,\eta)d\eta d\tau
       \\
       &=J_{31}+J_{32}
\end{align*}
From the expansion of $e_{i}(\eta)$, we have
\begin{align*}
 \langle e_i(\eta),&E_i\rangle_\xi
       \langle e_i(\eta),\Gamma(E_i,E_i)\rangle_\xi-i\eta B_i^{ii}
       \\
       &\quad=\langle \eta^{2}R_{e,i}(\eta),\Gamma(E_{i},E_{i})\rangle_{\xi}+\langle e_{i}(\eta)-E_{i},E_{i}\rangle_{\xi}\langle e_{i}(\eta),\Gamma(E_{i},E_{i})\rangle_{\xi}\,,
\end{align*}
and hence
\[
\vert \partial_{x}^{q}J_{31}\vert\lesssim \varepsilon^{2}\left[\int_{0}^{\frac{t}{2}}\theta_{i}^{3+q}(t-\tau)\ast_{x}\theta_{i}^{2}(\tau)d\tau+\int_{\frac{t}{2}}^{t}\theta_{i}^{2}(t-\tau)\ast_{x}\theta_{i}^{3+q}(\tau)d\tau\right]\lesssim \varepsilon^{2}\theta_{i}^{2+q}\,.
\]
For $J_{32}$, the estimate of $\mathcal{D}_{i}(t,x)$ in \eqref{est-mathcal-Di} gives
\begin{align*}
\vert \partial_{x}^{q}J_{32}\vert &\leq\left\vert \int_{0}^{t}\int_{\mathbb{R}}e^{ix\eta}B_{i}^{ii}(i\eta)^{q+2}\widehat{\mathcal{D}}_{i}(\eta,t-\tau)\widehat{\tilde\phi^{2}_{i}}(\tau,\eta)d\eta d\tau\right\vert
\\
&\lesssim \varepsilon^{2}\left[\int_{0}^{\frac{t}{2}}\theta_{i}^{3+q}(t-\tau)\ast_{x}\theta_{i}^{2}(\tau)d\tau+\int_{\frac{t}{2}}^{t}\theta_{i}^{2}(t-\tau)\ast_{x}\theta_{i}^{3+q}(\tau)d\tau\right]
\\
&\lesssim\varepsilon^{2}\theta_{i}^{2+q}\,,
\end{align*}
where $(i\eta)^{2+q}$ is placed on $\widehat{D_{i}}$ over $[0,\frac{t}{2}]$, while $(i\eta)^{1+q}$ is placed on $\widehat{\tilde\phi_{i}^{2}}$ over $[\frac{t}{2},t]$.
For $J_{4}$,
$\text{supp}\left(1-\chi_{\delta}(\eta)\right)\subset\left\{\vert \eta\vert\geq \frac{\delta}{2}\right\}$. Placing $(i\eta)$ and the external derivative factor $(i\eta)^{q}$ on $\widehat{\tilde\phi_{i}^{2}}$, we obtain
\[
\vert \partial_{x}^{q}J_{4}\vert\lesssim \varepsilon^2 \left[e^{-ct}\left(1+\vert x-\lambda_{i}t\vert\right)^{-L}\right]\ast_{x,t}(\theta_{i}^{3+q})\lesssim \varepsilon\theta_{i}^{3+q}\,, \qquad \text{for any fixed }L\geq 2N.
\]

In view of the estimates for $J_{1}, \ J_{3}, \ J_{4}$, we impose the following ansatz for $\mathcal{K}_{i}$:
\begin{equation}
\label{asatz-mathcal-i}
\left\vert \partial_{x}^{q}\mathcal{K}_{i}\right\vert\leq M\varepsilon\theta_{i}^{2+q-\delta_{0}}\,,
\end{equation}
where $M$ will be chosen later. By integration by parts,
\begin{align*}
\vert \partial_{x}^{q}J_{2}\vert&\lesssim \left\vert \int_{0}^{\frac{t}{2}}\partial_{x}^{q+1}\mathbb H_i^{t-\tau}*_x
[\mathcal K_i(\tilde\phi_i+\tilde u_i)](\tau)d\tau\right\vert+\left\vert\int_{\frac{t}{2}}^{t}\partial_{x}\mathbb H_i^{t-\tau}*_x\partial_{x}^{q}
[\mathcal K_i(\tilde\phi_i+\tilde u_i)](\tau)d\tau\right\vert
              \\
&\lesssim\varepsilon^{2}M_{s}\left[\int_{0}^{\frac{t}{2}}\theta_{i}^{q+2}(t-\tau)\ast_{x}\theta_{i}^{3-\delta_{0}}(\tau)d\tau+\int_{\frac{t}{2}}^{t}\theta_{i}^{2}(t-\tau)\ast_{x}\theta_{i}^{3-\delta_{0}+q}(\tau)d\tau\right]
\\
&\lesssim\varepsilon^{2}M\theta^{2-\delta_{0}+q}_{i}\,.
\end{align*}
Combining the above component estimates, the known forcing has amplitude
$C_{0}\varepsilon+C_{1}\varepsilon^{2}$, whereas the terms containing $\mathcal{K}_{i}$ have amplitude at most
$C_{2}\varepsilon^2M$, with the corresponding profiles in \eqref{asatz-mathcal-i}.
Here $C_{0}\,, \ C_{1}\geq1$ and $C_{2}$ depend only on the fixed parameters $s$ and the
initial-data norm, and are independent of $M$, $\varepsilon$, and the terminal time.
Thus the output coefficient is bounded by
\[
C_{0}\varepsilon+C_{1}\varepsilon^{2}+C_{2}\varepsilon^2M=\varepsilon\left(C_{0}+C_{1}\varepsilon+C_{2}\varepsilon M\right)
\]
Choose $M\geq 4 C_{0}$ sufficiently large and then $0<\varepsilon\leq1$ sufficiently small so that
\[
C_{1}\varepsilon+C_{2}\varepsilon M\leq\frac14 M.
\]
The output is then bounded by $\frac{1}{2}M\varepsilon$, which closes the ansatz and completes the proof.

\end{proof}

\begin{lemma}[Off-diagonal comparison]\label{sec4:refined-comparison}
Under the hypotheses of Section~\ref{wave}, for any fixed $0<\delta_0<\frac{1}{4}$ and
sufficiently small $\varepsilon$, one has
\begin{align}\label{est-mathcal-S}
 &\left|\partial_{x}^{s}\mathcal S_i(t,x)\right|\lesssim\varepsilon^2\left[
       \theta_i^{2-\delta_0+s}+(1+t)^{-\frac{1-\delta_{0}+2s}{4}}\left(\psi_i^{3/2}+\sum_{j\ne i}\tilde\psi_j^{3/2}\right)\right]\,, \quad \text{for any }s\in\left\{0,1\right\}\,.
\end{align}
\end{lemma}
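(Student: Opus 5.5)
We follow the proof of Lemma~\ref{lemm-mathcal-K}. The plan is to write a mild equation for $\mathcal S_i=V_{i,B_1}-v_i$, split it into a ``known forcing'' part involving only differences of the leading waves, and a part linear in $\mathcal S$ and $\mathcal K$. The linear part is then closed by a bootstrap ansatz of the same form as \eqref{est-mathcal-S}.

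\textbf{Decomposition.} Subtract \eqref{def-v-i} from \eqref{eq-V-i-B1} term by term, matching $J_1+J_2$ with the $\tilde u_l\tilde u_m$ sources, $J_3,\dots,J_5$ with the $v_l\tilde u_m$ sources, and $J_6$ with the $v_lv_m$ sources. In each matched pair, write the Boltzmann integrand as
\[
\chi_\delta e^{\sigma_i(t-\tau)}\langle e_i(\eta),E_i\rangle_\xi\langle\Gamma(E_l,E_m),e_i(-\eta)\rangle_\xi\,\widehat{ab}
\]
and the fluid integrand as $i\eta B_i^{lm}e^{\alpha_i(t-\tau)}\widehat{a'b'}$. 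The difference of each pair then splits into four pieces:
\begin{itemize}
\item[(a)] a multiplier difference with the Boltzmann product $ab$. This is handled as in $J_{31}$, and as in $J_{32}$ via $\mathcal D_i$, which gains an extra $\eta$ factor;
\item[(b)] a high-frequency piece $(1-\chi_\delta)$, as in $J_4$;
\item[(c)] the heat-kernel term with source $ab-a'b'$. Expand this as $\mathcal K_l\tilde\phi_m+\tilde u_l\mathcal K_m$, or with $V_{\cdot,B}-v_\cdot=\mathcal S_\cdot+V_{\cdot,B_2}$ in the off-diagonal slots;
\item[(d)] leftover Boltzmann terms with no fluid counterpart, namely $J_7$ (involving $g_A$) and the $V_{k,B_2}$ parts of $V_{k,B}$.
\end{itemize}

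\textbf{Bounds on the forcing.} Pieces (a), (b) and (d) are bounded by Proposition~\ref{lemm tilde-phi-ast,sharp}, Proposition~\ref{prop-V-i} and Proposition~\ref{sec4:artificial-waves}. Their profiles are
\[
\varepsilon^2(1+t)^{-1/4-s/2}\Big(\psi_i^{3/2}+\sum_{j\neq i}\tilde\psi_j^{3/2}\Big)
\quad\text{or}\quad
\varepsilon^2\theta_i^{2+s},
\]
and both are dominated by the right-hand side of \eqref{est-mathcal-S}. For piece (c), the diagonal factors are controlled by Lemma~\ref{lemm-mathcal-K}, which gives $|\mathcal K_l|\lesssim\varepsilon\theta_l^{2-\delta_0}$.

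\textbf{Where the rate is lost, and the main obstacle.} The hardest terms are the different-speed pieces in (c): $\partial_x\mathbb H_i*(\mathcal K_j\tilde\phi_j)$ and $\partial_x\mathbb H_i*(\mathcal K_j\tilde u_j)$ for $j\ne i$. A direct convolution $\theta_i^2*\theta_j^{3-\delta_0}$ only yields $\psi_i^{3/2}+\tilde\psi_j^{3/2}$ with a weak time factor. We therefore apply the Cancellation Mechanism of Remark~\ref{remark-cancellation mechanism}, taking $H$ to be the heat-kernel antiderivative.

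This requires the characteristic derivative $(\partial_t+\lambda_j\partial_x)\mathcal K_j$. We obtain it from the difference of \eqref{eq-tilde-j-par t+lamda j par x} and the Burgers equation \eqref{burger-artificial}. The result has the form $O(\eta^2)\widehat{\mathcal K_j}$ plus sources of size $\varepsilon^2\theta_j^{4-\delta_0}$, which verifies \eqref{asumption-cancellation-mechanism-2}. The boundary and commutator terms then give
\[
\varepsilon^2(1+t)^{-\frac{1-\delta_0}{4}-\frac s2}\big(\psi_i^{3/2}+\tilde\psi_j^{3/2}\big)
\]
from the interval $[\sqrt t,t]$, plus $\varepsilon^2\theta_i^{2-\delta_0+s}$ from $[0,\sqrt t]$. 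This is exactly why the rate in \eqref{est-mathcal-S} loses $\delta_0/4$.

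\textbf{Closing the bootstrap.} Assume $|\partial_x^s\mathcal S_i|\le M\varepsilon^2[\cdots]$, with the profile of \eqref{est-mathcal-S}. Terms linear in $\mathcal S$ have coefficients $\tilde\phi$, $\tilde u$, $V_{\cdot,B}$ or $v$, all of size $O(\varepsilon)$. They are estimated exactly as $J_4$ and $J_5$ in Proposition~\ref{prop-V-i}:
\begin{itemize}
\item use the cancellation for $j\ne i$ diagonal pairs, with the characteristic derivative of $\mathcal S_j$ read off from \eqref{cancellation-assumption--V-j-B1} minus \eqref{off-artificial};
\item use direct convolution with the Appendix tables otherwise.
\end{itemize}
These contribute $C\varepsilon^3M$. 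For $s=1$, place the derivative on the kernel over $[0,t/2]$ and on the source over $[t/2,t]$. Choosing $M$ large and then $\varepsilon$ small closes the ansatz, as in the previous proofs.
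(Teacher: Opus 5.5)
Your architecture is the paper's: a mild equation for $\mathcal S_i$, matching of Boltzmann and heat-kernel Duhamel terms, the Cancellation Mechanism for different-speed diagonal sources, and a bootstrap on $\mathcal S$. The gap is the claim that pieces (a) and (d) are controlled directly by the Propositions with the profile $\varepsilon^2(1+t)^{-1/4-s/2}(\psi_i^{3/2}+\sum_{j\ne i}\tilde\psi_j^{3/2})$.

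\textbf{The $V_{\cdot,B_2}$ terms in (d).} $V_{k,B_2}$ is not quadratically small. It contains the linear terms $\varepsilon\langle E_k,\mathbb G_j^tf_0\rangle_\xi$, $j\neq k$, which have size $\varepsilon\theta_j^2$ and move with speed $\lambda_j$. Through $J_4$ (take $k=i$, $j\ne i$), $V_{i,B_1}$ therefore contains $\int_0^t\langle E_i,\mathbb G_i^{t-\tau}2\Gamma(\tilde\phi_jE_j,V_{i,B_2}E_i)\rangle_\xi d\tau$. Its source has a part of size $\varepsilon^2\theta_j^3$ travelling at speed $\lambda_j\ne\lambda_i$, and $v_i$ has no counterpart. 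The same happens in $J_5$ with $\tilde\phi_jV_{j,B_2}$.

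A direct bound gives only $\theta_i^2*\theta_j^3$, and this is sharp for the absolute-value convolution. At $x=\lambda_it+c(\lambda_j-\lambda_i)t$ with $0<c<1$, the crossing region has measure $\sim t$ and contributes $\sim t^{-1}\cdot t^{-3/2}\cdot t=t^{-3/2}$. The right-hand side of \eqref{est-mathcal-S} there is only $\sim t^{-7/4+\delta_0/4}$.

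So these terms need the Cancellation Mechanism, which requires an ingredient missing from your plan:
\begin{enumerate}
\item Split $V_{k,B_2}=\sum_mV_{k,B_2}^m+r_k$. Here $V^m_{k,B_2}$ collects the $\mathbb G_m$-branch terms driven by $f_0$ and by $\Gamma(\tilde\phi_mE_m,\tilde\phi_mE_m)$, together with the $(\mathbb G_*+\mathcal G)$-action on the latter.
\item Prove $|\partial_x^n(\partial_t+\lambda_m\partial_x)V^m_{k,B_2}|\lesssim\varepsilon\theta_m^{4+n}$, as in \eqref{V2-characteristic-bound}.
\item Show that $r_k$ decays faster.
\item Run the argument for $L_2$ in Proposition~\ref{prop-remainder} on the pairs $\tilde\phi_jV^j_{k,B_2}$ and $\tilde u_jV^j_{k,B_2}$.
\end{enumerate}
Nothing in Proposition~\ref{prop-V-i} gives a characteristic derivative for $V_{k,B_2}$. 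As a whole it has none, since it carries components at all three speeds.

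\textbf{The multiplier differences in (a).} For $(l,m)=(j,j)$, $j\neq i$, the multiplier difference is $(i\eta)^2\big(\widehat{\mathcal D_i^{jj}}+B_i^{jj}\widehat{\mathcal D_i}\big)$ acting on $\tilde\phi_j^2$ at speed $\lambda_j$. The $J_{31}/J_{32}$ splitting at $t/2$ works only because the source there is $\tilde\phi_i^2$, at the same speed. Here it yields $\int_0^{t/2}\theta_i^3*\theta_j^2+\int_{t/2}^t\theta_i^2*\theta_j^3$, which is again $\sim t^{-3/2}$ between the characteristics. The paper treats this term ($Q_{i,6}$) with the Cancellation Mechanism, exactly as for $L_5$ in Proposition~\ref{prop-V-i}, using that $\mathcal D_i^{jj}+B_i^{jj}\mathcal D_i$ satisfies \eqref{asumption-cancellation-mechanism-1}.

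In short, cancellation must be applied to every diagonal different-speed source, not only those involving $\mathcal K_j$ and $\mathcal S_j$. With that, your bootstrap closes as described.

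\textbf{Minor slip.} The characteristic derivative of $\mathcal K_j$ contains linear-size terms, $O(\eta^3)\widehat{\tilde u_j}$ and $O(\eta^2)\widehat{\mathcal K_j}$. Its source is therefore $O(\varepsilon\theta_j^{4-\delta_0})$, not $O(\varepsilon^2\theta_j^{4-\delta_0})$. This is harmless after multiplying by $\tilde\phi_j$.
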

\begin{proof}
By the decomposition $V_{i,B}=V_{i,B_{1}}+V_{i,B_{2}}$ established in the proof of Proposition \ref{prop-V-i}, we have
\[
V_{i,B_{1}}=\int_{0}^{t}\left\langle E_{i},\mathbb{G}_{i}^{t-\tau}\left[\Gamma\left(\Phi_{B},\Phi_{B}\right)-\Gamma(\tilde\phi_{i}E_{i},\tilde\phi_{i}E_{i})\right](\tau)\right\rangle_{\xi}d\tau+\int_{0}^{t}\left\langle E_{i},\mathbb{G}^{t-\tau}_{i} g_{A}(\tau)\right\rangle_{\xi}\,,
\]
We further decompose $V_{i,B_{2}}$ as
\[
V_{i,B_{2}}=\sum_{l=1}^{3}(V_{i,B_{2}}^{l})+r_{i}\,,
\]
where
\begin{align*}
V_{i,B_{2}}^{l}&=\varepsilon\langle E_{i},\mathbb{G}_{l}^{t}
f_{0}\rangle_{\xi}+\int_{0}^{t}\left\langle E_{i},\left(\mathbb{G}_{\ast}^{t-\tau}+\mathcal{G}^{t-\tau}\right)\Gamma\left(\tilde\phi_{l}E_{l},\tilde\phi_{l}E_{l}\right)(\tau)\right\rangle_{\xi}d\tau
\\
&\quad+\int_{0}^{t}\left\langle E_{i},\mathbb{G}_{l}^{t-\tau}\Gamma\left(\tilde\phi_{l}E_{l},\tilde\phi_{l}E_{l}\right)(\tau)\right\rangle_{\xi}d\tau\,,
\end{align*}
for $l\in\left\{1,2,3\right\} \setminus  \left\{i\right\}$, while $V_{i,B_{2}}^{i}$ is defined by
\begin{align*}
V_{i,B_{2}}^{i}&=\int_{0}^{t}\left\langle E_{i},\left(\mathbb{G}_{\ast}^{t-\tau}+\mathcal{G}^{t-\tau}\right)\Gamma\left(\tilde\phi_{i}E_{i},\tilde\phi_{i}E_{i}\right)(\tau)\right\rangle_{\xi}d\tau\,,
\end{align*}
and
\begin{align*}
r_{i}&=\varepsilon \left\langle E_{i},\mathcal{G}^{t}f_{0}\right\rangle_{\xi}+\sum_{\substack{1\leq j,k\leq 3\\ j\neq k}}\int_{0}^{t}\left\langle E_{i},\left(\mathbb{G}_{\ast}^{t-\tau}+\mathcal{G}^{t-\tau}\right)\Gamma\left(\tilde\phi_{j}E_{j},\tilde\phi_{k}E_{k}\right)(\tau)\right\rangle_{\xi}d\tau
\\
&\quad+\sum_{\substack{1\leq l\leq 3 \\l\neq i}}\int_{0}^{t}\left\langle E_{i},\mathbb{G}_{l}^{t-\tau}\Bigg(\sum_{\substack{1\leq j,k\leq 3\\ j\neq k}}\Gamma\left(\tilde\phi_{j}E_{j},\tilde\phi_{k}E_{k}\right)\Bigg)(\tau)\right\rangle_{\xi}d\tau+\int_{0}^{t}\langle E_{i},\mathcal{G}^{t-\tau}g_{A}(\tau)\rangle_{\xi}d\tau\notag
\\
&\quad+\sum_{\substack{1\leq l\leq 3 \\l\neq i}}\int_{0}^{t}\left\langle E_{i},\mathbb{G}_{l}^{t-\tau}\Bigg(\sum_{\substack{1\leq k\leq 3\\k\neq l}}\Gamma\left(\tilde\phi_{k}E_{k},\tilde\phi_{k}E_{k}\right)(\tau)\Bigg)\right\rangle_{\xi}d\tau
\\
&\quad+\int_{0}^{t}\left\langle E_{i},\left(\mathbb{G}_{\ast}^{t-\tau}+\mathcal{G}^{t-\tau}\right)g_{B}(\tau)\right\rangle_{\xi}d\tau+\sum_{\substack{1\leq l\leq 3 \\l\neq i}}\int_{0}^{t}\left\langle E_{i},\mathbb{G}_{l}^{t-\tau}2\Gamma\left(\tilde\phi_{l}E_{l},V_{l,B}E_{l}\right)(\tau)\right\rangle_{\xi}d\tau\notag
\\
&\quad+\sum_{\substack{1\leq l\leq 3 \\l\neq i}}\int_{0}^{t}\left\langle E_{i},\mathbb{G}_{l}^{t-\tau}\Bigg(\sum_{\substack{1\leq k\leq 3\\k\neq l}}2\Gamma\left(\tilde\phi_{k}E_{k},V_{k,B}E_{k}\right)(\tau)\Bigg)\right\rangle_{\xi}d\tau\notag
\\
&\quad+\sum_{\substack{1\leq l\leq 3 \\l\neq i}}\int_{0}^{t}\left\langle E_{i},\mathbb{G}_{l}^{t-\tau}\Bigg(\sum_{1\leq j,k\leq 3}\Gamma\left(V_{j,B}E_{j},V_{k,B}E_{k}\right)(\tau)\Bigg)\right\rangle_{\xi}d\tau
\\
&\quad+\sum_{\substack{1\leq l\leq 3 \\l\neq i}}\int_{0}^{t}\left\langle E_{i},\mathbb{G}_{l}^{t-\tau}\Bigg(\sum_{\substack{1\leq j,k\leq 3\\ j\neq k}}2\Gamma\left(\tilde\phi_{j}E_{j},V_{k,B}E_{k}\right)\Bigg)(\tau)\right\rangle_{\xi}d\tau
\\
&\quad+\sum_{\substack{1\leq l\leq 3\\l\neq i}}\int_{0}^{t}\left\langle E_{i},\mathbb{G}_{l}^{t-\tau}g_{A}(\tau)\right\rangle_{\xi}d\tau\notag\,.
\end{align*}
From the argument of $V_{2,m}^{\perp}$ in proof of Proposition \ref{prop-V-perp}, the components $V_{i,B_{2}}^{m}\,$, $m=1,2,3$, satisfy the same estimates \eqref{est-der-V-perp-2m}-\eqref{V2-characteristic-bound} of $V_{2,m}^{\perp}$:
\begin{equation}
\label{est-V-iB2-m}
\vert \partial_{x}^{n}V_{i,B_{2}}^{m}\vert\lesssim\varepsilon\theta_{m}^{2+n}\,, \quad\vert \partial_{x}^{n}(\partial_{t}+\lambda_{m}\partial_{x})V_{i,B_{2}}^{m}\vert\lesssim\varepsilon \theta_{m}^{4+n}\quad \text{for any }n\in \mathbb{N}\cup\left\{0\right\}, \ 1\leq i,m\leq 3\,,
\end{equation}
whereas the remaining component $r_{i}$ decays faster in time $t$:
\begin{equation}
\label{est-D-i}
\vert r_{i}\vert\lesssim \varepsilon(1+t+\vert x\vert)^{-\frac{N_{0}}{1-\gamma}}+\varepsilon^{2}(1+t)^{-\frac{1}{2}}\Bigg(\sum_{m=1}^{3}\psi_{m}^{\frac{3}{2}}\Bigg)\,,
\end{equation}
\begin{equation}
\label{est-der-D-i}
 \vert \partial_{x}r_{i}\vert\lesssim \varepsilon(1+t+\vert x\vert)^{-\frac{N_{0}}{1-\gamma}}+\varepsilon^{2}\left[\sum_{l\neq i}(\theta_{l}^{\frac{7}{2}-\frac{\delta_{0}}{2}})+(1+t)^{-\frac{1}{2}}\Bigg(\sum_{m=1}^{3}\psi_{m}^{\frac{3}{2}}\Bigg)\right]\,.
\end{equation}
Combining this decomposition with the definition of the off-diagonal waves $v_{i}$ of the artificial-viscosity system in \eqref{def-v-i}, we write
\begin{align}
\mathcal{S}_{i}(t,x)
={}&
\sum_{\substack{1\le l,m\le3\\(l,m)\ne(i,i)}}
B_i^{lm}\int_0^t
\partial_x\mathbb{H}_i^{t-\tau}*_x
\bigl[
\mathcal{K}_l\tilde u_m+\tilde\phi_l\mathcal{K}_m
\bigr](\tau,x)
\,\mathrm d\tau
\notag\\
&+
\sum_{1\leq l,m,k\leq 3}B_i^{lm}\int_0^t
\partial_x\mathbb{H}_i^{t-\tau}*_x
\Bigl[
2V_{l,B_{2}}^{k}\tilde u_m
\Bigr](\tau,x)
\,\mathrm d\tau
\notag\\
&+
\sum_{1\leq l,m\leq 3}B_i^{lm}\int_0^t
\partial_x\mathbb{H}_i^{t-\tau}*_x
\Bigl[
\mathcal{S}_l(2\tilde u_m+v_m)
+V_{l,B_{1}}(2\mathcal{K}_m+\mathcal{S}_m)
\Bigr](\tau,x)
\,\mathrm d\tau
\notag\\
&+
\sum_{1\leq l,m\leq 3}B_i^{lm}\int_0^t
\partial_x\mathbb{H}_i^{t-\tau}*_x
\Bigl[
2r_l\tilde u_m+V_{l,B_{2}}v_m
\Bigr](\tau,x)
\,\mathrm d\tau
\notag\\
&+
\sum_{1\leq l,m\leq 3}B_i^{lm}\int_0^t
\partial_x\mathbb{H}_i^{t-\tau}*_x
\Bigl[
V_{l,B_{2}}(2\mathcal{K}_m+\mathcal{S}_{m})+V_{l,B}V_{m,B_{2}}
\Bigr](\tau,x)
\,\mathrm d\tau
\notag\\
&+
\frac{1}{2\pi}\sum_{\substack{1\leq l,m\leq 3\\ (l,m)\neq (i,i)}}
\int_0^t\int_{\mathbb R}
e^{\mathrm i x\eta}\chi_\delta(\eta)
e^{\sigma_i(\eta)(t-\tau)}\widehat{\tilde\phi_{l}\tilde\phi_{m}}(\tau,\eta)
\\
&\hspace{20mm}\cdot\left[\langle e_i(\eta),E_i\rangle_\xi\langle\Gamma(E_l,E_m),e_i(-\eta)\rangle_\xi-\mathrm i\eta B_i^{lm}\right]d\eta d\tau
\notag\\
&+
\frac{1}{2\pi}\sum_{\substack{1\leq l,m\leq 3\\ (l,m)\neq (i,i)}}
\int_0^t\int_{\mathbb R}
e^{\mathrm i x\eta}\chi_\delta(\eta)\mathrm (i\eta B_i^{lm})
\left(e^{\sigma_i(\eta)(t-\tau)}-
e^{\alpha_i(\eta)(t-\tau)}\right)
\widehat{\tilde\phi_{l}\tilde\phi_{m}}(\tau,\eta)
\,\mathrm d\eta\,\mathrm d\tau
\notag\\
&+
\frac{1}{2\pi}\sum_{1\leq l,m\leq 3}
\int_0^t\int_{\mathbb R}
e^{\mathrm i x\eta}\chi_\delta(\eta)
e^{\sigma_i(\eta)(t-\tau)}\widehat{(2\tilde\phi_{l}V_{m,B}+V_{l,B}V_{m,B})}(\tau,\eta)
\\
&\hspace{20mm}\cdot\left[\langle e_i(\eta),E_i\rangle_\xi\langle\Gamma(E_l,E_m),e_i(-\eta)\rangle_\xi-\mathrm i\eta B_i^{lm}\right]d\eta d\tau
\notag\\
&+
\frac{1}{2\pi}\sum_{1\leq l,m\leq 3}
\int_0^t\int_{\mathbb R}
e^{\mathrm i x\eta}\chi_\delta(\eta)\mathrm (i\eta B_i^{lm})
\left(e^{\sigma_i(\eta)(t-\tau)}-
e^{\alpha_i(\eta)(t-\tau)}\right)
\notag\\
&\hspace{20mm}\cdot\widehat{(2\tilde\phi_{l}V_{m,B}+V_{l,B}V_{m,B})}(\tau,\eta)
\,\mathrm d\eta\,\mathrm d\tau
\notag\\
&-
\frac{1}{2\pi}\sum_{\substack{1\leq l,m\leq 3\\ (l,m)\neq (i,i)}}
\int_0^t\int_{\mathbb R}
e^{\mathrm i x\eta+\alpha_i(\eta)(t-\tau)}
(i\eta B_{i}^{lm})\bigl(1-\chi_\delta(\eta)\bigr)
\widehat{\tilde\phi_{l}\tilde\phi_{m}}(\tau,\eta)
\,\mathrm d\eta\,\mathrm d\tau
\notag\\
&-
\frac{1}{2\pi}\sum_{1\leq l,m\leq 3}
\int_0^t\int_{\mathbb R}
e^{\mathrm i x\eta+\alpha_i(\eta)(t-\tau)}
 (i\eta B_{i}^{lm})\bigl(1-\chi_\delta(\eta)\bigr)
\widehat{(2\tilde\phi_{l}V_{m,B}+V_{l,B}V_{m,B})}(\tau,\eta)
\,\mathrm d\eta\,\mathrm d\tau
\notag\\
&+\int_{0}^{t}\left\langle E_{i},\mathbb{G}_{i}^{t-\tau} g_{A}(\tau)\right\rangle_{\xi}d\tau
\notag\\
\equiv{}&
Q_{i,1}+\cdots +Q_{i,12},
\label{eq:VBone-mathcal-S-decomposition}
\end{align}
Throughout the proof, the parameter $s$ denotes the order of differentiation with $s\in\left\{0,1\right\}$. For each component of $Q_{i,1}$, we write
\[
Q_{i,1}^{l,m}=B_i^{lm}\int_0^t
\partial_x\mathbb{H}_i^{t-\tau}*_x
\bigl[
\mathcal{K}_l\tilde u_m+\tilde\phi_l\mathcal{K}_m
\bigr](\tau,x)
\,\mathrm d\tau\,.
\]
When $l\neq m$, combining \eqref{est-theta-1-j-times-theta-1-k} with the estimates of $\mathcal{K}_{i}$ and $\tilde u_{i}$ gives
\[
\vert \partial_{x}^{s} Q_{i,1}^{l,m}\vert \lesssim \varepsilon^{2} (1+t)^{-\frac{1+2s}{4}}\Bigg(\psi_{i}^{\frac{3}{2}}+\sum_{k\neq i}\tilde\psi_{k}^{\frac{3}{2}}\Bigg)\,.
\]
When $l=m=j$ and $j\neq i$, we apply the argument of cancellation mechanism used to estimate $J_{2}^{ij}$ in \eqref{eq-J-2-ij}. This gives
\[
\vert \partial_{x}^{s}Q_{i,1}^{jj}\vert\lesssim\varepsilon^{2}(1+t)^{-\frac{1-\delta_{0}+2s}{4}}\left(\psi_{i}^{\frac{3}{2}}+\tilde\psi_{j}^{\frac{3}{2}}\right)\,, \qquad j\neq i\,.
\]
For $Q_{i,2}$, combining \eqref{est-V-iB2-m}, \eqref{est-tilde-ui}, and applying the arguments used to estimates $O_{2}$, $L_{1}$, and $L_{2}$ in the proof of Proposition \ref{prop-remainder} to the corresponding terms, we obtain
\[
\vert \partial_{x}^{s} Q_{i,2}\vert\lesssim\varepsilon^{2}\left[\theta_{i}^{2-\delta_{0}+s}+(1+t)^{-\frac{1+2s}{4}}\Bigg(\psi_{i}^{\frac{3}{2}}+\sum_{k\neq i}\tilde\psi_{k}^{\frac{3}{2}}\Bigg)\right]\,.
\]
For $Q_{i,4}$, applying the argument used to estimates $O_{1}$ in the proof of Proposition \ref{prop-remainder}, we obtain
\[
\vert \partial_{x}^{s}Q_{i,4}\vert\lesssim\varepsilon^{2}(1+t)^{-\frac{1-\delta_{0}+2s}{4}}\Bigg(\psi_{i}^{\frac{3}{2}}+\sum_{k\neq i}\tilde\psi_{k}^{\frac{3}{2}}\Bigg)\,.
\]
For $Q_{i,6},\cdots,Q_{i,9}$, we first note that
\begin{align*}
&\chi_{\delta}(\eta)e^{\sigma_{i}(\eta)t}\Big(\langle e_i(\eta),E_i\rangle_\xi\langle\Gamma(E_l,E_m),e_i(-\eta)\rangle_\xi-\mathrm i\eta B_i^{lm}\Big)
\\
&=(i\eta)^{2}\left[\frac{\chi_{\delta}(\eta)e^{\sigma_{i}(\eta)t}}{(i\eta)^{2}}\left( \langle \eta^{2}R_{e,i}(\eta),\Gamma(E_{l},E_{m})\rangle_{\xi}+\langle e_{i}(\eta)-E_{i},E_{i}\rangle_{\xi}\langle e_{i}(\eta),\Gamma(E_{l},E_{m})\rangle_{\xi}\right)\right]
\\
&=(i\eta)^{2}\widehat{\mathcal{D}_{i}^{lm}}(\eta,t)\,,
\end{align*}
the kernel $\mathcal{D}_{i}^{lm}$ satisfies \eqref{asumption-cancellation-mechanism-1} with characteristic velocity $\lambda_{i}$.
Combining this with the kernel $\mathcal{D}_{i}$ defined by \eqref{eq-artificial-boltzmann}, we can write $Q_{i,6}+\cdots +Q_{i,9}$ as
\begin{align*}
\sum_{\substack{1\leq l,m\leq 3\\(l,m)\neq (i,i)}}&\int_{0}^{t} \partial_{x}^{2}\left(\mathcal{D}_{i}^{lm}+B_{i}^{lm}\mathcal{D}_{i}\right)(t-\tau)\ast_{x}(\tilde\phi_{l}\tilde\phi_{m})(\tau)d\tau
\\
&\quad+\sum_{1\leq l,m\leq 3}\int_{0}^{t}\partial_{x}^{2}\left(\mathcal{D}_{i}^{lm}+B_{i}^{lm}\mathcal{D}_{i}\right)(t-\tau)\ast_{x}(2\tilde\phi_{l}V_{m,B}+V_{l,B}V_{m,B})(\tau)d\tau\,.
\end{align*}
Applying the arguments used to estimates $L_{3}$, $L_{5}$, $L_{8}$, $L_{9}$, $L_{10}$, and $L_{11}$ in the proof of Proposition \ref{prop-V-i} to the corresponding terms, we obtain
\[
\vert \partial_{x}^{s}Q_{i,k}\vert\lesssim \varepsilon^{2}(1+t)^{-\frac{1+s}{2}}\Bigg(\sum_{m=1}^{3}\psi_{m}^{\frac{3}{2}}\Bigg)\,, \qquad k=6,7\,,
\]
and
\[
\vert Q_{i,k}\vert\lesssim \varepsilon^{2}(1+t)^{-\frac{1}{2}}\Bigg(\sum_{m=1}^{3}\psi_{m}^{\frac{3}{2}}\Bigg)\,,  \quad \vert \partial_{x}Q_{i,k}\vert\lesssim \varepsilon^{2}\left[\ln(1+t)\theta_{i}^{\frac{7}{2}}+(1+t)^{-1}\Bigg(\sum_{m=1}^{3}\psi_{m}^{\frac{3}{2}}\Bigg)\right]\,,
\]
for $k=8,9$.
For $Q_{i,10}$ and $Q_{i,11}$, applying the argument used to estimate $J_{4}$ in the proof of Lemma \ref{lemm-mathcal-K}, we yields
\[
\vert \partial_{x}^{s}Q_{i,10}\vert\lesssim \varepsilon^2 \left[e^{-ct}\left(1+\vert x-\lambda_{i}t\vert\right)^{-L}\right]\ast_{x,t}\Bigg(\sum_{m=1}^{3}\theta_{m}^{3+s}\Bigg)\lesssim \varepsilon^{2}\Bigg(\sum_{m=1}^{3}\theta_{m}^{3+s}\Bigg)\,, \quad \text{for any fixed }L\geq 2N.
\]
and
\[
\vert \partial_{x}^{s}Q_{i,11}\vert\lesssim \varepsilon^2 \left[e^{-ct}\left(1+\vert x-\lambda_{i}t\vert\right)^{-L}\right]\ast_{x,t}\Bigg(\sum_{m=1}^{3}\theta_{m}^{\frac{5}{2}+s}+(1+t)^{-\frac{s}{2}}\psi_{m}^{3}\Bigg)\lesssim \varepsilon^{2}(1+t)^{-\frac{1+s}{2}}\Bigg(\sum_{m=1}^{3}\psi_{m}^{\frac{3}{2}}\Bigg)\,,
\]
for any fixed $L\geq 3$.
Note that we place $(i\eta)$ and the external derivative factor $(i\eta)^{s}$ on $\widehat{\tilde\phi_{l}\tilde\phi_{m}}$ in $Q_{i,10}$; for $Q_{i,11}$, we place $(i\eta)$ on $e^{\alpha_{i}(\eta)(t-\tau)}(1-\chi_{\delta}(\eta))$ and the external derivative factor $(i\eta)^{s}$ on the source $\widehat{(2\tilde\phi_{l}V_{m,B}+V_{l,B}V_{m,B})}$.
For $Q_{i,12}$, we directly obtain
\[
\vert \partial_{x}^{s}Q_{i,12}\vert\lesssim \varepsilon^{2}\theta_{i}^{2+s}\,.
\]
In view of the estimates above, we impose the following ansatz for $\mathcal{S}_{i}$:
\begin{equation}
\label{asatz-mathcal-S-i}
\left\vert \partial_{x}^{s}\mathcal{S}_{i}\right\vert \leq M\varepsilon^{2}\left[\theta_{i}^{2-\delta_{0}+s}+(1+t)^{-\frac{1-\delta_{0}+2s}{4}}\Bigg(\psi_{i}^{\frac{3}{2}}+\sum_{k\neq i}\tilde\psi_{k}^{\frac{3}{2}}\Bigg)\right]\,,
\end{equation}
where the constant $M$ will be chosen later.
For $Q_{i,3}$, applying the argument used to estimates $K_{4}$, $J_{1}$, and $J_{2}$ in the proof of Proposition \ref{prop-remainder} to the corresponding terms, we obtain
\[
\vert \partial_{x}^{s}Q_{i,3}\vert \lesssim(\varepsilon^{2}+M\varepsilon^{3})\left[\theta_{i}^{2-\delta_{0}+s}+(1+t)^{-\frac{1-\delta_{0}+2s}{4}}\Bigg(\psi_{i}^{\frac{3}{2}}+\sum_{k\neq i}\tilde\psi_{k}^{\frac{3}{2}}\Bigg)\right]\,.
\]
For $Q_{i,5}$, applying the argument used to estimates $K_{3}$ in the proof of Proposition \ref{prop-remainder}, one yields
\[
\vert \partial_{x}^{s}Q_{i,5}\vert \lesssim(\varepsilon^{2}+\varepsilon^{3}M)\left[\theta_{i}^{2-\delta_{0}+s}+(1+t)^{-\frac{1-\delta_{0}+2s}{4}}\Bigg(\psi_{i}^{\frac{3}{2}}+\sum_{k\neq i}\tilde\psi_{k}^{\frac{3}{2}}\Bigg)\right]\,.
\]
Combining the component estimates, the known forcing has amplitude
$C_{0}\varepsilon^{2}$, whereas the terms containing $\mathcal{S}_{i}$ have amplitude at most
$C_{1}\varepsilon^3M$, with the corresponding profiles in \eqref{asatz-mathcal-S-i}.
Here $C_{0}\geq1$ and $C_{1}$ depend only on the
initial-data norm, and are independent of $M$, $\varepsilon$, and the terminal time.
Thus the output coefficient is bounded by
\[
C_{0}\varepsilon^{2}+C_{1}\varepsilon^{3}M=\varepsilon^{2}\left(C_{0}+C_{1}\varepsilon M\right)
\]
Choose $M \geq 4 C_{0}$ sufficiently large and $0<\varepsilon\leq1$ sufficiently small that
\[
C_{1}\varepsilon M\leq\frac14 M.
\]
The output is then bounded by $\frac{1}{2}M\varepsilon^{2}$, which closes the ansatz and completes the proof.

\end{proof}
The exact decomposition established in Section~\ref{wave} gives
\[
 \langle\mathbb P_0f,E_i\rangle_\xi-u_i
       =\mathcal K_i+\mathcal{S}_{i}+V_{i,B_{2}}+V_{i,A}+R_i.
\]
Combining \eqref{est-mathcal-K-i}, the refined estimates \eqref{est-V-i-A}, \eqref{est-R-iA}--\eqref{est-R-iB} and
\eqref{est-mathcal-S}-\eqref{est-D-i}, we obtain
\begin{align}
 \label{sec4:kinetic-artificial-pointwise}
 &\left|\langle\mathbb P_0f,E_i\rangle_\xi-u_i\right|\\
 &\quad\lesssim_{\delta_0}\varepsilon\left[
       (1+t+|x|)^{-N_0/(1-\gamma)}+\theta_i^{2-\delta_0}
                                      +\sum_{j\ne i}\theta_j^2\right]\notag+\varepsilon^2(1+t)^{-1/4+\delta_0/4}
                  \Bigg(\psi_i^{3/2}+\sum_{j\ne i}\tilde\psi_j^{3/2}\Bigg).
                  \notag
\end{align}
Integrating \eqref{sec4:kinetic-artificial-pointwise} over $x\in\mathbb R$ yields
\begin{align}
\label{boltzmann-artificial-L-1}
 \|\langle\mathbb P_0f,E_i\rangle_\xi-u_i\|_{L_x^1}
       &\lesssim_{\delta_0}\varepsilon(1+t)^{-\frac{1}{2}+\frac{\delta_{0}}{2}}.
\end{align}

We next compare the quadratic physical-viscosity system
\eqref{NS-burger} with the artificial-viscosity system \eqref{artificial}.
They have the same quadratic flux and the same initial data, while their
viscosity matrices are $(A_{ij})$ and
$\operatorname{diag}(A_{11},A_{22},A_{33})$, respectively. Since the two systems have the same quadratic flux and the same initial data, and differ only in their viscosity matrices, \cite[Remark~8.9, (8.48)--(8.52),
pp.~109--110]{[Zeng]}, applies directly.

Applying \cite[Remark~8.9, (8.52)]{[Zeng]} with the arbitrarily small
loss chosen to be $\frac{\delta_{0}}{2}$, and substituting the profiles from
\cite[(2.19) and (2.25)]{[Zeng]}, we obtain
\begin{align}\label{sec42:physical-artificial-pointwise}
 |\omega_i(x,t)-u_i(x,t)|
 &\lesssim_{\delta_0}\varepsilon (1+t)^{-\frac{1-2\delta_{0}}{4}}
       \sum_{j=1}^3\left[
       \frac{1}{(1+t+|x-\lambda_j(1+t)|^2)^{\frac{3}{4}}}
\right.
\notag\\
       &\quad \left.+\frac{1}{(1+t+|x-\lambda_j(1+t)|)^{\frac{1}{2}}
                       (1+t+|x-\lambda_j(1+t)|^2)^{\frac{3}{8}}}\right]\notag\\[2mm]
 &\quad+\varepsilon (1+t)^{-\frac{1}{2}}|x-\lambda_i(1+t)|^{-1+\frac{\delta_{0}}{2}}
       \mathbf1_{\left\{\frac{2\sqrt{1+t}}{\delta_{0}}\leq|x-\lambda_i(1+t)|\leq\frac{\delta_{0}(1+t)}{2}\right\}}\,.
\end{align}
The restricted term is understood to vanish outside the indicated set, including
when that set is empty. Consequently,
\begin{equation}\label{sec42:physical-artificial-norms}
  \|\omega(t)-u(t)\|_{L_x^1}
    \lesssim_{\delta_0}\varepsilon(1+t)^{-\frac{1}{2}+\frac{\delta_{0}}{2}}.
\end{equation}
Finally,
\[
 \left\langle\mathbb P_0f,E_i\right\rangle_\xi-\omega_i
 =\left(\left\langle\mathbb P_0f,E_i\right\rangle_\xi-u_i\right)
       +(u_i-\omega_i).
\]
Combining \eqref{sec4:kinetic-artificial-pointwise} with
\eqref{sec42:physical-artificial-pointwise} gives the pointwise estimate
in Part~(I) of the main theorem. The corresponding $L_x^1$ estimate follows from \eqref{boltzmann-artificial-L-1} and
\eqref{sec42:physical-artificial-norms}\,.

\subsection{Comparison with the full Navier--Stokes system}
\label{sec43:full-comparison}

We now compare $u_i$ in \eqref{artificial} with the full-flux
artificial solution $u_i^f$, and then compare $u_i^f$ with the full NS
solution $\om_i^f$. The three systems are written in the same Eulerian spatial coordinate and have the same initial data. The coefficients $\lambda_i$,
$A_{ij}$, $A_i=A_{ii}$, and $B_i^{kl}$ are those in
\eqref{NS-burger} and \eqref{artificial}. We write
\[
 \mathcal Q_i(z,\widetilde z)
 =\sum_{k,l=1}^3B_i^{kl}z_k\widetilde z_l,
 \qquad i=1,2,3.
\]
Since $B_i^{kl}=B_i^{lk}$, $\mathcal Q$ is a symmetric bilinear form.

Let $\mathcal F$ and $\mathcal A$ denote the full Euler flux and
physical viscosity matrix of the compressible NS system, expressed
in the equilibrium characteristic coordinates corresponding to $E_i$.
The full physical equations are discussed in
\cite[Section~9]{[Zeng]}. Here we use the abstract conservation-law form
\cite[(2.1), (2.8)--(2.10)]{[Zeng]}, rather than the component formulas.
After subtracting the equilibrium flux, the normalization is
\begin{equation}\label{sec43:coefficient-matching}
 \begin{aligned}
 \mathcal F(0)&=0,\qquad
 D_z\mathcal F(0)=\operatorname{diag}(\lambda_1,\lambda_2,\lambda_3),\\
 \frac12D_z^2\mathcal F(0)[z,z]&=-\mathcal Q(z,z),\qquad
 \mathcal A(0)=(A_{ij}).
 \end{aligned}
\end{equation}

Define the exact nonlinear remainders by
\begin{equation}\label{sec43:cubic-flux}
 \begin{aligned}
 \mathcal R_{3,i}(z)
   &=\mathcal F_i(z)-\lambda_i z_i+\mathcal Q_i(z,z),\\
 \mathcal V_i(z,q)
   &=\sum_{j=1}^3\big(\mathcal A_{ij}(z)-A_{ij}\big)q_j.
 \end{aligned}
\end{equation}
Thus $\mathcal R_3$ contains all higher-order Euler terms, rather than
only the cubic Taylor polynomial, while $\mathcal V$ is linear in
the gradient variable $q$. The full-flux artificial system is
\begin{equation}\label{sec43:full-artificial}
 \left\{\begin{aligned}
 \partial_tu_i^f+\lambda_i\partial_xu_i^f
 &=A_i\partial_x^2u_i^f+\partial_x\mathcal Q_i(u^f,u^f)
                    -\partial_x\mathcal R_{3,i}(u^f),\\
 u_i^f(0)&=u_i(0).
 \end{aligned}\right.
\end{equation}
The full NS system takes the form
\begin{equation}\label{sec43:full-ns-characteristic}
 \left\{\begin{aligned}
 \partial_t\om_i^f+\lambda_i\partial_x\om_i^f
 &=\sum_{j=1}^3A_{ij}\partial_x^2\om_j^f
            +\partial_x\mathcal Q_i(\om^f,\om^f)\\
 &\quad-\partial_x\mathcal R_{3,i}(\om^f)
            +\partial_x\mathcal V_i(\om^f,\partial_x\om^f),\\
 \om_i^f(0)&=u_i(0),\qquad i=1,2,3.
 \end{aligned}\right.
\end{equation}
The two equations have the same full Euler flux. In the second equation,
the off-diagonal entries of $A_{ij}$ remain in the linear viscous
operator.

All constitutive maps are smooth in a fixed neighborhood of
zero, and for sufficiently small $\varepsilon$ the solutions considered below remain in this neighborhood. Taylor's formula and bilinearity give
\begin{equation}\label{sec43:structural-bounds}
 \begin{aligned}
 |\mathcal Q(z,\widetilde z)|&\lesssim |z|\,|\widetilde z|,\\
 |D_z^j\mathcal R_3(z)|&\lesssim |z|^{3-j},\quad j=0,1,2,
       \qquad |D_z^3\mathcal R_3(z)|\lesssim1,\\
 |\mathcal V(z,q)|&\lesssim |z|\,|q|.
 \end{aligned}
\end{equation}
Here $D_z$ denotes differentiation in the state variable. We will also use
\begin{equation}\label{sec43:difference-bounds}
 \begin{aligned}
 \mathcal Q(z,z)-\mathcal Q(\widetilde z,\widetilde z)
   &=\mathcal Q(z+\widetilde z,z-\widetilde z),\\
 |\mathcal R_3(z)-\mathcal R_3(\widetilde z)|
   &\lesssim (|z|^2+|\widetilde z|^2)|z-\widetilde z|.
 \end{aligned}
\end{equation}
The common initial data are
\begin{equation}\label{sec43:common-data}
 u_i^f(0)=\om_i^f(0)=u_i(0)
     =\varepsilon\langle\check\chi*_xf_0,E_i\rangle_\xi.
\end{equation}
The estimates \eqref{sec42:data-decay}--\eqref{sec42:data-primitives}, together with the one-sided pointwise decay established in the preceding subsection, apply to these common data. Hence the initial-data hypotheses of \cite[Theorems~2.3 and~2.9]{[Zeng]} hold with size $O(\varepsilon)$. \cite[Theorem~2.9]{[Zeng]} applies to $u$ and $u^f$, since their transport linearization is diagonal and their common viscosity is $\operatorname{diag}(A_1,A_2,A_3)>0$; the equilibrium symmetrizer can be taken to be the identity. The small physical solution is covered by \cite[Theorems~7.1 and~2.3]{[Zeng]} under the structural hypotheses above.

\begin{proposition}\label{sec43:prop-flux-comparison}
For $0<\delta_{0}<\frac{1}{4}$ and sufficiently small $\varepsilon$, the solutions
$u_i$ and $u_i^f$ with the common initial data satisfy, for $s=0,1$,
\begin{align}\label{sec43:flux-pointwise}
 |\partial_x^s(u_i^f-u_i)(x,t)|
 &\lesssim_{\delta_{0}}\varepsilon^3\left[
       \theta_i^{2+s-\delta_{0}}
       +(1+t)^{-\frac{1-2\delta_{0}}{4}-\frac{s}{2}}
          \Bigg(\psi_i^{3/2}+\sum_{j\ne i}\tilde\psi_j^{3/2}\Bigg)\right].
\end{align}
In particular,
\begin{equation}\label{sec43:flux-norms}
 \begin{aligned}
  \|\partial_x^s(u^f-u)(t)\|_{L_x^1}
       &\lesssim_{\delta_{0}}\varepsilon^3(1+t)^{-\frac{1+s}{2}+\frac{\delta_{0}}{2}},
       \qquad s=0,1.
 \end{aligned}
\end{equation}
\end{proposition}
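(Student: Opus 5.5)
The plan is to set $w_i=u_i^f-u_i$ and subtract \eqref{artificial} from \eqref{sec43:full-artificial}. Using \eqref{sec43:difference-bounds}, $w$ solves
\[
 \partial_tw_i+\lambda_i\partial_xw_i=A_i\partial_x^2w_i
 +\partial_x\mathcal Q_i(2u+w,w)-\partial_x\mathcal R_{3,i}(u+w),
 \qquad w_i(0)=0 .
\]
Its mild form, with the convected heat kernel $\mathbb H_i^t$, is
\[
 w_i(t)=\int_0^t\partial_x\mathbb H_i^{t-\tau}*_x
 \Big[2\mathcal Q_i(u,w)+\mathcal Q_i(w,w)-\mathcal R_{3,i}(u)
 -\big(\mathcal R_{3,i}(u+w)-\mathcal R_{3,i}(u)\big)\Big](\tau)\,d\tau .
\]
The forcing is the cubic term $\mathcal R_{3,i}(u)$, and it has size $\varepsilon^3$. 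Everything else is linear or superlinear in $w$, with coefficients of size at least $\varepsilon$. I would therefore close a bootstrap with the ansatz
\[
 |\partial_x^sw_i|\le M\varepsilon^3\Big[\theta_i^{2+s-\delta_0}+(1+t)^{-\frac{1-2\delta_0}{4}-\frac s2}\Big(\psi_i^{3/2}+\sum_{j\ne i}\tilde\psi_j^{3/2}\Bigg)\Big],\qquad s=0,1,
\]
and run the same scheme as Lemma~\ref{sec4:refined-comparison} and Proposition~\ref{prop-V-i}. The output amplitude will be $C_0\varepsilon^3+C_1\varepsilon^4M$, which closes for small $\varepsilon$. For the pointwise input on $u$ I write $u=\tilde u+v$ and use Proposition~\ref{sec4:artificial-waves}: $|\partial_x^s\tilde u_i|\lesssim\varepsilon\theta_i^{1+s}$, and $v_i$ is bounded by $\varepsilon^2\times$ coupling profiles. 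To place $\partial_x$ on the source on $[t/2,t]$ I also need $|\partial_x^s\mathcal R_3(u)|\lesssim|u|^2|\partial_x^su|$, which follows from \eqref{sec43:structural-bounds}.

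Next I expand the forcing. From \eqref{sec43:structural-bounds}, $\mathcal R_{3,i}(u)$ is dominated by $|u|^3\lesssim\varepsilon^3\sum_j\theta_j^3+$ (terms containing $v$, which carry extra decay). The diagonal piece $\theta_i^3$ convolved against $\partial_x\mathbb H_i$ is the heat-type convolution already used for $L_1$ in Proposition~\ref{prop-remainder}. It gives $\theta_i^{2-\delta_0}$, and in fact $\theta_i^{2}\ln$, which is where the $\delta_0$ loss enters. The case $s=1$ splits the time integral at $t/2$ in the usual way. The off-diagonal pieces $\theta_j^3$ with $j\ne i$, and the mixed products handled through \eqref{est-theta-1-j-times-theta-1-k}, give the $\psi_i^{3/2}+\tilde\psi_j^{3/2}$ profiles by the convolution table. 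Because the source is already cubic, a direct bound has the decay $(1+t)^{-1/4}$ needed at the $\tilde\psi$ level, so no cancellation is required.

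The terms linear in $w$ are $\mathcal Q(u,w)$ and $D\mathcal R_3(u)w$. I treat them as $K_3,K_4,J_1$ were treated in Proposition~\ref{prop-remainder}: multiply $\varepsilon\theta_j^1$ by the ansatz and convolve. The diagonal product $\tilde u_iw_i$ against $\mathbb H_i$ reproduces $\theta_i^{2-\delta_0}$. The mixed products with $j\ne i$ are where the main obstacle lies. The product $\tilde u_j w_j$ with source speed $\lambda_j\ne\lambda_i$ convolved against $\partial_x\mathbb H_i$ gives only $(1+t)^{-1/4}$-type decay if estimated directly. The $\psi_i^{3/2}$ tail does appear, but the extra factor $(1+t)^{-(1-2\delta_0)/4}$ would be lost.

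For this I would use the Cancellation Mechanism of Remark~\ref{remark-cancellation mechanism}, with the kernel $H=\mathbb H_i$, which trivially satisfies \eqref{asumption-cancellation-mechanism-1}. The source then needs a characteristic-derivative bound:
\begin{itemize}
\item $(\partial_t+\lambda_j\partial_x)\tilde u_j=A_j\partial_x^2\tilde u_j+B_j^{jj}\partial_x\tilde u_j^2=O(\varepsilon\theta_j^3)$;
\item $(\partial_t+\lambda_j\partial_x)$ of the diagonal part of $w_j$ is given by its own equation.
\end{itemize}
This mirrors the $\mathcal B_j$ treatment of $\tilde J_{(i,j)}$. Carrying it out requires splitting $w_j$ into a $\mathbb H_j$-diagonal piece and a remainder, in the same way $R_{j,B}$ was split into $R_{j,B_1}$ and $R_{j,B_2}$, and bounding the remainder directly.

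The quadratic-in-$w$ terms carry $\varepsilon^6M^2$ and are harmless. Once the ansatz closes, the $L^1_x$ bound \eqref{sec43:flux-norms} follows by integrating the profiles. Each of $\theta_i^{2+s-\delta_0}$ and $(1+t)^{-\frac{1-2\delta_0}{4}-\frac s2}\psi^{3/2}$, $(1+t)^{-\frac{1-2\delta_0}{4}-\frac s2}\tilde\psi^{3/2}$ has $L^1_x$ norm $\lesssim(1+t)^{-\frac{1+s}{2}+\frac{\delta_0}{2}}$, as in \eqref{boltzmann-artificial-L-1}.
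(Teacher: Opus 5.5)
Your overall scheme matches the paper's: a bootstrap on $w=u^f-u$ with the stated ansatz, direct bounds for the diagonal and cross-family terms, and characteristic integration by parts for $\tilde u_jw_j$, $j\neq i$. The gap is in the forcing. You claim that the off-diagonal pure-family cubic term $\mathcal R_{3,i}(\tilde u_j\mathbf e_j)$, $j\ne i$, needs no cancellation because it is already cubic. This is false.

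A direct bound gives $\varepsilon^3\int_0^t\theta_i^2(t-\tau)*_x\theta_j^3(\tau)\,d\tau\lesssim\varepsilon^3(\psi_i^{3/2}+\tilde\psi_j^{3/2})$, with no extra time factor, and this is sharp. Take $x-\lambda_it=c(\lambda_j-\lambda_i)t$ with $0<c<1$, at distance of order $t$ from all characteristics. There the times $|\tau-ct|\lesssim\sqrt t$ contribute (window $\sqrt t$) $\times$ (kernel height $t^{-1}$) $\times$ (source height $t^{-3/2}$) $\times$ (overlap width $\sqrt t$) $=t^{-3/2}$. The bound you must prove is $\varepsilon^3(1+t)^{-\frac{1-2\delta_0}{4}}(\psi_i^{3/2}+\tilde\psi_j^{3/2})\sim\varepsilon^3t^{-7/4+\delta_0/2}$ at such points. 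So the direct estimate loses a factor $t^{1/4-\delta_0/2}$ and the ansatz does not close. The relation $(1+t)^{-1/4}\psi^{3/2}\lesssim\tilde\psi^{3/2}$ does not rescue this, because the target carries the extra factor on $\tilde\psi_j^{3/2}$ as well.

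The paper fixes this by putting the cubic term into the cancellation together with the linear term. It sets $F_{ij}=2B_i^{jj}\tilde u_jd_j-\mathcal R_{3,i}(\tilde u_j\mathbf e_j)$ and splits the time integral at $\sqrt t$ and $t-\sqrt t$. The two end pieces are bounded directly. On $[\sqrt t,t-\sqrt t]$ it integrates by parts along characteristics, using $(\partial_t+\lambda_i\partial_x)\mathbb H_i=A_i\partial_x^2\mathbb H_i$ and $(\partial_t+\lambda_j\partial_x)\tilde u_j=\partial_xq_j$ with $q_j=A_j\partial_x\tilde u_j+B_j^{jj}\tilde u_j^2$. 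This gives $(\partial_t+\lambda_j\partial_x)\mathcal R_{3,i}(\tilde u_j\mathbf e_j)=\partial_{z_j}\mathcal R_{3,i}(\tilde u_j\mathbf e_j)\,\partial_xq_j=O(\varepsilon^3\theta_j^5)$. The resulting terms $\theta_i^2*\theta_j^4$ and $\theta_i^1*\theta_j^5$ are of order $t^{-2}$ between the characteristics. Only the rest, $\mathcal R_{3,i}(u)-\sum_j\mathcal R_{3,i}(\tilde u_j\mathbf e_j)$, is bounded directly: cross-family products give $\varepsilon^3(1+t)^{-3/2}\psi^{2N}$, and terms containing $v$ are of size $\varepsilon^4$.

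Two smaller points. First, you do not need to split $w_j$ into a diagonal piece and a remainder. The artificial system is scalar in each component, so exactly $(\partial_t+\lambda_j\partial_x)w_j=\partial_xg_j$ with $g_j=A_j\partial_xw_j+\mathcal Q_j(2u+w,w)-\mathcal R_{3,j}(u+w)$. You then write $\tilde u_j\partial_xg_j=\partial_x(\tilde u_jg_j)-(\partial_x\tilde u_j)g_j$. Second, $\mathbb H_i^{t-\tau}$ is singular as $\tau\to t$, so it is cleaner to stop the integration by parts at $t-\sqrt t$, as the paper does, rather than use $H=\mathbb H_i$ in the remark's scheme up to $\tau=t$.
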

\begin{proof}
Set $v_i^f=u_i^f-\tilde u_i$ and
$d_i=u_i^f-u_i=v_i^f-v_i$, where $v_i=u_i-\tilde u_i$
is defined in \eqref{off-artificial}.
Since $d_i(0)=0$, subtracting
\eqref{artificial} from \eqref{sec43:full-artificial} gives
\begin{equation}\label{sec43:difference-duhamel}
 u_i^f(t)-u_i(t)=\int_0^t\partial_x\mathbb H_i^{t-\tau}*_x
 [\mathcal Q_i(u^f+u,u^f-u)-\mathcal R_{3,i}(u^f)](\tau)\,d\tau.
\end{equation}

Since $u^f+u=2\tilde u+v^f+v$, the difference admits the exact decomposition
\begin{align}
 d_i&=\sum_{m=1}^{3}\int_0^t\partial_x\mathbb H_i^{t-\tau}*_xF_{im}(\tau)\,d\tau\notag\\
 &\quad+2\sum_{\substack{1\leq m,l\leq 3\\m\ne l}}B_i^{ml}
       \int_0^t\partial_x\mathbb H_i^{t-\tau}*_x
                      (\tilde u_md_l)(\tau)\,d\tau\notag\\
 &\quad+\sum_{1\leq m,l\leq 3}B_i^{ml}
       \int_0^t\partial_x\mathbb H_i^{t-\tau}*_x
                      ((v_m^f+v_m)d_l)(\tau)\,d\tau\notag\\
 &\quad-\int_0^t\partial_x\mathbb H_i^{t-\tau}*_x
   \left[\mathcal R_{3,i}(u^f)
                -\sum_{m=1}^{3}\mathcal R_{3,i}(\tilde u_m\mathbf e_m)\right](\tau)\,d\tau
\notag\\
&=J_{1}+J_{2}+J_{3}+J_{4}\,,
\label{sec43:difference-four-terms}
\end{align}
where
\begin{equation}\label{sec43:signed-family-source}
 F_{im}=2B_i^{mm}\tilde u_md_m
                   -\mathcal R_{3,i}(\tilde u_m\mathbf e_m).
\end{equation}
Throughout the proof, the parameter $s$ denotes the order of differentiation with $s\in\left\{0,1\right\}$. To estimate these terms, we have
\begin{equation}\label{sec43:individual-artificial}
 |\partial_x^s( v_{i}^{f})|+|\partial_x^s(v_{i})|
 \lesssim\varepsilon^2(1+t)^{-s/2}
       \left(\psi_i^{3/2}+\sum_{j\ne i}\tilde\psi_j^{3/2}\right),
 \qquad s=0,1\,,
\end{equation}
by a modification of the argument in \cite[Chapter 4]{[Zeng]}.
We first consider $J_{4}$, since it depends only on the known components $\tilde{u}$ and $v^{f}$.
For $J_{4}$, set
\[
\mathcal{R}_{3,i}(u^{f})-\Bigg[\sum_{m=1}^{3}\mathcal{R}_{3,i}(\tilde{u}_{m}e_{m})\Bigg]=\left[\mathcal{R}_{3,i}(u^{f})-\mathcal{R}_{3,i}(\tilde{u})\right]+\left[\mathcal{R}_{3,i}(\tilde{u})-\sum_{m=1}^{3}\left(\mathcal{R}_{3,i}(\tilde{u}_{m}e_{m})\right)\right]=B_{1}+B_{2}\,.
\]
 By the fundamental theorem of calculus, \eqref{sec43:structural-bounds}, and \eqref{sec43:individual-artificial}, the first term $B_{1}$ satisfies
\begin{align}
 \left\vert B_{1}\right\vert&\leq\left\vert\sum_{k=1}^3v_k^f\int_0^1
       \partial_{z_k}\mathcal R_{3,i}(\tilde u+\sigma v^f)\,d\sigma\right\vert
       \lesssim \left\vert v^{f}\right\vert\left( \vert \tilde{u}\vert +\left\vert v^{f}\right\vert \right)^{2}
       \lesssim \varepsilon^{4}\sum_{m=1}^{3}\left[\theta_{m}^{\frac{7}{2}}+(1+t)^{-\frac{3}{4}}\psi_{m}^{3}\right]
\label{sec43:full-flux-remainder-identity}
\end{align}
Differentiating $B_{1}$, we have
\begin{align}
 \left\vert\partial_x B_{1}\right\vert
 &\leq \left\vert\int_0^1\left\{
 \left(D_z^2\mathcal R_{3,i}(\tilde u+\sigma v^f)\right)
                       \left(\partial_x\tilde u+\sigma\partial_xv^f\right)(v^f)
 +\left(D_z\mathcal R_{3,i}(\tilde u+\sigma v^f)\right)\left(\partial_xv^f\right)
                         \right\}\,d\sigma\right\vert
\notag\\
&\lesssim \left(\vert \tilde{u}\vert +\left\vert v^{f}\right\vert\right)\left(\left\vert \partial_{x}\tilde{u}\right\vert+\left\vert \partial_{x}v^{f}\right\vert\right)\left\vert v^{f}\right\vert+\left(\vert \tilde{u}\vert +\left\vert v^{f}\right\vert\right)^{2}\left\vert \partial_{x}v^{f}\right\vert
\notag\\
&\lesssim\varepsilon^{4}\sum_{m=1}^{3}\left[\theta_{m}^{\frac{9}{2}}+(1+t)^{-\frac{5}{4}}\psi_{m}^{3}\right]\,.
\label{sec43:full-flux-remainder-derivative}
\end{align}
For the second term, integrating the mixed derivatives over the corresponding rectangles gives
\begin{align}
 B_{2}&=\tilde u_1\tilde u_2\int_0^1\!\!\int_0^1
       \partial_{z_1z_2}\mathcal R_{3,i}
          (\rho\tilde u_1,\sigma\tilde u_2,0)\,d\rho\,d\sigma+\tilde u_1\tilde u_3\int_0^1\!\!\int_0^1
       \partial_{z_1z_3}\mathcal R_{3,i}
          (\rho\tilde u_1,\rho\tilde u_2,\sigma\tilde u_3)\,d\rho\,d\sigma\notag\\
 &\quad+\tilde u_2\tilde u_3\int_0^1\!\!\int_0^1
       \partial_{z_2z_3}\mathcal R_{3,i}
          (\rho\tilde u_1,\rho\tilde u_2,\sigma\tilde u_3)\,d\rho\,d\sigma.
\label{sec43:full-flux-mixed-family-identity}
\end{align}
Together with \eqref{est-tilde-ui}, \eqref{sec43:structural-bounds}, and \eqref{est-theta-1-j-times-theta-1-k}, these exact identities yield

\begin{align}
 &\left|\partial_x^s\left(
       \mathcal R_{3,i}(\tilde u)
          -\sum_m\mathcal R_{3,i}(\tilde u_m\mathbf e_m)\right)\right|\lesssim\varepsilon^3(1+t)^{-\frac{3+s}{2}}\sum_{m=1}^{3}\left(\psi_m^{2N}\right)\,.
\label{sec43:full-flux-mixed-family-bound}
\end{align}
Therefore,
\begin{align}
\left\vert \partial_{x}^{s}J_{4}\right\vert&\lesssim\varepsilon^3\left[
 \int_0^{t/2}\theta_{i}^{2+s}(t-\tau)\ast_{x}
\Bigg(\sum_{m=1}^{3}\theta_{m}^{\frac{7}{2}}+(1+\tau)^{-\frac{3}{4}}\psi_{m}^{3}\Bigg)(\tau)\,d\tau\right.\notag\\
 &\quad\left.+\int_{t/2}^t\theta_{i}^{2}(t-\tau)*_x\Bigg(\sum_{m=1}^{3}\theta_{m}^{\frac{7}{2}+s}+(1+\tau)^{-\frac{3+2s}{4}}\psi_{m}^{3}\Bigg)(\tau)\,d\tau
\right]\notag\\
 &\quad\lesssim\varepsilon^3\Bigg[\theta_{i}^{2+s}+(1+t)^{-\frac{1+2s}{4}}\Bigg(\psi_i^{\frac{3}{2}}+\sum_{j\ne i}\tilde\psi_j^{\frac{3}{2}}\Bigg)\Bigg]\,.
\label{sec43:full-flux-remainder-integral}
\end{align}
Motivated by the estimate of $J_{4}$, we impose the following ansatz for $d_{i}$:
\begin{equation}\label{asatz-d-i}
 \left\vert\partial_x^sd_i(t,x)\right\vert\leq M\varepsilon^{3}
\left[\theta_i^{2+s-\delta_0}
 +(1+t)^{-\frac{1-2\delta_{0}+2s}{4}}
  \Bigg(\psi_i^{3/2}+\sum_{j\ne i}\tilde\psi_j^{3/2}\Bigg)\right]\,,
\end{equation}
where the constant $M\geq 1$ will be chosen later. We next consider the term $J_{1}$. For convenience, we denote the components $J_{1}^{m}$ of $J_{1}$ by
\[
J_{1}^{m}=\int_{0}^{t}\partial_{x}\mathbb{H}_{i}^{t-\tau}\ast_{x}F_{im}(\tau)d\tau\,.
\]
When $m=i$, we directly obtain
\begin{align*}
\vert \partial_{x}^{s}J_{1}^{i}\vert &\lesssim(M\varepsilon^{4}+\varepsilon^{3})\left[\int_{0}^{\frac{t}{2}}\theta_{i}^{2+s}(t-\tau)\ast_{x}\theta_{i}^{3-\delta_{0}}(\tau)d\tau+\int_{\frac{t}{2}}^{t}\theta_{i}^{2}(t-\tau)\ast_{x}\theta_{i}^{3-\delta_{0}+s}(\tau)d\tau\right]
\\
&\lesssim
(M\varepsilon^{4}+\varepsilon^{3})\theta_{i}^{2+s-\delta_{0}}\,.
\end{align*}
When $m\neq i$ and $t\geq4$, we split the time integral at $\sqrt t$ and $t-\sqrt{t}$, namely, $$J_1^{m}=\int_{0}^{\sqrt{t}}\cdots+\int_{\sqrt{t}}^{t-\sqrt{t}}\cdots+\int_{t-\sqrt{t}}^{t}\cdots\equiv J^{m}_{1\left\{[0,\sqrt{t}]\right\}}+J^{m}_{1\left\{[\sqrt{t},t-\sqrt{t}]\right\}}+J^{m}_{1\left\{[t-\sqrt{t},t]\right\}}\,.$$ The early-time part satisfies
\[
 \left\vert\partial_{x}^{s}J^{m}_{1\left\{[0,\sqrt{t}]\right\}}\right\vert\lesssim(M\varepsilon^{4}+\varepsilon^{3})
 \int_0^{\sqrt t}\theta_i^{2+s}(t-\tau)*_x\theta_m^{3-\delta_{0}}(\tau)\,d\tau
 \lesssim(M\varepsilon^{4}+\varepsilon^{3})\theta_i^{2+s-\frac{\delta_{0}}{2}}\,.
\]
On the last time interval, setting $r=t-\tau$ gives
\begin{align*}
\left\vert \partial_{x}^{s}J^{m}_{1\left\{[t-\sqrt{t},t]\right\}}\right\vert
&=\left\vert \int_{0}^{\sqrt{t}}\partial_{x}^{s}F_{im}(t-r)\ast_{x}\partial_{x}\mathbb{H}_{i}^{r}dr\right\vert
\\
 &\quad\lesssim(M\varepsilon^{4}+\varepsilon^{3})
 (1+t)^{-(3+s-\delta_0)/2}
 \left(1+\frac{|x-\lambda_mt|^2}{1+t}\right)^{-N}
                            \int_0^{\sqrt t}r^{-1/2}\,dr\\
 &\quad\lesssim(M\varepsilon^{4}+\varepsilon^{3})
                  \theta_m^{\frac{5}{2}+s-\delta_0}
\end{align*}
We now turn to the middle-time part. From the full artificial system \eqref{sec43:full-artificial}, the artificial system \eqref{artificial}, and the kinetic Burgers equation \eqref{burger-artificial}, we obtain the following characteristic equations:
\begin{equation}\label{sec43:difference-characteristic-flux}
 (\partial_t+\lambda_m\partial_x)d_m=\partial_x g_{m},
 \qquad
  g_m=A_m\partial_xd_m+
 \mathcal Q_m(u^f+u,d)-\mathcal R_{3,m}(u^f)\,,
\end{equation}
\begin{equation}
\label{sec43:kinetic-burger-flux}
(\partial_{t}+\lambda_m\partial_x)\tilde{u}_{m}=\partial_{x}q_{m}\,, \qquad q_{m}=A_{m}\partial_{x}\tilde{u}_{m}+B_{m}^{mm}(\tilde{u}_{m})^{2}\,.
\end{equation}
These characteristic equations imply
\begin{align*}
(\partial_t+\lambda_m\partial_x)F_{im}&=2B_{i}^{mm}\left[\left(\partial_{x}q_{m}\right)d_{m}+\partial_{x}(\tilde{u}_{m}g_{m})-(\partial_{x}\tilde{u}_{m})g_{m}\right]-\partial_{z_{m}}\mathcal{R}_{3,i}(\tilde{u}_{m}\mathbf{e}_{m})\left[\partial_{x}q_{m}\right]
\end{align*}
Recalling the known bound
 $$\vert u(t)\vert_{L^{\infty}_{x}}+\vert u^f(t)\vert_{L^{\infty}_{x}}
\lesssim\varepsilon(1+t)^{-1/2}\,,
$$
and combining it with the ansatz for $d$ in \eqref{asatz-d-i}, we obtain
\[
\left\vert g_{m}(t)\right\vert_{L^{\infty}_{x}}\lesssim \varepsilon^{3}M (1+t)^{-\frac{3}{2}+\frac{\delta_{0}}{2}}\,.
\]
The cancellation can now be seen directly in physical space. Using $(\partial_{t}+\lambda_{i}\partial_{x})\mathbb{H}_{i}^{t-\tau}=A_{i}\partial_{x}^{2}\mathbb{H}_{i}^{t-\tau}$ and transferring the spatial derivatives by integration by parts, we obtain
\begin{align*}
 &\left\vert(\lambda_{m}-\lambda_{i})\partial_{x}^{s}J_{1\left\{[\sqrt{t}, \,t-\sqrt{t}]\right\}}^{m}\right\vert
 \\
 &\leq \Bigg\vert\big[\partial_{x}^{s}\mathbb{H}_{i}^{t-\tau}\ast_{x}F_{im}(\tau)\big]_{\sqrt t}^{t-\sqrt{t}}\Bigg\vert
   +\left\vert\int_{\sqrt t}^{t-\sqrt{t}}\partial_{x}^{s}(\partial_{t}+\lambda_{i}\partial_{x})\mathbb{H}_{i}^{t-\tau}*_x F_{im}(\tau)\,d\tau\right\vert\\
 &\quad+\left\vert\int_{\sqrt t}^{t-\sqrt{t}} \partial_{x}^{s}\mathbb{H}_{i}^{t-\tau}*_x(\partial_\tau+\lambda_m\partial_x)F_{im}(\tau)\,d\tau\right\vert
              \\
 &\lesssim(M\varepsilon^{4}+\varepsilon^{3})\Big[\theta_{i}^{1+s}(\sqrt{t})\ast_{x}\theta_{m}^{3-\delta_{0}}(t-\sqrt{t})+\theta_{i}^{1+s}(t-\sqrt{t})\ast_{x}\theta_{m}^{3-\delta_{0}}(\sqrt{t})
  \\
 & +\int_{\sqrt t}^{t-\sqrt{t}}\theta_i^{2+s}(t-\tau)*_x\theta_m^{4-\delta_{0}}(\tau)\,d\tau+\int_{\sqrt t}^{t-\sqrt{t}}\theta_i^{1+s}(t-\tau)*_x\theta_m^{5-\delta_{0}}(\tau)\,d\tau\Big]\\
 &\lesssim(M\varepsilon^{4}+\varepsilon^{3})(1+t)^{-\frac{1+2s-2\delta_{0}}{4}}\left(\psi_{i}^{\frac{3}{2}}+\tilde\psi_{m}^{\frac{3}{2}}\right)
\end{align*}
Hence, $J_{1}$ satisfies the ansatz \eqref{asatz-d-i}. We finally estimate $J_{2}$ and $J_{3}$. By \eqref{sec43:individual-artificial}, the ansatz \eqref{asatz-d-i} for $d_{i}$, and \eqref{eq-theta-psi-j-neq-k}, we obtain
\begin{align*}
\vert \partial_{x}^{s}J_{2}\vert &\lesssim\varepsilon^{4}M\left[\int_{0}^{\frac{t}{2}}\theta_{i}^{2+s}(t-\tau)\ast_{x}\Bigg(\sum_{m=1}^{3}\theta_{m}^{\frac{7}{2}-\delta_{0}}+(1+\tau)^{-\frac{3-2\delta_{0}}{4}}\psi_{m}^{3}\Bigg)(\tau)d\tau\right.
\\
&\quad \left.+\int_{\frac{t}{2}}^{t}\theta_{i}^{2}(t-\tau)\ast_{x}\Bigg(\sum_{m=1}^{3}\theta_{m}^{\frac{7}{2}-\delta_{0}+s}+(1+\tau)^{-\frac{3-2\delta_{0}+2s}{4}}\psi_{m}^{3}\Bigg)(\tau)d\tau\right]
\\
&\lesssim\varepsilon^{4}M(1+t)^{-\frac{1-2\delta_{0}+2s}{4}}\Bigg(\psi_{i}^{\frac{3}{2}}+\sum_{j\neq i}\tilde\psi_{j}^{\frac{3}{2}}\Bigg)\,.
\end{align*}
For $J_{3}$, \eqref{sec43:individual-artificial} and the ansatz of $d_{i}$ \eqref{asatz-d-i} give
\begin{align*}
\vert \partial_{x}^{s}J_{3}\vert &\lesssim\varepsilon^{4}M\left[\int_{0}^{\frac{t}{2}}\theta_{i}^{2+s}(t-\tau)\ast_{x}\Bigg(\sum_{m=1}^{3}(1+\tau)^{-\frac{1-2\delta_{0}}{4}}\psi_{m}^{3}\Bigg)(\tau)d\tau\right.
\\
&\quad \left.+\int_{\frac{t}{2}}^{t}\theta_{i}^{2}(t-\tau)\ast_{x}\Bigg(\sum_{m=1}^{3}(1+\tau)^{-\frac{1-2\delta_{0}+2s}{4}}\psi_{m}^{3}\Bigg)(\tau)d\tau\right]
\\
&\lesssim\varepsilon^{4}M(1+t)^{-\frac{1-2\delta_{0}+2s}{4}}\Bigg(\psi_{i}^{\frac{3}{2}}+\sum_{j\neq i}\tilde\psi_{j}^{\frac{3}{2}}\Bigg)\,.
\end{align*}
Combining the component estimates, the known forcing has amplitude
$C_{0}\varepsilon^{3}$, whereas the terms containing $d_{i}$ have amplitude at most
$C_{1}\varepsilon^{4}M$, with the corresponding profiles in \eqref{asatz-d-i}.
Here $C_{0}\geq1$ and $C_{1}$ depend only on the
initial-data norm, and are independent of $M$, $\varepsilon$, and the terminal time.
Thus the output coefficient is bounded by
\[
C_{0}\varepsilon^{3}+C_{1}\varepsilon^{4}M=\varepsilon^{3}\left(C_{0}+C_{1}\varepsilon M\right)
\]
Choose $M \geq 4 C_{0}$ sufficiently large and $0<\varepsilon\leq1$ sufficiently small that
\[
C_{1}\varepsilon M\leq\frac14 M.
\]
The output is then bounded by $\frac{1}{2}M\varepsilon^{3}$,
which closes the ansatz and proves \eqref{sec43:flux-pointwise}.
Finally, integrating \eqref{sec43:flux-pointwise} over $x\in\mathbb R$ yields \eqref{sec43:flux-norms}. This completes the proof.
\end{proof}

We next compare $u_i^f$ with $\om_i^f$. Their complete Euler fluxes
coincide, whereas their viscosity matrices are
$\operatorname{diag}(A_1,A_2,A_3)$ and $\mathcal A(\om^f)$.
Since $\mathcal A(0)=(A_{ij})$ and the equilibrium characteristic
speeds are simple, the artificial matrix prescribed in
\cite[(8.49)]{[Zeng]} is exactly
$\operatorname{diag}(A_{11},A_{22},A_{33})$ in the present coordinates.
Therefore, \eqref{sec43:full-artificial} is precisely the full-flux
artificial system \cite[(8.48)]{[Zeng]}.

\begin{proposition}\label{sec43:prop-viscosity-comparison}
Under the structural hypotheses and common initial data above,
for sufficiently small $\varepsilon$ the solutions $u_i^f$ and
$\om_i^f$ satisfy, for $i=1,2,3$,
\begin{align}\label{sec43:viscosity-pointwise}
 |\om_i^f(x,t)-u_i^f(x,t)|
 &\lesssim_{\delta_0}\varepsilon (1+t)^{-\frac{1-2\delta_{0}}{4}}
       \sum_{j=1}^3\left[
       \frac{1}{(1+t+|x-\lambda_j(1+t)|^2)^{\frac{3}{4}}}
\right.
\notag\\
       &\quad \left.+\frac{1}{(1+t+|x-\lambda_j(1+t)|)^{\frac{1}{2}}
                       (1+t+|x-\lambda_j(1+t)|^2)^{\frac{3}{8}}}\right]\notag\\[2mm]
 &\quad+\varepsilon (1+t)^{-\frac{1}{2}}|x-\lambda_i(1+t)|^{-1+\frac{\delta_{0}}{2}}
       \mathbf1_{\left\{\frac{2\sqrt{1+t}}{\delta_{0}}\leq|x-\lambda_i(1+t)|\leq\frac{\delta_{0}(1+t)}{2}\right\}}\,.
\end{align}
The last term is understood to vanish outside the indicated set,
including when the set is empty. The right-hand side is integrable
in $x$ for every $t\geq0$. Moreover,
\begin{equation}\label{sec43:viscosity-norms}
 \begin{aligned}
 \|\om^f(t)-u^f(t)\|_{L_x^1}
   &\lesssim_{\delta
_{0}}\varepsilon(1+t)^{-\frac{1}{2}+\frac{\delta_{0}}{2}}.
 \end{aligned}
\end{equation}
\end{proposition}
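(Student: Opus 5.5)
The plan is to reduce Proposition~\ref{sec43:prop-viscosity-comparison} to the artificial--physical viscosity comparison of \cite[Remark~8.9, (8.48)--(8.52)]{[Zeng]}. The remarks preceding the statement make this reduction almost formal: \eqref{sec43:full-artificial} coincides with the full-flux artificial system \cite[(8.48)]{[Zeng]}, and \eqref{sec43:full-ns-characteristic} is the physical system in the abstract form \cite[(2.1), (2.8)--(2.10)]{[Zeng]}. The two systems share the Euler flux $\mathcal F$ and the common data \eqref{sec43:common-data}, and the artificial matrix \cite[(8.49)]{[Zeng]} equals $\operatorname{diag}(A_{11},A_{22},A_{33})$ because the speeds $\lambda_j$ are simple.

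\textbf{Step 1: verify the hypotheses.} I will check the hypotheses of \cite[Theorems~2.3, 2.9, 7.1]{[Zeng]} for both solutions. The data bounds are \eqref{sec42:data-decay}--\eqref{sec42:data-primitives}, with size $O(\varepsilon)$. Smoothness of $\mathcal F,\mathcal A$ near $0$ and the normalization \eqref{sec43:coefficient-matching} provide strict hyperbolicity, positive definiteness of $\mathcal A(0)$ (since $A_{ij}$ is a Gram matrix of $(-L)^{-1}$, which is positive on $\mathbb P_1$), and the Kawashima-type dissipativity condition. These give global small solutions $u^f$ and $\omega^f$, each satisfying the pointwise wave bounds \cite[(2.19), (2.25)]{[Zeng]}. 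For $u^f$ one may alternatively use Proposition~\ref{sec43:prop-flux-comparison} together with Proposition~\ref{sec4:artificial-waves}.

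\textbf{Step 2: estimate the difference.} Set $w=\omega^f-u^f$, which has zero initial data. Subtracting the two systems gives
\[
\partial_t w_i+\lambda_i\partial_x w_i-A_i\partial_x^2w_i=\sum_{j\ne i}A_{ij}\partial_x^2\omega^f_j+\partial_x\big[\mathcal Q_i(\omega^f+u^f,w)-(\mathcal R_{3,i}(\omega^f)-\mathcal R_{3,i}(u^f))+\mathcal V_i(\omega^f,\partial_x\omega^f)\big].
\]
I will follow \cite[(8.50)--(8.52)]{[Zeng]} and write $w$ through the convected heat kernels $\mathbb H_i$. The main source is the off-diagonal viscous term $A_{ij}\partial_x^2\omega_j^f$ with $j\ne i$, a second derivative of a wave travelling at speed $\lambda_j\ne\lambda_i$. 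Its $\omega_j^f$-diagonal Burgers part produces, via the different-speed cancellation of Remark~\ref{remark-cancellation mechanism} (now in physical space, with $(\partial_t+\lambda_i\partial_x)\mathbb H_i=A_i\partial_x^2\mathbb H_i$), the $\psi$-type tails with prefactor $(1+t)^{-\frac{1-2\delta_0}{4}}$. The $i$-th family's own correction produces the term $(1+t)^{-1/2}|x-\lambda_i(1+t)|^{-1+\delta_0/2}$ restricted to $2\sqrt{1+t}/\delta_0\le|x-\lambda_i(1+t)|\le\delta_0(1+t)/2$; in \cite{[Zeng]} this arises from the logarithmic loss, traded for the power $\delta_0/2$. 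The quadratic, cubic and $\mathcal V$ terms in $w$ are absorbed by the same bootstrap ansatz with constant $M$ and small $\varepsilon$, exactly as in the closing arguments used earlier. The shift $\lambda_j(1+t)$ versus $\lambda_jt$ is harmless: it changes the profiles by bounded factors.

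\textbf{Step 3: $L^1_x$ bound.} I will integrate \eqref{sec43:viscosity-pointwise} term by term. The first bracket contributes $(1+t)^{-\frac{1-2\delta_0}{4}}\cdot(1+t)^{-1/4}$. The second, whose tail behaves like $|y|^{-5/4}$, contributes the same order. The indicator term gives $(1+t)^{-1/2}\int_{2\sqrt{1+t}/\delta_0}^{\delta_0(1+t)/2}r^{-1+\delta_0/2}\,dr\lesssim_{\delta_0}(1+t)^{-1/2+\delta_0/2}$. Together these give \eqref{sec43:viscosity-norms}.

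The main obstacle is Step~2, specifically the precise bookkeeping that matches the constants and loss exponent of \cite[(8.52)]{[Zeng]} to the present normalization and confirms that their proof uses only the abstract structure and not the specific gas constitutive laws. I would first track how \cite[Remark~8.9]{[Zeng]} handles general $\mathcal A(z)$. If it only treats constant viscosity, I would add the $\mathcal V$ term as an extra $O(|\omega^f||\partial_x\omega^f|)$ source, which decays like $\varepsilon^2\theta^3$ and fits inside the bootstrap.
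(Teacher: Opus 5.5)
Your plan follows the paper's proof. You identify \eqref{sec43:full-artificial} with the full-flux artificial system \cite[(8.48)--(8.49)]{[Zeng]}, check the hypotheses, read \eqref{sec43:viscosity-pointwise} off \cite[Remark~8.9, (8.52)]{[Zeng]} with the profiles (2.19), (2.25) there, and integrate term by term for the $L^1_x$ bound; that integration is correct, and the heuristic re-derivation in Step~2 is not needed.

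One correction to Step~1: $\mathcal A(0)=(A_{ij})$ is only positive semidefinite, not definite. Since $\mathbb P_1\xi^1\chi_0=0$, the three vectors $\mathbb P_1\xi^1E_j$ span only a two-dimensional space, so $(A_{ij})$ annihilates $(1,-2/\sqrt3,1)$, which is the density direction. The physical system is therefore genuinely hyperbolic--parabolic. What \cite[Theorems~2.3, 7.1]{[Zeng]} require is the Kawashima-type condition, and it holds here because $A_{ii}>0$ and the $\lambda_i$ are distinct.
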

\begin{proof}
The structural and initial-data hypotheses allow us to apply
\cite[Remark~8.9, (8.48)--(8.52), pp.~109--110]{[Zeng]} to
\eqref{sec43:full-ns-characteristic} and
\eqref{sec43:full-artificial}. By \eqref{sec42:data-decay}, the initial-data size is bounded by
$C\varepsilon$, and the characteristic difference is $\om_i^f-u_i^f$. Substituting the profiles in
\cite[(2.19) and (2.25)]{[Zeng]} into (8.52) gives precisely
\eqref{sec43:viscosity-pointwise}. We retain the original
centers $\lambda_j(t+1)$ and the indicated region in that estimate.
\end{proof}

Finally, under the structural hypotheses of this subsection, we write
\[
 \begin{aligned}
 \langle\mathbb P_0f,E_i\rangle_\xi-\om_i^f
 ={}&\big(\langle\mathbb P_0f,E_i\rangle_\xi-u_i\big)
       +(u_i-u_i^f)+(u_i^f-\om_i^f).
 \end{aligned}
\]
Combining the kinetic to artificial comparison obtained in Subsection 4.2 with Propositions~\ref{sec43:prop-flux-comparison} and \ref{sec43:prop-viscosity-comparison} yields the desired space–time comparison with the full Navier–Stokes solution.

\section{Appendix}
\subsection{Some convolution estimate}

We list some useful convolution formula in this section, which
are of elementary nature and were studied in viscous conservation laws. We need these convolution estimates for the interaction of waves in section \ref{wave} and \ref{fluid-approximation}. Note that our linear wave $\theta_{i}^{\alpha}$ (see \eqref{fluid-def}) is polynomial type with arbitrary order, while the linear wave of 1D CNS is exponential type, therefore those formula can be found in \cite{[Liu], [Zeng]} by replacing $\theta_{i}^{\alpha}$ as \eqref{fluid-def}.

\begin{table}[H]
     \renewcommand\arraystretch{2.5}
        \centering
        \begin{tabular}{|c|c|c|}
         \hline
        \textbf{Expression}     &  \textbf{Estimate} & \textbf{Reference}
        \\
        \hline
         $\displaystyle\int_{\sqrt{t}}^{t}\theta_{i}^{1}(t-\tau)\ast_{x}\theta_{j}^{4}(\tau)d\tau$ &
        $\psi_{i}^{\frac{3}{2}}$ &(3.30)-(3.32) in \cite{[Zeng]}
        \\
        \hline
         $\displaystyle\int_{\sqrt{t}}^{t}\theta_{i}^{1}(t-\tau)\ast_{x}(1+\tau)^{-\frac{1}{2}}\psi_{j}^{3}(\tau)d\tau$ &
        $\psi_{i}^{\frac{3}{2}}$ & Lemma \ref{lem:late-time-weighted-convolution}
        \\
        \hline
        $\displaystyle\int_{0}^{t}\theta_{i}^{2}(t-\tau)\ast_{x}\psi_{i}^{3}(\tau)d\tau$&$\theta_{i}^{2}\ln(1+t)+\psi_{i}^{2}$&(5.16) in \cite{[Liu]}
        \\
        \hline
         $\displaystyle\int_{0}^{t}\theta_{i}^{2}(t-\tau)\ast_{x}\psi_{j}^{3}(\tau)d\tau$&
        $\psi^{\frac{3}{2}}_{i}+\tilde{\psi}^{\frac{3}{2}}_{j}$&
        (5.18) in \cite{[Liu]}
        \\
        \hline
        $\displaystyle\int_{0}^{t}\theta_{i}^{3}(t-\tau)\ast_{x}\psi_{i}^{3}(\tau)d\tau$&
        $(1+t)^{-\frac{1}{2}}\ln(1+t)\psi_{i}^{2}$ &
        Similar to Lemma \ref{lemm-convalution-A}
        \\
        \hline
        $\displaystyle\int_{0}^{t}\theta_{i}^{3}(t-\tau)\ast_{x}\psi_{j}^{3}(\tau)d\tau$&
        $(1+t)^{-\frac{1}{2}}\left(\psi_{i}^{\frac{3}{2}}+\psi_{j}^{\frac{3}{2}}\right)$ &
         Similar to Lemma \ref{lemm-convalution-A}
        \\
        \hline
        $\displaystyle\int_{0}^{t}\theta_{i}^{3}(t-\tau)\ast_{x}\psi_{j}^{3}(\tau)d\tau$&
        $(1+t)^{-\frac{1}{2}}\ln(1+t)\left(\tilde{\psi_{i}}^{\frac{3}{2}}+\tilde{\psi}_{j}^{\frac{3}{2}}\right)$ & Similar to Lemma \ref{lemm-convalution-A}
        \\
        \hline
         $\displaystyle\theta_{i}^{1}(t-\sqrt{t})\ast_{x}\theta_{j}^{1}(\sqrt{t})$&
        $\theta_{i}^{1}$&(3.28) in \cite{[Zeng]}
        \\
        \hline
         $\displaystyle\theta_{i}^{1}(t-\sqrt{t})\ast_{x}\psi_{j}^{3}(\sqrt{t})$&
        $\psi_{i}^{2}$& Lemma \ref{lem:sqrt-time-spatial-convolution}
        \\
        \hline
        $\displaystyle \int_{0}^{\sqrt{t}}\theta_{i}^{2}(t-\tau)\ast_{x}\theta_{j}^{\alpha}(\tau)d\tau$,  $1\leq \alpha<3$&
        $\theta_{i}^{\frac{\alpha+1}{2}}$&(3.20)-(3.23) in \cite{[Zeng]}
        \\
        \hline
        $\displaystyle\int_{0}^{t}\theta_{i}^{2}(t-\tau)\ast_{x}\theta_{i}^{2}(\tau)d\tau$&$\theta_{i}^{1}$&
        (5.7) in \cite{[Liu]}
        \\
        \hline
        $\displaystyle\int_{0}^{\frac{t}{2}}\theta_{i}^{3}(t-\tau)\ast_{x}\theta_{i}^{2}(\tau)d\tau$ &$\theta_{i}^{2}$& (5.10) in \cite{[Liu]}
        \\
        \hline
        $\displaystyle\int_{0}^{t}\theta_{i}^{4}(t-\tau)\ast_{x}\theta_{j}^{3}(\tau)d\tau$& $(1+t)^{-\frac{3}{4}}\left(\psi_{j}^{\frac{3}{2}}+\tilde{\psi}_{i}^{\frac{3}{2}}\right)$
        &(5.8) in \cite{[Liu]}
        \\
        \hline
        $\displaystyle\int_{0}^{t}\theta_{i}^{2}(t-\tau)\ast_{x}\theta_{i}^{\frac{5}{2}}(\tau)d\tau$& $\theta_{i}^{\frac{3}{2}}$
        &(5.7) in \cite{[Liu]}
        \\
        \hline
        \end{tabular}
\caption{}
\label{table-1}
    \end{table}

    \begin{table}[H]
    \renewcommand\arraystretch{2.5}
        \centering

        \begin{tabular}{|c|c|c|}
        \hline
        \textbf{Expression}     &  \textbf{Estimate} & \textbf{Reference}
         \\
        \hline
        $\displaystyle\int_{0}^{t}\theta_{i}^{2}(t-\tau)\ast_{x}\theta_{i}^{3-\delta_{0}}(\tau)d\tau$&$\theta_{i}^{2-\delta_{0}}$ &(5.7) in \cite{[Liu]}
        \\
        \hline
        $\displaystyle\int_{0}^{t}\theta_{i}^{2}(t-\tau)\ast_{x}\theta_{i}^{3}(\tau)d\tau$&$\theta_{i}^{2}\ln(1+t)$&(5.7) in \cite{[Liu]}
        \\
        \hline
       $\displaystyle \int_{0}^{t}\theta_{i}^{2}(t-\tau)\ast_{x}\theta_{i}^{\alpha}(\tau)d\tau$, $\alpha>3$&$\theta_{i}^{2}$
        &(5.7) in \cite{[Liu]}
        \\
        \hline
        $\displaystyle\int_{0}^{t}\theta_{i}^{3}(t-\tau)\ast_{x}\theta_{j}^{\frac{7}{2}-\delta_{0}}(\tau)d\tau$&$(1+t)^{-\frac{3}{4}+\frac{\delta_{0}}{2}}
        \ln(1+t)\left(\psi_{i}^{\frac{3}{2}}+\tilde{\psi}_{j}^{\frac{3}{2}}\right)$& (5.8) in \cite{[Liu]}
        \\
        \hline
        $\displaystyle\int_{0}^{t}\theta_{i}^{2}(t-\tau)\ast_{x}(1+\tau)^{-\alpha}\psi_{j}^{3}(\tau)d\tau$,  $0<\alpha\leq \frac{1}{2}$&
        $\psi_{i}^{2}+(1+t)^{-\alpha}\left(\psi_{i}^{\frac{3}{2}}+\tilde{\psi}_{j}^{\frac{3}{2}}\right)$& Lemma \ref{lemm-convalution-A}
        \\
        \hline
        $\displaystyle\int_{0}^{t}\theta_{i}^{2}(t-\tau)\ast_{x}(1+\tau)^{-\alpha}\psi_{i}^{3}(\tau)d\tau$,  $\alpha>0$&
        $\psi_{i}^{2}$&Similar to Lemma \ref{lemm-convalution-A}
        \\
        \hline
        $\displaystyle\int_{0}^{t}\theta_{i}^{3-\delta_{0}}(t-\tau)\ast_{x}\theta_{j}^{3}(\tau)d\tau$&$(1+t)^{-\frac{1}{2}+\frac{\delta_{0}}{2}}\left(\psi_{i}^{\frac{3}{2}}+\psi_{j}^{\frac{3}{2}}\right)$&(5.8) in \cite{[Liu]}
        \\
        \hline
        $\displaystyle\int_{0}^{t}\theta_{i}^{3-\delta_{0}}(t-\tau)\ast_{x}\theta_{j}^{3}(\tau)d\tau$&$(1+t)^{-\frac{1}{2}+\frac{\delta_{0}}{2}}\ln(1+t)\left(\tilde\psi_{i}^{\frac{3}{2}}+\tilde\psi_{j}^{\frac{3}{2}}\right)$&(5.8) in \cite{[Liu]}
        \\
        \hline
        $\displaystyle\int_{0}^{t}\theta_{i}^{3}(t-\tau)\ast_{x}\theta_{i}^{\alpha}(\tau)d\tau$, $\alpha>3$& $\theta_{i}^{3}$ &(5.7) in \cite{[Liu]}
        \\
        \hline
        $\displaystyle\int_{0}^{t}\theta_{i}^{4}(t-\tau)\ast_{x}\psi_{i}^{3}(\tau)d\tau$&$(1+t)^{-\frac{1}{2}}\psi_{i}^{2}$&Similar to Lemma \ref{lemm-convalution-A}
        \\
        \hline
        $\displaystyle\int_{0}^{t}\theta_{i}^{4}(t-\tau)\ast_{x}\psi_{j}^{3}(\tau)d\tau$&$(1+t)^{-\frac{1}{2}}\left(\psi_{i}^{2}+\psi_{j}^{2}\right)$&Similar to Lemma \ref{lemm-convalution-A}
        \\
        \hline
        \end{tabular}
\caption{}
\label{table-2}
    \end{table}

\subsection{Proof of the convolution estimate}
In the estimates of the remainder, some convolution estimates may not appear in the references \cite{[Liu]} or \cite{[Zeng]}, and hence we will prove it in this subsection. To begin with, by the completing square, we have
\begin{align}
    &\quad\frac{\vert x-y-\lambda(t-\tau)\vert^{2}}{1+t-\tau}+\frac{\vert y-\lambda^{'}\tau\vert^{2}}{1+\tau}
    \notag\\
    &=\frac{2+t}{(1+t-\tau)(1+\tau)}\left[y+\lambda^{'}-\frac{(1+\tau)(x+2\lambda^{'}-(\lambda-\lambda^{'})(t-\tau))}{2+t}\right]^{2}
    \label{eq-complete-square}\\
    &\quad+\frac{\vert x-\lambda(t-\tau)-\lambda^{'}\tau\vert^{2}}{2+t}.\notag
\end{align}
\begin{lemma}
\label{lemm-convalution-A} For $i\neq j$, $0<\alpha\leq 2$,
\begin{equation*}
A:=\theta_{i}^{2}\ast_{x,t}(1+t)^{-\alpha}\psi_{j}^{3}\lesssim \psi_{i}^{2}+(1+t)^{-\alpha}\left(\psi_{i}^{\frac{3}{2}}+\tilde{\psi}_{j}^{\frac{3}{2}}\right).
\end{equation*}
\end{lemma}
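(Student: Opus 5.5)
We prove the bound by splitting the time integral at $t/2$, completing the square via \eqref{eq-complete-square}, and separating the spatial integral into the region near the $i$-th characteristic and the region near the $j$-th characteristic. Write
\[
A=\int_0^{t}\int_{\mathbb R}(1+t-\tau)^{-1}\Big(1+\tfrac{|x-y-\lambda_i(t-\tau)|^2}{1+t-\tau}\Big)^{-N}(1+\tau)^{-\alpha}\big(1+\tau+|y-\lambda_j\tau|^2\big)^{-3/2}\,dy\,d\tau .
\]
We treat $A_{1}=\int_0^{t/2}$ and $A_{2}=\int_{t/2}^{t}$ separately. The relevant length scales are $\sqrt{1+t-\tau}$ for the heat-type factor and $\sqrt{1+\tau}$ for the core of $\psi_j^3$, with an algebraic tail $|y-\lambda_j\tau|^{-3}$ outside that core. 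Throughout we use $\lambda_i\ne\lambda_j$, so $|\lambda_i-\lambda_j|\ge c>0$.

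For $A_1$, the kernel satisfies $\theta_i^2(t-\tau)\lesssim(1+t)^{-1}(1+|x-y-\lambda_i(t-\tau)|^2/(1+t))^{-N}$, so we can freeze the time weight. Since $\int\psi_j^3(\tau,y)\,dy\lesssim(1+\tau)^{-1}$, the spatial convolution is controlled by a $\theta_i$-type profile at the shifted point $x-\lambda_i(t-\tau)-\lambda_j\tau$. We split $y$ into $|y-\lambda_j\tau|\le\frac12|x-\lambda_i t-(\lambda_j-\lambda_i)\tau|$ and its complement.
\begin{itemize}
\item On the first piece, the kernel is evaluated essentially at $x-\lambda_i t-(\lambda_j-\lambda_i)\tau$. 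Integrating in $\tau$ with $(1+\tau)^{-1-\alpha}$ gives a contribution bounded by $\psi_i^2$ near the $i$-characteristic; the $\tau$-integral converges because $\alpha>0$. Away from it, the contribution is controlled by $(1+t)^{-\alpha}$ times the interaction profile.
\item On the complement, $\psi_j^3\lesssim |x-\lambda_i t-(\lambda_j-\lambda_i)\tau|^{-3}$. We change variables $\sigma=x-\lambda_it-(\lambda_j-\lambda_i)\tau$, with $d\tau=d\sigma/|\lambda_j-\lambda_i|$. The resulting integral of $|\sigma|^{-3}$ against the heat profile produces the $\psi_i^{3/2}$ and $\tilde\psi_j^{3/2}$ tails. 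This is the same computation as (5.18) in \cite{[Liu]}, and the factor $(1+\tau)^{-\alpha}$ is bounded by a constant, or by $(1+t)^{-\alpha}$ once $\tau\gtrsim t$ is reached.
\end{itemize}

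For $A_2$ we have $(1+\tau)^{-\alpha}\sim(1+t)^{-\alpha}$, so we pull this factor out. It remains to show $\int_{t/2}^t\theta_i^2(t-\tau)\ast_x\psi_j^3(\tau)\,d\tau\lesssim\psi_i^{3/2}+\tilde\psi_j^{3/2}$, which is exactly the Table \ref{table-1} entry $\theta_i^2\ast_{x,t}\psi_j^3\lesssim\psi_i^{3/2}+\tilde\psi_j^{3/2}$ ((5.18) in \cite{[Liu]}) restricted to $[t/2,t]$. Here the heat kernel is singular as $\tau\to t$, but $\int\theta_i^2(t-\tau,z)\,dz\lesssim(1+t-\tau)^{-1/2}$ is integrable in $\tau$. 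The resulting bound is centered at $\lambda_j t$ (giving $\tilde\psi_j^{3/2}$ after accounting for the transversal sweep, of length $\sim t$, over the characteristic cone) plus the $\psi_i^{3/2}$ part.

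The main obstacle is the $A_1$ region $|x-\lambda_i t|\lesssim\sqrt{1+t}$ with $\alpha$ small. There we must verify that the early-time mass of the source, $\int_0^{t/2}(1+\tau)^{-1-\alpha}\,d\tau<\infty$, yields exactly $\theta_i^2\le\psi_i^2$ rather than a logarithmic loss. The source's spatial center drifts by $(\lambda_j-\lambda_i)\tau$, which is $\lesssim\sqrt t$ only for $\tau\lesssim\sqrt t$. For larger $\tau$ we instead exploit the $|y|^{-3}$ tail of $\psi_j^3$ to gain $(1+\tau)^{-3/2}$-type decay, using the completed-square identity \eqref{eq-complete-square} to locate the stationary point of the Gaussian-like product. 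The remaining intermediate regimes of $x$ are matched to the stated profiles by elementary comparisons such as $\psi_i^{2}\le\psi_i^{3/2}(1+t)^{-1/4}$, with $\alpha\le 2$ used only to keep the $A_2$ prefactor dominant.
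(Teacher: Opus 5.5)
Your treatment of the late-time half is fine, and it is cleaner than the paper's route. For $\tau\in[t/2,t]$ one has $(1+\tau)^{-\alpha}\le 2^{\alpha}(1+t)^{-\alpha}$, so the Table~\ref{table-1} entry for $\theta_i^2\ast_{x,t}\psi_j^3$ gives $A_2\lesssim(1+t)^{-\alpha}(\psi_i^{3/2}+\tilde\psi_j^{3/2})$. For $|x-\lambda_it|\lesssim\sqrt{1+t}$, the early part follows at once from $\sup\theta_i^2(t-\tau)\lesssim(1+t)^{-1}$ times the source mass $(1+\tau)^{-1-\alpha}$. No drift analysis is needed there, so this is not the real obstacle. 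Separately, on your complement piece, pairing the kernel's $L^1$ norm $(1+t-\tau)^{-1/2}$ with $\psi_j^3\lesssim|D|^{-3}$ has two defects: it is singular at the crossing, and near the $i$-characteristic it only yields $(1+t)^{-1/2}$.

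The genuine gap is the early part in the intermediate zone $\sqrt{1+t}\ll x-\lambda_it\ll t$ (take $\lambda_j>\lambda_i$).
\begin{itemize}
\item There the backward $i$-characteristic from $(x,t)$ meets the $j$-source at $\tau_*=\bar x=(x-\lambda_it)/(\lambda_j-\lambda_i)\in[0,t/2]$.
\item The window $|\tau-\tau_*|\lesssim\sqrt t$ contributes exactly (both upper and lower bound) $(1+t)^{-1/2}|x-\lambda_it|^{-1-\alpha}$.
\end{itemize}
Your sketch does not bound this term correctly, in either of the two ways you handle the weight.
\begin{itemize}
\item Bounding $(1+\tau)^{-\alpha}$ by a constant gives $(1+t)^{-1/2}|x-\lambda_it|^{-1}$, which exceeds the right-hand side. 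For example, with $\alpha=\frac12$ and $|x-\lambda_it|=t^{3/4}$ this is $t^{-5/4}$, against a right-hand side of order $t^{-3/2}$.
\item The claim that it is $(1+t)^{-\alpha}$ times the interaction profile is false for $\alpha>\frac12$. For example, with $\alpha=1$ and $|x-\lambda_it|=10\sqrt t$ the term is of order $t^{-3/2}$, while $(1+t)^{-1}(\psi_i^{3/2}+\tilde\psi_j^{3/2})\sim t^{-7/4}$.
\end{itemize}

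The missing step is the paper's argument, in three parts.
\begin{itemize}
\item Reduce via \eqref{eq-complete-square} to $(1+t)^{-1}\int_0^{t/2}(1+\tau)^{-1-\alpha}\bigl[1+|\bar x-\tau|^2/(1+t)\bigr]^{-1}d\tau$.
\item Keep $(1+\tau_*)^{-\alpha}\sim|x-\lambda_it|^{-\alpha}$ rather than discarding it. This produces the crossing term $|x-\lambda_jt|^{-1/2}|x-\lambda_it|^{-1-\alpha}$.
\item Using $\sqrt{1+t}\lesssim|x-\lambda_it|\lesssim1+t$, absorb this term into $(1+t)^{-\alpha}\psi_i^{3/2}$ when $\alpha\le\frac12$, and into $\psi_i^2$ when $\alpha\ge\frac12$.
\end{itemize}
This dichotomy is exactly why $\psi_i^2$ appears on the right-hand side without a time factor. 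The comparison $\psi_i^2\le(1+t)^{-1/4}\psi_i^{3/2}$ you invoke goes in the wrong direction and does not address it.
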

\begin{proof}
We assume that $\lambda_j>\lambda_i$; the other case follows
by reflection. It suffices to consider $t\geq8$, since bounded times
are estimated directly.
Using this change of variables and \eqref{eq-complete-square}, we have
\begin{align*}
A&\lesssim\int_{0}^{t}\int_{\mathbb{R}}(1+t-\tau)^{-1}\left[1+\frac{\left\vert \bar{x}-y\right\vert^{2}}{1+t-\tau}\right]^{-N}(1+\tau)^{-\alpha-\frac{3}{2}}\left[1+\frac{\left\vert y-\tau\right\vert^{2}}{1+\tau}\right]^{-\frac{3}{2}}dyd\tau
\\
&\lesssim (1+t)^{-\frac{1}{2}}\int_{0}^{t}(1+t-\tau)^{-\frac{1}{2}}(1+\tau)^{-\alpha-1}\left[1+\frac{\left\vert \bar{x}-\tau\right\vert^{2}}{1+t}\right]^{-1}d\tau,
\end{align*}
where $\bar{x}=\frac{x-\lambda_{i}t}{\lambda_{j}-\lambda_{i}}$.
\\
For $\bar{x}\leq 0$ or $0\leq \bar{x}\leq \sqrt{1+t}$,
\begin{equation*}
A\lesssim (1+t)^{-\frac{1}{2}}\left[1+\frac{\bar{x}^{2}}{1+t}\right]^{-1}\int_{0}^{t}(1+t-\tau)^{-\frac{1}{2}}(1+\tau)^{-\alpha-1}d\tau\lesssim
\psi_{i}^{2}.
\end{equation*}
For $\sqrt{1+t}<\bar{x}<t-\sqrt{1+t}$, by choosing the constant $a$ to be large, we have
\begin{align*}
A&\lesssim (1+t)^{-\frac{1}{2}}\left[\int_{0}^{\frac{\bar{x}}{a}}(1+t)^{-\frac{1}{2}}(1+\tau)^{-\alpha-1}\left[1+\frac{\bar{x}^{2}}{1+t}\right]^{-1}d\tau\right.
\\
&\left.\quad+\int_{\frac{\bar{x}}{a}}^{\frac{\bar{x}}{a}+(1-\frac{1}{a})t}(1+t-\bar{x})^{-\frac{1}{2}}(1+\bar{x})^{-\alpha-1}\left[1+\frac{\left\vert\bar{x}-\tau\right\vert^{2}}{1+t}\right]^{-1}d\tau\right.
\\
&\left.\quad +\int_{\frac{\bar{x}}{a}+(1-\frac{1}{a})t}^{t}(1+t-\tau)^{-\frac{1}{2}}(1+t)^{-\alpha-1}\left[1+\frac{\left\vert \bar{x}-t\right\vert^{2}}{1+t}\right]^{-1}d\tau\right]
\\
&\lesssim \psi_{i}^{2}+\left\vert x-\lambda_{j}t\right\vert^{-\frac{1}{2}}\left\vert x-\lambda_{i}t\right\vert^{-\alpha-1}+(1+t)^{-\alpha}\psi_{j}^{2}.
\end{align*}
In particularly, as $\left\vert x-\lambda_{j}t\right\vert<\frac{(\lambda_{j}-\lambda_{i})t}{2}$,
\begin{equation*}
\left\vert x-\lambda_{i}t\right\vert =\left\vert \left(\lambda_{j}-\lambda_{i}\right)t-\left(\lambda_{j}t-x\right)\right\vert>\frac{\left(\lambda_{j}-\lambda_{i}\right)t}{2}>\left\vert x-\lambda_{j}t\right\vert,
\end{equation*}
we have
\begin{align*}
\left\vert x-\lambda_{j}t\right\vert^{-\frac{1}{2}}\left\vert x-\lambda_{i}t\right\vert^{-\alpha-1}&\lesssim \left(\left\vert x-\lambda_{j}t\right\vert+\sqrt{1+t}\right)^{-\frac{1}{2}}\left(\left\vert x-\lambda_{j}t\right\vert+1+t\right)^{-1}(1+t)^{-\alpha}
\\
&\lesssim (1+t)^{-\alpha}\tilde{\psi}_{j}^{\frac{3}{2}}.
\end{align*}
As $\left|x-\lambda_jt\right|\geq\frac{(\lambda_j-\lambda_i)t}{2}$,
we have $\left|x-\lambda_jt\right|\asymp1+t$ and
$\sqrt{1+t}\lesssim\left|x-\lambda_it\right|\lesssim1+t$.
\\
For $0<\alpha\leq\frac{1}{2}$,
\begin{align*}
 &\left|x-\lambda_jt\right|^{-\frac{1}{2}}
                     \left|x-\lambda_it\right|^{-\alpha-1}\lesssim(1+t)^{-\alpha}
                     \left|x-\lambda_it\right|^{-3/2}
 \lesssim(1+t)^{-\alpha}\psi_i^{3/2}.
\end{align*}
For $\frac{1}{2}\leq\alpha\leq2$,
\begin{align*}
 \left|x-\lambda_jt\right|^{-\frac{1}{2}}
\left|x-\lambda_it\right|^{-\alpha-1}\lesssim\left|x-\lambda_it\right|^{-2}\lesssim\psi_i^2,
\end{align*}
since $(1+t)^{-\frac{1}{2}}\left|x-\lambda_it\right|^{1-\alpha}$ is
uniformly bounded on this region.
\\
For $t-\sqrt{1+t}<\bar{x}<t+\sqrt{1+t}$,
\begin{align*}
A&\lesssim (1+t)^{-\frac{1}{2}}\left[\int_{0}^{\frac{t}{2}}(1+t)^{-\frac{1}{2}}(1+\tau)^{-\alpha-1}(1+t)^{-1}d\tau\right.
\\
&\left.\quad+\int_{\frac{t}{2}}^{t-\sqrt{1+t}}(1+t)^{-\frac{1}{4}}(1+t)^{-\alpha-1}\left[1+\frac{\left\vert\bar{x}-\tau\right\vert^{2}}{1+t}\right]^{-1}d\tau
+\int_{t-\sqrt{1+t}}^{t}(1+t-\tau)^{-\frac{1}{2}}(1+t)^{-\alpha-1}d\tau\right]
\\[1mm]
&\lesssim (1+t)^{-2}+(1+t)^{-\frac{5}{4}-\alpha}
 \lesssim\psi_i^2+(1+t)^{-\alpha}\tilde\psi_j^{3/2}.
\end{align*}
\\
For $\bar{x}>t+\sqrt{1+t}$,
\begin{align*}
A&\lesssim (1+t)^{-\frac{1}{2}}\left[\int_{0}^{\frac{t}{2}}(1+t)^{-\frac{1}{2}}(1+\tau)^{-\alpha-1}\left[1+\frac{\bar{x}^{2}}{1+t}\right]^{-1}d\tau
\right.\\
&\quad\left.+\int_{\frac{t}{2}}^{t}(1+t-\tau)^{-\frac{1}{2}}(1+t)^{-\alpha-1}\left[1+\frac{\left\vert \bar{x}-t\right\vert^{2}}{1+t}\right]^{-1}d\tau\right]
\\
&\lesssim \psi_{i}^{2}+(1+t)^{-\alpha}\psi_{j}^{2}.
\end{align*}
Combining above four cases, we complete the proof.
\end{proof}


\begin{lemma}\label{lem:late-time-weighted-convolution}
Let $i\neq j$, $t\geq 1$, and let $\theta_k^\alpha$ and $\psi_k^\alpha$ be defined
by (1.9) and (1.10), respectively.
\begin{equation}\label{eq:late-time-weighted-convolution}
B:=\int_{\sqrt t}^{t}\int_{\mathbb R}
\theta_i^1(x-y,t-\tau)(1+\tau)^{-1/2}
\psi_j^3(y,\tau)\,dy\,d\tau \lesssim \psi_i^{3/2}(x,t)\,.
\end{equation}
\end{lemma}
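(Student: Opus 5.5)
The plan is to reduce $B$ to a one-dimensional time integral, as in the proof of Lemma~\ref{lemm-convalution-A}, and then split into the same regions in the rescaled variable
\[
 \bar x=\frac{x-\lambda_it}{\lambda_j-\lambda_i}.
\]
By reflection we may assume $\lambda_j>\lambda_i$. Bounded $t$ is trivial, so take $t\geq8$.

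\emph{Step 1: the $y$-integral.} For fixed $\tau$, estimate $\int_{\mathbb R}\theta_i^1(x-y,t-\tau)\,\psi_j^3(y,\tau)\,dy$. Split the line into the set where $|x-y-\lambda_i(t-\tau)|\geq\frac12|z(\tau)|$ and its complement, where
\[
 z(\tau)=x-\lambda_i(t-\tau)-\lambda_j\tau=(\lambda_j-\lambda_i)(\bar x-\tau).
\]
On the first set, the polynomial tail of $\theta_i^1$ (order $N>4$) is small at the scale $|z|$, and we integrate $\psi_j^3$, whose $L^1_y$ norm is $\lesssim(1+\tau)^{-1}$. On the second set, $|y-\lambda_j\tau|\geq\frac12|z|$, so $\psi_j^3$ is small, and we integrate $\theta_i^1$, whose $L^1_y$ norm is $O(1)$. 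Combining this with the trivial sup bounds, I aim for
\[
 \int_{\mathbb R}\theta_i^1\,\psi_j^3\,dy\lesssim \min\Big\{(1+t-\tau)^{-\frac12}(1+\tau)^{-1},\,(1+\tau)^{-\frac32}\Big\}\Big(1+\frac{|\bar x-\tau|^2}{1+t}\Big)^{-1}+\big(1+\tau+|\bar x-\tau|^2\big)^{-\frac32}.
\]
The extra term is the algebraic tail of $\psi_j^3$. It is this tail that forces us to work with $\psi$-profiles rather than the Gaussian completing-square identity \eqref{eq-complete-square}.

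\emph{Step 2: the $\tau$-integral over $[\sqrt t,t]$ with weight $(1+\tau)^{-1/2}$.} The aim is to bound the result by
\[
 \psi_i^{3/2}\simeq\big(1+t+|\bar x|^2\big)^{-3/4}.
\]
\begin{itemize}
\item[(a)] $\bar x\leq 2\sqrt{1+t}$. For $\tau\geq\sqrt t$ and $|\bar x|\lesssim\tau$, the integrand is $\lesssim(1+\tau)^{-3/2}(1+t-\tau)^{-1/2}$. This integrates to $\lesssim t^{-1/4}\cdot t^{-1/2}=(1+t)^{-3/4}$; here the lower limit $\sqrt t$ is exactly what produces the needed extra $t^{-1/4}$. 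When $\bar x\ll-\sqrt t$, the factor $|\bar x-\tau|\geq|\bar x|$ supplies the decay $(1+|\bar x|^2/(1+t))^{-1}$ and the tail $(1+|\bar x|^2)^{-3/2}$, and both are dominated by $(1+t+\bar x^2)^{-3/4}$.
\item[(b)] $2\sqrt{1+t}<\bar x<t-\sqrt{1+t}$. Split at $\tau=\bar x/2$ and $\tau=(\bar x+t)/2$.
 \begin{itemize}
 \item[--] Away from $\tau\approx\bar x$, the Lorentzian factor gives $(1+t)/\bar x^2$ or $(1+t)/(t-\bar x)^2$ decay.
 \item[--] Near $\tau\approx\bar x$, the integral of the Lorentzian factor over $\tau$ costs $\sqrt{1+t}$. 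Combined with $(1+\bar x)^{-3/2}(1+t-\bar x)^{-1/2}$, this gives $\lesssim\bar x^{-3/2}\lesssim\psi_i^{3/2}$.
 \item[--] The tail term integrates to at most $(1+\bar x)^{-1}\cdot(1+\bar x)^{-1/2}$, which is again $\lesssim\bar x^{-3/2}$.
 \end{itemize}
\item[(c)] $|\bar x-t|\leq\sqrt{1+t}$, i.e.\ near $\lambda_jt$. Here $\psi_i^{3/2}\simeq(1+t)^{-3/2}$. The part with $\tau\geq t/2$ gives $(1+t)^{-3/2}\int_{t/2}^t(1+t-\tau)^{-1/2}\,d\tau\cdot(1+t)^{-1/2}\lesssim(1+t)^{-3/2}$. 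The part with $\tau<t/2$ uses the Lorentzian decay $\sim(1+t)^{-1}$.
\item[(d)] $\bar x>t+\sqrt{1+t}$, and likewise $\bar x\lesssim-t$. Every factor decays at distance $\gtrsim|\bar x|$, which gives $\lesssim(1+|\bar x|)^{-3/2}$.
\end{itemize}

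\emph{Main obstacle.} I expect the main obstacle to be Step 1: getting a clean pointwise bound on the $y$-convolution of two algebraically decaying profiles that keeps both the Lorentzian-type decay in $\bar x-\tau$ and the cubic tail. If the two-region splitting turns out too lossy, I will first try bounding $\theta_i^1\lesssim\psi_i^{2N}(1+t-\tau)^{N-1/2}\cdots$ and quoting the $\psi*\psi$ convolution bounds of \cite{[Liu]}, (5.16)--(5.18). Beyond that, the case analysis in Step 2 mirrors Lemma~\ref{lemm-convalution-A} and should be routine.
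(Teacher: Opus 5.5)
Your route is the paper's: reduce the $y$-integral to a $\tau$-integrand carrying $L(\tau)=\bigl(1+|\bar x-\tau|^2/(1+t)\bigr)^{-1}$, then run the same four-region analysis in $\bar x$. Step~1 is correct, and it is a more careful derivation than the paper's one-line reduction via \eqref{eq-complete-square}. Keeping the cubic tail as a separate term is harmless. For every $\bar x$, its weighted $\tau$-integral over $[\sqrt t,t]$ is $\lesssim(1+\max\{\sqrt t,|\bar x|\})^{-3/2}\simeq\psi_i^{3/2}$. To see this, split at $\tau=|\bar x|/2$: below, use $|\bar x-\tau|\ge|\bar x|/2$; above, use $\int_{\mathbb R}(1+\tau_0+s^2)^{-3/2}ds\simeq(1+\tau_0)^{-1}$ with $\tau_0=\max\{\sqrt t,|\bar x|/2\}$. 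Note that the tail is \emph{not} pointwise below $(1+\tau)^{-3/2}(1+t-\tau)^{-1/2}$ near $\tau\approx\bar x$, contrary to what you claim in (a).

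The genuine error is in Step~2 near the $\lambda_j$-characteristic, where $\psi_i^{3/2}\simeq(1+t)^{-3/2}$.
\begin{itemize}
\item In (c), for $\tau\ge t/2$ you use the branch $(1+t-\tau)^{-1/2}(1+\tau)^{-1}$ of your minimum. With the weight this gives $(1+t)^{-3/2}(1+t-\tau)^{-1/2}L(\tau)$. For $\bar x\approx t$, the window $[t-\sqrt t,t]$ (where $L\simeq1$) already contributes $\simeq(1+t)^{-5/4}$, which is not $\lesssim(1+t)^{-3/2}$. The extra factor $(1+t)^{-1/2}$ in your display has no source.
\item The same loss occurs in (b) when $t-\bar x\ll t$. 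At $t-\bar x\simeq\sqrt t$, the product $\sqrt{1+t}\,(1+\bar x)^{-3/2}(1+t-\bar x)^{-1/2}$ is $t^{1/4}\bar x^{-3/2}$, not $\lesssim\bar x^{-3/2}$.
\item In (d), the claim that every factor decays at distance $\gtrsim|\bar x|$ fails for $t+\sqrt{1+t}<\bar x\le 2t$. At $\tau=t$ one has $|\bar x-\tau|=\bar x-t$, which may be only $\sqrt{1+t}$.
\end{itemize}

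The remedy is already in your Step~1. For $\tau\ge t/2$, take the branch $(1+\tau)^{-3/2}=\sup_y\psi_j^3$ times $\|\theta_i^1\|_{L^1_y}\lesssim1$. The weighted integrand is then $\lesssim(1+t)^{-2}L(\tau)$, and $\int L\,d\tau\lesssim\sqrt{1+t}$ yields $(1+t)^{-3/2}$. More cleanly, observe that
\[
 \min\bigl\{(1+t-\tau)^{-1/2}(1+\tau)^{-1},\,(1+\tau)^{-3/2}\bigr\}\lesssim(1+t)^{-1/2}(1+\tau)^{-1},\qquad 0\le\tau\le t.
\]
This turns your main term into exactly the paper's reduced integrand $(1+t)^{-1/2}(1+\tau)^{-3/2}L(\tau)$. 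With it, the paper's case analysis closes. Split the middle region at $\bar x/a$ and $\bar x/a+(1-1/a)t$. In the last region, separate $\bar x\le 2t$, which is handled like (c), from $\bar x>2t$.
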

\begin{proof}
We assume that $\lambda_j>\lambda_i$; the other case follows by reflection.
It suffices to consider $t\geq 8$, since bounded times are estimated directly.
 Using this change of variables and
the completing-square identity \eqref{eq-complete-square}, we have
\begin{align*}
B
&\lesssim \int_{\sqrt t}^{t}\int_{\mathbb R}
 (1+t-\tau)^{-\frac12}
 \left(1+\frac{|\bar x-y|^2}{1+t-\tau}\right)^{-N}
 (1+\tau)^{-2}
 \left(1+\frac{|y-\tau|^2}{1+\tau}\right)^{-\frac32}
 \,dy\,d\tau\\
&\lesssim (1+t)^{-\frac12}
 \int_{\sqrt t}^{t}(1+\tau)^{-\frac32}
 \left(1+\frac{|\bar x-\tau|^2}{1+t}\right)^{-1}d\tau,
\end{align*}
where $\bar x=\frac{x-\lambda_i t}{\lambda_j-\lambda_i}$.

For $\bar x\leq 0$ or $0\leq\bar x\leq\sqrt{1+t}$,
\begin{align*}
B\lesssim (1+t)^{-\frac12}
 \left(1+\frac{\bar x^2}{1+t}\right)^{-1}
 \int_{\sqrt t}^{t}(1+\tau)^{-\frac32}d\tau\lesssim (1+t)^{-\frac34}
 \left(1+\frac{\bar x^2}{1+t}\right)^{-1}
 \lesssim \psi_i^{3/2}.
\end{align*}

For $\sqrt{1+t}<\bar x<t-\sqrt{1+t}$, we choose a fixed constant
$a>2$, we obtain
\begin{align*}
B&\lesssim (1+t)^{-\frac12}\Bigg[
\left(1+\frac{\bar x^2}{1+t}\right)^{-1}
 \int_{\sqrt t}^{\max\left\{\frac{\bar{x}}{a},\sqrt{t}\right\}}(1+\tau)^{-\frac32}d\tau+\bar x^{-\frac32}
 \int_{\max\left\{\frac{\bar{x}}{a},\sqrt{t}\right\}}^{\frac{\bar{x}}{a}+(1-\frac{1}{a})t}
 \left(1+\frac{|\bar x-\tau|^2}{1+t}\right)^{-1}d\tau\\
&\quad+(1+t)^{-\frac32}
 \int_{\frac{\bar{x}}{a}+(1-\frac{1}{a})t}^{t}
 \left(1+\frac{|\bar x-\tau|^2}{1+t}\right)^{-1}d\tau
\Bigg]\\
&\lesssim (1+t)^{-\frac34}
 \left(1+\frac{\bar x^2}{1+t}\right)^{-1}
 +(\bar x)^{-\frac32}+(1+t)^{-\frac32}
 \\
 &\lesssim\psi_i^{3/2}.
\end{align*}

For $t-\sqrt{1+t}<\bar x<t+\sqrt{1+t}$,
\begin{align*}
B&\lesssim (1+t)^{-\frac12}\Bigg[
(1+t)^{-1}\int_{\sqrt t}^{t/2}
 (1+\tau)^{-\frac32}d\tau+(1+t)^{-\frac32}\int_{t/2}^{t-\sqrt{1+t}}
 \left(1+\frac{|\bar x-\tau|^2}{1+t}\right)^{-1}d\tau\\
&\quad+(1+t)^{-\frac32}\int_{t-\sqrt{1+t}}^{t}
 \left(1+\frac{|\bar x-\tau|^2}{1+t}\right)^{-1}d\tau
\Bigg]\\
&\lesssim (1+t)^{-\frac74}+(1+t)^{-\frac32}
\\
 &\lesssim\psi_i^{3/2}.
\end{align*}

For $\bar x>t+\sqrt{1+t}$,
\begin{align*}
B&\lesssim (1+t)^{-\frac12}\Bigg[
\left(1+\frac{\bar x^2}{1+t}\right)^{-1}
 \int_{\sqrt t}^{t/2}(1+\tau)^{-\frac32}d\tau+(1+t)^{-\frac32}\int_{t/2}^{t}
 \left(1+\frac{|\bar x-\tau|^2}{1+t}\right)^{-1}d\tau
\Bigg]\\
&\lesssim (1+t)^{-\frac34}
 \left(1+\frac{\bar x^2}{1+t}\right)^{-1}
 +(1+t)^{-\frac32} \,\mathbf{1}_{\left\{\bar x\leq2t\right\}}+
 (\bar x)^{-2}\, \mathbf{1}_{\left\{\bar x>2t\right\}}
 \\
&\lesssim\psi_i^{3/2}.
\end{align*}
Combining the above four cases, we complete the proof.
\end{proof}

\begin{lemma}\label{lem:sqrt-time-spatial-convolution}
Let $i\neq j$, $t\geq 1$ and let $\theta_k^\alpha$ and $\psi_k^\alpha$
be defined by (1.9) and (1.10), respectively, with $N\geq\frac{3}{2}$.
Then,
\begin{equation}\label{eq:sqrt-time-spatial-convolution}
\int_{\mathbb R}
\theta_i^1(x-y,t-\sqrt t)\psi_j^3(y,\sqrt t)\,dy \lesssim \psi_i^2(x,t)
\end{equation}
\end{lemma}
\begin{proof}
We assume that $\lambda_j>\lambda_i$; the other case follows by reflection.

Using this change of variables and
the completing-square identity \eqref{eq-complete-square}, we have
\begin{align*}
\int_{\mathbb R}&\theta_i^1(x-y,t-\sqrt t)\psi_j^3(y,\sqrt t)\,dy\\
&\lesssim
(1+t-\sqrt t)^{-\frac12}(1+\sqrt t)^{-\frac32}
\int_{\mathbb R}
\left(1+\frac{|\bar x-y|^2}{1+t-\sqrt t}\right)^{-N}
\left(1+\frac{|y-\sqrt t|^2}{1+\sqrt t}\right)^{-\frac32}dy\\[2mm]
&\lesssim\left(2+t+|\bar x-\sqrt t|^2\right)^{-1}
\\[2mm]
&\lesssim (1+t+|\bar x|^2)^{-1}
\\[2mm]
&\lesssim\psi_i^2(x,t).
\end{align*}
We complete the proof.
\end{proof}

\vspace{2em}

\noindent \textbf{Acknowledgment}\hspace*{1em}
This work is partially supported by the National Key R\&D Program of China under grant 2022YFA1007300. H.-T. Wang is supported by NSFC under Grant No. 12371220 and 12031013. H.-T. Chang and K.-C. Wu are supported by the National Science and Technology Council under the grant NSTC 112-2628-M-006-006 -MY4 and National Center for Theoretical Sciences. The authors would like to thank Professor Tai-Ping Liu for his valuable suggestions and insightful discussions.
\\

\noindent \textbf{Data Availability}\hspace*{1em}
Data sharing is not applicable to this article as no datasets were generated or analyzed during the current study.
\\

\noindent \textbf{Declarations}
\\

\noindent \textbf{Conflict of interest}\hspace*{1em}
The authors declare that they have no conflict of interest.
\medskip

\noindent\textbf{Declaration of generative AI.}
During the preparation of this work, the authors used ChatGPT (OpenAI)
to assist with language editing, improving readability and organization.
The authors reviewed and verified all suggested changes and take full
responsibility for the content of the article.


\begin{thebibliography}{99}




\bibitem{[EP]} R. Ellis, M. Pinsky, The first and second fluid approximation
to the linearized Boltzmann equation. J. Math. Pure. Appl. 54 (1975),
125-156.


\bibitem{[Grad]} H. Grad, Asymptotic theory of the Boltzmann equation,
Rarefied Gas Dynamics, J. A. Laurmann, Ed. 1, 26, pp.26--59 Academic Press,
New York, 1963.




\bibitem{[Kawashima-Matsumura-Nishida]} S. Kawashima, A. Matsumura, and T.
Nishida, On the Fluid-Dynamical Approximation to the Boltzmann Equation at
the Level of the Navier-Stokes Equation, Commun. Math. Phys., 70 (1979),
97-124.

\bibitem{[Kai]} K. Koike, Time-Asymptotic Expansion with Pointwise Remainder Estimates for 1D Viscous Compressible Flow. Arch Rational Mech Anal 247, 81(2023).

\bibitem{[LeeLiuYu-07]} M.-Y. Lee, T.-P. Liu, S.-H. Yu, Large-time behavior of solutions for the Boltzmann equation with hard potentials, Comm. Math. Phys. 269 (2007), no. 1, 17–37.


\bibitem{[LiYangZhong-20]} H.-L. Li, T. Yang, M.-Y. Zhong, Green's function and pointwise space-time behaviors of the Vlasov-Poisson-Boltzmann system, Arch. Ration. Mech. Anal. 235 (2020), no. 2, 1011–1057.

\bibitem{[LiYangZhong-23]} H.-L. Li, T. Yang, M.-Y. Zhong, Green's function and pointwise behavior of the one-dimensional Vlasov-Maxwell-Boltzmann system, Arch. Ration. Mech. Anal. 247 (2023), no. 5, Paper No. 82, 95 pp.

\bibitem{[Haitao]}Y.-C. Lin, H.T. Wang, K.-C. Wu, Space-time Structure and particle-fluid duality of solutions for Boltzmann equation with hard potentials, Arch. Ration. Mech. Anal. 249 (2025), no. 6, Paper No. 75, 52 pp.

\bibitem{[LinWangWu2]} Y.-C. Lin, H.-T. Wang,  K.-C. Wu, 3D hard sphere
Boltzmann equation: explicit structure and the transition process from
polynomial tail to Gaussian tail, (2024), arXiv:2408.02183.



\bibitem{[Liu]}T.-P. Liu, Pointwise Convergence to Shock Waves for Viscous Conservation Laws. Communications on Pure and Applied Mathematics, 50(1997), 1113-1182.



\bibitem{[Liu-17]} T.-P. Liu, Hopf-Cole transformation, Bull. Inst. Math. Acad. Sin. (N.S.), 12 (2017), no. 1, 71–101.

\bibitem{[Liu-21]} T.-P. Liu, Shock waves, Grad. Stud. Math., 215, American Mathematical Society, Providence, RI, 2021, xvii+437 pp.

\bibitem{[Yu]}T.-P. Liu and S.-H.Yu, The Green's function and large‐time behavior of solutions for the one‐dimensional Boltzmann equation. Communications on Pure and Applied Mathematics, 57(12)(2004), 1543-1608.

\bibitem{[LiuYu-04]}  T.-P. Liu, S.-H. Yu,  Boltzmann equation: micro-macro decompositions and positivity of shock profiles, Comm. Math. Phys., 246 (2004), no. 1, 133–179.


\bibitem{[LiuYu1]} T.-P. Liu, S.-H. Yu, Green's function of Boltzmann
equation, 3-D waves, Bull. Inst. Math. Acad. Sin. 1 (2006), 1-78.

\bibitem{[LiuYu-07]}  T.-P. Liu, S.-H. Yu, Initial-boundary value problem for one-dimensional wave solutions of the Boltzmann equation, Comm. Pure Appl. Math. 60 (2007), no. 3, 295–356.


\bibitem{[LiuYu2]} T.-P. Liu, S.-H. Yu, Solving Boltzmann equation, Part I :
Green's function, Bull. Inst. Math. Acad. Sin. (N.S.), 6 (2011), 151-243.

\bibitem{[LiuYu-13]} T.-P. Liu, S.-H. Yu, Invariant manifolds for steady Boltzmann flows and applications, Arch. Ration. Mech. Anal. 209 (2013), no. 3, 869–997.

\bibitem{[Zeng]}T.-P. Liu and Y. Zeng, Large time behavior of solutions for general quasilinear hyperbolic parabolic systems of conservation laws. Memoirs of the American mathematical Society, Volume 125, Number 599(1997), ISSN-0065-9266.







%


\bibitem{[Ukai-Yang]} S. Ukai, T. Yang, The Boltzmann equation in the space $%
L^{2}\cap L_{\beta }^{\infty }$: global and time-periodic solutions, Anal.
Appl., 4 (2006), no. 3, 263-310.



\bibitem{[YangYu]} T. Yang, H. Yu, Spectrum analysis of some kinetic
equations, Arch. Ration. Mech. Anal., 222 (2016), no. 2, 731-768.

\bibitem{[Yu-10]} S.-H. Yu, Nonlinear wave propagations over a Boltzmann shock profile, J. Amer. Math. Soc. 23 (2010), no. 4, 1041–1118.

\bibitem{[Guo2010]}Y. Guo, Decay and Continuity of the Boltzmann Equation in Bounded Domains. Arch. Ration. Mech. Anal., 197(2010), 713–809.

\bibitem{[Zhong]} M.Y. Zhong, Optimal time-decay rates of the Boltzmann
equation, Sci. China Math., 57 (2014), 807-822.

\end{thebibliography}
\end{document}